\documentclass[11pt, oneside, article]{memoir}

\selectfont 
\settrims{0pt}{0pt} 
\settypeblocksize{*}{35.4pc}{*} 
\setlrmargins{*}{*}{1} 
\setulmarginsandblock{0.85in}{0.85in}{*} 
\setheadfoot{\onelineskip}{2.66\onelineskip} 
\setheaderspaces{*}{1.5\onelineskip}{*} 
\checkandfixthelayout[lines] 

\makepagestyle{dap}
\makeevenhead{dap}{\small\slshape\leftmark}{}{\small\slshape\rightmark}
\makeoddhead{dap}{\small\slshape\leftmark}{}{\small\slshape\rightmark}
\makeevenfoot{dap}{}{\thepage}{}
\makeoddfoot{dap}{}{\thepage}{}
\makeheadrule{dap}{\textwidth}{\normalrulethickness}
\makepsmarks{dap}{%
  \nouppercaseheads
  \createmark{chapter}{left}{shownumber}{}{.\enspace}%
  \createmark{section}{right}{shownumber}{}{.\enspace}%
}
\usepackage{amsthm}
\usepackage{mathtools}

\usepackage[inline]{enumitem}
\usepackage{calc}
\usepackage{ifthen}
\usepackage[utf8]{inputenc} 
\usepackage{xcolor}

\usepackage[backend=biber, backref=true, maxbibnames = 10, style = alphabetic]{biblatex}
\usepackage[bookmarks=true, colorlinks=true, linkcolor=blue!50!black,
citecolor=orange!50!black, urlcolor=orange!50!black, pdfencoding=unicode]{hyperref}
\usepackage{xurl} 
\usepackage[capitalize]{cleveref}

\usepackage{tikz}
\usepackage{pdflscape}
\usepackage{afterpage}

\usepackage{amssymb}
\usepackage{newpxtext}
\usepackage[varg,bigdelims]{newpxmath}
\usepackage{mathrsfs}
\usepackage{dutchcal}
\usepackage[scaled=0.95]{helvet}
\DeclareMathAlphabet{\mathsf}{T1}{phv}{m}{n}
\usepackage{fontawesome}
\usepackage{ebproof}
\usepackage{stmaryrd}

  \crefformat{enumi}{\card#2#1#3}
  \crefalias{chapter}{section}

  \hypersetup{final}

  \setlist{nosep}
  \setlistdepth{6}

  \usetikzlibrary{
  	cd,
  	math,
  	decorations.markings,
		decorations.pathreplacing,
		decorations.pathmorphing,
  	positioning,
  	arrows.meta,
  	shapes,
		shadows,
		shadings,
  	calc,
  	fit,
  	quotes,
  	intersections,
  	circuits.logic.US,
  	backgrounds,
  }

  \tikzset{
biml/.tip={Glyph[glyph math command=triangleleft, glyph length=.95ex]},
bimr/.tip={Glyph[glyph math command=triangleright, glyph length=.95ex]},
}

\tikzset{
	tick/.style={postaction={
  	decorate,
    decoration={markings, mark=at position 0.5 with
    	{\draw[-] (0,.4ex) -- (0,-.4ex);}}}
  }
}
\tikzset{
	slash/.style={postaction={
  	decorate,
    decoration={markings, mark=at position 0.5 with
    	{\draw[-] (.3ex,.3ex) -- (-.3ex,-.3ex);}}}
  }
}
\tikzset{
	dot/.style={circle, draw, fill=black, inner sep=0, minimum size=3pt}
}

\tikzset{
	oriented WD/.style={
		every to/.style={out=0,in=180,draw},
    label/.style={
    	font=\everymath\expandafter{\the\everymath\scriptstyle},
      inner sep=0pt,
      node distance=2pt and -2pt},
    semithick,
    node distance=1 and 1,
    decoration={markings, mark=at position \stringdecpos with \stringdec},
    ar/.style={postaction={decorate}},
    execute at begin picture={\tikzset{
    	x=\bbx, y=\bby,
      every fit/.style={inner xsep=\bbx, inner ysep=\bby}}}
    },
    string decoration/.store in=\stringdec,
    string decoration={\arrow{stealth}},
    string decoration pos/.store in=\stringdecpos,
    string decoration pos=.5,
    bbx/.store in=\bbx,
    bbx = 1.5cm,
    bby/.store in=\bby,
    bby = 1.5ex,
    bb port sep/.store in=\bbportsep,
    bb port sep=1.5,
    bb port length/.store in=\bbportlen,
    bb port length=4pt,
    bb penetrate/.store in=\bbpenetrate,
    bb penetrate=0,
    bb min width/.store in=\bbminwidth,
    bb min width=1cm,
    bb rounded corners/.store in=\bbcorners,
    bb rounded corners=2pt,
    bb small/.style={
    	bb port sep=1, bb port length=2.5pt, bbx=.4cm, bb min width=.4cm, bby=.7ex},
    bb/.code 2 args={
    	\pgfmathsetlengthmacro{\bbheight}{\bbportsep * (max(#1,#2)+1) * \bby}
      \pgfkeysalso{draw,minimum height=\bbheight,minimum
       width=\bbminwidth,outer sep=0pt,
         rounded corners=\bbcorners,thick,
         prefix after command={\pgfextra{\let\fixname\tikzlastnode}},
         append after command={\pgfextra{\draw
            \ifnum #1=0{} \else foreach \i in {1,...,#1} {
            	($(\fixname.north west)!{\i/(#1+1)}!(\fixname.south west)$) +(-\bbportlen,0) coordinate (\fixname_in\i) -- +(\bbpenetrate,0) coordinate (\fixname_in\i')}\fi
            \ifnum #2=0{} \else foreach \i in {1,...,#2} {
            	($(\fixname.north east)!{\i/(#2+1)}!(\fixname.south east)$) +(-\bbpenetrate,0) coordinate (\fixname_out\i') -- +(\bbportlen,0) coordinate (\fixname_out\i)}\fi;
           }}}
		},
		bb name/.style={
     	append after command={
				\pgfextra{\node[anchor=north] at (\fixname.north) {#1};}
			}
		}
}

\newcommand{\boxpic}[2]{%
  \begin{tikzpicture}[oriented WD, bb small, baseline=(M.south)]
    \node[bb={1}{1}] (M) {};
    \node[left=-1pt] at (M_in1) {\scriptsize #1};
    \node[right=-1pt] at (M_out1) {\scriptsize #2};
  \end{tikzpicture}%
}

\newcommand{\boxpicout}[1]{%
  \begin{tikzpicture}[oriented WD, bb small, baseline=(M.south)]
    \node[bb={0}{1}] (M) {};
    \node[right=-1pt] at (M_out1) {\scriptsize #1};
  \end{tikzpicture}%
}

\newcommand{\tankpic}{%
  \begin{tikzpicture}[oriented WD, bb small, font=\tiny, baseline=(current bounding box.center)]
    \node[bb port sep=4.5mm, bb={0}{0}, align=center] (pipe) {$\fun{pipe}$\\$\kappa,z$};
    \node[bb={0}{1}, align=center, above left=.7 and 4.5 of pipe.180] (t1) {$\fun{tank}_1$\\$v_1$};
    \node[bb={0}{1}, align=center, below left=.7 and 4.5 of pipe.180] (t2) {$\fun{tank}_2$\\$v_2$};
    \node[bb={0}{0}, fit=(t1) (t2) (pipe)] (outer) {};
    \draw[ar] (t1_out1) -- node[above right=-2pt and 0, pos=0]{$e_1$} node[below left=-2pt and 0, pos=1]{$I(e_1,e_2)$} (t1_out1-|pipe.west);
    \draw[ar] (t2_out1) -- node[above right=-2pt and 0, pos=0]{$e_2$} node[below left=-2pt and 0, pos=1]{$-I(e_1,e_2)$} (t2_out1-|pipe.west);
  \end{tikzpicture}%
}

\usepackage{aliascnt}

\theoremstyle{definition}
\newtheorem{definitionx}{Definition}[section]
\crefname{definitionx}{Definition}{Definitions}

\newaliascnt{examplex}{definitionx}
\newtheorem{examplex}[examplex]{Example}
\aliascntresetthe{examplex}
\crefname{examplex}{Example}{Examples}

\newaliascnt{warning}{definitionx}
\newtheorem{warning}[warning]{Warning}
\aliascntresetthe{warning}
\crefname{warning}{Warning}{Warnings}

\newaliascnt{remarkx}{definitionx}
\newtheorem{remarkx}[remarkx]{Remark}
\aliascntresetthe{remarkx}
\crefname{remarkx}{Remark}{Remarks}

\newaliascnt{notation}{definitionx}
\newtheorem{notation}[notation]{Notation}
\aliascntresetthe{notation}
\crefname{notation}{Notation}{Notations}

\newaliascnt{conventionx}{definitionx}

\aliascntresetthe{conventionx}
\crefname{conventionx}{Convention}{Conventions}

\theoremstyle{plain}

\newaliascnt{theorem}{definitionx}
\newtheorem{theorem}[theorem]{Theorem}
\aliascntresetthe{theorem}
\crefname{theorem}{Theorem}{Theorems}

\newaliascnt{proposition}{definitionx}
\newtheorem{proposition}[proposition]{Proposition}
\aliascntresetthe{proposition}
\crefname{proposition}{Proposition}{Propositions}

\newaliascnt{corollary}{definitionx}
\newtheorem{corollary}[corollary]{Corollary}
\aliascntresetthe{corollary}
\crefname{corollary}{Corollary}{Corollaries}

\newaliascnt{lemma}{definitionx}
\newtheorem{lemma}[lemma]{Lemma}
\aliascntresetthe{lemma}
\crefname{lemma}{Lemma}{Lemmas}

\newaliascnt{fact}{definitionx}

\aliascntresetthe{fact}
\crefname{fact}{Fact}{Facts}

\newaliascnt{conjecture}{definitionx}

\aliascntresetthe{conjecture}
\crefname{conjecture}{Conjecture}{Conjectures}

\newtheorem*{theorem*}{Theorem}
\newtheorem*{proposition*}{Proposition}
\newtheorem*{corollary*}{Corollary}
\newtheorem*{lemma*}{Lemma}
\newtheorem*{warning*}{Warning}

\newenvironment{example}
  {\pushQED{\qed}\examplex}
  {\popQED\endexamplex}

 \newenvironment{remark}
  {\pushQED{\qed}\remarkx}
  {\popQED\endremarkx}

  \newenvironment{definition}
  {\pushQED{\qed}\definitionx}
  {\popQED\enddefinitionx}

\DeclareSymbolFont{stmry}{U}{stmry}{m}{n}
\DeclareMathSymbol\fatsemi\mathop{stmry}{"23}

\DeclareFontFamily{U}{mathx}{\hyphenchar\font45}
\DeclareFontShape{U}{mathx}{m}{n}{
      <5> <6> <7> <8> <9> <10>
      <10.95> <12> <14.4> <17.28> <20.74> <24.88>
      mathx10
      }{}
\DeclareSymbolFont{mathx}{U}{mathx}{m}{n}
\DeclareFontSubstitution{U}{mathx}{m}{n}
\DeclareMathAccent{\widecheck}{0}{mathx}{"71}

\renewcommand{\ss}{\subseteq}

\DeclarePairedDelimiter{\pairing}{\langle}{\rangle}

\DeclarePairedDelimiter{\ihom}{[}{]}
\DeclarePairedDelimiter{\absval}{\lvert}{\rvert}

\DeclareMathOperator{\ob}{Ob}

\newcommand{\cat}[1]{\mathcal{#1}}
\newcommand{\Cat}[1]{\mathbf{#1}}
\newcommand{\fun}[1]{\mathtt{#1}}
\newcommand{\Fun}[1]{\text{\normalfont\textsf{#1}}}

\newcommand{\notsafe}[1]{{#1}}
\makeatletter
\NewDocumentCommand{\trackTermSymbol}{m m}{%
  \expandafter\newcommand\csname termraw@#1\endcsname{\notsafe{#2}}%
}
\NewDocumentCommand{\termraw}{m}{%
  \@ifundefined{termraw@#1}%
    {\PackageError{termraw}{Term '#1' not registered}{Use \string\trackTermSymbol{#1}{...} first.}}%
    {\@nameuse{termraw@#1}}%
}
\DeclareRobustCommand{\termref}[2]{\notsafe{\hypersetup{linkcolor=black}\hyperlink{term.#1}{#2}}}
\NewDocumentCommand{\trackTerm}{m m m}{%
  \trackTermSymbol{#2}{#3}%
  \newcommand{#1}{\termref{#2}{\termraw{#2}}}%
}
\NewDocumentCommand{\defineTerm}{m}{\notsafe{\hypertarget{term.#1}{\termraw{#1}}}}
\makeatother

\newcommand{\id}{\mathrm{id}}
\newcommand{\then}{\mathbin{\fatsemi}}

\newcommand{\too}{\longrightarrow}
\newcommand{\To}[2][]{\xrightarrow[#1]{#2}}
\renewcommand{\Mapsto}[1]{\xmapsto{#1}}
\newcommand{\from}{\leftarrow}
\newcommand{\From}[1]{\xleftarrow{#1}}
\newcommand{\inj}{\hookrightarrow}
\newcommand{\surj}{\twoheadrightarrow}
\newcommand{\tto}{\rightrightarrows}

\newcommand{\card}{\,^{\#}}
\makeatletter
\newcommand{\leftvec}[1]{\mathpalette\leftvec@{#1}}
\newcommand{\leftvec@}[2]{\reflectbox{$\m@th#1\vec{\reflectbox{$\m@th#1#2$}}$}}
\makeatother
\newcommand{\bk}[2]{#1[#2]}

\newcommand{\inv}{^{-1}}
\newcommand{\op}{^\tn{op}}

\newcommand{\tn}[1]{\textnormal{#1}}

\newcommand{\nn}{\mathbb{N}}
\newcommand{\rr}{\mathbb{R}}

\trackTerm{\finset}{finset}{\Cat{Fin}}
\trackTerm{\smset}{smset}{\Cat{Set}}
\trackTerm{\smsetiso}{smsetiso}{\Cat{Set}_\cong}
\trackTerm{\smcat}{smcat}{\Cat{Cat}}
\trackTerm{\vect}{vect}{\Cat{Vect}}
\trackTerm{\vectiso}{vectiso}{\Cat{Vect}_\cong}

\newcommand{\List}{\Fun{List}}

\trackTerm{\yon}{yon}{\mathcal{y}}
\trackTerm{\poly}{poly}{\Cat{Poly}}

\newcommand{\biglens}[2]{
     \begin{bmatrix}{\vphantom{f_f^f}#2} \\ {\vphantom{f_f^f}#1} \end{bmatrix}
}
\newcommand{\littlelens}[2]{
     \begin{bsmallmatrix}{\vphantom{f}#2} \\ {\vphantom{f}#1} \end{bsmallmatrix}
}
\newcommand{\lens}[2]{
  \relax\if@display
     \biglens{#1}{#2}
  \else
     \littlelens{#1}{#2}
  \fi
}

\newcommand{\qqand}{\qquad\text{and}\qquad}
\newcommand{\qqwhere}{,\quad\textit{where}\qquad}
\newcommand{\qqthus}{\qquad\text{thus}\qquad}

\newcommand{\coalg}{\tn{-}\Cat{Coalg}}
\newcommand{\alg}{\tn{-}\Cat{Alg}}
\newcommand{\Coalg}{\Fun{Coalg}}
\newcommand{\enrcat}[1]{#1\tn{-}\smcat}

\trackTerm{\pc}{pc}{\mathbb{P}\Cat{C}}
\trackTerm{\pciso}{pciso}{\mathbb{P}\Cat{C}_{\cong}}
\trackTerm{\arr}{rwd}{\mathbb{A}\Cat{rr}}

\trackTerm{\mfd}{mfd}{\Cat{Mfd}}
\trackTerm{\etaLR}{etaLR}{\eta_{\mathrm{LR}}}
\trackTerm{\Store}{Store}{\Fun{Store}}
\newcommand{\inc}{\Fun{inc}}
\trackTermSymbol{cot}{\Fun{cot}}
\RenewDocumentCommand{\cot}{}{\termref{cot}{\termraw{cot}}}
\NewDocumentCommand{\cotof}{m}{\termref{cot}{\termraw{cot}}(#1)}
\trackTermSymbol{chiQ}{\chi}
\newcommand{\chiQ}[1]{\termref{chiQ}{\termraw{chiQ}}_{#1}}
\trackTermSymbol{nu}{\nu}
\newcommand{\nuro}{\termref{nu}{\termraw{nu}}}
\trackTermSymbol{Psisem}{\Psi}
\newcommand{\Psisem}[1]{\termref{Psisem}{\termraw{Psisem}}_{#1}}
\newcommand{\ev}{\fun{ev}}
\newcommand{\coev}{\fun{coev}}
\newcommand{\src}{\mathrm{src}}
\newcommand{\tgt}{\mathrm{tgt}}
\newcommand{\Part}{\bullet}
\newcommand{\RealPart}{\circ}
\newcommand{\Bond}{\mathord{\sim}}
\newcommand{\boxob}{\mathrm{box}}
\trackTerm{\bang}{bang}{\mathord{!}}

\providecommand{\Sm}{\mathrm{Sm}}
\trackTerm{\sarr}{srwd}{\mathbb{A}\Cat{rr}_{\Sm}}
\trackTerm{\Para}{Para}{\mathbb{P}\Cat{ara}}
\newcommand{\para}[3][]{\Para_{#2}^{#1}(#3)}
\trackTerm{\rmfdc}{rmfdc}{\Cat{RMfdC}}
\trackTerm{\rvect}{rvect}{\Cat{RVect}}
\trackTerm{\SLens}{SLens}{\Cat{SLens}}
\newcommand{\rv}[1]{\mathbf{#1}}
\trackTermSymbol{Lens}{\Cat{Lens}}
\newcommand{\Lens}[1]{\termref{Lens}{\termraw{Lens}}_{#1}}
\newcommand{\lmfd}{\Lens{\mfd}}
\trackTermSymbol{LensFun}{\Cat{Lens}}
\newcommand{\LensFun}[1]{\termref{LensFun}{\termraw{LensFun}}^{#1}}
\trackTermSymbol{LensMor}{\Cat{Lens}}
\newcommand{\LensMor}[2]{\termref{LensMor}{\termraw{LensMor}}_{#1}^{#2}}
\trackTermSymbol{Lcokl}{\Cat{Lens}_{\cat C}^{\Fun T}}
\newcommand{\Lcokl}[2]{\termref{Lcokl}{\Cat{Lens}_{#1}^{#2}}}
\newcommand{\plmfd}{\Lcokl{\mfd}{\rr}}
\newcommand{\ltens}{\boxtimes}
\newcommand{\bigltens}{\mathop{\vcenter{\hbox{\begin{tikzpicture}[line width=0.6pt,line join=miter]\def\s{5.5pt}\draw(-\s,-\s)rectangle(\s,\s);\draw(-\s,-\s)--(\s,\s);\draw(-\s,\s)--(\s,-\s);\end{tikzpicture}}}}\displaylimits}
\newcommand{\ltpow}[1]{^{\ltens #1}}
\newcommand{\inpt}[1]{#1^{-}}
\newcommand{\outp}[1]{#1^{+}}
\newcommand{\lensob}[1]{\binom{\inpt{#1}}{\outp{#1}}}
\newcommand{\lensobs}[2]{\binom{\inpt{#1}\otimes\inpt{#2}}{\outp{#1}\otimes\outp{#2}}}
\newcommand{\lensunit}[1][1]{\binom{#1}{#1}}
\trackTerm{\potd}{potd}{\mathrm{d}}
\trackTerm{\potalg}{potalg}{\mathbf z}
\newcommand{\md}{\mathsf{Md}}
\newcommand{\conf}{\mathrm{conf}}
\newcommand{\phase}{\mathrm{phase}}
\newcommand{\lrn}{\mathrm{lrn}}
\newcommand{\euc}{\mathrm{Euc}}
\trackTerm{\rand}{rand}{\fun{rand}}

\newcommand{\Phip}[1]{\Phi'_{#1}}
\trackTermSymbol{Phiphase}{\Phi_{\phase}}
\newcommand{\Phiphase}{\termref{Phiphase}{\termraw{Phiphase}}}
\trackTermSymbol{Phiconf}{\Phi_{\conf}}
\newcommand{\Phiconf}{\termref{Phiconf}{\termraw{Phiconf}}}
\trackTermSymbol{sharpEuc}{\sharp_{\euc}}
\newcommand{\sharpEuc}{\termref{sharpEuc}{\termraw{sharpEuc}}}
\trackTermSymbol{dyn}{\mathrm{dyn}}
\newcommand{\dyn}{\termref{dyn}{\termraw{dyn}}}
\trackTermSymbol{dynphase}{\mathrm{dyn}_{\mathrm{phase}}}
\newcommand{\dynphase}{\termref{dynphase}{\termraw{dynphase}}}
\trackTerm{\intg}{intg}{\mathfrak{i}}
\trackTerm{\interp}{interp}{\mathfrak{p}}
\trackTerm{\interpsm}{interpsm}{\mathrm{sm}}
\trackTermSymbol{sharpR}{\sharp}
\newcommand{\sharpR}{\termref{sharp_map}{\termraw{sharpR}}}
\trackTermSymbol{sharp_symp}{\sharp}
\trackTermSymbol{sharpS}{\sharp}
\newcommand{\sharpS}{\termref{sharp_symp}{\termraw{sharpS}}}

\font\MnSyCfont=MnSymbolC10
\font\MnSyCfonts=MnSymbolC7
\font\MnSyCfontss=MnSymbolC5
\newcommand{\smallsquare}{\mathchoice
  {\mbox{\MnSyCfont\symbol{"68}}}%
  {\mbox{\MnSyCfont\symbol{"68}}}%
  {\mbox{\MnSyCfonts\symbol{"68}}}%
  {\mbox{\MnSyCfontss\symbol{"68}}}}
\newcommand{\blank}{\notsafe{\mathord{\mkern1mu\smallsquare\mkern2mu}}}

\newcommand{\thanksAFOSR}[1]{This material is based upon work supported by the Air Force Office of Scientific Research under award number #1}

\setsecnumdepth{subsection}
\settocdepth{section}
\begin{document}

\title{Compositional Dynamics in Learning and Mechanics}

\author{David I. Spivak}

\date{\vspace{-.2in}}

\maketitle

\begin{abstract}
We give a single compositional setting in which gradient-based learning and Hamiltonian-style mechanics appear as functorial semantics. The syntax is an operad $\sarr$ whose objects are input-output interfaces (pairs of manifolds) and whose morphisms are \emph{smooth adaptive arrangements}, which consist of a responsive parameter space, a lens given by smooth output and input maps, and a real-valued potential.

The main technical result of the paper is what we call \emph{lens internalization}, a lax symmetric monoidal functor $\Lens{\cat{C}}\to\cat{C}$ associated to any symmetric monoidal closed category $\cat{C}$. Using it, we provide two functors $\Phiphase,\Phiconf\colon\sarr\to\pc$ into the 2-category of polynomial coalgebras---input-output discrete dynamical systems---which we take as the semantics category. $\Phiphase$ stores both position and momentum, whereas $\Phiconf$ stores only position. 

When applied to a parameterized function, $\Phiconf$ recovers the gradient descent training algorithm, with backpropagation as the lens' backward pass. When applied to harmonic particles wired together---in series, or according to any finite directed graph---one diagram yields two different regimes, both of which are governed by the graph Laplacian: $\Phiphase$ gives the discrete wave equation, which is second-order, and $\Phiconf$ gives the discrete heat equation, which is first-order. They are two semantics of one adaptive arrangement, e.g.\ with the same potential in each case. And because $\sarr$ is an operad, such diagrams nest---larger systems wired from smaller ones---and each semantics assembles a system's dynamics functorially from its parts. These dynamics are moreover executable: a parameterized neural network and a graph of particles both compile, by the same construction, to explicit state machines one can run.
\end{abstract}

\medskip
\noindent\textbf{Keywords:} operad, functorial semantics, polynomial functors, lenses, gradient descent, backpropagation, Hamiltonian mechanics, discrete dynamical systems, graph Laplacian, compositionality

\tableofcontents*

\chapter{Introduction}\label{ch.intro}

There is a commonality in Hamiltonian dynamics and gradient descent, namely a map $T^*M\to TM$ from a cotangent bundle to a tangent bundle, transforming covectors into vectors that change the state. In each case, the covector it transforms is the differential $dU$ of a potential $U$---the potential energy in Hamiltonian dynamics or the loss in gradient descent---so it is the potential that drives the state. We refer to vector spaces with this sort of covector-to-vector structure as \emph{responsive}. In Hamiltonian dynamics, the response structure is the \emph{symplectic sharp} map on phase space~\cite[\S\S 18.1--18.2]{cannasdasilva2008lectures}; in gradient descent it is the sharp map induced by a Riemannian metric, scaled by a learning rate on parameter space. In both cases, the dynamics is compositional: local subsystems---each with their own dynamics---exchange information through which their dynamics assemble into an integrated system.

The goal of this paper is to build a compositional syntax for such smooth responsive, adaptive systems---one in which the couplings between subsystems can themselves adapt---and a functorial semantics that turns this syntax into explicit discrete update rules.

\section{Motivation for smooth adaptive arrangements}\label{sec.intro_smooth_rewiring}

Operads are a natural categorical setting for establishing compositionality results~\cite{Spivak:2013b,Yau:2015a}. An operad $\cat{W}$ establishes a composition \emph{syntax}---what sorts of diagrams can be composed---and a functor $\Phi\colon\cat{W}\to\smset$ establishes the corresponding semantics. For example consider the diagram:
\begin{equation}\label{eqn.varphi}
\varphi=\begin{tikzpicture}[oriented WD, bb port length=4pt, bb port sep=1.5, bb min width=.35cm, bbx=.4cm, bby=.8ex, baseline=(box1), particle/.style={path picture={\fill (path picture bounding box.center) circle (2.4pt);}}]
  \def\N{6}
  \node[bb={1}{1}, particle] (box1) {};
  \foreach \i [evaluate=\i as \prev using int(\i-1)] in {2,...,\N} {
    \node[bb={1}{1}, particle, right=.7 of box\prev] (box\i) {};
    \draw (box\prev_out1) -- (box\i_in1);
  }
  \node[bb={0}{0}, fit={(box1) (box\N)}] (Y) {};
  \node[coordinate] at (Y.west|-box1_in1) (Y_in1) {};
  \node[coordinate] at (Y.east|-box\N_out1) (Y_out1) {};
  \draw (Y_in1) -- (box1_in1);
  \draw (box\N_out1) -- (Y_out1);
\end{tikzpicture}
\end{equation}
For interfaces $b_1,\ldots,b_K,c$, a morphism $\varphi\colon b_1,\ldots,b_K\to c$ in $\cat{W}$ represents an arrangement of $K$ internal boxes, with interfaces $b_1,\ldots,b_K$, inside an external box with interface $c$. The picture above has $K=6$: it wires up six boxes inside a box, all of which have the same type:
\boxpic{}{}. The dots are \emph{particles}: $0$-ary morphisms, boxes with no further boxes inside. As syntax, $\varphi$ records the architecture of the chain---the types of each wire and how boxes are connected---but it does not say what the boxes do or what signals pass along each wire; that is the job of the semantics $\Phi$. It is also static: $\varphi$ doesn't change no matter how the particles move.

In the examples below we need a richer composition syntax: one where the effective coupling is itself part of the data and may adapt to the signals flowing between boxes. The way it adapts is governed by a dynamical system whose behavior depends on continuous parameters, on the values exposed by the internal boxes, and on values received by the external box. For example, consider replacing $\varphi$ by an interaction pattern representing an adaptive spring material for which each connecting edge has a variable spring constant that responds to a chosen potential $U$ and the edge lengths.

We call these richer composition patterns \emph{smooth adaptive arrangements}: each carries an internal state---the parameter---that the dynamics evolve over time, which is what \emph{adaptive} names, together with a real-valued \emph{potential} $U$---the potential energy in mechanics, the loss in learning---which drives that evolution and is the datum at the center of this paper. Formally, they will be the morphisms in the operad $\sarr$ of smooth adaptive arrangements constructed in \cref{sec.potentialized_lenses}. Instead of giving this syntax static semantics in $\smset$, we give it dynamical semantics in $\pc$,%
\footnote{
In previous work \cite{spivak2021learners,shapiro2022dynamic}, the author has referred to $\pc$ as $\mathbb{O}\Cat{rg}$.
}
whose morphisms are polynomial coalgebras, i.e.\ input-output dynamical systems. Thus a $0$-ary morphism can represent a component, such as a single dynamical particle, and a $K$-ary morphism can represent an adaptive interaction among $K$ components.

Having semantics land in $\pc$ amounts to using polynomial functors as input-output interfaces. Polynomial functors are built from one variable $\yon$ by coproducts and products, so a polynomial
\[
p=\sum_{i:I}\yon^{p[i]}
\]
is just a family of \emph{directions} $p[i]$ indexed by a family of \emph{positions} $i:I$. We regard positions as the possible outputs and the directions as the inputs accepted at that output. A $p$-coalgebra $S\to p(S)$ is then a deterministic dynamical system with state: given a state $s$ it specifies an output position and, for every acceptable input direction, specifies a next state. 


Most relevantly for smooth adaptive arrangements, $\poly$ also carries the Dirichlet product $\otimes$, which combines interfaces in parallel, and this monoidal structure is closed. A map of polynomials $p\to p'$ can be understood as a static coupling pattern: it sends each output position of the internal interface $p$ to an output position of the external interface $p'$ and each compatible input direction of $p'$ back to an input direction of $p$. Thus the $K$-many internal interfaces $p_1,\ldots,p_K$ of a wiring diagram combine as $p_1\otimes\cdots\otimes p_K$, and a static wiring into an external interface $p'$ is a map $\varphi\colon p_1\otimes\cdots\otimes p_K\to p'$ as in \eqref{eqn.varphi}.

Because $\poly$ is monoidal closed, such maps are equivalently the positions of the internal hom
\[
w\coloneqq\ihom{p_1\otimes\cdots\otimes p_K,p'},
\]
so a fixed $\varphi$ is precisely the same data as a one-state coalgebra on the polynomial $w$. To make the wiring dynamic, we replace that one-state coalgebra by an arbitrary coalgebra $f\colon S\to w(S)$: its state $s:S$ outputs the current wiring pattern $\varphi_s\colon p_1\otimes\cdots\otimes p_K\to p'$, and each acceptable input direction determines the next state $s'$. These internal-hom coalgebra categories are the morphisms in $\pc$. We will explain all this in detail in \cref{sec.poly_dynamic_bg}.

\section{From smooth data to dynamics}\label{sec.intro_smooth_to_dynamics}

The bridge from manifolds to polynomial functors is the cotangent functor $\cot\colon\mfd\to\poly$. It sends a manifold $M$ to the polynomial
\[
\cotof{M}\coloneqq\sum_{m\in M}\yon^{T_m^*M},
\]
whose positions are points $m\in M$ and whose directions at $m$ are the covectors $T_m^*M$. A smooth map $f\colon M\to N$ acts forward on positions by $f$ and backward on directions by cotangent pullback $(T_mf)^\top\colon T^*_{f(m)}N\to T^*_mM$. Crucially, $\cot$ is strong symmetric monoidal: $\cotof{M\times N}\cong\cotof{M}\otimes\cotof{N}$. Thus Dirichlet products of polynomial interfaces correspond to ordinary products of manifolds.

We are now ready to describe the source syntax: ordinary smooth data that will be sent through $\cot$ and then converted into the dynamics of a polynomial coalgebra. A neural network, for example, begins as a smooth map
\[f\colon Q\times M\to N\]
i.e.\ a choice of parameter, input, and output manifolds, together with a model map; typically, $M=\rr^m$ and $N=\rr^n$ are Euclidean. Categorically, $f$ is a parameterized map $M\to N$ in the sense of the Para construction~\cite[Def.~2]{capucci2022cybernetics} (\cref{sec.para_general}), with parameter $Q$. Gradient descent updates this parameter via a map $T^*Q\to TQ$ that converts a cotangent (the gradient) into a tangent step on $Q$; such a map is exactly a response structure $\sharpR_Q$ (\cref{sec.rvect}), so $Q$ is implicitly responsive in the gradient-descent setting.

Aside from making the response structure explicit, smooth adaptive arrangements---the morphisms in $\sarr$---generalize the above familiar situation in three ways. First, an ordinary manifold $M$, i.e.\ in the role of $\rr^m$ and $\rr^n$ above---a pure output, \boxpicout{$M$}---becomes only one component of an interface $\binom{\inpt M}{\outp M}$, namely the output side $\outp M$; that is, we add a new input side $\inpt M$, which is an ordinary smooth port for information supplied by the surrounding system, so that $M$ becomes a box \boxpic{$\inpt M$}{$\outp M$}.
Second, an ordinary map $f\colon Q\times M\to N$ becomes only one component of a pair of maps
\begin{equation}\label{eqn.first_lens}
\outp f\colon Q\times\outp M\to\outp N
\qqand
\inpt f\colon Q\times\outp M\times\inpt N\to\inpt M.
\end{equation}
That is, we add a new input map $\inpt f\colon Q\times\outp M\times\inpt N\to\inpt M$, which routes the new external inputs back to the new input sides of the internal interfaces.
And third, we add a potential
\begin{equation}\label{eqn.potential}
U\colon Q\times\outp{M}\times\inpt{N}\to\rr,
\end{equation}
whose differential contributes to the dynamics.
Seen from the box's perspective, \eqref{eqn.potential} provides some intuition regarding the distinction between inputs and outputs of a box. As a slogan,
\begin{center}
\emph{	
Your inputs are what your interior can have preferences over, and\\
your outputs are what your exterior can have preferences over.
}
\end{center}
Indeed, the potential lives interior to the outer box $N$ and exterior to the inner box $M$,
so it reads $Q$, $\outp M$, and $\inpt N$, and can read neither $\inpt M$ nor $\outp N$.

All the data from \eqref{eqn.first_lens} and \eqref{eqn.potential} constitutes a morphism $f=(\rv Q,\outp f,\inpt f,U)$ and is depicted in \eqref{eqn.arrangement}:
\begin{equation}\label{eqn.arrangement}
f\coloneqq\quad
\begin{tikzpicture}[oriented WD, bb port length=5pt, bb port sep=1, bb min width=.5cm, bbx=.7cm, bby=.7ex,
    ar/.style={postaction={decorate, decoration={markings, reset marks,
      mark=at position \stringdecpos with \stringdec}}},
    ann ar/.style={postaction={decorate, decoration={markings, reset marks,
      mark=at position .6 with {\arrow{Stealth[length=3.5pt]}}}}}, baseline=(M)]
  \node[bb={1}{1}] (M) {$M$};
  \node[circle, draw, thin, inner sep=.5pt, left=.9 of M_in1] (fin) {\scriptsize$\inpt f$};
  \node[circle, draw, thin, inner sep=.5pt, right=.9 of M_out1] (fout) {\scriptsize$\outp f$};
  \node[dot] at ($(M_out1)!.5!(fout.west)$) (split) {};
  \node[cloud, draw=gray, thin, cloud puffs=10, aspect=2.2, inner xsep=1pt, inner ysep=0pt,
        anchor=south] at ($(M.north)+(0,6pt)$) (Q) {\scriptsize$Q$};
  \coordinate (fblow) at ($(M.south)+(0,-12pt)$);
  \coordinate (pot) at ($(fin.center)+(-16pt,-27pt)$);
  \coordinate (potSW) at ($(pot)+(-15pt,-1pt)$);
  \coordinate (potNE) at ($(pot)+(15pt,7pt)$);
  \node[bb={0}{0}, fit=(M) (Q) (fin) (fout) (fblow) (potSW) (potNE),
        inner xsep=.55cm, inner ysep=8pt] (N) {};
  \node[coordinate] at (N.west|-M_in1) (N_in1) {};
  \node[coordinate] at (N.east|-M_out1) (N_out1) {};
  \coordinate (Nin) at ($(N_in1)+(-5pt,0)$);
  \coordinate (Nout) at ($(N_out1)+(5pt,0)$);
  \draw[thin, gray!55, ann ar] (Q) to[out=180, in=90] (fin);
  \draw[thin, gray!55, ann ar] (Q) to[out=0, in=90] (fout);
  \draw[ar] (Nin) to (fin);
  \draw[ar] (fin) to (M_in1);
  \draw (M_out1) to (split);
  \draw[ar] (split) to (fout);
  \draw[ar] (fout) to (Nout);
  \coordinate (fbmid) at ($(fin.-45|-fblow)!.5!(split|-fblow)$);
  \draw[ar] (split) to[out=-90, in=0] (fbmid) to[out=180, in=-45] (fin.-45);
  \draw[thin, black!60, -{Stealth[length=3.5pt]}] (fin.-135) to[out=-135, in=45] node[left=1pt, font=\scriptsize] {$U$} ($(pot)+(-2pt,8pt)$);
  \begin{scope}[shift={(pot)}]
    \draw[thin, black!60] plot[domain=-1.45:1.45, samples=41] ({\x*10pt},{(\x*\x-1)^2*5pt});
    \fill[black!60] (-10pt,1.8pt) circle (1.7pt);
  \end{scope}
  \node[above=.3pt] at ($(M_in1)+(-9pt,0)$) {\scriptsize$\inpt M$};
  \node[above=.3pt] at ($(M_out1)!.5!(split.west)$) {\scriptsize$\outp M$};
  \node[left=-1pt] at (Nin) {\scriptsize$\inpt N$};
  \node[right=0pt] at (Nout) {\scriptsize$\outp N$};
\end{tikzpicture}
\end{equation}

The first two pieces, $\outp f$ and $\inpt f$, are exactly the shape of a lens $\binom{\inpt f}{\outp f}$. Lenses are a standard language for bidirectional interaction, appearing in bidirectional programming, compositional game theory, and categorical accounts of backpropagation~\cite{foster2007combinators,hedges2017coherence,fong2019backprop,cruttwell2021categorical}. In our manifold setting, these two maps route output and input data between the internal boxes and the external box in a composable way.

To accommodate the potential---the last ingredient---we pass from lenses to the coKleisli category $\plmfd$ for the comonad on $\lmfd$ induced by the monad $\rr\times\blank$ on $\mfd$; this is the monad-with-strength construction of \cref{prop.backward_comonad}. Informally, the backward direction now handles both inputs and a real number. Thus we are now ready to define $\sarr$ to be the operad underlying the symmetric monoidal category $\para{\rvect}{\plmfd}$; we call its morphisms \emph{smooth adaptive arrangements}. Under the dynamics functor $\sarr\to\pc$, the parameters and potential let the effective coupling not only transport information but also contribute to the parameter update rules, again in a compositional way.

The passage from lenses to polynomial coalgebras uses a functor we call \emph{lens internalization}. In any symmetric monoidal closed category, a lens interface $\lensob c$ can be internalized as $\ihom{\inpt c,I}\otimes\outp c$, and this process constitutes a lax symmetric monoidal functor (\cref{cor.Theta}).

Together, $\cot$ and lens internalization turn ordinary smooth data---output map, input map, a responsive parameter, and a potential---into data for a discrete dynamical system, i.e.\ a polynomial coalgebra. At that point, the remaining choice is the state set $S$ that the coalgebra carries and how an incoming covector updates it. For example, in Hamiltonian dynamics, we not only store the data of position but also a momentum, which carries the position forward. We package this choice as an \emph{integrator}: a state space $\Fun S$, depending functorially on $Q$, together with a rule that updates the current state according to an incoming covector. Each integrator $\intg$ gives a dynamics functor $\Phi_\intg\colon\sarr\to\pc$.

Thus the integrator is where the dynamical regime gets decided: one smooth adaptive arrangement fed through different integrators can produce qualitatively different behaviors. We work out two. We show that the \emph{configuration integrator} ($\Fun S_Q\coloneqq Q$) generalizes both Newton's method (\cref{sec.newton_warmup}) and training an artificial neural network (\cref{sec.dl_warmup}). Indeed, when applied to the arrangement corresponding to a parameterized neural network, it recovers gradient descent, with backpropagation as the lens backward pass. 
We also show that the \emph{phase integrator} ($\Fun S_{Q}\coloneqq T^*Q$) applied to any finite directed graph of harmonic particles yields the discrete dynamics of its graph Laplacian (\cref{sec.graph_laplacian}), generalizing the wave equation. Fed the \emph{very same} adaptive arrangement---identical wiring, potential, and response---the configuration integrator instead gives the corresponding discrete heat equation (\cref{rmk.graph_heat}). Thus the second-order and first-order regimes are two semantics of a single syntactic object, differing mainly in terms of
whether to store position and momentum data or just position data.

This is the sense in which smooth adaptive arrangements are syntax: the diagram fixes the wiring, the potential, and the geometry of the parameter space, while the integrator alone selects the dynamical regime, second-order or first-order.
On the phase side, we will see in \cref{prop.closed_conservation} that each step of the phase dynamics applied to a closed harmonic system---one with quadratic response and quadratic potential---preserves the canonical symplectic pairing exactly.

That is a lot of moving parts, and almost none of them are special to manifolds: the construction needs only that spaces form a cartesian category, parameters a symmetric monoidal one, and potentials a monoidal monad (the writer monad of $(\rr,0,+)$ being the main instance). We therefore build $\sarr\to\pc$ in two passes. \Cref{ch.framework} carries out the construction at this level of generality---isolating what actually makes it work, and leaving room to specialize it differently later (e.g.\ \cref{ex.syntactic_datum})---and \cref{ch.smooth_rwd,ch.smooth_dynamics} then instantiate it in the smooth setting above.

The paper aims to be self-contained for a reader comfortable with monoidal categories and (co)monads and who has a passing familiarity with smooth manifolds; the background section reviews polynomial functors, the Para construction, and the 2-category $\pc$, while lenses and lens internalization have their own section (\cref{ch.lenses}).

The main contributions are the following.
\begin{itemize}
\item \emph{Lens internalization} (\cref{cor.Theta}): a lax symmetric monoidal functor $\Theta\colon\Lens{\cat C}\to\cat C$ that internalizes each lens interface as an object of $\cat C$, as well as a monad/monoid generalization $\Theta_\potalg$ (\cref{thm.Theta_T_alpha}).
\item A vocabulary for smooth adaptive arrangements---\emph{stateless}, \emph{potential-free}, \emph{static}, \emph{quadratic}, \emph{linear}, \emph{unilateral}---each class spanning a suboperad of $\sarr$ (\cref{def.arrangement_terminology,prop.suboperads}).
\item A closed system of two species wired to a predation potential, with oppositely-signed responses, whose sharp and potential generate the Lotka--Volterra ODEs (\cref{ex.lotka_volterra}).
\item Two functorial semantics $\Phiconf,\Phiphase\colon\sarr\to\pc$ (\cref{cor.functor}), each assembling a system's discrete dynamics from its smooth adaptive arrangement.
\item A proof that, for a harmonic closed system, each step of the phase dynamics preserves the canonical symplectic pairing exactly (\cref{prop.closed_conservation}).
\item $\Phiconf$ recovers gradient descent with backpropagation (\cref{sec.dl_warmup}), and Newton's method (\cref{sec.newton_warmup}).
\item One adaptive arrangement of harmonic particles yields, via $\Phiphase$, the discrete wave equation and, via $\Phiconf$, the discrete heat equation. Both are governed by the graph Laplacian (\cref{sec.graph_laplacian,rmk.graph_heat}).
\end{itemize}

The next section places this work alongside the open-energy-driven framework of Capucci--Lynch--Spivak; readers interested only in the constructions of this paper may skip to \cref{sec.plan}.

\section{Relation to open energy-driven systems}\label{sec.related_cls}

The closest predecessor of this paper is Capucci--Lynch--Spivak~\cite{capucci2024organizing} (hereafter CLS), which identifies the response $T^*X\to TX$---what they call a \emph{reaction}---in both Hamiltonian mechanics and gradient descent and builds a functorial semantics
\[
\mathbf{OpenErg}\To{T^*}\mathbf{OpenODE}\To{\mathrm{collapse}}\mathbf{COrg}
\]
 from their category of \emph{open energy-driven systems}, through \emph{open ODEs}, to $\mathbf{COrg}$, which is a continuous-time analog of $\pc$ (also known as $\mathbb{O}\Cat{rg}$). A first difference is that our semantics $\Phiconf,\Phiphase\colon\sarr\to\pc$ is discrete-time, rather than continuous.

However, neither their framework nor the one in this paper subsumes the other. There is a common subcategory $\cat{R}$ whose objects are manifolds and whose morphisms $M\to N$ are---in CLS notation---tuples $(\rv Q,w,E)$ where
\[
\rv Q:\rvect,\qquad
w\colon Q\times M\to N,\qquad
E\colon Q\times M\to\rr,
\]
and there is a wide strong monoidal functor $\cat{R}\to\mathbf{OpenErg}$ and a fully faithful operad functor $\cat{R}\to\sarr$, the latter sending $M$ to $\binom{1}{M}$, output map $\outp f\coloneqq w$, unique input map $\inpt f\coloneqq \bang$, and potential $U\coloneqq E$. 
In other words, the two extensions of $\cat{R}$ go in orthogonal directions: $\sarr$ has richer interfaces and additional input-routing data in its morphisms, while CLS allows manifold (rather than vector-space) state spaces. We conjecture that the latter gap is removable by replacing $\rvect$ with a category of responsive manifolds equipped with connections (\cref{rmk.rmfdc_generalization}), but we do not pursue this here.

There is also a semantic comparison between the two.

\begin{proposition*}[Proven as \cref{prop.euler_submersion_lenses}]
For $\cat R$ as above, there is a category $\cat S$, whose objects are submersions, and a diagram
\[
\begin{tikzcd}[column sep=large, row sep=large]
\mathbf{OpenErg}\ar[d, "{\mathrm{collapse}\circ T^*}"'] &
\cat{R}\ar[l, "{\mathrm{wide}}"']\ar[r, "{\mathrm{ff}}"]\ar[d] &
\sarr\ar[d, "\Phiconf"]\\
\mathbf{COrg} &
\cat{S}\ar[l, "{\mathrm{wide}}"]\ar[r] &
\pc
\end{tikzcd}
\]
where the middle vertical functor sends $M$ to the submersion $T^*M\surj M$, and the lower right functor sends a submersion $p\colon E\surj B$ to the polynomial $\sum_{b:B}\yon^{E_b}$.
\end{proposition*}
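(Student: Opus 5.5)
The plan is to build $\cat S$ by hand, checking the two functor composites on objects first and on morphisms afterwards. I expect the proposition only asks for the two squares to commute up to natural isomorphism, and that this isomorphism is the identity on objects.

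\textbf{Defining $\cat S$.} Take the objects of $\cat S$ to be submersions $p\colon E\surj B$. Take a morphism $p\to p'$ to be a triple $(\rv Q,\phi,\sigma)$ with $\rv Q:\rvect$, where
\begin{itemize}
\item $\phi\colon Q\times B\to B'$ is a smooth forward map, and
\item $\sigma$ is a smooth fibrewise backward map $(Q\times B)\times_{B'}E'\to T^*Q\times_Q(Q\times E)$.
\end{itemize}
This is a "parameterized dependent lens" between submersions, with parameter update recorded in $T^*Q$. Composition reuses the Para-composition formula: parameters multiply, and backward maps compose by pulling back and adding covectors.

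\textbf{The three functors out of $\cat S$ and $\cat R$.}
\begin{itemize}
\item $\cat S\to\pc$ sends $p$ to $\sum_{b:B}\yon^{E_b}$. It sends $(\rv Q,\phi,\sigma)$ to the coalgebra on state set $Q$ (the configuration integrator). Its output at $q$ is the polynomial map $b\mapsto\phi(q,b)$, with backward map given by the $E$-component of $\sigma$. On input it updates $q\mapsto q+\sharpR_Q(\xi)$, where $\xi$ is the $T^*Q$-component of $\sigma$.
\item $\cat S\to\mathbf{COrg}$ uses the same data but reads the $T^*Q$ component as a continuous-time vector field $\sharpR_Q(\xi)$. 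No Euler step is taken.
\item $\cat R\to\cat S$ sends $M$ to $T^*M\surj M$. It sends $(\rv Q,w,E)$ to $(\rv Q,w,\sigma)$, where $\sigma(q,m,\nu)\coloneqq\bigl(d_qE+(T_qw)^\top\nu,\;d_mE+(T_mw)^\top\nu\bigr)$.
\end{itemize}
Functoriality of $\cat R\to\cat S$ is the chain rule, together with additivity of the differential of summed potentials. This is the same computation that underlies the cotangent lens and $\cot$ from the introduction.

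\textbf{The two squares.} The left square should commute essentially by unwinding CLS's definition of $T^*$: it produces exactly the Hamiltonian-style vector field $\sharpR(dE+\text{pullback})$. $\mathrm{collapse}$ then forgets the open ODE to its interface polynomial, and that interface is $\sum_{m}\yon^{T^*_mM}$. For the right square, the image of $M$ is $\cotof{\binom1M}$ under lens internalization. Using $\ihom{1,I}\cong 1$ from \cref{cor.Theta}, this reduces to $\cotof M=\sum_m\yon^{T^*_mM}$, agreeing with the image of $T^*M\surj M$. On morphisms we must then unwind $\Phiconf$ on an arrangement with $\inpt f=\bang$ and $U=E$. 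The configuration integrator gives $q\mapsto q+\sharpR_Q(d_qU+\dots)$, and the backward pass is $d_mU+(T_mw)^\top\nu$. These match $\sigma$ above.

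\textbf{Wideness and the main obstacle.} Wideness of $\cat S\to\mathbf{COrg}$ (injective-on-objects/wide subcategory) will be argued as in the top row. I expect the main obstacle to be this morphism-level comparison in the right square. It requires tracing $\Phiconf$ through the writer-monad coKleisli structure and lens internalization, and confirming that the potential's differential splits correctly into parameter and input components with the correct signs. I would first verify it on generating morphisms (pure parameterized maps and pure potentials), then use the fact that both sides are functors out of $\cat R$.
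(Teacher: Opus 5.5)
Your architecture matches the paper's: a Para construction over submersion lenses, functors out of $\cat R$ and $\cat S$, and squares checked by unwinding. But as you have set it up, the right square fails because of a sign. The configuration integrator \emph{descends}. By \eqref{eqn.conf_integrator} and \eqref{eqn.state_update_gradient}, $\Phiconf$ sends $q$ to $q-\sharpR_q(\xi_Q)$, where $\xi_Q=\partial_qE+(\partial_qw)^\top\xi_N$; these are the only surviving terms of \eqref{eqn.covector_triple}, since $\inpt{f}=\bang$. Your $\sigma$ records $+\bigl(\partial_qE+(\partial_qw)^\top\nu\bigr)$, and your $\cat S\to\pc$ updates by $q+\sharpR_q(\xi)$. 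So the path through $\cat S$ gives $q+\sharpR_q(\xi_Q)$ while the path through $\sarr$ gives $q-\sharpR_q(\xi_Q)$. The error enters at your sentence ``the configuration integrator gives $q\mapsto q+\sharpR_Q(d_qU+\dots)$.''

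The minus sign has to be placed deliberately, and each placement costs something.
\begin{itemize}
\item If you negate only in the $\pc$-leg (literally using $\Psisem{\conf}$, i.e.\ $(-1)\circ\chi$), the right square commutes. But the $\pc$-leg is then no longer the Euler step of your $\mathbf{COrg}$-leg, so $\cat S$ stops recording common data read two ways.
\item If you negate in $\cat R\to\cat S$ (or in both lower legs), the lower row stays Euler-compatible. But then the left square holds only if $\cat R\to\mathbf{OpenErg}$ folds the sign into the CLS reaction. That is the convention the paper adopts and comments on at the end of its proof.
\end{itemize}

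Once the sign is fixed, your route is a legitimate variant of the paper's; the difference is where the response lives.
\begin{itemize}
\item You take $\cat S=\para{\rvect}{\SLens}$ with $\rv Q$ acting by $(T^*Q\surj Q)\otimes\blank$. Morphisms then carry covectors, and both lower functors apply $\sharpR$.
\item The paper takes $\cat S=\para{\vectiso}{\SLens}$ with $Q$ acting by $(TQ\surj Q)\otimes\blank$. Morphisms carry the velocity $u=-\sharpR_q(\xi_Q)$, and the sharp and sign are applied once, in $\cat R\to\cat S$.
\end{itemize}
Your choice makes $\cat R\to\cat S$ sharp-free. It also lets $\cat S\to\pc$ factor as $\para{\rvect}{P}$, with $P(E\surj B)\coloneqq\sum_{b:B}\yon^{E_b}$, followed by the configuration integrator; the needed naturality is the sharp-equivariance already proved in \cref{prop.rvect_polynomial}. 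The paper's choice makes $\cat S$ itself sharp-free and both lower functors canonical: $\mathbf{COrg}$ reads $u$ as a vector field, and $\pc$ takes the sharp-free Euler step $q\mapsto q+u$. So the sign convention is fixed in one place rather than two, which is exactly the bookkeeping that went wrong above.

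Minor points:
\begin{itemize}
\item Para composition in $\cat S$ involves no addition of covectors; it simply pairs the two parameter components. The sums you have in mind live in the formula for $\cat R\to\cat S$, whose functoriality you check correctly by the chain rule.
\item ``Wide'' means identity-on-objects, not injective-on-objects.
\item On objects, $\Phiconf\binom{1}{M}=\cotof{M}\otimes\ihom{\cotof{1},\yon}\cong\cotof{M}$, because $\cotof{1}\cong\yon$.
\item No reduction to generators is needed. \Cref{sec.configuration_dynamics} has already traced $\Phiconf$ through the coKleisli structure and lens internalization, so the right square is a direct read-off from \eqref{eqn.covector_triple}. That is how the paper argues.
\end{itemize}
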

The point is that, on the overlap $\cat R$, both semantics are built from the same smooth data: cotangent pullback along $w$, the differential of $E$, and the response $\sharpR_Q$. The category $\cat{S}$ records this common data, at which point it can be read in either of two ways: as a continuous-time morphism in $\mathbf{COrg}$ on the CLS side, or as a discrete coalgebra in $\pc$ on our side, where the tangent component $\dot q$ becomes the Euler step $q\mapsto q+\dot q$.

Another related framework is that of bond graphs, a graphical language for network modeling in which components are joined by edges, each carrying an effort and a flow whose product is power~\cite[\S1]{vanderschaft2014port}.%
\footnote{
We will talk about bonds in this paper; see
\cref{rmk.symmetric_bonds}. Our bonds are like chemical bonds---stateless interactions with binding energy---whereas a bond in the sense of bond graphs is a route along which power passes, which is just an identification that a wiring diagram can make.
}
 In \cite{coya2017bondgraphs}, Coya categorified bond graphs using cospans and corelations and assigned two functorial semantics in Lagrangian relations: one in terms of effort and flow and the other in terms of potential and current. Bond graph semantics are relational and choose no causal orientation, and arbitrary bond graphs need not admit an evolution at all. By contrast, every expression in $\sarr$ denotes a deterministic discrete-time system (\cref{thm.dynamics_functor}), and correspondingly we do not express idealized constraints between systems; e.g.\ a ``rigid'' coupling can only be specified as a stiff potential, rather than an equation.

\section{Plan of the paper}\label{sec.plan}

\Cref{ch.background} fixes notation and recalls the categorical background used later: polynomial functors, coalgebras, and the 2-category $\pc$; wiring-diagram operads; and the Para construction.

\Cref{ch.lenses} reviews lenses and the backward monad construction and builds the \emph{lens internalization} functor (\cref{cor.Theta}), which turns a lens interface into an object of the ambient category.

\Cref{ch.framework} is the abstract heart of the paper. From an \emph{adaptive-arrangement datum} $D$---a cartesian category of spaces, a parameter category, and a potentials monad---it builds a syntax operad $\arr_D$. An interface functor into $\poly$, a semantic effect carrying the potentials monad into $\poly$, and a potential algebra then give a \emph{polynomial interpretation} via lens internalization (\cref{cor.Theta}), and an \emph{integrator} turns this into an operad functor $\arr_D\to\pc$ (\cref{thm.dynamics_functor}).

\Cref{ch.smooth_rwd} instantiates the syntax: it introduces responsive vector spaces and assembles the smooth ingredients into the adaptive-arrangement datum $\Sm$, whose syntax operad is the operad $\sarr$ of smooth adaptive arrangements---$\rvect$-parameterized manifold lenses equipped with potentials (\cref{def.potlens}).

\Cref{ch.smooth_dynamics} instantiates the dynamics by applying the framework of \cref{ch.framework} to the datum $\Sm$: the cotangent polynomial interpretation and the two integrators---configuration and phase---together yield operad functors $\Phiconf\colon\sarr\to\pc$ and $\Phiphase\colon\sarr\to\pc$ (\cref{cor.functor}); we then unpack the differential geometry underlying $\Phiconf$ and $\Phiphase$ in full detail.

\Cref{ch.applications} works out the main examples: Newton's method, gradient descent with backpropagation, the discrete wave equation for a chain of harmonic particles, and the graph-Laplacian dynamics of harmonic particles on an arbitrary finite directed graph.

\section*{Acknowledgments}
Thanks to Jake Araujo-Simon, Lukas Barth, David Jaz Myers, Maximilien P\'eroux, Eduardo Sontag, Ryan Tavenner, and Scott Viteri for stimulating conversations. I also thank Claude and Codex for assistance in editing the document and for various helpful conversations.

\thanksAFOSR{FA9550-23-1-0376}. 

\chapter{Background}\label{ch.background}

This section collects much of the background required for this paper. We first fix our typographic conventions. Then we recall polynomial functors, polynomial coalgebras, and the 2-category $\pc$ they assemble into. Finally we review wiring-diagram operads and the Para construction, which together with the lenses of \cref{ch.lenses} provide the compositional ingredients used in the rest of the paper.

\section{Notational conventions}\label{sec.prelim}

We fix some typographic conventions. We denote generic categories using calligraphic script, $\cat{C},\cat{M},\cat{A}$, and specifically-named categories using boldface, $\defineTerm{smset},\poly,\mfd$. We write specifically-named functions or operations in roman, $\ev,\coev$, and named functors in sans-serif, $\Fun{T}$. Number systems use blackboard bold, $\nn,\rr$; named bicategorical structures are written with a blackboard-bold initial, such as $\pc$ and $\Para$. 

For convenience in the electronic version, many recurring pieces of notation are hyperlinked to where they are defined, e.g.\ $\pc$ is clickable.

We write $\blank$ for a ``blank'' placeholder argument in expressions, e.g.\ $\rr\times\blank\colon\mfd\to\mfd$ for the functor $M\mapsto\rr\times M$. We write $\defineTerm{bang}$ for a unique map when the domain and codomain make it clear, e.g.\ the unique map $\emptyset\to A$ or the unique map $X\to 1$. We write $\inc$ for any inclusion functor that is clear from its domain and codomain categories, and we sometimes elide them, e.g. sometimes writing $F$ for $F\circ\inc$. In any category, we may use an object name as the name of the identity morphism on that object, e.g.\ $X\otimes f$ in place of $\id_X\otimes f$. For example, $\cat{C}$ can stand as the name of the identity functor $\id_\cat{C}$. For composable maps $A\To{f}B\To{g}C$, we may write $f\then g$ or $g\circ f$ as their composite, based on convenience.

We reserve $\eta$ and $\mu$ for units and multiplications, and $\varepsilon$ and $\delta$ for comonoid counits and comultiplications. Since $\eta$ is also the usual notation for a learning rate, we write learning rates as $\defineTerm{etaLR}$.

We sometimes refer to a $(2,1)$-category simply as a category. More precisely, its homs are groupoids and composition is associative and unital up to coherent isomorphism.

By \emph{operad} we mean \emph{colored operad} unless otherwise stated. We sometimes elide the difference between monoidal categories $(\cat{C},I,\otimes)$ and their underlying operads $\cat{O}_{\cat{C}}$, which have $K$-ary morphisms $c_1\otimes\cdots\otimes c_K\to c'$. Lax monoidal functors between monoidal categories correspond exactly to functors between their underlying operads.%
\footnote{That is, the functor $\Cat{SMC}_{\tn{lax}}\to\Cat{Opd}$ is fully faithful.}

\section{Polynomial functors, coalgebras, and \texorpdfstring{$\termraw{pc}$}{PC}}\label{sec.poly_dynamic_bg}

This section reviews the polynomial-functor machinery underlying our model of dynamics: the symmetric monoidal closed category $\poly$ (\cref{sec.poly}), the store comonad (\cref{sec.substitution_store}), polynomial coalgebras as discrete-time dynamical systems (\cref{sec.coalgebras}), and the monoidal 2-category $\pc$ of polynomial coalgebras (\cref{sec.pc}).

\subsection{The symmetric monoidal closed category \texorpdfstring{$\termraw{poly}$}{Poly}}\label{sec.poly}

We briefly review the category $\defineTerm{poly}$ of polynomial functors; see~\cite{niu2025polynomial} for a comprehensive treatment.

For any set $A$, let $\yon^A\colon\smset\to\smset$ denote the functor $\smset(A,\blank)$. A \emph{polynomial functor} is a coproduct $p\coloneqq\sum_{i: I}\yon^{A_i}$ of such functors. We refer to $I$ as the set of \emph{positions} of $p$ and $A_i$ is the set of \emph{directions} at position $i$. For example $(\yon^2+\yon^1)\colon\smset\to\smset$ is the polynomial that sends a set $X$ to $(X\times X)+X$; it has two positions, with two and one direction, respectively.

The category $\poly$ has polynomial functors as objects and natural transformations as morphisms. It has all categorical sums and products, and these are given by the usual sum and product of polynomials, e.g.\ $(\yon^2+\yon)\times\yon\cong\yon^3+\yon^2$. Since $p(1)=\sum_{i:I}1=I$, we use the compact notation $p(1)$ for the position set and $p[i]\coloneqq A_i$ for the direction set at $i: p(1)$, so that
\begin{equation}\label{eqn.poly}
p=\sum_{i: p(1)}\yon^{p[i]}=\sum_{i: p(1)}\;\prod_{d: p[i]}\yon.
\end{equation}
We use juxtaposition for multiplication: $pp'\coloneqq p\times p'$, and we write $\defineTerm{yon}\coloneqq\yon^1$ and $1\coloneqq\yon^0$.

One can derive from the universal property of coproducts and the Yoneda lemma that the maps in $\poly$, i.e.\ the natural transformations $\varphi\colon p\to p'$, are given by:
\begin{equation}\label{eqn.poly_map}
\poly(p,p')=\prod_{i: p(1)}\;\sum_{j: p'(1)}\;\prod_{e: p'[j]}\; \sum_{d: p[i]}1.
\end{equation}
The outer $\prod_i\sum_j$ yields a ``forward on positions'' function $\varphi_1\colon p(1)\to p'(1)$, which is the component of $\varphi$ at $1:\smset$, and the inner $\prod_e\sum_d$ yields, for each $i:p(1)$, a ``backward on directions'' function from the directions of $p'$ at $\varphi_1(i)$ to the directions of $p$ at $i$; we denote it as follows:%
\footnote{We extend the directions bracket to maps: where $p[i]$ is the set of directions of $p$ at $i$, $\bk{\varphi}{i}$ is the backward map $\varphi$ induces there, so $[i]$ acts on a map as it does on a polynomial. This map is often denoted $\varphi^\sharp_i$; we avoid the $\sharp$ because $\sharpR$ is used throughout the paper in the context of responsive vector spaces.}
\[\bk{\varphi}{i}\colon p'[\varphi_1(i)]\to p[i].\]

For polynomials $p,p':\poly$, their \emph{Dirichlet product} is
\begin{equation}\label{eqn.dirichlet}
p\otimes p'\coloneqq\sum_{(i,j):p(1)\times p'(1)}\yon^{p[i]\times p'[j]}\;.
\end{equation}
Together with unit $\yon$, the Dirichlet product forms a symmetric monoidal structure, which is also closed: the \emph{internal hom} is
\begin{equation}\label{eqn.dirichlet_hom}
\ihom{p,p'}=\prod_{i: p(1)}\;\sum_{j: p'(1)}\;\prod_{e: p'[j]}\;\sum_{d: p[i]}\yon
\end{equation}
satisfying the hom-tensor adjunction
\begin{equation}\label{eqn.dirichlet_adjunction}
\poly(a\otimes p,\, p')\cong\poly(a,\,\ihom{p,p'}),
\end{equation}
naturally in $a$. Note that $\ihom{p,p'}(1)=\poly(p,p')$, as is immediate from comparing \eqref{eqn.dirichlet_hom} with \eqref{eqn.poly_map}. We can rewrite it in standard form as
\begin{equation*}
\ihom{p,p'}\cong\sum_{\varphi: \poly(p,p')}\yon^{\sum_{i: p(1)}p'[\varphi_1(i)]}.
\end{equation*}

\subsection{The store comonad}\label{sec.substitution_store}

\begin{definition}[The store comonad]\label{def.store}
For any set $S$, define $\defineTerm{Store}(S)\coloneqq S\yon^S$. It arises as the comonad associated to the adjunction $S\times\blank\;\dashv\;(\blank)^S$ on $\smset$, i.e.\ the comonad with underlying endofunctor $X\mapsto S\times X^S$.
\end{definition}

\begin{lemma}\label{lem.store_monoidal}
The assignment $S\mapsto\Store(S)$, sending a bijection $f\colon S\To{\cong}T$ to $f\yon^{f\inv}$, defines a strong symmetric monoidal functor
\[
\Store\colon(\defineTerm{smsetiso},1,\times)\to(\poly,\yon,\otimes).
\]
\end{lemma}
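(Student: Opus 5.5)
We prove the statement by direct computation with the formula \eqref{eqn.poly_map} for maps of polynomials. First we check that $f\yon^{f\inv}$ is a map $S\yon^S\to T\yon^T$. Its forward map on positions is $f\colon S\to T$. For each $s:S$, its backward map on directions is $f\inv\colon T\to S$; this is the required shape, since every position of $S\yon^S$ has direction set $S$.

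Functoriality is immediate. In $\poly$, maps compose by composing forward maps and composing backward maps in the reverse order. So $(g\yon^{g\inv})\circ(f\yon^{f\inv})$ has forward map $g\circ f$ and backward map $f\inv\circ g\inv=(g\circ f)\inv$, which is $(gf)\yon^{(gf)\inv}$. Identities are clearly preserved. We restrict to bijections precisely because the backward map must go from $T$ to $S$ and invert the forward map; Store is not functorial on arbitrary functions.

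Next come the coherence isomorphisms. By \eqref{eqn.dirichlet}, $\Store(S)\otimes\Store(T)=\sum_{(s,t):S\times T}\yon^{S\times T}=\Store(S\times T)$, so we take the structure map to be the identity. In the same way $\Store(1)=1\yon^1=\yon$, so the unit map is also the identity. Naturality in bijections $f,g$ reduces to checking $(f\yon^{f\inv})\otimes(g\yon^{g\inv})=(f\times g)\yon^{(f\times g)\inv}$. This holds because the Dirichlet product of maps acts as the product on forward maps and as the product on backward maps, and $(f\times g)\inv=f\inv\times g\inv$.

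It remains to check compatibility with the associators, unitors and symmetries. The associator and unitors of $(\poly,\yon,\otimes)$ are induced componentwise by the cartesian associator and unitors of $\smset$, both on positions and on directions. Applying Store to a cartesian structure bijection $\alpha$ gives the map with forward part $\alpha$ and backward part $\alpha\inv$, and this is exactly the corresponding Dirichlet structure map. The symmetry $\sigma\colon S\times T\to T\times S$ behaves the same way: $\Store(\sigma)$ is forward $\sigma$ and backward $\sigma\inv=\sigma$, which is the Dirichlet braiding. So every coherence diagram commutes, and since the structure maps are identities the functor is in fact strict monoidal.

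The only real subtlety is the bookkeeping of variance. We must confirm that the Dirichlet structure isomorphisms have backward components given by the inverse bijections, and not by the same bijections. Beyond that check, every step is a routine calculation.
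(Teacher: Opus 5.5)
Your proof is correct and takes the same route as the paper. Both compute $S\yon^S\otimes T\yon^T=(S\times T)\yon^{S\times T}$ and $1\yon^1=\yon$ from the Dirichlet product formula, and both check functoriality via $\Store(g\circ f)=(g\circ f)\yon^{f\inv\circ g\inv}=\Store(g)\circ\Store(f)$. You additionally verify naturality of the comparison maps and the associator, unitor and symmetry coherences, using the fact that backward components are the inverse bijections; the paper leaves those checks implicit.
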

\begin{proof}
The Dirichlet product formula \eqref{eqn.dirichlet} gives $1\yon^1=\yon$ and $S\yon^S\otimes T\yon^T=(S\times T)\yon^{S\times T}$. Functoriality on bijections is direct: $\Store(g\circ f)=(g\circ f)\yon^{f\inv\circ g\inv}=\Store(g)\circ\Store(f)$.
\end{proof}

\subsection{Polynomial coalgebras as dynamical systems}\label{sec.coalgebras}

For a polynomial $p$, a \emph{$p$-coalgebra} is a pair $(S,f)$ where $S:\smset$ is a set and $f\colon S\to p(S)$ is a function. We call elements of $S$ \emph{states}. Unpacking $p(S)=\sum_{i:p(1)} S^{p[i]}$, the function $f$ is equivalently a pair
\begin{equation}\label{eqn.coalg_unpack}
f_1\colon S\to p(1),\qquad
\bk{f}{\blank}\colon\prod_{s:S} S^{p[f_1(s)]},
\end{equation}
where $f_1(s)$ returns a position and $\bk{f}{s}\colon p[f_1(s)]\to S$ returns a state-update function---from directions to new states---for each $s:S$. Given a state $s:S$, we call $r_s\coloneqq f_1(s):p(1)$ its \emph{readout} position and $\bk{f}{s}\colon p[r_s]\to S$ its \emph{update} function, sending each direction $d:p[r_s]$ to a new state $s_d\coloneqq\bk{f}{s}(d)$.

A morphism $(S,f)\to(S',f')$ of $p$-coalgebras is a function $g\colon S\to S'$ such that $p(g)\circ f=f'\circ g$. We denote the category of $p$-coalgebras and their morphisms by $p\coalg$.

When $p=B\yon^A$ then a $p$-coalgebra $S\to B\times S^A$ is equivalently a readout function $S\to B$ and an update function $S\times A\to S$, which is independent of the readout. 
Such a coalgebra is called an $(A,B)$-\emph{Moore machine}, for which the canonical reference is~\cite{moore1956gedanken}. 

A $p$-coalgebra is thus a deterministic dynamical system whose interface is $p$: the state exposes an output in $p(1)$ and transitions in response to inputs in $p[f_1(s)]$, capturing Moore machines, discretized differential equations, Markov decision processes,%
\footnote{
 Writing $\defineTerm{rand}\coloneqq\sum_{N:\nn}\sum_{P:\fun{Dist}(N)}\yon^N$, whose positions are finitely-supported distributions and whose directions are outcomes, a partially observable Markov decision process with rewards in $R:\smset$ can be modeled as a coalgebra for the composite polynomial functor $O\yon^A\circ\rand\circ R\yon$, i.e.\ a map $S\to O\times\bigl(\sum_{(N,P)}(R\times S)^N\bigr)^A$: the state emits an observation in $O$, receives an action in $A$, emits a distribution, receives an outcome, and emits a reward in $R$.
See~\cite{Spivak2023lotteries} for its monad structure.
} and other mode-dependent dynamics in a single framework. It is a special case of the general theory of coalgebras for endofunctors; see~\cite{jacobs2017introduction} for a comprehensive treatment.

There is a connection between $p$-coalgebras and the $\Store$ comonad from \cref{def.store}.

\begin{proposition}[Coalgebras as polynomial maps]\label{prop.coalg_as_poly_map}
For $p:\poly$ and $S:\smsetiso$, there is a natural bijection
\begin{equation}\label{eqn.store_coalg}
\smset(S,\,p(S))\;\cong\;\poly(\Store(S),\,p).
\end{equation}
\end{proposition}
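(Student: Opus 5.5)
We prove the bijection \eqref{eqn.store_coalg} by unpacking both sides with the explicit formula \eqref{eqn.poly_map} and matching components. Naturality then has to be checked in both variables: in $p$ along arbitrary polynomial maps, and in $S$ along bijections, since $S$ ranges over $\smsetiso$.

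First, apply \eqref{eqn.poly_map} with domain $\Store(S)=S\yon^S$. Its positions are $S$ and its directions at each $s$ are $S$. This gives
\[
\poly(S\yon^S,p)\cong\prod_{s:S}\;\sum_{i:p(1)}\;\prod_{d:p[i]}\;\sum_{s':S}1\cong\prod_{s:S}\;\sum_{i:p(1)} S^{p[i]}.
\]
On the other side, $p(S)=\sum_{i:p(1)}S^{p[i]}$, so
\[
\smset(S,p(S))\cong\prod_{s:S}\sum_{i:p(1)}S^{p[i]}.
\]
The two sides are literally the same set. Concretely, a coalgebra $f$ with readout $f_1$ and updates $\bk{f}{s}$, as in \eqref{eqn.coalg_unpack}, corresponds to the polynomial map $\varphi$ with $\varphi_1=f_1$ on positions and $\bk{\varphi}{s}=\bk{f}{s}\colon p[f_1(s)]\to S$ on directions.

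Next, naturality in $p$. Let $\psi\colon p\to p'$. Postcomposing on the right-hand side sends $\varphi$ to $\psi\circ\varphi$. Its forward part is $s\mapsto\psi_1(f_1(s))$, and its backward part at $s$ is $\bk{\varphi}{s}\circ\bk{\psi}{f_1(s)}$. On the left-hand side, $\psi_S\colon p(S)\to p'(S)$ sends $(i,g\colon p[i]\to S)$ to $(\psi_1(i),\,g\circ\bk{\psi}{i})$. This is the standard formula for the component of a polynomial map, obtained from Yoneda. The two descriptions agree.

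Finally, naturality in $S$, which is the only step needing care, because $\Store$ is functorial only on bijections. By \cref{lem.store_monoidal}, a bijection $h\colon S\to T$ gives $\Store(h)=h\yon^{h\inv}$. It acts forward on positions by $h$ and backward on directions by $h\inv$. Precomposition with $\Store(h)\inv=\Store(h\inv)$ transports $\varphi\colon T\yon^T\to p$ to a map $S\yon^S\to p$. This map has forward part $\varphi_1\circ h$ and backward part $h\inv\circ\bk{\varphi}{h(s)}$.

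On the coalgebra side, the corresponding transport is conjugation $f\mapsto p(h\inv)\circ f\circ h$. Unwinding the components, it produces the same pair. The main obstacle is keeping the variances straight: transport along a bijection is covariant on one side and uses inverses on the other, and we must check the directions match. Once the components are written out, the verification is routine.
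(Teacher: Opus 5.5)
Your proof is correct and follows the paper's argument: both sides unpack via \eqref{eqn.poly_map} into the same data $(f_1,\bk{f}{\blank})$, naturality in $p$ is composition, and naturality in $S$ is a componentwise check against $\Store(h)=h\yon^{h\inv}$. One small slip: the map $S\yon^S\to p$ you describe (forward part $\varphi_1\circ h$, backward part $h\inv\circ\bk{\varphi}{h(s)}$) is $\varphi\circ\Store(h)$. It is not a precomposition with $\Store(h)\inv$, which would not type-check; your component formulas are nonetheless the right ones.
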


\begin{proof}
As in \eqref{eqn.coalg_unpack}, a coalgebra $f\colon S\to p(S)$ unpacks as a readout $f_1\colon S\to p(1)$ and an update $\bk{f}{\blank}\colon\prod_{s:S}S^{p[f_1(s)]}$. A polynomial map $\phi\colon S\yon^S\to p$ unpacks as a pair $\phi_1\colon S\to p(1)$ and $\bk{\phi}{\blank}\colon\prod_{s:S}S^{p[\phi_1(s)]}$ with the same type signature by \eqref{eqn.poly_map}, so they are in bijection.

Naturality in $p:\poly$ is just composition. For naturality in $S$, a bijection $g\colon S\To{\cong}T$ acts on the left side of \eqref{eqn.store_coalg} via post-composition with $p(g)$ and pre-composition with $g\inv$, and on the right side of \eqref{eqn.store_coalg} via $\Store(g)=g\yon^{g\inv}$ (\cref{lem.store_monoidal}); we check
\[
\begin{tikzcd}
	S\ar[r,"f"]\ar[d, "g"']&p(S)\ar[d, "p(g)"]\\
	T\ar[r,"f'"']&p(T)
\end{tikzcd}
\quad\leftrightsquigarrow^?\quad
\begin{tikzcd}
	S\yon^S\ar[r,"\phi"]\ar[d, "g\yon^{g\inv}"']&
	p\ar[d, equal]\\
	T\yon^T\ar[r,"\phi'"']&
	p
\end{tikzcd}
\]
The first commutes iff $f'_1\circ g=f_1$ and $g\circ\bk{f}{s}=\bk{f'}{g(s)}$ for all $s:S$; the second iff $\phi'_1\circ g=\phi_1$ and $\bk{\phi}{s}=g\inv\circ\bk{\phi'}{g(s)}$. These coincide under the established bijection $\phi_\blank=f_\blank$.\qedhere
\end{proof}

\begin{example}[$\yon$-coalgebras and trajectories]\label{ex.y_coalgebra_trajectory}
The simplest case is $p\coloneqq\yon$, in which case its positions $p(1)=1$ and directions $p[*]=1$ are both singletons, and $\yon(S)=S$, so a $\yon$-coalgebra is just a function $f\colon S\to S$. The readout is trivial and the update takes no input direction: at each tick, the state $s$ transitions to $f(s)$. Given an initial state $s_0\in S$, iteration produces a function
\[
s\colon\nn\to S,\qquad s(0)\coloneqq s_0,\qquad s(t+1)\coloneqq f(s(t)).
\]
We call this the \emph{trajectory} of the $\yon$-coalgebra from initial state $s_0$, and use this notation freely below.
\end{example}

\subsection{The monoidal 2-category $\pc$ of polynomial coalgebras}\label{sec.pc}

The semantics category of this paper, called $\mathbb{O}\Cat{rg}$ in \cite{spivak2021learners,shapiro2022dynamic}, arises naturally from a simple categorical construction: \emph{change of base of enrichment} along a lax monoidal functor.\footnote{I thank Maximilien P\'eroux for pointing out that this construction is a change of base of enrichment.}

Recall that for any monoidal category $(\cat{V},I,\otimes)$, a \emph{$\cat{V}$-category} $\cat{C}$ has objects as usual, but instead of having sets of morphisms, it has $\cat{V}$-objects: $\cat{C}(c_1,c_2)\in\ob\cat{V}$. The identities and compositions are given by maps 
\[
I\to\cat{C}(c,c)
\qqand
\cat{C}(c_1,c_2)\otimes\cat{C}(c_2,c_3)\too\cat{C}(c_1,c_3)
\]
in $\cat{V}$, which satisfy unitality and associativity equations in $\cat{V}$. For example, any monoidal closed category---having an internal hom $\ihom{\blank,\blank}$---is self-enriched: $\cat{C}(c,c')\coloneqq\ihom{c,c'}\in\ob\cat{C}$ and there are natural maps $I\to\ihom{c,c}$ and $\ihom{c_1,c_2}\otimes\ihom{c_2,c_3}\to\ihom{c_1,c_3}$ that satisfy unitality and associativity.

The category of $\cat{V}$-categories is denoted $\enrcat{\cat V}$. For any lax monoidal functor $F\colon\cat V\to\cat W$, there is an induced functor $F_*\colon\enrcat{\cat V}\to\enrcat{\cat W}$ that carries each $\cat V$-enriched category to a $\cat W$-enriched category with the same objects, applying $F$ to the hom-objects and transporting the identities and composites along the lax structure maps of $F$. When $\cat{V}$ and $\cat{W}$ are symmetric monoidal and $F$ is symmetric lax monoidal, $F_*$ carries monoidal (resp.\ monoidal closed) $\cat{V}$-categories to monoidal (resp.\ monoidal closed) $\cat{W}$-categories.

For a polynomial $q$, let $q\coalg$ denote the category of $q$-coalgebras $(S,f\colon S\to q(S))$ and their morphisms; the assignment $q\mapsto q\coalg$ is a lax symmetric monoidal functor, which we write
\[
\Coalg\colon(\poly,\yon,\otimes)\to(\defineTerm{smcat},1,\times).
\]
Indeed, given $S\yon^S\to p$ and $T\yon^T\to q$, one obtains $ST\yon^{ST}\cong S\yon^S\otimes T\yon^T\to p\otimes q$. 

Since $(\poly,\yon,\otimes,\ihom{\blank,\blank})$ is monoidal closed (\cref{sec.poly}), it is self-enriched and since $\Coalg$ is lax monoidal, we can form the $\smcat$-enriched category
\[\defineTerm{pc}\coloneqq\Coalg_*\poly.\]
Its objects are polynomials and its hom-objects are
\[
\pc(p,p')\coloneqq\ihom{p,p'}\coalg.
\]
The identity $p\to p$ in $\pc$ is the one-state coalgebra $1\To{\id_p}\poly(p,p)=\ihom{p,p}(1)$.

This 2-category has been discussed under the name $\mathbb{O}\Cat{rg}\cong\pc$ previously~\cite{spivak2021learners,smithe2022open,shapiro2022dynamic}.

\begin{proposition}\label{prop.pc}
$\pc$ is symmetric monoidal closed: the monoidal product $(\yon,\otimes)$ is induced by the Dirichlet product on polynomials, and the internal hom is $\ihom{p,p'}$.
\end{proposition}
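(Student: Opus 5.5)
The plan is to derive the statement from the general change-of-base fact already recalled before the proposition: since $\Coalg\colon(\poly,\yon,\otimes)\to(\smcat,1,\times)$ is lax symmetric monoidal, the induced $\Coalg_*$ carries symmetric monoidal closed $\poly$-enriched categories to symmetric monoidal closed $\smcat$-enriched ones. So the work has three parts. First, regard $\poly$ as a symmetric monoidal closed $\poly$-category, i.e.\ self-enriched with enriched tensor and internal hom. Second, verify that $\Coalg$ is lax symmetric monoidal. Third, read off what $\Coalg_*$ does to each piece of structure.

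For the second part, I will use \cref{prop.coalg_as_poly_map} to identify $p\coalg$ objects with maps $\Store(S)\to p$. The unit map $1\to\yon\coalg$ picks the one-state coalgebra $\Store(1)=\yon\to\yon$. The laxator $p\coalg\times q\coalg\to(p\otimes q)\coalg$ sends $(\phi,\psi)$ to
\[
\Store(S\times T)\cong\Store(S)\otimes\Store(T)\To{\phi\otimes\psi}p\otimes q,
\]
using \cref{lem.store_monoidal}. On morphisms of coalgebras it sends $(g,h)$ to $g\times h$, and I will check that this is a coalgebra morphism by naturality of $\otimes$ on positions and directions. Associativity, unitality and symmetry of the laxator then follow from the strong symmetric monoidal structure of $\Store$ together with the coherence of $\otimes$ on $\poly$.

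For the third part:
\begin{itemize}
\item The monoidal product on objects is $p\otimes p'$.
\item On hom-categories, tensoring $\ihom{p,q}\coalg\times\ihom{p',q'}\coalg\to\ihom{p\otimes p',q\otimes q'}\coalg$ is $\Coalg$ applied to the enriched tensor $\ihom{p,q}\otimes\ihom{p',q'}\to\ihom{p\otimes p',q\otimes q'}$, precomposed with the laxator.
\item The associator, unitor and symmetry are the images of the identity-state coalgebras on the corresponding isomorphisms of $\poly$.
\item For closedness, the enriched adjunction in $\poly$ is an isomorphism of hom-objects $\ihom{a\otimes p,p'}\cong\ihom{a,\ihom{p,p'}}$. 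This follows from \eqref{eqn.dirichlet_adjunction} by Yoneda, and it is natural. Applying the functor $\Coalg$ gives an isomorphism of categories $\pc(a\otimes p,p')\cong\pc(a,\ihom{p,p'})$, natural in all variables, which is exactly closedness with internal hom $\ihom{p,p'}$.
\end{itemize}

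I expect the main obstacle to be the bookkeeping around strictness. $\pc$ is really a $(2,1)$- or bicategorical structure: composition in $\pc$ uses the laxator, whose product-of-states $S\times T$ is associative only up to canonical bijection. To handle this I would work in $\smcat$ as a $2$-category, or note that $\Coalg$ is only pseudo-functorial up to these bijections. I would then argue that all coherence isomorphisms are supplied by the canonical isomorphisms $(S\times T)\times U\cong S\times(T\times U)$, which are coalgebra isomorphisms. The coherence axioms then hold because they hold in $\smset$ and $\poly$. The remaining check, which I expect to be routine, is that the enriched adjunction isomorphism in $\poly$ is compatible with composition. That compatibility is what makes its image under $\Coalg$ natural as a $2$-functorial isomorphism rather than just objectwise.
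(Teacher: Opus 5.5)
Your proposal takes the same route as the paper, which just invokes the change-of-base principle for the lax symmetric monoidal functor $\Coalg$ applied to the self-enriched symmetric monoidal closed $\poly$. You additionally spell out the laxator via $\Store$ and the transport of the isomorphism $\ihom{a\otimes p,p'}\cong\ihom{a,\ihom{p,p'}}$ under $\Coalg$; your observation that this laxator is associative and unital only up to the canonical bijections $(S\times T)\times U\cong S\times(T\times U)$, so that $\pc$ is really bicategorical, is a correct refinement that the paper's one-line proof leaves implicit.
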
 
\begin{proof}
Since $(\poly,\yon,\otimes,\ihom{\blank,\blank})$ is symmetric monoidal closed and $\Coalg$ is lax symmetric monoidal, the result follows from the change-of-base discussion at the top of \cref{sec.pc}.
\end{proof}

We often elide the difference between $\pc$ as a monoidal 2-category and $\pc$ as a category-enriched operad, with maps
\[
\pc(p_1,\ldots,p_K;p')\coloneqq\ihom{p_1\otimes\cdots\otimes p_K,p'}\coalg.
\]
The operadic perspective is closer to the intended semantics---internal interfaces $p_1,\ldots,p_K$ within a larger external interface $p'$---than the monoidal perspective is, but the operadic notation $\pc(p_1,\ldots,p_K;p')$ is heavier than the monoidal notation $\pc(p,p')$ is.

To be explicit, a morphism $p\to p'$ in $\pc$ is an $\ihom{p,p'}$-coalgebra $(S,\,f\colon S\to\ihom{p,p'}(S))$. We refer to $S$ as the \emph{state set}. We regard $(S,\,f)$ as an interaction, regarding $p$ as an \emph{internal} interface and $p'$ as an \emph{external} one: at each state $s\in S$ the coalgebra converts each internal output $i\in p(1)$ into an external output $j\in p'(1)$, then converts each external input $e\in p'[j]$ into an internal input $d\in p[i]$ together with a new state $s'\in S$. This can be read off from the formula
\[
\smset(S,\ihom{p,p'}(S))\cong\prod_{s:S}\prod_{i: p(1)}\;\sum_{j: p'(1)}\;\prod_{e: p'[j]}\;\sum_{d: p[i]}\sum_{s':S}1.
\]
Since $\ihom{\yon,p}\cong p$, a morphism $\yon\to p$ in $\pc$ is precisely a $p$-coalgebra $S\to p(S)$.

Another instance of the construction $F_*\poly$ will be useful. Recall that one can always replace a category with its \emph{core groupoid}: the same objects but only the isomorphisms. The functor $\Fun{core}\colon\smcat\to\Cat{Gpd}$ preserves finite products, so the composite $\Fun{core}\circ\Coalg\colon\poly\to\Cat{Gpd}$ is lax monoidal. We can thus define $\defineTerm{pciso}\coloneqq(\Fun{core}\circ\Coalg)_*\poly$,
\[
\pciso(p,p')\coloneqq\Fun{core}\bigl(\ihom{p,p'}\coalg\bigr)\cong\sum_{S:\smsetiso}\smset(S,\ihom{p,p'}(S)).
\]
It is the wide sub-$(2,1)$-category of $\pc$ with the same objects $p:\poly$, same 1-cells  $S\To{f}\ihom{p,p'}(S)$, but only the \emph{invertible} 2-cells $(S,f)\cong (S',f')$ between them. The inclusion $\Fun{core}\Rightarrow\id_{\smcat}$ is monoidal, and base-changing it gives the inclusion $\pciso\inj\pc$.

\section{Wiring-diagram operads}\label{sec.wd_operads}

Categories, monoidal categories, traced categories, operads, etc.\ can be understood in terms of the sort of composition syntax they interpret. For example, each of the following diagrams is interpretable in the sort of structure named beneath it:
\begin{equation}\label{eqn.wd_examples}
\begin{tikzpicture}[baseline=(Catg)]
\begin{scope}[font=\footnotesize, text height=1.5ex, text depth=.5ex]
  \begin{scope}[oriented WD, bb port sep=1, bb port length=0, bb min width=.4cm, bby=.2cm, inner xsep=.2cm, x=.5cm, y=.3cm]
  	\node[bb={1}{1}] (Catf) {$f_1$};
  	\node[bb={1}{1}, right=1 of Catf] (Catg) {$f_2$};
  	\node[bb={0}{0}, inner ysep=8pt, fit=(Catf) (Catg)] (Cat) {};
  	\node[coordinate] at (Cat.west|-Catf_in1) (Cat_in1) {};
  	\node[coordinate] at (Cat.east|-Catg_out1) (Cat_out1) {};
  	\draw (Cat_in1) -- (Catf_in1);
  	\draw (Catf_out1) -- (Catg_in1);
  	\draw (Catg_out1) -- (Cat_out1);
  	\draw[label]
  		node[left=3pt of Cat_in1] {$A$}
  		node at ($(Catf_out1)!.5!(Catg_in1)+(0,.8)$) {$B$}
  		node[right=3pt of Cat_out1] {$C$}
  	;
  	\node[bb={1}{2}, above right=-1 and 6.1 of Catf] (Monf) {$f_1$};
  	\node[bb={2}{1}, below right=-.5 and 1.3 of Monf] (Mong) {$f_2$};
  	\node[bb={0}{0}, inner ysep=8pt, fit=(Monf) (Mong)] (Mon) {};
  	\node[coordinate] at (Mon.west|-Monf_in1) (Mon_in1) {};
  	\node[coordinate] at (Mon.west|-Mong_in2) (Mon_in2) {};
  	\node[coordinate] at (Mon.east|-Monf_out1) (Mon_out1) {};
  	\node[coordinate] at (Mon.east|-Mong_out1) (Mon_out2) {};
  	\draw  (Mon_in1) -- (Monf_in1);
  	\draw (Monf_out1) -- (Mon_out1);
  	\draw (Monf_out2) to (Mong_in1);
  	\draw (Mon_in2) -- (Mong_in2);
  	\draw (Mong_out1) -- (Mon_out2);
  	\draw[label]
  		node[left=3pt of Mon_in1] {$A$}
  		node[left=3pt of Mon_in2] {$B$}
  		node[right=3pt of Mon_out1] {$C$}
  		node at ($(Monf_out2)!.5!(Mong_in1)+(.2,.4)$) {$D$}
  		node[right=3pt of Mon_out2] {$E$}
  	;
  	\node[bb={2}{1}, below right= -1 and 6.5 of Monf] (Trf) {$f_1$};
  	\node[bb={2}{2}, below right=-1 and 1 of Trf] (Trg) {$f_2$};
  	\node[bb={0}{0}, fit={($(Trf.north west)+(-.5,2)$) ($(Trg.south east)+(.5,0)$)}] (Tr) {};
  	\node[coordinate] at (Tr.west|-Trf_in2) (Tr_in1) {};
  	\node[coordinate] at (Tr.west|-Trg_in2) (Tr_in2) {};
  	\node[coordinate] at (Tr.east|-Trf_out1) (Tr_out1) {};
  	\node[coordinate] at (Tr.east|-Trg_out2) (Tr_out2) {};
  	\draw (Tr_in1) -- (Trf_in2);
  	\draw (Trf_out1) to (Trg_in1);
  	\draw (Tr_in2) -- (Trg_in2);
  	\draw (Trg_out2) -- (Tr_out2);
  	\draw let \p1=(Trg.east), \p2=(Trf.north west), \n1=\bbportlen, \n2=\bby in
  		(Trg_out1) to[in=0] (\x1+\n1,\y2+\n2) -- (\x2-\n1,\y2+\n2) to[out=180] (Trf_in1);
  	\draw[label]
  		node[left=3pt of Tr_in1] {$A$}
  		node[left=3pt of Tr_in2] {$B$}
  		node at ($(Trf_out1)!.4!(Trg_in1)+(0,.8)$) {$D$}
  		node[right=3pt of Tr_out2] {$E$}
  		node at ($(Trf.north)+(0,1.3)$) {$C$}
  	;
  \end{scope}
  \begin{scope}[circuit logic US, thick, every to/.style={out=0,in=180}]
  	\node[and gate, draw, above right=-.2 and 3 of Trf] (Opdf) {$f_1$};
  	\node[and gate, draw, below right=-.1 and 0.5 of Opdf] (Opdg) {$f_2$};
  	\node[and gate, inner ysep=-4pt, draw, fit={($(Opdf.north west)+(0,-.2)$) ($(Opdg.south east)+(0,.2)$)}] (Opd) {};
  	\node[coordinate] at (Opd.input 1|-Opdf.west) (Opd_in1) {};
  	\node[coordinate] at (Opd.input 1|-Opdg.input 2) (Opd_in2) {};
  	\node[coordinate] at (Opd.output) (Opd_out) {};
  	\draw (Opd_in1) to (Opdf.west);
  	\draw (Opd_in2) to (Opdg.input 2);
  	\draw (Opdf.output) to (Opdg.input 1);
  	\draw (Opdg.output) to (Opd_out);
  	\draw[label]
  		node[left=-1pt of Opd_in1] {$A$}
  		node[left=-1pt of Opd_in2] {$B$}
  		node at ($(Opdf.output)!.5!(Opdg.input 1)+(.1,.2)$) {$C$}
  		node[right=-1pt of Opd_out] {$D$}
  	;
  \end{scope}
	\node[below=.3 of Cat.south] (Cat name) {category};
	\node[text width=1.5cm, align=center] at (Cat name-|Mon) {monoidal\\category};
	\node[text width=2.5cm, align=center] at (Cat name-|Tr) {traced monoidal\\category};
	\node at (Cat name-|Opd) {operad};
\end{scope}
\end{tikzpicture}
\end{equation}
\smallskip

\noindent In each case there is an associated operad $\cat{W}$ whose objects are the legal interfaces (boxes-with-object-labeled-ports) and whose morphisms $\varphi\colon b_1,\ldots,b_K\to c$ are the legal arrangements of internal boxes in an external box. That is, for a set $O$ of objects, there are operads $\cat{W}_{O\tn{-Cat}}$, $\cat{W}_{O\tn{-MnCat}}$, $\cat{W}_{O\tn{-TrCat}}$, and $\cat{W}_{O\tn{-Opd}}$ each with a 2-ary morphism that can be drawn as in \eqref{eqn.wd_examples}, because each of those wiring diagrams has two internal boxes.%
\footnote{
There is a potentially confusing level-shift in that there is an operad $\cat{W}_{O\tn{-Opd}}$ for which algebras are operads with object set $O$. In case it's helpful, the object set for $\cat{W}_{O\tn{-Opd}}$ would be $\List(O)\times O$, because every ``box'' has a list of inputs and an output.
}

Category-like structures that can interpret the compositionality syntax from $\cat{W}$ are precisely the $\cat{W}$-algebras, i.e.\ lax monoidal functors
\begin{equation}\label{eqn.W_alg}
\cat{C}\colon\cat{W}\to\smset.
\end{equation}
So categories are lax monoidal functors $\cat{W}_{O\tn{-Cat}}\to\smset$ for various $O$, monoidal categories are lax monoidal functors $\cat{W}_{O\tn{-MnCat}}\to\smset$ for various $O$, etc. Indeed, such a functor $\cat{C}$ assigns to each box $b$ the set $\cat{C}(b)$ of ``fillers'' and to each arrangement $\varphi$ the function $\cat{C}(\varphi)\colon \cat{C}(b_1)\times\cdots\times \cat{C}(b_K)\to \cat{C}(c)$ sending fillers for internal boxes to a filler of the external box.%
\footnote{
By a $\cat{W}$-algebra, we mean an operad functor $\cat{W}\to\smset$, where $\smset$ is the operad (multicategory) underlying the monoidal category $(\smset,1,\times)$. In general, the \emph{underlying operad} of a symmetric monoidal category $(\cat{C},I,\otimes)$ has the same objects as $\cat{C}$ and morphisms $\cat{C}(a_1,\ldots,a_n;b)\coloneqq\cat{C}(a_1\otimes\cdots\otimes a_n,b)$, with operadic composition by tensoring and composing in $\cat{C}$.
}

We'll make this explicit for categories.
\begin{example}[$O$-categories as $\cat{W}_{O\tn{-Cat}}$-algebras]
For a set $O$, let $\ob\cat{W}_{O\tn{-Cat}}\coloneqq O\times O$ and
\[\cat{W}_{O\tn{-Cat}}((o_0',o_1),(o_1',o_2),\ldots(o_{K-1}',o_K);(o_0,o_K'))\coloneqq
\begin{cases}
\{1\}&\tn{ if }o_k=o_k'\quad \forall\; 0\leq k\leq K\\
\varnothing&\tn{ otherwise}
\end{cases}
\]
Thus the diagram
\begin{equation}\label{eqn.cat_composite}
\begin{tikzpicture}[oriented WD, bb port sep=1, bb port length=0, bb min width=.2cm, bby=.2cm, inner xsep=.2cm, x=.2cm, y=.3cm, baseline=(Catf)]
\begin{scope}[font=\footnotesize, text height=1.5ex, text depth=.5ex]
  \node[bb={1}{1}] (Catf) {$f_1$};
  \node[bb={1}{1}, right=1 of Catf] (Catg) {$f_2$};
  \node[bb={0}{0}, fit=(Catf) (Catg)] (Cat) {};
  \node[coordinate] at (Cat.west|-Catf_in1) (Cat_in1) {};
  \node[coordinate] at (Cat.east|-Catg_out1) (Cat_out1) {};
  \draw (Cat_in1) -- (Catf_in1);
  \draw (Catf_out1) -- (Catg_in1);
  \draw (Catg_out1) -- (Cat_out1);
  \draw[label]
    node[left=3pt of Cat_in1] {$A_0$}
    node[above left =2pt of Catf_in1] {$A'_0$}
    node[above right=2pt of Catf_out1] {$A_1$}
    node[above left=2pt of Catg_in1] {$A'_1$}
    node[above right=2pt of Catg_out1] {$A_2$}
    node[right=3pt of Cat_out1] {$A'_2$}
  ;
\end{scope}
\end{tikzpicture}
\end{equation}
	represents a map $\fun{compose}\colon (A'_0,A_1),(A'_1,A_2)\to(A_0,A'_2)$ in $\cat{W}_{O\tn{-Cat}}$, in the case $A_0=A_0'$, $A_1=A_1'$ and $A_2=A_2'$, i.e.\ in the case that each wire in the diagram has a consistent type
\[
\fun{compose}\coloneqq
	\begin{tikzpicture}[oriented WD, bb port sep=1, bb port length=0, bb min width=.2cm, bby=.2cm, inner xsep=.2cm, x=.2cm, y=.3cm, baseline=(Catf)]
\begin{scope}[font=\footnotesize, text height=1.5ex, text depth=.5ex]
  \node[bb={1}{1}] (Catf) {$f_1$};
  \node[bb={1}{1}, right=1 of Catf] (Catg) {$f_2$};
  \node[bb={0}{0}, fit=(Catf) (Catg)] (Cat) {};
  \node[coordinate] at (Cat.west|-Catf_in1) (Cat_in1) {};
  \node[coordinate] at (Cat.east|-Catg_out1) (Cat_out1) {};
  \draw (Cat_in1) -- (Catf_in1) node[label, midway, above] {$A_0$};
  \draw (Catf_out1) -- (Catg_in1) node[label, midway, above] {$A_1$};
  \draw (Catg_out1) -- (Cat_out1) node[label, midway, above] {$A_2$};
\end{scope}
\end{tikzpicture}
\hphantom{\fun{compose}\coloneqq}
\]

If $\cat{C}\colon\cat{W}_{O\tn{-Cat}}\to\smset$ is an algebra, then \eqref{eqn.cat_composite} shows $f_1:\cat{C}(A_0,A_1)$, $f_2:\cat{C}(A_1,A_2)$ and $\cat{C}(\fun{compose})(f_1,f_2):\cat{C}(A_0,A_2)$. In other words the $2$-ary morphism $\varphi$ represents composition 
\[
\cat{C}(\fun{compose})\colon\cat{C}(A_0,A_1)\times\cat{C}(A_1,A_2)\to\cat{C}(A_0,A_2)
\]
in a category. The category of small categories with object set $O$ is equivalent to the category of $\cat{W}_{O\tn{-Cat}}$-algebras.%
\footnote{It is not hard to let the objects change as well. For any function $O\to O'$, there is an induced operad functor $\cat{W}_{O\tn{-Cat}}\to\cat{W}_{O'\tn{-Cat}}$ and hence a functor $\cat{W}_{O'\tn{-Cat}}\alg\to\cat{W}_{O\tn{-Cat}}\alg$. The Grothendieck construction of this data is equivalent to the category of small categories. The same idea applies to all of the wiring diagram operads.}
\end{example}

Each such $\cat{W}$ is a \emph{composition syntax}: its morphisms specify legal arrangements of boxes---how systems compose---with no commitment about what fills them. A $\cat{W}$-algebra is a semantics. A variety of examples of these wiring-diagram operads, whose algebras are structured categories, can be found in the literature. \cite{Spivak.Schultz.Rupel:2016a} treats the traced and compact categories, showing in particular that the operad $\cat{W}_{\tn{TrCat}}$ for traced categories is the operad $1\tn{-}\Cat{Cob}$ of oriented 1-dimensional cobordisms. \cite{fong2019hypergraph} treats hypergraph categories, whose associated operad $\cat{W}_{\tn{HypCat}}$ is given by cospans of finite sets. The symmetric monoidal case~\cite{patterson2021wiring} is in some sense the most basic of the three, but its operad $\cat{W}_{\tn{MnCat}}$ is less easy to describe directly.

An advantage of the operadic perspective is that it allows us to more uniformly treat various sorts of category, including ones that may not have an existing name. For example, 
consider the notion of ``traced monoidal categories without identity''; one might balk at this notion, worrying that it might not be coherent for one reason or another. But as an operad it is perfectly sensible and well-defined because wiring diagrams of this sort---disallowing wires that pass straight through the diagram---can be nested.

Another advantage is that wiring diagram algebras also easily represent $\cat{V}$-enriched categories for monoidal categories $\cat{V}$, namely by using algebras $\cat{W}\to\cat{V}$ valued in $\cat{V}$ in place of $\smset$ as in \eqref{eqn.W_alg}. For example, a $\vect$-enriched category with object set $O$ is an operad functor $\cat{W}_{O\tn{-Cat}}\to\vect$.

For our purposes, the goal is to build \emph{dynamic} versions of the structures in \eqref{eqn.wd_examples}, in which the ``fillers'' of a box are ``morphisms that change in time'', i.e.\ coalgebras. In \cite{shapiro2022dynamic}, a dynamic category is defined to be a category enriched in $\pc$; similarly for dynamic monoidal categories or dynamic operads. Given the ideas above, these are just operad functors
\[
\cat{W}_{O\tn{-Cat}}\to\pc
\qquad
\cat{W}_{O\tn{-MnCat}}\to\pc
\qquad
\cat{W}_{O\tn{-Opd}}\to\pc
\] 
for any $O:\smset$. We continue with this approach, taking values in the 2-category $\pc$ of \cref{sec.pc}. In \cref{sec.potentialized_lenses} we will choose the particular composition syntax $\cat{W}$ used in this paper.

\section{The Para construction}\label{sec.para_general}

Recall that an \emph{action} of a symmetric monoidal category $(\cat A,I,\otimes)$ on a category $\cat D$ is a functor $(\cdot)\colon\cat A\times\cat D\to\cat D$ equipped with coherent natural isomorphisms $I\cdot d\cong d$ and $(a\otimes b)\cdot d\cong a\cdot(b\cdot d)$; such data is also called an \emph{$\cat A$-actegory}~\cite{capucci2024actegories}. When $\cat D$ too is symmetric monoidal, the action functor $(\cdot)$ is strong monoidal, and the action coherences are monoidal natural isomorphisms, $\cat D$ is called a \emph{monoidal $\cat A$-actegory}~\cite[Def.~5.1.1]{capucci2024actegories}.

\begin{lemma}[Strong monoidal functors give actions]\label{lem.smc_action}
Any strong monoidal functor $J\colon\cat A\to\cat D$ with $\cat D$ symmetric monoidal induces an action of $\cat A$ on $\cat D$, making $\cat D$ a monoidal $\cat A$-actegory, by left multiplication,
\[
a\cdot d\coloneqq J(a)\otimes d.
\]
\end{lemma}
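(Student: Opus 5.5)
We first build the action and then upgrade it to a monoidal actegory. The action functor is the composite
\[
\cat A\times\cat D\To{J\times\cat D}\cat D\times\cat D\To{\otimes}\cat D,
\]
which is a functor because it is a composite of functors.

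\textbf{The action coherences.} The unitor is
\[
I\cdot d=J(I)\otimes d\cong I_{\cat D}\otimes d\cong d,
\]
using the unit structure map $J(I)\cong I_{\cat D}$ of $J$ and then the left unitor of $\cat D$. The multiplicator is
\[
(a\otimes b)\cdot d=J(a\otimes b)\otimes d\cong (J(a)\otimes J(b))\otimes d\cong J(a)\otimes(J(b)\otimes d)=a\cdot(b\cdot d),
\]
using the tensor structure map of $J$ and then the associator of $\cat D$. Both are natural, since they are composites of natural isomorphisms.

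The actegory axioms (the pentagon-type and triangle-type conditions) should follow by pasting. The key inputs are the coherence axioms of a strong monoidal functor, which say that $J$'s structure maps are compatible with the associators and unitors of $\cat A$ and $\cat D$. Combined with Mac~Lane coherence in $\cat D$, each axiom reduces to a diagram built from $\cat D$'s structure maps and $J$'s structure maps, and these diagrams commute by those coherence results. Alternatively, one can observe that $\cat D$ acts on itself by $\otimes$ (the regular action), and restricting an action along a strong monoidal functor is again an action; this is the cleaner route.

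\textbf{Monoidal structure on the action.} We must make $(a,d)\mapsto J(a)\otimes d$ strong monoidal from $\cat A\times\cat D$ to $\cat D$. The comparison map is the interchange
\[
(J(a)\otimes d)\otimes(J(a')\otimes d')\cong (J(a)\otimes J(a'))\otimes(d\otimes d')\cong J(a\otimes a')\otimes(d\otimes d'),
\]
built from the symmetry and associators of $\cat D$ and the structure map of $J$. The unit comparison is $J(I)\otimes I\cong I$.

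Two things then need checking:
\begin{itemize}
\item the comparison satisfies the strong monoidal functor axioms;
\item the unitor and multiplicator from the first step are monoidal natural transformations.
\end{itemize}
For the first, the cleanest argument is that $\otimes\colon\cat D\times\cat D\to\cat D$ is strong monoidal precisely because $\cat D$ is \emph{symmetric}, and $J\times\cat D$ is strong monoidal. The composite of strong monoidal functors is strong monoidal, so no separate verification is needed.

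\textbf{Main obstacle.} The real work is showing that the unitor and multiplicator are monoidal natural transformations. Each condition is a diagram involving associators, symmetries, and $J$'s structure maps. I would invoke the coherence theorem for symmetric monoidal categories, together with the fact that $J$'s structure maps are natural and coherent. Both sides of each diagram are then composites of canonical isomorphisms between the same formal expressions, and by coherence for symmetric monoidal categories and strong monoidal functors, canonical maps between such expressions are unique. If a direct citation is preferred, this is the standard fact that a symmetric monoidal category is a monoidal actegory over itself (\cite[Def.~5.1.1]{capucci2024actegories} and surrounding examples), and such actions restrict along strong monoidal functors.
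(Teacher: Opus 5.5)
\textbf{The gap is in the monoidality of the multiplicator: it needs $J$ to be strong \emph{symmetric} monoidal, not merely strong monoidal.}

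Everything before that step is fine. The plain action works, either directly or by restricting the regular action along $J$. The action functor $\otimes\circ(J\times\cat D)$ is strong monoidal. The unitor is monoidal, since only symmetries against the unit occur there.

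The problem is exactly the step you flag as the main obstacle: showing the multiplicator $J(a\otimes b)\otimes d\cong J(a)\otimes(J(b)\otimes d)$ is monoidal.
\begin{itemize}
\item Its source functor $(a,b,d)\mapsto(a\otimes b)\cdot d$ is monoidal through $\otimes\colon\cat A\times\cat A\to\cat A$. That comparison map uses the symmetry $s^{\cat A}_{b,a'}$ of $\cat A$, which reaches $\cat D$ as $J(s^{\cat A}_{b,a'})$.
\item Its target functor $(a,b,d)\mapsto a\cdot(b\cdot d)$ is monoidal using only the structure of $\cat D$. At the same spot it uses $s^{\cat D}_{J(b),J(a')}$.
\end{itemize}
Both composites realize the same permutation. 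So by coherence in $\cat D$, the square commutes for all arguments if and only if
\[
J_2\circ s^{\cat D}_{J(b),J(a')}=J(s^{\cat A}_{b,a'})\circ J_2,
\]
i.e.\ iff $J$ is a \emph{symmetric} monoidal functor. Necessity is already visible at $a=b'=I$ and $d=d'=I$.

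Your appeal to ``coherence for symmetric monoidal categories and strong monoidal functors'' does not supply this. Uniqueness of canonical maps holds for symmetric monoidal functors. For a merely strong $J$, the maps $J(s^{\cat A})$ and $s^{\cat D}$ are unrelated primitives. The same hypothesis is hidden in your alternative route: monoidal actegories restrict along symmetric (braided) monoidal functors, not along arbitrary strong ones.

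This is not a technicality. The forgetful functor $\Cat{SVect}\to\Cat{Vect}$, from super vector spaces with the Koszul sign symmetry, is strict monoidal but not symmetric. Take $b$ and $a'$ to be odd lines and all other arguments the unit. Then the two sides of the multiplicator square differ by $-1$. So with the structure maps you specify (which are the ones the lemma's ``left multiplication'' suggests), the monoidal-actegory conclusion requires $J$ to be strong \emph{symmetric} monoidal.

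The paper states this lemma without proof and with the same omission. The extra hypothesis does hold in its concrete instances, e.g.\ $\Store$ and $\rvect\to\mfd$. Once you add it, your coherence argument goes through as written.
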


We use the following form of the Para construction. If a symmetric monoidal category $\cat{A}$ acts on a category $\cat{D}$ (\cref{lem.smc_action}), the category $\defineTerm{Para}_{\cat{A}}(\cat{D})$ has the same objects as $\cat{D}$, and a morphism $d_1\to d_2$  in $\para{\cat{A}}{\cat{D}}$ is a pair $(a,f)$ with $a:\cat{A}$ and
\[
f\colon a\cdot d_1\to d_2
\]
in $\cat{D}$. We call such morphisms \emph{parameterized maps} in $\cat D$. The identity on $d_1$ has parameter $I$ and action given by the unitor $I\cdot d_1\cong d_1$. The composite of $(a,f)\colon d_1\to d_2$ and $(b,g)\colon d_2\to d_3$ has parameter $b\otimes a$ and uses
\begin{equation*}
(b\otimes a)\cdot d_1\cong b\cdot(a\cdot d_1)\To{b\cdot f}b\cdot d_2\To{g}d_3,
\end{equation*}
i.e.\ $(b,g)\circ(a,f)=(b\otimes a, g\circ (b\cdot f))$.

\begin{lemma}[Para of a monoidal actegory {\cite[\S5.1, eq.~(5.7); \S5.4]{capucci2024actegories}}]\label{lem.para_monoidal}
When $\cat D$ is a monoidal $\cat A$-actegory, $\para{\cat A}{\cat D}$ is symmetric monoidal: on objects, $d_1\otimes d_2$ is the $\cat D$-tensor, and the tensor of maps $(a,f)\colon d_1\to d_2$ and $(a',f')\colon d_1'\to d_2'$ has parameter $a\otimes a'$ and underlying map
\[
(a\otimes a')\cdot(d_1\otimes d_1')\To{\cong}(a\cdot d_1)\otimes(a'\cdot d_1')\To{f\otimes f'}d_2\otimes d_2',
\]
the first isomorphism coming from $\cat D$ being a monoidal $\cat A$-actegory.
\end{lemma}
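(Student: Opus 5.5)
The proof has two layers. First we write down the symmetric monoidal structure on objects and morphisms of $\para{\cat A}{\cat D}$. Then we observe that, because morphisms are only identified up to reparameterization isomorphism, every coherence law reduces to coherence in $\cat A$, in $\cat D$, and in the monoidal actegory structure.

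The basic structure is as follows.
\begin{itemize}
\item The unit object is the unit $I_{\cat D}$ of $\cat D$.
\item On objects the tensor is the tensor of $\cat D$.
\item On morphisms, $(a,f)\otimes(a',f')$ has parameter $a\otimes a'$. Its underlying map is the composite in the statement.
\end{itemize}
The isomorphism used there is the \emph{mixed interchanger} $(a\otimes a')\cdot(d_1\otimes d_1')\cong(a\cdot d_1)\otimes(a'\cdot d_1')$. It exists because the action functor $\cat A\times\cat D\to\cat D$ is strong monoidal for the product monoidal structure on $\cat A\times\cat D$. Its unit part gives $I_{\cat A}\cdot I_{\cat D}\cong I_{\cat D}$.

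In the main case of \cref{lem.smc_action} everything is explicit. There $a\cdot d=J(a)\otimes d$, and the interchanger is built from $J$'s strong structure $J(a\otimes a')\cong J(a)\otimes J(a')$ together with the symmetry of $\cat D$.

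The associator, unitors and symmetry of $\para{\cat A}{\cat D}$ are taken to be the images of those of $\cat D$ under the canonical embedding $\cat D\to\para{\cat A}{\cat D}$, $g\mapsto(I,\,g\circ\lambda)$, where $\lambda$ is the action unitor.

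Next comes functoriality of $\otimes$, i.e.\ the interchange law
\[\bigl((b,g)\circ(a,f)\bigr)\otimes\bigl((b',g')\circ(a',f')\bigr)\;=\;\bigl((b,g)\otimes(b',g')\bigr)\circ\bigl((a,f)\otimes(a',f')\bigr).\]
The two sides have parameters $(b\otimes a)\otimes(b'\otimes a')$ and $(b\otimes b')\otimes(a\otimes a')$. These agree only up to the symmetry and associativity isomorphism of $\cat A$. So one must either regard $\para{\cat A}{\cat D}$ as a bicategory with reparameterization 2-cells, or quotient morphisms by reparameterization isomorphisms. I would take the quotient (equivalently, work in the locally groupoidal version). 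This matches the paper's convention of treating $(2,1)$-categories as categories.

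With that choice, interchange amounts to a single diagram in $\cat D$, asserting that the underlying maps agree after composing with the reparameterization isomorphism. That diagram splits into two kinds of squares:
\begin{itemize}
\item naturality squares for the mixed interchanger, applied to $f,f',g,g'$;
\item a coherence square saying that the mixed interchanger is compatible with the action associator $(b\otimes a)\cdot d\cong b\cdot(a\cdot d)$.
\end{itemize}
The second square is exactly the statement that the action associator is a \emph{monoidal} natural isomorphism, which is part of the definition of a monoidal actegory.

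The pentagon, triangle and hexagon hold in $\para{\cat A}{\cat D}$ because they are images of the corresponding identities in $\cat D$ under a functor. Naturality of the associator, unitors and symmetry with respect to general parameterized maps reduces in the same way to monoidality of the action coherences, now including the unitor $I\cdot d\cong d$. For the symmetry, one also needs the interchanger to be compatible with the symmetries of $\cat A$ and $\cat D$. This holds because the action functor is assumed \emph{symmetric} strong monoidal, which is part of the definition of a monoidal actegory over a symmetric $\cat A$ in~\cite{capucci2024actegories}.

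The main obstacle I expect is the bookkeeping in the interchange-law verification. There one must track the symmetry of $\cat A$ that reorders $b\otimes a\otimes b'\otimes a'$, and show it is matched in $\cat D$ by the corresponding symmetry inside the action. I would first prove the special case $a\cdot d=J(a)\otimes d$ of \cref{lem.smc_action}. In that case every map involved is built from $\cat D$'s structure isomorphisms and $J$'s strong monoidal structure, so Mac~Lane coherence for symmetric monoidal functors settles the diagram at once. After that I would observe that the general argument uses only the axioms of a monoidal actegory, as recorded in~\cite[\S5.1]{capucci2024actegories}.
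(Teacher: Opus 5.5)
The paper gives no argument of its own here; it simply cites Capucci--Gavranovi\'c~\cite{capucci2024actegories}. Your direct verification is a sound, self-contained alternative. You correctly isolate the four inputs. First, naturality of the mixed interchanger. Second, monoidality of the action associator; this is where the symmetry of $\cat A$ enters the interchange law, through the middle-four interchange $(b\otimes a)\otimes(b'\otimes a')\cong(b\otimes b')\otimes(a\otimes a')$ that makes $\otimes\colon\cat A\times\cat A\to\cat A$ monoidal. Third, monoidality of the action unitor; this is also what makes your embedding $\cat D\to\para{\cat A}{\cat D}$ monoidal, so that the pentagon, triangle and hexagon genuinely transfer. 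Fourth, compatibility of the interchanger with the symmetries.

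What your route buys is an explicit list of hypotheses. For the symmetry transported from $\cat D$ to be natural, the action must be symmetric monoidal. That is an extra condition relative to the paper's wording in \cref{sec.para_general}, which asks only for a strong monoidal action. Likewise, in the setting of \cref{lem.smc_action}, your appeal to coherence for symmetric monoidal functors needs $J$ to be symmetric, which that lemma leaves implicit (it holds in all of the paper's uses).

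What you give up is the 2-dimensional structure. Quotienting by reparameterization isomorphisms is not equivalent to working in the locally groupoidal version. It only makes the 1-truncation symmetric monoidal, whereas the paper keeps its 2-cells (e.g.\ \cref{prop.pc_as_para} is an isomorphism of $(2,1)$-categories). The upgrade is routine, though. Keep the middle-four reparameterization as the interchange 2-cell; your diagram in $\cat D$ is exactly the check that it is a legitimate 2-cell. The remaining coherence axioms are equations between composites of structural isomorphisms of $\cat A$, so they should follow from Mac~Lane coherence in $\cat A$.
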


The Para construction extends to a bicategory: a 2-cell $(a,f)\Rightarrow(a',f')$ in the hom-category $\para{\cat A}{\cat D}(d_1,d_2)$ consists of a morphism $g\colon a\to a'$ in $\cat A$ such that $f'\circ(g\cdot d_1)=f$. When $\cat A$ is a groupoid, all 2-cells are invertible, and each hom-category $\para{\cat A}{\cat D}(d_1,d_2)$ is a groupoid.
In this paper, all of our parameter categories are groupoids, so these Para constructions are naturally $(2,1)$-categories; by the convention of \cref{sec.prelim}, we will usually call them categories.

The following is straightforward.

\begin{proposition}[Action squares from monoidal natural transformations]\label{prop.para_strong_induced}
Let $J\colon\cat A\to\cat D$ and $J'\colon\cat A'\to\cat D'$ be strong monoidal functors, inducing actions of $\cat A$ on $\cat D$ and of $\cat A'$ on $\cat D'$ by left multiplication (\cref{lem.smc_action}). A monoidal natural transformation $\theta\colon J'\circ F\Rightarrow G\circ J$ as on the left, with $F$ strong monoidal and $G$ lax (resp.\ strong) monoidal, induces a lax (resp.\ strong) monoidal action square as on the right:
\[
\begin{tikzcd}[ampersand replacement=\&]
\cat A\ar[r,"J"]\ar[d,"F"', ""{name=left}]\& \cat D\ar[d,"G", ""' {name=right}]\\
\cat A'\ar[r,"J'"']\& \cat D'
\arrow[Rightarrow, from=left, to=right, shorten <=3mm, shorten >=3mm, "\theta"]
\end{tikzcd}
\quad\leadsto\quad
\begin{tikzcd}[ampersand replacement=\&]
\cat A\times\cat D\ar[r,"(\cdot)"]\ar[d,"F\times G"', ""{name=left}]\& \cat D\ar[d,"G", ""' {name=right}]\\
\cat A'\times\cat D'\ar[r,"(\cdot')"']\& \cat D'.
\arrow[Rightarrow, from=left, to=right, shorten <=6mm, shorten >=6mm]
\end{tikzcd}
\]
\end{proposition}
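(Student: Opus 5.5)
The plan is to write the required 2-cell of the right-hand square explicitly, component by component, and then check naturality and the monoidal coherences. For $(a,d):\cat A\times\cat D$ the two composites around the square are
\[
G(a\cdot d)=G(J(a)\otimes d)
\qqand
F(a)\cdot' G(d)=J'(F(a))\otimes G(d).
\]
In the lax case the component will be the composite
\[
J'(F(a))\otimes G(d)\To{\theta_a\otimes G(d)}G(J(a))\otimes G(d)\To{\mu_{J(a),d}}G(J(a)\otimes d),
\]
where $\mu$ is the lax structure map of $G$. So the cell points $(\cdot')\circ(F\times G)\Rightarrow G\circ(\cdot)$, i.e.\ it is oriented the same way as $\theta$. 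When $G$ is strong, $\mu$ is invertible, but the composite is invertible only if $\theta$ is as well. So in the strong case I will read "strong" as meaning that $\theta$ is a monoidal natural isomorphism; this is the case that arises in practice.

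Naturality of the component in $(a,d)$ is immediate, because it is a composite of natural transformations whiskered by functors:
\begin{itemize}
\item $\theta$ is natural in $a$;
\item $\otimes$ is a bifunctor;
\item $\mu$ is natural in both arguments.
\end{itemize}

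Next I will verify the actegory-morphism axioms, which compare the cell with the unitor and multiplicator isomorphisms $I\cdot d\cong d$ and $(a\otimes b)\cdot d\cong a\cdot(b\cdot d)$. Those isomorphisms come from \cref{lem.smc_action}: they are built from the strong structure of $J$ (resp.\ $J'$) together with the unitors and associators of $\cat D$ (resp.\ $\cat D'$).
\begin{itemize}
\item For the unit axiom, I will use that $\theta$ is monoidal: it commutes with the unit constraints of $J'\circ F$ and $G\circ J$. Combined with the unit coherence of the lax functor $G$, this reduces the axiom to a triangle in $\cat D'$.
\item For the multiplicativity axiom, I will expand $\theta_{a\otimes b}$ using the monoidality of $\theta$ with respect to the multiplication constraints. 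I will then use the associativity coherence of $\mu$ and naturality of the associators to rearrange the diagram into one that commutes.
\end{itemize}

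Finally, the word "monoidal" in "monoidal action square" also requires the cell to be compatible with the monoidal structures of $\cat D$ and $\cat D'$, i.e.\ with the interchange isomorphism $(a\otimes a')\cdot(d\otimes d')\cong(a\cdot d)\otimes(a'\cdot d')$. That isomorphism is built from the symmetries, so here I also need $G$ to be symmetric lax monoidal, in addition to using $\mu$ and the monoidality of $\theta$.

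I expect this last compatibility to be the main obstacle. It is not hard conceptually, but the pasting diagram is large. It mixes three things:
\begin{itemize}
\item the symmetry of $\cat D'$ and the symmetry of $G$,
\item the multiplication constraint of $\theta$,
\item the associativity of $\mu$.
\end{itemize}
Rather than chase every cell by hand, I would first invoke Mac Lane coherence in the form "all diagrams of structure maps of a symmetric lax monoidal functor commute". That reduces the check to the single non-formal step $\theta_a\otimes\theta_{a'}$ versus $\theta_{a\otimes a'}$, and that step is exactly the monoidality of $\theta$.
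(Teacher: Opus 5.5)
Your construction is the one the paper intends. The paper gives no proof beyond calling the result straightforward and recording the induced cell as $\theta_a\otimes G(d)$ followed by the lax structure map $G(J(a))\otimes G(d)\to G(J(a)\otimes d)$, oriented like $\theta$. Your checklist is also the right one for carrying it out: naturality, the unit and multiplicativity axioms of an action square, and compatibility with the interchange isomorphism (which uses that $G$ is symmetric and $\theta$ is monoidal).

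The genuine problem is your strong case. You read ``strong action square'' as ``invertible 2-cell'', and so you add the hypothesis that $\theta$ is a monoidal natural isomorphism. The statement has no such hypothesis and does not need one. The qualifier lax/strong tracks the monoidal structure of $G$: its unit and multiplication maps are exactly the comparison cells inherited by $\para[\theta]{F}{G}$ in \cref{prop.para_square}. The action cell there is only ever required to be a not-necessarily-invertible map $F(a)\cdot' G(d)\to G(a\cdot d)$.

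Your remark that the invertible case ``is the case that arises in practice'' is false for this paper, and your restricted version would not support its main use. \Cref{prop.integrator_to_pc} applies the proposition with $F=\id_{\cat Q}$, $G=\id_{\poly}$ and $\theta=\dyn\colon\Store\circ\Fun S\Rightarrow p$, and concludes a strong monoidal Para functor. Yet for the phase integrator, $\dynphase$ acts on positions by $(q,\xi)\mapsto q+\sharpR_q(\xi)$, a map $\absval{T^*Q}\to\absval{Q}$ that is not injective once $\dim Q>0$.

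The fix is simply to drop the extra hypothesis. Your lax-case argument proves the strong case verbatim; the only new input is that $G$'s structure maps are invertible. The action cell itself remains non-invertible in general, which is harmless.
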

Unwinding, the induced 2-cell is the composite
\[
F(a)\cdot' G(d)=J'(F(a))\otimes' G(d)\To{\theta_a\otimes' G(d)}G(J(a))\otimes' G(d)\To{}G(J(a)\otimes d)=G(a\cdot d).
\]

\begin{proposition}[Para functoriality from action squares]\label{prop.para_square}
Suppose $\cat A,\cat A'$ are symmetric monoidal categories acting on $\cat D,\cat D'$, that $F\colon\cat A\to\cat A'$ is a strong monoidal functor, $G\colon\cat D\to\cat D'$ is a functor, and $\theta\colon F(a)\cdot' G(d)\to G(a\cdot d)$ is a natural transformation compatible with the action unitors and associators, i.e.\ an action square as in~\cite[\S 3.6, eq.~(3.59)]{capucci2024actegories}:
\begin{equation*}
\begin{tikzcd}[ampersand replacement=\&]
\cat A\times\cat D\ar[r,"(\cdot)"]\ar[d,"F\times G"', ""{name=left}]\& \cat D\ar[d,"G", ""' {name=right}]\\
\cat A'\times\cat D'\ar[r,"(\cdot')"']\& \cat D'.
\arrow[Rightarrow, from=left, to=right, shorten <=6mm, shorten >=6mm, "\theta"]
\end{tikzcd}
\end{equation*}
Then there is an induced pseudo-functor
\[
\para[\theta]{F}{G}\colon\para{\cat A}{\cat D}\;\longrightarrow\;\para{\cat A'}{\cat D'},
\]
which is strict if $F$ is strict monoidal.
When $\cat D$ is a monoidal $\cat A$-actegory and $\cat D'$ a monoidal $\cat A'$-actegory, $G$ is strong (resp.\ normal lax, resp.\ lax) symmetric monoidal, and $\theta$ is a monoidal natural transformation, then $\para[\theta]{F}{G}$ extends to a strong (resp.\ normal lax, resp.\ lax) monoidal functor.
\end{proposition}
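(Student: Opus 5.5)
We define $\para[\theta]{F}{G}$ directly and check the axioms. On objects it sends $d$ to $G(d)$. On a 1-cell $(a,f)\colon d_1\to d_2$, with $f\colon a\cdot d_1\to d_2$, it gives the pair
\[
\bigl(F(a),\; F(a)\cdot' G(d_1)\To{\theta_{a,d_1}}G(a\cdot d_1)\To{G(f)}G(d_2)\bigr).
\]
On a 2-cell $g\colon a\to a'$ it gives $F(g)$. The 2-cell condition $G(f')\circ\theta_{a',d_1}\circ(F(g)\cdot' G(d_1))=G(f)\circ\theta_{a,d_1}$ follows from naturality of $\theta$ in $a$ together with $f'\circ(g\cdot d_1)=f$. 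Functoriality on 2-cells is inherited from $F$.

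Next come the comparison cells for identities and composites. The identity $(I,\lambda)$ maps to a 1-cell with parameter $F(I)$. The unit coherence of $F$ gives $\epsilon\colon I'\cong F(I)$, and we use it as a 2-cell from $(I',\lambda')$. It is a valid 2-cell precisely because $\theta$ is compatible with the unitors, i.e.\ $G(\lambda)\circ\theta_{I,d}\circ(\epsilon\cdot' G d)=\lambda'_{Gd}$.

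For composites, $(b,g)\circ(a,f)$ has parameter $b\otimes a$, which maps to $F(b\otimes a)$. The composite of the images has parameter $F(b)\otimes' F(a)$. The strong structure $\mu_{b,a}$ of $F$ provides the candidate 2-cell. Checking that it is a 2-cell means pasting: we need
\[
G(g)\circ G(b\cdot f)\circ G(\alpha)\circ\theta_{b\otimes a,d_1}\circ(\mu\cdot' Gd_1)
\;=\;
G(g)\circ\theta_{b,d_2}\circ(F b\cdot' G f)\circ(Fb\cdot'\theta_{a,d_1})\circ\alpha'.
\]
This follows from the associator compatibility of $\theta$ (the defining axiom of an action square in \cite[eq.~(3.59)]{capucci2024actegories}), followed by naturality of $\theta$ in its second variable to move $G(f)$ past $\theta_{b,\blank}$. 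The pseudofunctor coherence axioms (associativity and unit pentagon/triangles for these comparison cells) then reduce to the monoidal coherence axioms for $(F,\mu,\epsilon)$, since the 2-cells of $\para{\cat A'}{\cat D'}$ are just morphisms in $\cat A'$. If $F$ is strict, then $\mu$ and $\epsilon$ are identities and the functor is strict.

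For the monoidal part, $\para{\cat A}{\cat D}$ is monoidal by \cref{lem.para_monoidal}. We equip $\para[\theta]{F}{G}$ with the structure maps of $G$, $\phi_{d,d'}\colon Gd\otimes' Gd'\to G(d\otimes d')$ and $\phi_0\colon I'\to G(I)$, regarded as parameter-$I'$ 1-cells. For these to form a (lax) natural transformation in the Para sense, we need a 2-cell filling each naturality square. Its parameter component is $\mu_{a,a'}\colon F(a)\otimes' F(a')\cong F(a\otimes a')$, up to unitors. The underlying equation says that the interchange isomorphism $(a\otimes a')\cdot(d\otimes d')\cong(a\cdot d)\otimes(a'\cdot d')$ of the monoidal actegory is carried by $G$, $\phi$ and $\theta$ to its primed counterpart. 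This is exactly the statement that $\theta$ is a monoidal natural transformation, which in the case of \cref{prop.para_strong_induced} unwinds to monoidality of $\theta$ plus the hexagon for $\phi$. Symmetry follows from $G$ being symmetric. Strong, normal lax or lax behaviour is inherited verbatim from $\phi$ and $\phi_0$, since those become the structure 1-cells: invertible $\phi$ gives equivalences in Para.

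The main obstacle is this last bookkeeping step. We must verify that the naturality 2-cells for $\phi$ satisfy the monoidal-pseudofunctor coherence axioms, and that the interchange isomorphisms of the two actegories match under $\theta$. I would first reduce to the case where the actions come from strong monoidal $J,J'$ as in \cref{lem.smc_action}, where everything becomes a string-diagram computation in $\cat D'$ using only the symmetric monoidal coherence theorem. I would then observe that the general monoidal-actegory case is formally identical, citing \cite[\S5.4]{capucci2024actegories}.
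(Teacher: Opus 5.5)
Your proposal is correct and follows the same route as the paper. You define the functor on 1-cells by $(a,f)\mapsto\bigl(F(a),\,G(f)\circ\theta_{a,d_1}\bigr)$, build the identity and composition comparison cells from the unit and product structure maps $\epsilon,\mu$ of $F$ using $\theta$'s unitor and associator compatibility, and take the monoidal comparison cells to be those of $G$, with monoidality of $\theta$ supplying their naturality; the difference is only that you spell out the 2-cell equations and the reduction of pseudofunctor coherence to that of $(F,\mu,\epsilon)$, which the paper leaves implicit.
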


\begin{proof}
On objects $\para[\theta]{F}{G}(d)\coloneqq G(d)$, and a parameterized map $(a,f)\colon d_1\to d_2$ goes to $(F(a),\hat f)$ with $\hat f\colon F(a)\cdot' G(d_1)\To{\theta}G(a\cdot d_1)\To{G(f)}G(d_2)$. Functoriality comes from the compatibility of $\theta$ with the action's unitors and associators, as well as $F$'s productor isomorphism $F(a_1)\otimes' F(a_2)\cong F(a_1\otimes a_2)$. In fact, when $F$ is strict this productor isomorphism is the identity, in which case we find also that $\para[\theta]{F}{G}$ is strict. For the monoidal claim, $\para[\theta]{F}{G}$ agrees with $G$ on objects and because $\theta$ is monoidal, the comparison cells of $\para[\theta]{F}{G}$---the unit $I_{\cat D'}\to G(I_{\cat D})$ and the multiplication $G(d_1)\otimes' G(d_2)\to G(d_1\otimes d_2)$---are precisely those of $G$. Hence it is strong (resp.\ normal lax, resp.\ lax) monoidal exactly when $G$ is.\qedhere
\end{proof}

When $\theta$ is the identity, we drop it from the notation.

\chapter{Lenses and their internalization}\label{ch.lenses}

This section reviews the category $\Lens{\cat C}$ of lenses in a symmetric monoidal category $\cat C$, develops the \emph{backward-monad} construction that will let us add a real-valued potential to the backward direction, and builds the \emph{lens internalization} functor (\cref{sec.lens_internalization}). The lens and backward-monad material is standard, lightly adapted.

\section{Lenses}\label{sec.lenses}

The adaptive arrangements we will build have boxes on the inside and a box on the outside, and some sort of (possibly varying) message-passing wiring between them. Each box includes two objects $(\inpt c, \outp c)$ in some other category (for us, eventually $\mfd$), their input and output side; these are the objects in the category $\Lens{\cat{C}}$.

\subsection{The category of lenses}\label{sec.category_of_lenses}
We first recall the category $\defineTerm{Lens}_{\cat{C}}$ of lenses in a symmetric monoidal category $(\cat{C},I,\otimes)$; the notion of lens originates, in its asymmetric form, with \cite[Def.~3.1]{foster2007combinators}, and the bimorphic form we use is treated in \cite{hedges2017coherence} and \cite[Def.~2.9]{spivak2019generalized}. Let $(\cat{C}, I,\otimes)$ be a symmetric monoidal category.

An \emph{interface} in $\cat C$ is a pair $\lensob c$ whose output part $\outp c$ carries a comonoid structure $(\outp c,\varepsilon,\delta)$.%
\footnote{
When $(\cat{C},1,\times)$ is cartesian monoidal, every object has a unique comonoid structure and every morphism is automatically a comonoid homomorphism; this is one direction of Fox's theorem~\cite{nlab:cartesian_monoidal}.
} 
A \emph{lens} $f\colon\lensob c\to\lensob d$ consists of a forward map $\outp f\colon\outp c\to\outp d$ (a comonoid homomorphism) and a backward map $\inpt f\colon\outp c\otimes\inpt d\to\inpt c$ (any map in $\cat{C}$). Such a morphism can be depicted as follows:
\begin{equation}\label{eqn.lens_map}
\begin{tikzpicture}[oriented WD, bb port length=5pt, bb port sep=1, bb min width=.4cm, bbx=.5cm, bby=.7ex, baseline=(fo)]
  \node[bb={1}{1}] (fo) {$\outp f$};
  \node[bb={1}{2}, above left=6 and 1.5 of fo] (fi) {$\inpt f$};
  \node[dot, left=1 of fo] (dot) {};
  \coordinate (co) at ($(dot)+(-3,0)$);
  \coordinate (do) at ($(fo.east)+(1,0)$);
  \coordinate (ci) at (co |- fi_in1);
  \coordinate (di) at (do |- fi_out2);
  \node[left=0 of co] {$\outp c$};
  \node[right=0 of do] {$\outp d$};
  \node[left=0 of ci] {$\inpt c$};
  \node[right=0 of di] {$\inpt d$};
  \draw[ar] (co) to (dot);
  \draw[ar] (dot) to (fo_in1);
  \draw[ar] (fo_out1) to (do);
  \draw[ar] (fi_in1) to (ci);
  \draw[ar] (di) to (fi_out2);
  \draw[ar] (dot) to[out=30, in=0] (fi_out1);
\end{tikzpicture}
\end{equation}
The identity lens has $\outp{\id_{\lensob{c}}}\coloneqq\id_{\outp{c}}$ and $\inpt{\id_{\lensob{c}}}\coloneqq\varepsilon\otimes\id_{\inpt{c}}$, i.e.\ use the counit to discard the $\outp c$ part.

Lenses compose as follows: given also $g\colon \lensob{d}\to\lensob{e}$, the composite $f\then g$ is given by
\[
\outp{(f\then g)}=\outp f\then\outp g
\qqand
\inpt{(f\then g)}=
	(\delta_{\outp{c}}\otimes\inpt{e})\then
	(\outp{c}\otimes\outp{f}\otimes\inpt{e})\then
	(\outp{c}\otimes\inpt{g})\then
	\inpt{f}.
\]
This dense notation can be depicted in a string-diagram as follows:
\begin{equation*}
\begin{tikzpicture}[oriented WD, bb port length=5pt, bb port sep=1, bb min width=.4cm, bbx=.5cm, bby=.7ex, baseline=(fo)]
  \node[bb={1}{1}] (fo) {$\outp f$};
  \node[bb={1}{1}, right=5 of fo] (go) {$\outp g$\vphantom{$\inpt f$}};
  \node[bb={1}{2}, above left=6 and 1.5 of fo] (fi) {$\inpt f$};
  \node[bb={1}{2}, above left=6 and 1.8 of go] (gi) {$\inpt g$\vphantom{$\inpt f$}};
  \node[dot, left=1 of fo] (dot1) {};
  \node[dot] at ($(fo.east)!.75!(go.west)$) (dot2) {};
  \coordinate (co) at ($(dot1)+(-3,0)$);
  \coordinate (eo) at ($(go.east)+(1,0)$);
  \coordinate (ci) at (co |- fi_in1);
  \coordinate (ei) at (eo |- gi_out2);
  \node[left=0 of co] {$\outp c$};
  \node[right=0 of eo] {$\outp e$};
  \node[left=0 of ci] {$\inpt c$};
  \node[right=0 of ei] {$\inpt e$};
  \draw[ar] (co) to (dot1);
  \draw[ar] (dot1) to (fo_in1);
  \draw[ar] (fo_out1) to node[above, font=\scriptsize] {$\outp d$} (dot2);
  \draw[ar] (dot2) to (go_in1);
  \draw[ar] (go_out1) to (eo);
  \draw[ar] (fi_in1) to (ci);
  \draw[ar] (ei) to (gi_out2);
  \draw[ar] (gi_in1) to[out=180, in=0] node[above, font=\scriptsize] {$\inpt d$} (fi_out2);
  \draw[ar] (dot1) to[out=30, in=0] (fi_out1);
  \draw[ar] (dot2) to[out=30, in=0] (gi_out1);
\end{tikzpicture}
\end{equation*}

\begin{definition}\label{def.lens}
For a symmetric monoidal category $(\cat C, I,\otimes)$, the category $\Lens{\cat C}$ of \emph{interfaces} and \emph{lenses} has as objects the pairs $\lensob c$ with $\outp c$ a comonoid $(\varepsilon_{\outp c},\delta_{\outp c})$, and as morphisms $\lensob c\to\lensob d$ the pairs $\binom{\inpt f}{\outp f}$ with $\outp f\colon\outp c\to\outp d$ a comonoid homomorphism and $\inpt f\colon\outp c\otimes\inpt d\to\inpt c$ any map in $\cat{C}$, with identity and composition as above.

It is symmetric monoidal with unit $\lensunit[I]$ and
\[
\lensob{c}\ltens\lensob{d}\coloneqq\lensobs{c}{d}.
\qedhere
\]
\end{definition}

Our main example will be $\cat{C}=\mfd$ with its cartesian product. But the following is simple, instructive, and will come in handy.
\begin{example}\label{ex.lens_finsetop}
The category $\defineTerm{finset}\op$, opposite to the category of finite sets, is cartesian monoidal where the product is given by the disjoint union $(0,+)$, i.e.\ the coproduct in $\finset$. Thus every object carries a canonical comonoid structure $A\from 0$ and $A\from A+A$, and every morphism preserves it automatically. 

An interface in $\Lens{\finset\op}$ is just a pair of finite sets $\binom{\inpt A}{\outp A}$. A morphism $\binom{\inpt f}{\outp f}\colon\binom{\inpt A}{\outp A}\to\binom{\inpt B}{\outp B}$ consists of two maps, together sometimes called a \emph{prism}:
\begin{equation}\label{eqn.lens_finsetop}
\outp A\From{\outp f} \outp B
\qqand
\outp A+\inpt B\From{\inpt f} \inpt A,
\end{equation}
with no further homomorphism condition, as it is automatic.
\end{example}

\begin{proposition}\label{prop.lens_functoriality}
The $\Lens{\blank}$ construction is functorial in symmetric monoidal categories and strong monoidal functors.
\end{proposition}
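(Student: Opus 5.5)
Given a strong symmetric monoidal functor $F\colon(\cat C,I,\otimes)\to(\cat D,J,\otimes)$ with structure isomorphisms $\phi_0\colon J\To{\cong}F(I)$ and $\phi_{x,y}\colon F(x)\otimes F(y)\To{\cong}F(x\otimes y)$, I will define $\Lens{F}\colon\Lens{\cat C}\to\Lens{\cat D}$ and then check functoriality of the assignment $F\mapsto\Lens{F}$. On objects, $\Lens{F}$ sends $\lensob c$ to $\binom{F\inpt c}{F\outp c}$. Here $F\outp c$ gets the transported comonoid structure: counit $F\outp c\To{F\varepsilon}F(I)\To{\phi_0\inv}J$ and comultiplication $F\outp c\To{F\delta}F(\outp c\otimes\outp c)\To{\phi\inv}F\outp c\otimes F\outp c$. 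This is the standard fact that a strong (indeed oplax) monoidal functor preserves comonoids and comonoid homomorphisms. In particular $F\outp f$ is again a comonoid homomorphism, by naturality of $\phi$.

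On morphisms, a lens $\binom{\inpt f}{\outp f}$ goes to the lens with forward part $F\outp f$ and backward part
\[
F\outp c\otimes F\inpt d\To{\phi}F(\outp c\otimes\inpt d)\To{F\inpt f}F\inpt c.
\]

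Next I check that $\Lens{F}$ preserves identities and composites. For identities, the backward part is $\phi\then F(\varepsilon\otimes\id)$, possibly composed with a unitor. By naturality of $\phi$ and the unit axiom for strong monoidal functors, this equals $(\phi_0\inv F\varepsilon)\otimes\id$ followed by the unitor, which is the identity lens on the transported comonoid.

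Composition is the main computation. I expand the backward part of $\Lens{F}(f\then g)$ as $\phi$ followed by $F$ applied to the four-step composite in the definition. I then use naturality of $\phi$ to push every $\phi$ to the front, and the associativity coherence of $\phi$ to merge them. After that, the $F\delta$ appearing inside matches the transported comultiplication $\phi\inv\circ F\delta$ in the composite of $\Lens{F}f$ and $\Lens{F}g$, because the $\phi\phi\inv$ pairs cancel. I expect this bookkeeping of coherence isomorphisms to be the main, though routine, obstacle. I would handle it with string diagrams in $\cat D$, reading $\phi$ as invertible merging nodes, so that the argument reduces to naturality plus associativity coherence.

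Finally, $\Lens{F}$ is itself strong symmetric monoidal. The unit comparison is $\binom{\phi_0}{\phi_0}$, formed as a lens with inverse-based backward map. The product comparison $\binom{F\inpt c\otimes F\inpt d}{F\outp c\otimes F\outp d}\to\binom{F(\inpt c\otimes\inpt d)}{F(\outp c\otimes\outp d)}$ has forward part $\phi$ and backward part (discard the first input) $\phi\inv$. These are invertible lenses, and their coherence follows from that of $\phi$.

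Functoriality in $F$ is then immediate. $\Lens{\id}=\id$ holds on the nose. For composable $F$ and $G$, $\Lens{G\circ F}=\Lens{G}\circ\Lens{F}$ holds because the structure map of $G\circ F$ is $G\phi^F\circ\phi^G$, which is exactly what the two-step backward part produces. This equality is strict if one uses this canonical composite monoidal structure on $G\circ F$, and otherwise holds up to canonical monoidal isomorphism.
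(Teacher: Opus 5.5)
Your proposal is correct and takes the paper's route. You transport the comonoid structure on $\outp c$ along the inverse of $F$'s structure maps, set the backward part of $\Lens{F}(f)$ to $F(\inpt f)\circ F_2$, and derive functoriality from naturality and invertibility of the coherence isomorphisms. You also spell out what the paper leaves to the reader: the identity and composition checks, the strong monoidality of $\Lens{F}$ (which the paper uses implicitly later), and compatibility with composition of functors.
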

\begin{proof}
If $F\colon\cat{C}\to\cat{D}$ is colax monoidal, it preserves comonoid structures, so we can define $\Lens F$ on objects. If $F$ is lax monoidal, it has a productor $F_2\colon F(A)\otimes F(B)\to F(A\otimes B)$ from which we can define $\Lens F$ on morphisms: given a lens morphism $\binom{\inpt f}{\outp f}$ we set $\Lens F\binom{\inpt f}{\outp f}\coloneqq\binom{F(\inpt f)\circ F_2}{F(\outp f)}$. So if $F$ is strong, we can define $\Lens F$ on objects and morphisms and the invertibility of the coherence maps assures functoriality; we leave the details to the reader.
\qedhere
\end{proof}

\begin{example}
The functor $\blank\yon\colon(\smset,1,\times)\to(\poly,\yon,\otimes)$ sending $A\mapsto A\yon$ is strong monoidal: $A\yon\otimes B\yon\cong (A\times B)\yon$. As such it induces a strong monoidal functor
\begin{equation}\label{eqn.lens_yon}
\Lens{\blank\yon}\colon\Lens{\smset}\to\Lens{\poly}.\qedhere
\end{equation}
\end{example}

Since $\finset\op$ is the free cartesian category on one object, any object $M$ in a cartesian category $\cat{M}$ determines a strong monoidal functor $M^\blank\colon\finset\op\to\cat{M}$ sending $A\mapsto M^A$.

\begin{lemma}\label{lem.lens_pow}
For any cartesian category $\cat M$ and object $M:\cat M$, the functor $M^\blank\colon\finset\op\to\cat{M}$ is strong monoidal, hence induces a strong monoidal functor
\[
\Lens{M^\blank}\colon\Lens{\finset\op}\to\Lens{\cat M}.
\]
\end{lemma}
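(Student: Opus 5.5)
The plan is to reduce everything to \cref{prop.lens_functoriality}: once $M^\blank\colon\finset\op\to\cat M$ is shown to be a strong (symmetric) monoidal functor from $(\finset\op,0,+)$ to $(\cat M,1,\times)$, applying the $\Lens{\blank}$ construction gives the strong monoidal functor $\Lens{M^\blank}$. So the work is in setting up $M^\blank$ and checking that it is strong monoidal.

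First, define $M^\blank$. For a finite set $A$, take $M^A$ to be the $A$-fold product $\prod_{a:A}M$ in $\cat M$, using chosen finite products. A morphism $A\to B$ in $\finset\op$ is a function $g\colon B\to A$. Send it to the map $M^A\to M^B$ whose $b$-th component is the projection $\pi_{g(b)}$. Functoriality is immediate from the universal property of products, since projections compose as functions do contravariantly.

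Second, check strong monoidality. The monoidal product on $\finset\op$ is $+$, which is the cartesian product there. The canonical comparison $M^{A+B}\to M^A\times M^B$, built from the projections along the two coproduct inclusions, is an isomorphism because a product indexed by $A+B$ is the product of the two sub-products. Likewise $M^0\cong 1$. Since $M^\blank$ sends the product projections of $\finset\op$ (namely the inclusions $A\to A+B$ in $\finset$) to product projections in $\cat M$, it is a product-preserving functor between cartesian categories. Hence it is strong symmetric monoidal with these comparison maps, and the coherence axioms hold automatically by uniqueness of maps into products. Equivalently, one can cite the universal property stated just before the lemma: $\finset\op$ is the free cartesian category on one object, and $M^\blank$ is the product-preserving functor determined by $1\mapsto M$.

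Third, invoke \cref{prop.lens_functoriality}. One point to note is that the comonoid structures on both sides are the cartesian diagonals, so interfaces are carried to interfaces. Every map is a comonoid homomorphism in both categories, so no extra conditions arise on forward maps. On morphisms, a prism $\binom{\inpt f}{\outp f}$ is sent to
\[
\binom{M^{\inpt f}\circ(M^{\outp A}\times M^{\inpt B}\cong M^{\outp A+\inpt B})}{M^{\outp f}}.
\]

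The only real obstacle is that \cref{prop.lens_functoriality} leaves its details to the reader. In particular, we need that $\Lens F$ preserves composition and that it is itself strong monoidal. For a product-preserving functor between cartesian categories this is straightforward. Composition involves only diagonals, the forward maps and the backward maps, and a product-preserving functor carries diagonals to diagonals. The monoidal comparison for $\Lens F$ is the pair of product comparison isomorphisms, now regarded as lenses with trivial backward part.
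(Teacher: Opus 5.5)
Your proposal is correct and follows the paper's own (implicit) argument: $M^\blank$ is the product-preserving functor out of the free cartesian category $\finset\op$ on one object, hence strong monoidal, and \cref{prop.lens_functoriality} then produces $\Lens{M^\blank}$. One small imprecision: the backward part of the monoidal comparison lens for $\Lens{M^\blank}$ is not literally trivial. It is the inverse of the input-side product comparison precomposed with the counit; it merely ignores the output coordinate, and using that inverse is where strongness rather than laxness is needed.
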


\subsection{Lenses can represent serial, parallel, and feedback}\label{sec.plenspot_ops}

Recall from \cref{sec.wd_operads} that operads---and hence monoidal categories---can be used to encode compositional syntax. In fact, we can think of the operad underlying $\Lens{\cat{C}}$ as giving a syntax for something like ``traced symmetric monoidal semicategories'', as we now explain. This section is meant for intuition and in particular does not define the previously quoted phrase. It will also not be used in any formal development.

\begin{warning}
The diagrams in the previous section, e.g.\ \eqref{eqn.lens_map}, are string diagrams for maps in $\cat{C}$. The wiring diagrams below as in \eqref{eqn.arb_wd} will denote maps in $\Lens{\cat{C}}$. A different way to say it: the pictures above are implicit in all the wiring diagram maps we'll choose, but are at a different level of abstraction, and we ask the reader not to confuse the two picture languages.
\end{warning}

By a \emph{semicategory} we mean almost a category, except without the notion of identities (like a semigroup is a monoid without identities). So to say that $\Lens{\cat{C}}$ is a composition syntax for something like traced symmetric monoidal semicategories, we should show that it interprets diagrams that look like this. 
\begin{equation}\label{eqn.arb_wd}
\varphi\coloneqq
\begin{tikzpicture}[oriented WD, bb min width=.6cm, bb port sep=1, bbx=.6cm, bby=1ex, bb port length=2.5pt, baseline=(X2)]
  \node[bb port sep=2, bb={2}{2}] (X1) {};
  \node[bb={1}{1}, right=.7 of X1_out1] (X2) {};
  \node[bb={0}{0}, fit={(X1) (X2) ($(X1.north)+(0,2.5)$)}] (Y) {};
  \node[coordinate] at (Y.west|-X1_in2) (Y_in1) {};
  \node[coordinate] at (Y.east|-X2_out1) (Y_out1) {};
  \node[coordinate] at (Y.east|-X1_out2) (Y_out2) {};
  \draw[ar] (Y_in1) -- node[above, font=\scriptsize] {$A$} (X1_in2);
  \draw[ar] (X1_out1) -- node[above, font=\scriptsize] {$B$} (X2_in1);
  \draw[ar] (X2_out1) -- node[above right=0 and -5pt, font=\scriptsize] {$C$} (Y_out1);
  \draw[ar] (X1_out2) -- node[below, font=\scriptsize] {$D$} (Y_out2);
  \draw[ar] let \p1=(X2.north east), \p2=(X1.north west), \n1={\y2+\bby}, \n2=\bbportlen in
          (X2_out1) to[in=0,in looseness=.9] (\x1+.7*\n2,\n1) -- node[above, font=\scriptsize] {$C$} (\x2-.7*\n2,\n1) to[out=180] (X1_in1);
  \end{tikzpicture}
 \end{equation}

We will consider boxes as lenses $\binom{\inpt c}{\outp c}$, where the ports on the left of the box constitute the input $\inpt c$ and the ports on the right of the box constitute the output $\outp c$. The wiring diagram itself is a morphism $\varphi$ in $\Lens{\cat{C}}$ from the tensor product of the internal boxes to one external box.

For example, the wiring diagram \eqref{eqn.arb_wd} includes three boxes, two internal boxes and the external box, so it should be represented in $\Lens{\cat{C}}$ by a map of the form
\[
\varphi\colon\binom{C\otimes A}{B\otimes D}\ltens\binom{B}{C}\to\binom{A}{C\otimes D}
\]
where let us say that $B,C,D$ are comonoids. Unfolding, we need two maps in $\cat{C}$:
\begin{equation}\label{eqn.wd_as_maps}
\outp f\colon B\otimes D\otimes C\to C\otimes D
\qqand
\inpt f\colon (B\otimes D\otimes C)\otimes A\to C\otimes A\otimes B
\end{equation}
But these maps are obvious: they are just counit projections and symmetries.

One could take $\cat{C}\coloneqq(\mfd,1,\times)$. Then $A,B,C, D$ are manifolds and the wiring diagram represents information flow, shuffling coordinates around. But there are many more smooth functions of the form \eqref{eqn.wd_as_maps} than there are wiring diagrams, e.g.\ there is more than one smooth function $\rr\to\rr$.

One can also interpret \eqref{eqn.arb_wd} in $\finset\op$, the opposite of the category of finite sets. Taking the sets $A\coloneqq\{a\}$, $B\coloneqq\{b\}$, $C\coloneqq\{c\}$, and $D\coloneqq\{d\}$ all to have one element for simplicity. Now \eqref{eqn.wd_as_maps} consists of two functions
\[
\{b,d,c\}\From{\outp f}\{c,d\}
\qqand
\{b,d,c\}+\{a\}\From{\inpt f}\{c,a,b\}.
\]
These two functions entirely determine the wiring diagram \eqref{eqn.arb_wd} as an operad morphism, saying how each external outgoing port $c,d$ is ``fed by'' some internal outgoing port (in $\{b,d,c\}$), and how each internal ingoing port $c,a,b$ is ``fed by'' either an internal outgoing port (in $\{b,d,c\}$) or the external ingoing port, $a$. Applying $\Lens{M^\blank}$ from \cref{lem.lens_pow}, for any manifold $M\in\mfd$, realizes such wiring diagrams as lenses between manifolds.

\section{Backward monads on lenses}\label{sec.backwards}

Our potentials will arise from a commutative monoid, via the following standard fact.

\begin{lemma}\label{lem.monoid_to_monad}
If $\cat{C}$ is a symmetric monoidal category and $(R,0,+)$ is a commutative monoid object in it, then $R\otimes\blank\colon\cat{C}\to\cat{C}$ carries a monoidal monad structure, functorially in $(\cat C,R)$.
\end{lemma}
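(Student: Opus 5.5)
We equip $\Fun T\coloneqq R\otimes\blank$ with the writer-monad structure and then check the monoidal-monad axioms. The unit is $\eta_X\colon X\cong I\otimes X\To{0\otimes X}R\otimes X$. The multiplication is $\mu_X\colon R\otimes R\otimes X\To{+\otimes X}R\otimes X$, with associators and unitors of $\cat C$ suppressed. Naturality of $\eta$ and $\mu$ in $X$ is immediate from bifunctoriality of $\otimes$. The monad laws are the corresponding monoid laws for $(R,0,+)$, tensored with $X$: unitality of $\mu$ with respect to $\eta$ is left and right unitality of $0$, and associativity of $\mu$ is associativity of $+$. Neither law needs commutativity.

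Next we give the lax monoidal structure. The unit map is $\eta_I\colon I\to R\otimes I$. The productor is
\[
\Fun T_2\colon (R\otimes X)\otimes(R\otimes Y)\To{\cong}(R\otimes R)\otimes(X\otimes Y)\To{+\otimes X\otimes Y}R\otimes(X\otimes Y),
\]
where the first map uses the symmetry to swap $X$ past the second copy of $R$. One then checks:
\begin{enumerate*}[label=(\roman*)]
\item associativity and unitality of $\Fun T_2$, which reduce to associativity and unitality of $+$ together with coherence for $\otimes$;
\item compatibility with the symmetry, which is exactly commutativity of $+$;
\item that $\eta$ and $\mu$ are monoidal natural transformations.
\end{enumerate*}
For (iii), monoidality of $\eta$ says that $0+0=0$, i.e.\ unitality. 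Monoidality of $\mu$ compares two ways of summing four copies of $R$, $(r_1+r_2)+(s_1+s_2)$ versus $(r_1+s_1)+(r_2+s_2)$, after shuffling by the symmetry. This is the interchange law for $+$, and it holds because $+$ is a monoid homomorphism $R\otimes R\to R$, which is where commutativity is used.

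Most of this is bookkeeping. The main obstacle is the verification of (iii) for $\mu$, together with (ii), which in a general symmetric monoidal category requires a clean string-diagram argument. I would argue it as follows. The commutative monoid $R$ makes $R\otimes R$ a monoid, and $+\colon R\otimes R\to R$ is then a monoid map. Interchange therefore follows from the standard Eckmann--Hilton-style identity in string diagrams, where coherence for symmetric monoidal categories lets us ignore the particular shuffles. An alternative is to note that $R\otimes\blank$ is the monad induced by the action of the monoid $R$ viewed as a commutative monoid in $(\cat C,\otimes)$. Such monoids are exactly the lax monoidal functors $1\to\cat C$, and composing with $\otimes$ gives the monoidal monad; I would still write out the interchange square explicitly.

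Functoriality in $(\cat C,R)$ means the following. Let $F\colon\cat C\to\cat D$ be strong symmetric monoidal and $\phi\colon F(R)\to R'$ a map of commutative monoids. The map $R'\otimes F(X)\From{\phi\otimes F(X)}F(R)\otimes F(X)\cong F(R\otimes X)$ then gives a monad morphism $F\circ\Fun T\Rightarrow\Fun T'\circ F$ after inverting the productor. Checking the required squares reduces to $\phi$ preserving $0$ and $+$, together with the coherence of $F$'s productor; this step is routine.
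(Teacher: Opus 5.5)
Your proposal is correct and follows the paper's proof: it uses the same unit $0\otimes\id$, the same multiplication $(+)\otimes\id$, and the same unitor and shuffled productor, and it treats functoriality as a routine check. The monad laws reduce to the monoid laws, while the symmetry condition and the monoidality of $\mu$ (the interchange square) reduce to commutativity of $+$. You simply write out in more detail what the paper compresses into ``the coherence diagrams arise from the monoid laws and commutativity of $R$.''
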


\begin{proof}
The monad unit and multiplication are $0\otimes\id$ and $(+)\otimes\id$. The monoidal structure has unitor $0\colon I\to R\cong R\otimes I$ and productor
\begin{equation}\label{eqn.monoid_productor}
(R\otimes X)\otimes(R\otimes Y)\cong R\otimes R\otimes X\otimes Y\To{(+)\otimes\id}R\otimes X\otimes Y.
\end{equation}
The coherence diagrams arise from the monoid laws and commutativity of $R$. Functoriality in $(\cat C,R)$ is straightforward.
\end{proof}

We will add a notion of potentials to our lenses by proving a more general result: any monad-with-strength on $\cat{C}$ induces a comonad on $\Lens{\cat{C}}$ by applying it to the input / backward-pass part. We pun on the word ``backwards'' to refer to the fact that it applies to the backward pass and the fact that, as such, it turns a monad on $\cat{C}$ into a comonad on $\Lens{\cat{C}}$.

\label{txt.monoidal_monad}Recall that a \emph{strength} on a monad $(\Fun{T},\eta,\mu)$ on a monoidal category $(\cat{C},I,\otimes)$ is a natural map $\sigma\colon X\otimes\Fun{T}Y\to\Fun{T}(X\otimes Y)$ satisfying four diagrams~\cite{kock1972strong}; a monad equipped with one is a \emph{monad-with-strength}. The monad is \emph{monoidal} if $\Fun{T}$ is lax monoidal (with unitor $\tau_0\colon I\to\Fun{T}(I)$ and productor $\tau_{X,Y}\colon \Fun{T}(X)\otimes\Fun{T}(Y)\to\Fun{T}(X\otimes Y)$) and if $\eta$ and $\mu$ are monoidal. Each monoidal monad carries a canonical strength, with $\sigma_{X,Y}\colon X\otimes \Fun{T}Y\to\Fun{T}(X\otimes Y)$ the composite
\[
X\otimes \Fun{T}Y\To{\eta_X\otimes\Fun{T}Y}\Fun{T}(X)\otimes \Fun{T}(Y)\To{\tau_{X,Y}}\Fun{T}(X\otimes Y).
\]

\begin{proposition}\label{prop.backward_comonad}
Let $(\Fun{T},\eta,\mu,\sigma)$ be a monad-with-strength on a symmetric monoidal category $(\cat{C},I,\otimes)$. Then there is a comonad $\defineTerm{LensFun}^{\Fun{T}}$ on $\Lens{\cat{C}}$ sending $\lensob{c}\mapsto \binom{\Fun{T}\inpt c}{\outp c}$. We denote its coKleisli category by $\defineTerm{Lcokl}\coloneqq(\Lens{\cat{C}})_{\LensFun{\Fun T}}$.

If moreover $\Fun{T}$ is a monoidal monad, then $\LensFun{\Fun T}$ is a colax monoidal comonad, thus the coKleisli category $(\Lens{\cat{C}}^{\Fun{T}},\lensunit[I],\ltens)$ is naturally monoidal.
\end{proposition}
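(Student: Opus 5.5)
I will define the comonad $\LensFun{\Fun T}$ on objects by $\lensob c\mapsto\binom{\Fun T\inpt c}{\outp c}$, and on a lens $f=\binom{\inpt f}{\outp f}\colon\lensob c\to\lensob d$ by keeping the forward map $\outp f$ and taking the backward map
\[
\outp c\otimes\Fun T\inpt d\To{\sigma}\Fun T(\outp c\otimes\inpt d)\To{\Fun T\inpt f}\Fun T\inpt c .
\]
Functoriality has two parts. For identities, the strength law relating $\sigma$ to the counit-type maps gives $\Fun T(\varepsilon\otimes\id)\circ\sigma=\varepsilon\otimes\id$. For composites, I unfold the composite backward map and use naturality of $\sigma$ in both variables together with the associativity axiom $\sigma_{X\otimes Y,Z}=\sigma_{X,Y\otimes Z}\circ(X\otimes\sigma_{Y,Z})$ up to associators. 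The comultiplication $\delta_{\outp c}$ can be slid past $\sigma$ by naturality.

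The counit $\epsilon\colon\LensFun{\Fun T}\Rightarrow\id$ is the lens with forward map $\id_{\outp c}$ and backward map $\outp c\otimes\inpt c\To{\varepsilon\otimes\id}\inpt c\To{\eta}\Fun T\inpt c$. The comultiplication $\LensFun{\Fun T}\Rightarrow\LensFun{\Fun T}\LensFun{\Fun T}$ has forward map $\id$ and backward map $\outp c\otimes\Fun T\Fun T\inpt c\To{\varepsilon\otimes\id}\Fun T\Fun T\inpt c\To{\mu}\Fun T\inpt c$. These go the right way because lens backward maps are contravariant: a lens $\binom{\Fun T\inpt c}{\outp c}\to\lensob c$ needs a map \emph{into} $\Fun T\inpt c$.

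I then check the following:
\begin{itemize}
\item the comonad laws follow directly from the monad laws for $(\eta,\mu)$, because composing lenses whose forward parts are identities just composes the backward maps after discarding $\outp c$ with $\varepsilon$;
\item naturality of the counit and comultiplication uses the two strength axioms $\sigma\circ(X\otimes\eta)=\eta$ and $\sigma\circ(X\otimes\mu)=\mu\circ\Fun T\sigma\circ\sigma$.
\end{itemize}
The main obstacle is bookkeeping rather than ideas. It is cleanest to note that lenses with identity forward part compose like maps in $\cat C\op$ (reindexed over $\outp c$), so each check reduces to a standard strength diagram. The coKleisli category $\Lcokl{\cat C}{\Fun T}$ is then defined as usual.

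For the monoidal claim, I take $\Fun T$ monoidal, with unitor $\tau_0$, productor $\tau$, and the canonical strength built from them. I give $\LensFun{\Fun T}$ colax monoidal structure as follows:
\begin{itemize}
\item the comparison $\LensFun{\Fun T}(\lensob c\ltens\lensob d)\to\LensFun{\Fun T}\lensob c\ltens\LensFun{\Fun T}\lensob d$ is the lens with identity forward part and backward map $\Fun T\inpt c\otimes\Fun T\inpt d\To{\tau}\Fun T(\inpt c\otimes\inpt d)$, after discarding the outputs with $\varepsilon$;
\item the counit comparison $\LensFun{\Fun T}\lensunit[I]\to\lensunit[I]$ has backward map $\tau_0\colon I\to\Fun T I$.
\end{itemize}
Again, contravariance of the backward part turns the lax structure of $\Fun T$ into colax structure of $\LensFun{\Fun T}$.

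The colax coherence axioms come from the lax coherence of $\tau$. Naturality of the comparison with respect to lenses requires $\tau$ to be compatible with the strength, which holds because the strength is built from $\tau$ and $\eta$ and $\eta$ is monoidal. Monoidality of $\eta$ and $\mu$ gives that the counit and comultiplication of the comonad are monoidal transformations, so $\LensFun{\Fun T}$ is a colax monoidal comonad. Symmetry follows from the symmetry of $\tau$.

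Finally, the coKleisli category of a colax monoidal comonad on a monoidal category is monoidal. The tensor of coKleisli maps $f\colon Gx\to y$ and $g\colon Gx'\to y'$ is $G(x\ltens x')\to Gx\ltens Gx'\To{f\ltens g}y\ltens y'$, with unit $\lensunit[I]$. This is a standard fact I will cite or briefly verify; it yields the stated monoidal structure $(\Lcokl{\cat C}{\Fun T},\lensunit[I],\ltens)$.
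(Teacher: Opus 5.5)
Your proposal is correct and follows essentially the paper's route: the same action on lenses via $\sigma$, the same counit and comultiplication built from $\eta$, $\mu$ and the comonoid counit, the same naturality-plus-associativity argument for preserving composites, and the same colax structure given by $\tau$ and $\tau_0$. Your bookkeeping is more explicit than the paper's, but the argument is the same. In particular, you separate the strength axioms used for functoriality and for naturality of the counit and comultiplication from the monad laws used for counitality and coassociativity. You also fix the strength in the monoidal case to be the canonical one built from $\tau$ and $\eta$, which the paper leaves implicit.
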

\begin{proof}
On a lens map $\binom{\inpt f}{\outp f}$ with $\inpt f\colon\outp c\otimes\inpt d\to\inpt c$, define the forward component of $\LensFun{\Fun T}\binom{\inpt f}{\outp f}$ to be $\outp f$ and the backward component to be the following composite:
\begin{equation*}
\outp c\otimes\Fun{T}\inpt d\To{\sigma}\Fun{T}(\outp c\otimes\inpt d)\To{\Fun{T}\inpt f}\Fun{T}\inpt c.
\end{equation*}
The counit and comultiplication of $\LensFun{\Fun{T}}$ are given by the identity on the forward map and, on the backward map, by the unit and multiplication of the monad, together with the counit $\varepsilon_{\outp c}$:
\begin{equation*}
\outp c\otimes\inpt c\To{\varepsilon_{\outp c}\otimes\id}\inpt c\To{\eta}\Fun{T}\inpt c,
\end{equation*}
\begin{equation*}
\outp c\otimes\Fun{T}\Fun{T}\inpt c\To{\varepsilon_{\outp c}\otimes\id}\Fun{T}\Fun{T}\inpt c\To{\mu}\Fun{T}\inpt c.
\end{equation*}
We need to prove that $\LensFun{\Fun{T}}$ preserves identity and composition (is a functor) and that the functor is counital and coassociative. These four axioms correspond to the four axioms of a monad strength.

We include only the least trivial here: preservation of composition. For $f\colon\lensob c\to\lensob d$ and $g\colon\lensob d\to\lensob e$, it suffices to show that the following commutes (at which point we post-compose with $\Fun{T}\inpt f\colon\Fun{T}(\outp c\otimes\inpt d)\to\Fun{T}\inpt c$ to obtain equality of maps $\outp c\otimes\Fun{T}\inpt e\to\Fun{T}\inpt c$):
\[
\begin{tikzcd}[column sep=1.6em, row sep=3em, every label/.append style={font=\scriptsize}, font=\small, ampersand replacement=\&]
\outp c\otimes\Fun{T}\inpt e
  \ar[r, "\delta"]
  \ar[d, "\sigma"']
\& (\outp c)^{\otimes 2}\otimes\Fun{T}\inpt e
  \ar[r, "\outp f"]
  \ar[d, "\sigma"]
\&[5pt] \outp c\otimes\outp d\otimes\Fun{T}\inpt e
  \ar[r, "\sigma"]
  \ar[dr, "\sigma" description]
\& \outp c\otimes\Fun{T}(\outp d\otimes\inpt e)
  \ar[r, "\inpt g"]
  \ar[d, "\sigma"]
\&[5pt] \outp c\otimes\Fun{T}\inpt d
  \ar[d, "\sigma"]
\\
\Fun{T}(\outp c\otimes\inpt e)
  \ar[r, "\delta"']
\& \Fun{T}((\outp c)^{\otimes 2}\otimes\inpt e)
  \ar[rr, "\outp f"']
\&
\& \Fun{T}(\outp c\otimes\outp d\otimes\inpt e)
  \ar[r, "\inpt g"']
\& \Fun{T}(\outp c\otimes\inpt d)
\end{tikzcd}
\]
The first two squares are naturality of $\sigma$ in its first argument, the middle triangle is the associator axiom of strength~\cite[(1.8)]{kock1972strong}, and the last square is naturality of $\sigma$ in its second argument.

When $\Fun T$ is moreover a monoidal monad, we obtain a colax monoidal structure on $\LensFun{\Fun{T}}$: the counitor $\binom{\Fun T(I)}{I}\to\lensunit[I]$ amounts to $\tau_0\colon I\to\Fun T I$ and the coproductor map $\binom{\Fun{T}(\inpt c\otimes\inpt d)}{\outp c\otimes\outp d}\to\binom{\Fun{T}(\inpt c)\otimes\Fun T(\inpt d)}{\outp c\otimes\outp d}$ amounts to $\tau_{\inpt c,\inpt d}\colon \Fun{T}(\inpt c)\otimes\Fun T(\inpt d)\to \Fun{T}(\inpt c\otimes\inpt d)$.
\end{proof}

A map in the coKleisli category $\Lens{\cat{C}}^{\Fun{T}}$ is a map of the form $\binom{\Fun{T}\inpt c}{\outp c}\to\lensob{d}$ in $\Lens{\cat{C}}$, i.e.
\[
\outp f\colon \outp c\to\outp d
\qqand
\inpt f\colon \outp c\otimes\inpt d\to\Fun{T}\inpt c
\]
where $\outp f$ is a comonoid homomorphism.

%

\begin{lemma}\label{lem.lens_T_functoriality}
The construction of \cref{prop.backward_comonad} is functorial in $(\cat{C},\Fun{T})$: a strong symmetric monoidal $F\colon\cat C\to\cat C'$ and a morphism of monads-with-strength $\theta\colon F\circ\Fun T\Rightarrow\Fun T'\circ F$ induce
\[
\defineTerm{LensMor}_{F}^{\theta}\colon\Lens{\cat C}^{\Fun T}\to\Lens{\cat C'}^{\Fun T'}\;,\qquad
\lensob c\mapsto\binom{F(\inpt c)}{F(\outp c)},\qquad
\binom{\inpt f}{\outp f}\mapsto\binom{\theta\circ F(\inpt f)\circ F_2}{F(\outp f)},
\]
where $F_2$ is $F$'s productor. If $\Fun T,\Fun T'$ are monoidal monads and $\theta$ is a morphism of monoidal monads, $\LensMor{F}{\theta}$ is strong symmetric monoidal.
\end{lemma}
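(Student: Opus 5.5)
We first check that $\LensMor{F}{\theta}$ is well defined, and then verify functoriality and monoidality in turn.

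\emph{Well-definedness.} Since $F$ is strong, hence colax, monoidal, it carries the comonoid $(\outp c,\varepsilon,\delta)$ to a comonoid on $F(\outp c)$, namely
\[
F(\outp c)\To{F\delta}F(\outp c\otimes\outp c)\cong F(\outp c)\otimes F(\outp c),\qquad F(\outp c)\To{F\varepsilon}F(I)\cong I'.
\]
It also carries comonoid homomorphisms to comonoid homomorphisms. This is exactly the object-level part of \cref{prop.lens_functoriality}. On a coKleisli morphism with backward map $\inpt f\colon\outp c\otimes\inpt d\to\Fun T\inpt c$, the proposed backward map
\[
F\outp c\otimes' F\inpt d\To{F_2}F(\outp c\otimes\inpt d)\To{F\inpt f}F\Fun T\inpt c\To{\theta}\Fun T'F\inpt c
\]
has the required type. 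So the assignment makes sense.

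\emph{Functoriality.} The cleanest route is to factor the claimed functor through the comonad picture of \cref{prop.backward_comonad}. By \cref{prop.lens_functoriality}, $F$ gives a strong monoidal $\Lens F\colon\Lens{\cat C}\to\Lens{\cat C'}$. The transformation $\theta$ induces a natural transformation
\[
\hat\theta\colon \Lens F\circ\LensFun{\Fun T}\Rightarrow\LensFun{\Fun T'}\circ\Lens F,
\]
whose component at $\lensob c$ is the lens with identity forward part and backward part $F\outp c\otimes'F\inpt c\to F\inpt c$ given by the counit projection. Its composite with $\Lens F$'s action on the backward part $\Fun T'\inpt{}$ is precisely $\theta_{\inpt c}$.

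It is then standard that a functor together with a comonad-morphism 2-cell induces a functor between coKleisli categories, and this induced functor is exactly the formula in the statement. So functoriality reduces to checking that $\hat\theta$ is a morphism of comonads, which splits into three checks:
\begin{itemize}
\item Naturality of $\hat\theta$. This unwinds to naturality of $\theta$ together with compatibility of $\theta$ with strengths, $\theta\circ F\sigma\circ F_2=\sigma'\circ(F\otimes'\theta)$ up to $F_2$. That compatibility is part of the definition of a morphism of monads-with-strength.
\item Counit compatibility. This unwinds to $\theta\circ F\eta=\eta' F$.
\item Comultiplication compatibility. This unwinds to $\theta\circ F\mu=\mu'F\circ\Fun T'\theta\circ\theta\Fun T$.
\end{itemize}
Each is an axiom on $\theta$. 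The counit projections on $\outp c$ thread through each square by naturality and by $F$ preserving counits.

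\emph{Monoidality.} On objects the functor is strictly compatible up to $F_2$:
\[
\binom{F(\inpt c\otimes\inpt d)}{F(\outp c\otimes\outp d)}\cong\binom{F\inpt c\otimes'F\inpt d}{F\outp c\otimes'F\outp d}.
\]
We take the structure isomorphisms to be the lenses with forward map $F_2^{-1}$ and backward map $\eta'$ composed with the projection and $F_2$; these are invertible in the coKleisli category. Naturality of these isomorphisms with respect to the tensor of coKleisli maps $\ltens$ uses that the tensor of coKleisli maps is built from the productors $\tau,\tau'$ of \cref{prop.backward_comonad}. It therefore reduces to monoidality of $\theta$: $\theta\circ F\tau\circ F_2=\tau'\circ(\theta\otimes'\theta)$ up to $F_2$, together with unitor compatibility. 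Symmetry follows from $F$ being symmetric.

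\emph{Main obstacle.} I expect the hardest step to be the naturality of $\hat\theta$ (equivalently, preservation of composition directly). There the coKleisli composite inserts the strength $\sigma$ and the comultiplication $\delta$ on $\outp c$, and one must slide $F_2$, $\theta$ and $F\delta$ past one another. I would first write the big diagram analogous to the one in the proof of \cref{prop.backward_comonad} and paste it from three pieces: the strength-compatibility of $\theta$, the monad-multiplication compatibility, and naturality of $F_2$.
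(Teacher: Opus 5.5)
Your overall strategy matches the paper's. You factor $\LensMor{F}{\theta}$ through $\Lens F$ and a 2-cell between the lens comonads induced by $\theta$. You reduce preservation of coKleisli identities and composites to comonad-morphism axioms that unwind to the axioms on $\theta$. You get monoidality from monoidality of $\theta$. But the 2-cell you write down points the wrong way, and in that direction the central step fails.

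A component of $\hat\theta\colon\Lens F\circ\LensFun{\Fun T}\Rightarrow\LensFun{\Fun T'}\circ\Lens F$ at $\lensob c$ is a lens $\binom{F\Fun T\inpt c}{F\outp c}\to\binom{\Fun T'F\inpt c}{F\outp c}$. Its backward map therefore has type $F\outp c\otimes'\Fun T'F\inpt c\to F\Fun T\inpt c$. Producing it would need a transformation $\Fun T'F\Rightarrow F\Fun T$, the opposite of $\theta$. The component you actually describe, with backward part the counit projection $F\outp c\otimes'F\inpt c\to F\inpt c$, is an identity lens and contains no $\theta$. Moreover, for comonads $G,G'$ and a functor $L$, a compatible 2-cell $LG\Rightarrow G'L$ lifts $L$ to categories of coalgebras, not to coKleisli categories. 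So the ``standard'' fact you invoke does not apply to $\hat\theta$ as typed.

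The fix is to reverse it, which is the paper's construction. Since lenses run their backward parts contravariantly, $\theta$ induces $\gamma\colon\LensFun{\Fun T'}\circ\Lens F\Rightarrow\Lens F\circ\LensFun{\Fun T}$. Its forward part is the identity and its backward part is $F\outp c\otimes'F\Fun T\inpt c\To{\varepsilon\otimes\id}F\Fun T\inpt c\To{\theta}\Fun T'F\inpt c$. The coKleisli functor sends $f$ to $\Lens F(f)\circ\gamma_{\lensob c}$, whose backward part is exactly $\theta\circ F(\inpt f)\circ F_2$. With this, the rest of your plan goes through:
\begin{itemize}
\item The conditions $\theta\circ F\eta=\eta'F$ and $\theta\circ F\mu=\mu'F\circ\Fun T'\theta\circ\theta\Fun T$ that you list are precisely counit and comultiplication compatibility of $\gamma$ in this direction.
\item Your explicit monoidal structure isomorphisms are fine.
\end{itemize}

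One point in your favor: you correctly observe that naturality of the 2-cell needs the strength compatibility $\theta\circ F\sigma\circ F_2=\Fun T'(F_2)\circ\sigma'\circ(\id\otimes'\theta)$. After using naturality of $\theta$ at $\inpt g$, the naturality square of $\gamma$ at a lens $g$ reduces to exactly this identity. The paper's one-line justification of naturality (``from that of $\theta$ and the counit'') leaves this implicit.
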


\begin{proof}
By \cref{prop.lens_functoriality}, $F$ induces $\Lens F\colon\Lens{\cat C}\to\Lens{\cat C'}$. By \cref{prop.backward_comonad}, $\Fun T$ and $\Fun T'$ induce comonads $\LensFun{\Fun T}$ and $\LensFun{\Fun T'}$. The monad morphism $\theta$ gives the data necessary to define a natural transformation,
\[
\gamma\colon\LensFun{\Fun T'}\circ\Lens F\Rightarrow \Lens F\circ\LensFun{\Fun T},
\]
whose component at $\lensob c$ has output map $\outp{\gamma_{\lensob c}}\coloneqq\id_{F(\outp c)}$ and input map $\inpt{\gamma_{\lensob c}}$ given by
\[
F(\outp c)\otimes F(\Fun T\inpt c)\To{\varepsilon_{F(\outp c)}\otimes\id}F(\Fun T\inpt c)\To{\theta_{\inpt c}}\Fun T'F(\inpt c).
\]
Its naturality follows from that of $\theta$ and the counit $\varepsilon_{F(\outp c)}$.

We define $\LensMor{F}{\theta}$ on objects by $F\lensob{c}\coloneqq\binom{F(\inpt{c})}{F(\outp c)}$. On morphisms, recall that
\[
\Lens{\cat C}^{\Fun T}\left(\lensob c,\lensob d\right)=\Lens{\cat C}\left(\LensFun{\Fun T}\lensob c,\lensob d\right).
\]
For $f\in\Lens{\cat C}^{\Fun T}(\lensob c,\lensob d)$, define $\LensMor{F}{\theta}(f)$ to be the element of $\Lens{\cat C'}^{\Fun T'}(\Lens F\lensob c,\Lens F\lensob d)$ represented by the composite
\[
\LensFun{\Fun T'}\circ\Lens F\lensob c\To{\gamma_{\lensob c}}\Lens F\circ\LensFun{\Fun T}\lensob c\To{\Lens F(f)}\Lens F\lensob d.
\]
Unfolding, its backward component is
\[
F(\outp c)\otimes F(\inpt d)\To{F_2}F(\outp c\otimes\inpt d)\To{F(\inpt f)}F(\Fun T\inpt c)\To{\theta_{\inpt c}}\Fun T'F(\inpt c),
\]
and its output component is $F(\outp f)$, as claimed. The axioms for $\theta$ as a morphism of monads-with-strength say exactly that $\gamma$ is compatible with the counits and comultiplications of the two comonads; hence $\LensMor{F}{\theta}$ preserves coKleisli identities and composition.

If the monads are monoidal and $\theta$ is monoidal, the natural transformation $\gamma$ is monoidal, so $\LensMor{F}{\theta}$ is strong symmetric monoidal.\qedhere
\end{proof}


The following is straightforward.

\begin{proposition}\label{prop.lens_inclusion}
Let $(\cat{C}, 1, \times)$ be a cartesian monoidal category and $\Fun{T}$ a monoidal monad on $\cat{C}$ with unit $\eta$. There is a strong symmetric monoidal functor
\begin{equation*}
\inc\colon\cat{C}\to\Lens{\cat{C}}^{\Fun{T}},\qquad X\mapsto\binom{1}{X},
\end{equation*}
sending $f\colon X\to Y$ to the coKleisli lens with $\outp f=f$ and $\inpt f=(\bang\then\eta_1)\colon X\times 1\To{\bang} 1\To{\eta_1}\Fun{T}(1)$.\end{proposition}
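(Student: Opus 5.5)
The plan is to factor $\inc$ as a composite of two strong symmetric monoidal functors and then identify the composite explicitly. First, I would construct a strong symmetric monoidal functor $j\colon\cat C\to\Lens{\cat C}$, $X\mapsto\binom{1}{X}$, sending $f\colon X\to Y$ to the lens with $\outp f=f$ and backward map the unique $X\times 1\to 1$. In the cartesian case every object is a comonoid and every map a comonoid homomorphism, so this is well defined. Functoriality is immediate because all backward maps land in the terminal object $1$: identities and composites of backward components are forced to agree. For the monoidal structure, $\binom{1}{X}\ltens\binom{1}{Y}=\binom{1\times 1}{X\times Y}\cong\binom{1}{X\times Y}$ and $\lensunit[1]=j(1)$. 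The coherence isomorphisms are the lenses whose forward part is the identity and whose backward part is the canonical isomorphism $1\times 1\cong 1$, and their naturality and coherence again reduce to uniqueness of maps into $1$. Symmetry is preserved for the same reason.

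Second, I would compose $j$ with the standard functor $\Lens{\cat C}\to\Lens{\cat C}^{\Fun T}$ into the coKleisli category of the comonad $\LensFun{\Fun T}$ from \cref{prop.backward_comonad}. This functor is the identity on objects and precomposes a lens with the counit of the comonad. By the counit formula in that proof, the counit at $\binom{1}{X}$ has backward component $X\times 1\To{\bang}1\To{\eta_1}\Fun T(1)$. Precomposing $j(f)$ with this counit therefore gives forward map $f$ and backward map $\bang\then\eta_1$, which is exactly the stated formula.

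The main obstacle I expect is monoidality of this coKleisli inclusion. In general, for a colax monoidal comonad, the free functor into the coKleisli category is strong monoidal as long as the counit is a monoidal transformation. That holds here because $\eta$ is monoidal for a monoidal monad: the counitor and coproductor of $\LensFun{\Fun T}$ are built from $\tau_0$ and $\tau$, and monoidality of $\eta$ gives the compatibilities $\eta_I=\tau_0$ and $\tau\circ(\eta\otimes\eta)=\eta$.

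If that general fact feels too quick, the fallback is to check the two coherence squares directly on objects $\binom{1}{X}$. There the backward components all live in $\Fun T(1)$ or $\Fun T(1\times 1)$, and the verification reduces to the equation $\tau_{1,1}\circ(\eta_1\times\eta_1)=\eta_{1\times 1}$ together with $\eta_1=\tau_0$ under $1\cong I$. Both are monoidality of $\eta$. Symmetry follows from $\tau$ being symmetric.
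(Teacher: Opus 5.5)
Your argument is correct. The paper gives no proof of this proposition and only calls it straightforward, so there is no argument to match step by step.

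The presumable intended route is a direct check of the stated formula inside $\Lens{\cat C}^{\Fun T}$. There, reducing the coKleisli composite of two lenses with backward map $\bang\then\eta_1$ back to $\bang\then\eta_1$ uses the strength--unit axiom $\sigma\circ(X\otimes\eta)=\eta$, naturality of $\eta$, and the unit law $\mu\circ\eta_{\Fun T}=\id$. Compatibility with $\ltens$ then uses $\tau\circ(\eta\otimes\eta)=\eta$.

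Your factorization $\cat C\To{j}\Lens{\cat C}\to\Lens{\cat C}^{\Fun T}$ makes functoriality automatic. It also isolates the one substantive input, monoidality of $\eta$, as exactly the statement that the counit of $\LensFun{\Fun T}$ is a monoidal transformation. Making this explicit is worthwhile, because the proof of \cref{prop.backward_comonad} only names the counitor and coproductor of $\LensFun{\Fun T}$ and never checks that its counit is monoidal.

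Two minor points:
\begin{itemize}
\item The canonical functor $\Lens{\cat C}\to\Lens{\cat C}^{\Fun T}$ (precompose with the counit) is usually called the \emph{cofree} functor, since it is right adjoint to $\lensob c\mapsto\LensFun{\Fun T}\lensob c$. Under your hypotheses it is in fact strict monoidal, so every non-identity structure map of $\inc$ comes from $j$.
\item Your remark that symmetry needs $\tau$ to be symmetric is right. This is not part of the paper's stated definition of a monoidal monad, but it is already needed for $\Lens{\cat C}^{\Fun T}$ to be symmetric monoidal at all, so it is implicit in the statement.
\end{itemize}
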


\begin{lemma}\label{lem.parameter_lens_action}
Let $\cat A$ and $\cat C$ be symmetric monoidal, let $\Fun T$ be a monad-with-strength on $\cat C$, and let $J\colon\cat A\to\cat C$ be a strong monoidal functor such that each object $J(a)$ carries a comonoid structure $(\delta_{J(a)},\varepsilon_{J(a)})$ in $\cat C$ and each morphism $J(u)$ is a comonoid homomorphism. Then we have an action $\cat A\times\Lens{\cat C}^{\Fun T}\to\Lens{\cat C}^{\Fun T}$ given by
\[
a\cdot\lensob c\coloneqq\binom{\inpt c}{J(a)\otimes\outp c}.
\]
If moreover $\Fun T$ is a monoidal monad, this action makes $\Lens{\cat C}^{\Fun T}$ a monoidal $\cat A$-actegory.
\end{lemma}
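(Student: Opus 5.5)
The plan is to build the action in two stages and then check it against the Para machinery of \cref{sec.para_general}. First I define an action on $\Lens{\cat C}$ itself. Then I show it lifts along the coKleisli construction for $\LensFun{\Fun T}$. On objects, $a\cdot\lensob c=\binom{\inpt c}{J(a)\otimes\outp c}$. Here $J(a)\otimes\outp c$ gets the tensor-product comonoid structure, built from $(\delta_{J(a)},\varepsilon_{J(a)})$, the structure on $\outp c$, and the symmetry.

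For a morphism $u\colon a\to a'$ in $\cat A$ and a coKleisli lens $f\colon\lensob c\to\lensob d$, I take the forward part of $u\cdot f$ to be $J(u)\otimes\outp f$. This is a comonoid homomorphism because $J(u)$ is one by hypothesis and tensors of homomorphisms are homomorphisms. For the backward part, I first discard the parameter:
\[
J(a)\otimes\outp c\otimes\inpt d\To{\varepsilon_{J(a)}\otimes\id}\outp c\otimes\inpt d\To{\inpt f}\Fun T\inpt c.
\]
Functoriality in $f$ then comes down to a single check against the coKleisli composite. That composite duplicates $J(a)\otimes\outp c$ with $\delta$ and then applies $\sigma$. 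Since the parameter copy fed into the backward pass is immediately discarded, counitality of $J(a)$ reduces the composite to $J(a)\otimes(\text{old composite})$. Naturality of $\sigma$ lets the counit slide past the strength. Functoriality in $u$ uses $\varepsilon_{J(a')}\circ J(u)=\varepsilon_{J(a)}$, which holds because $J(u)$ is a comonoid map.

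The unitor and associator isomorphisms are lenses whose forward parts are the images of $J$'s unit and product coherences tensored with $\outp c$, and whose backward parts are the counit projection followed by $\eta$. Their coherence (pentagon and triangle) is inherited directly from the strong monoidal structure of $J$ and the monoidal structure of $\cat C$. The backward parts are all trivial, so nothing new needs checking there. Naturality of these isomorphisms with respect to coKleisli maps is again a counit-discarding computation.

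For the monoidal actegory claim, assume $\Fun T$ is monoidal. Then $\Lens{\cat C}^{\Fun T}$ is monoidal by \cref{prop.backward_comonad}. I need the action functor to be strong monoidal, via the middle-four interchange
\[
(a\otimes a')\cdot\Bigl(\lensob c\ltens\lensob{c'}\Bigr)\cong\bigl(a\cdot\lensob c\bigr)\ltens\bigl(a'\cdot\lensob{c'}\bigr).
\]
Its forward part is $J_2^{-1}$ followed by a symmetry of $\cat C$, and its backward part is the identity-on-inputs lens followed by $\eta$. I expect the \textbf{main obstacle} to be verifying that this interchange is natural with respect to the tensor of coKleisli maps. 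That tensor involves the productor $\tau$ of $\Fun T$ on backward parts, so I must check that inserting symmetries and counit-discards commutes with $\tau$. I would do this by reducing to naturality of $\tau$ and $\sigma$, together with the fact that the canonical strength is built from $\eta$ and $\tau$. After that, the remaining actegory coherences (compatibility of the interchange with the unitors and associators) follow from those of $J$ and of the symmetric monoidal structure of $\cat C$.
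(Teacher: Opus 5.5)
Your route is the paper's. You use the same formula for $u\cdot f$: forward part $J(u)\otimes\outp f$, backward part $\varepsilon_{J(a)}$ followed by $\inpt f$. You get functoriality by sliding the counit past the strength and using $\varepsilon_{J(a')}\circ J(u)=\varepsilon_{J(a)}$, and you take the coherence and monoidal comparisons from the strong monoidal structure of $J$. Your treatment of the interchange is simply more explicit than the paper's one line.

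\textbf{The gap.} One step fails as stated, and the paper's proof skips it too. You assert that the unitor and associator are lenses whose forward parts are $J_0^{-1}\otimes\outp c$ and $J_2^{-1}\otimes\outp c$; the interchange similarly uses $J_2^{-1}$ followed by a symmetry. A lens's forward part must be a comonoid homomorphism, but the hypotheses guarantee this only for the maps $J(u)$. Nothing relates the chosen comonoid on $J(a\otimes a')$ to the tensor comonoid on $J(a)\otimes J(a')$, nor the one on $J(I)$ to the unit comonoid.

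\textbf{Why it is a real failure.} This is not merely an omitted check; the stated hypotheses are insufficient. Take the following data.
\begin{itemize}
\item $\cat A$ is discrete on $(\nn,+)$, so the condition on the $J(u)$ is vacuous.
\item $\cat C=(\vect,\rr,\otimes)$ and $\Fun T=\id$.
\item $J(n)\coloneqq V^{\otimes n}$ with $V=\rr^2$.
\item $J(1)$ carries the coalgebra in which both basis vectors are grouplike, and $J(2)$ carries a matrix coalgebra structure (dual to $M_2(\rr)$).
\end{itemize}
The matrix coalgebra has no grouplike elements, while $J(1)\otimes J(1)$ has four, so they are non-isomorphic comonoids. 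Forward parts of lens isomorphisms are comonoid isomorphisms. Hence $(1\otimes 1)\cdot\binom{\rr}{\rr}$ and $1\cdot\bigl(1\cdot\binom{\rr}{\rr}\bigr)$ are not isomorphic in $\Lens{\vect}$ at all, and no action with this object assignment exists.

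\textbf{Repair.} Assume that $J_0$ and $J_2$ are comonoid homomorphisms, i.e.\ that $J$ lifts to a strong monoidal functor $\cat A\to\Cat{Comon}(\cat C)$. This is automatic when $\cat C$ is cartesian. It also holds in the paper's other instance, $\cat C=\poly$, where the comonoids $\Fun c(JQ)$ are images of diagonals under the strong symmetric monoidal functor $\Fun c$. With this hypothesis added, your argument goes through exactly as the paper's does.
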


\begin{proof}
Given $f\colon\lensob c\to\lensob d$ in $\Lens{\cat C}^{\Fun T}$ and $u\colon a\to a'$ in $\cat A$, set
\[
u\cdot f\coloneqq\binom{(\varepsilon_{J(a)}\otimes\outp c\otimes\inpt d)\then\inpt f}{J(u)\otimes\outp f}\colon
\binom{\inpt c}{J(a)\otimes\outp c}
\to
\binom{\inpt d}{J(a')\otimes\outp d}.
\]
Checking functoriality reduces immediately to the strength axioms for $\sigma$---naturality in its first argument together with unit-coherence give $\sigma_{X,Y}\then\Fun T(\varepsilon_X\otimes\id_Y)=\varepsilon_X\otimes\id_{\Fun T Y}$---and the comonoid-homomorphism assumption on $J$, i.e.\ that $\varepsilon_{J(a)}$ equals the composite
\[
J(a)\To{J(u)}J(a')\To{\varepsilon_{J(a')}} I.
\]
Together with the coKleisli composition of \cref{prop.backward_comonad}, these give preservation of identities and composition.

When $\Fun T$ is monoidal, $\Lens{\cat C}^{\Fun T}$ is symmetric monoidal by \cref{prop.backward_comonad}, and the comparisons $J(I)\cong I$ and $(a\otimes a')\cdot(\lensob c\ltens\lensob d)\cong(a\cdot\lensob c)\ltens(a'\cdot\lensob d)$ come directly from the strong monoidality of $J$.
\end{proof}

\section{Lens internalization}\label{sec.lens_internalization}

This section introduces \emph{lens internalization}---a normal lax symmetric monoidal functor $\Theta\colon\Lens{\cat C}\to\cat C$ that turns each interface $\lensob c$ into the $\cat C$-object $\ihom{\inpt c,I}\otimes\outp c$---which \cref{ch.framework} will use to interpret adaptive-arrangement syntax as parameterized polynomial maps (\cref{thm.poly_interpretation}). We begin with a generalization.

All lax monoidal functors preserve monoids, so in particular monoidal monads (\cpageref{txt.monoidal_monad}) do.

\begin{definition}\label{def.T_monoid}
Let $(\Fun{T},\eta,\mu,\tau)$ be a monoidal monad on a symmetric monoidal category $(\cat{C},I,\otimes)$. A \emph{$\Fun{T}$-monoid in $\cat{C}$} is a $\otimes$-monoid $(z,e_z,m_z)$ in $\cat{C}$ together with a $\Fun{T}$-algebra structure $\alpha\colon\Fun{T}z\to z$
\[
\begin{tikzcd}[ampersand replacement=\&]
z\ar[r,"\eta_z"]\ar[dr,equal]\& \Fun{T}z\ar[d,"\alpha"]\\
\& z
\end{tikzcd}
\hspace{1in}
\begin{tikzcd}[ampersand replacement=\&]
\Fun{T}\Fun{T}z\ar[r,"\mu_z"]\ar[d,"\Fun{T}\alpha"']\& \Fun{T}z\ar[d,"\alpha"]\\
\Fun{T}z\ar[r,"\alpha"']\& z
\end{tikzcd}
\]
such that $\alpha$ is a monoid homomorphism
\begin{equation}\label{eqn.T_monoid}
\begin{tikzcd}[ampersand replacement=\&]
I\ar[r,"\tau_0"]\ar[d,"e_z"']\& \Fun{T}I\ar[r,"\Fun{T}e_z"]\& \Fun{T}z\ar[d,"\alpha"]\\
z\ar[r,"\eta_z"']\& \Fun{T}z\ar[r,"\alpha"']\& z
\end{tikzcd}
\hspace{1in}
\begin{tikzcd}[ampersand replacement=\&]
\Fun{T}z\otimes\Fun{T}z\ar[r,"\tau_{z,z}"]\ar[d,"\alpha\otimes\alpha"']\& \Fun{T}(z\otimes z)\ar[r,"\Fun{T}m_z"]\& \Fun{T}z\ar[d,"\alpha"]\\
z\otimes z\ar[rr,"m_z"']\&\& z
\end{tikzcd}
\end{equation}
We write $\defineTerm{potalg}\coloneqq(z,e_z,m_z,\alpha)$ for this data, with \emph{carrier} $z$.
\end{definition}

\begin{remark}\label{rmk.T_monoid_EM}
A $\Fun{T}$-monoid $\potalg$ in $\cat{C}$ is the same data as a $\Fun{T}$-algebra in $\Cat{Mon}_\otimes(\cat{C})$: $\Fun{T}$ being monoidal (see~\cite{nlab:monoidal_monad}) lets $\Fun{T}$ lift to a monad on $\Cat{Mon}_\otimes(\cat{C})$, and \eqref{eqn.T_monoid} is exactly the condition that the structure map $\alpha$ is a morphism there.
\end{remark}

Recall that in a symmetric monoidal closed category the functor $-\otimes a$ is left adjoint to the internal hom $\ihom{a,-}$ for each object $a$. We write $\ev_a\colon\ihom{a,b}\otimes a\to b$ for the counit (\emph{evaluation}) and $\coev_a\colon b\to\ihom{a,b\otimes a}$ for the unit (\emph{coevaluation}) of this adjunction; they satisfy the two triangle identities $(\coev_a\otimes a)\then\ev_a=\id_{b\otimes a}$ and $\coev_a\then[a,\ev_a]=\id_{[a,b]}$.

\begin{theorem}\label{thm.Theta_T_alpha}
Let $(\Fun{T},\tau)$ be a monoidal monad on a symmetric monoidal closed category $(\cat{C},I,\otimes, [-,-])$, and let $\potalg=(z,e,m,\alpha)$ be a $\Fun{T}$-monoid in $\cat{C}$ (\cref{def.T_monoid}). Then there is a lax symmetric monoidal functor that acts on objects as follows:
\[
\Theta_\potalg\colon\Lens{\cat{C}}^\Fun{T}\to\cat{C},\qquad
\lensob{c}\mapsto\ihom{\inpt c,z}\otimes \outp c
\]
It is normal if and only if $e\colon I\to z$ is an isomorphism.
\end{theorem}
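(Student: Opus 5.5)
The plan is to build $\Theta_\potalg$ explicitly, using the closed structure to curry. We then check functoriality, the lax structure, and normality in that order.

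\emph{Action on morphisms.} A coKleisli lens $f\colon\lensob c\to\lensob d$ consists of $\outp f\colon\outp c\to\outp d$, a comonoid homomorphism, and $\inpt f\colon\outp c\otimes\inpt d\to\Fun T\inpt c$. First define an ``pull back an $\inpt c$-indexed $z$-value along $f$'' map
\[
\hat f\colon\ihom{\inpt c,z}\otimes\outp c\to\ihom{\inpt d,z}.
\]
It is the transpose of the composite
\[
\ihom{\inpt c,z}\otimes\outp c\otimes\inpt d\To{\id\otimes\inpt f}\ihom{\inpt c,z}\otimes\Fun T\inpt c\To{\sigma}\Fun T(\ihom{\inpt c,z}\otimes\inpt c)\To{\Fun T\ev}\Fun Tz\To{\alpha}z.
\]
Here $\sigma$ is the canonical strength of the monoidal monad. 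Then set $\Theta_\potalg(f)$ to be
\[
\ihom{\inpt c,z}\otimes\outp c\To{\id\otimes\delta}\ihom{\inpt c,z}\otimes\outp c\otimes\outp c\To{\hat f\otimes\outp f}\ihom{\inpt d,z}\otimes\outp d.
\]
For identities, the identity coKleisli lens has $\inpt{\id}=(\varepsilon\otimes\id)\then\eta$. The strength unit axiom $\sigma\circ(\id\otimes\eta)=\eta$ and naturality of $\eta$ reduce $\hat{\id}$ to the transpose of $(\varepsilon\otimes\ev)\then\eta_z\then\alpha=\varepsilon\otimes\ev$. Then counitality of $\delta$ gives the identity.

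\emph{Composition: the main obstacle.} Preservation of coKleisli composition is where I expect the real work. The composite's backward map is
\[
(\delta\otimes\id)\then(\id\otimes\outp f\otimes\id)\then(\id\otimes\inpt g)\then\sigma\then\Fun T\inpt f\then\mu.
\]
After uncurrying both sides to maps $\ihom{\inpt c,z}\otimes\outp c\otimes\inpt e\to z$, one must show two things agree: one side is ``apply $\inpt g$, then $\inpt f$ under $\Fun T$, then $\mu$, then $\Fun T\ev$, then $\alpha$''; the other is ``apply $\inpt g$, then $\Fun T$ of the inner transpose, then $\Fun T\ev$ and $\alpha$, i.e.\ $\alpha\circ\Fun T\alpha$''. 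The key ingredients are the algebra law $\alpha\circ\mu=\alpha\circ\Fun T\alpha$, naturality and the associativity axiom of $\sigma$ (exactly as in the diagram in the proof of \cref{prop.backward_comonad}), coassociativity of $\delta$, and $\outp f$ being a comonoid homomorphism. I would organize this as one large diagram mirroring that proof, with the transposition handled by the triangle identity $(\coev\otimes a)\then\ev=\id$.

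\emph{Lax structure and normality.} The unitor is $I\To{e}z\cong\ihom{I,z}\otimes I=\Theta_\potalg\lensunit[I]$. The productor
\[
\ihom{\inpt c,z}\otimes\outp c\otimes\ihom{\inpt d,z}\otimes\outp d\to\ihom{\inpt c\otimes\inpt d,z}\otimes\outp c\otimes\outp d
\]
is built from the symmetry together with the transpose of $(\ev\otimes\ev)\then m$.
\begin{itemize}
\item Associativity and unitality of the lax structure follow from the monoid laws for $(z,e,m)$.
\item Naturality of the productor with respect to coKleisli lenses uses the homomorphism square \eqref{eqn.T_monoid}, $\alpha\circ\Fun Tm\circ\tau=m\circ(\alpha\otimes\alpha)$, together with $\sigma$ being built from $\tau$ and $\eta$. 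The tensor of lenses in \cref{prop.backward_comonad} uses $\tau$ on the backward part, which is what makes this match.
\item Compatibility with the symmetry requires $m$ to commute with the swap. I would check whether commutativity of $z$ is implicitly needed or can be avoided; the intended instance $(\rr,0,+)$ is commutative.
\end{itemize}
Finally, the productor is generally not invertible, so normality concerns only the unitor. That map is $e$ composed with isomorphisms, so it is invertible if and only if $e$ is.
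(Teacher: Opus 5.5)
Your construction is the paper's ($\hat f$ is the transpose of its $\psi_f$, and the unitor, productor, and normality argument are the same). However, the ingredients you list for preservation of composition do not suffice. Write $H_c\coloneqq\ihom{\inpt c,z}$. The triangle identity, naturality of $\sigma$, and its associativity axiom reduce both sides to a common prefix landing in $H_c\otimes\Fun T\Fun T\inpt c$. From there, the side $\Theta_\potalg(g)\circ\Theta_\potalg(f)$ continues as $\sigma\then\Fun T\sigma\then\Fun T\Fun T\ev\then\Fun T\alpha\then\alpha$, while the coKleisli composite continues as $(H_c\otimes\mu)\then\sigma\then\Fun T\ev\then\alpha$. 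Nothing on your list moves $\mu$ past the strength; naturality of $\sigma$ in its second argument does not apply, since $\mu$ is not of the form $\Fun T g$. You need three further steps, in order:
\begin{itemize}
\item the multiplication axiom $\sigma_{X,Y}\circ(X\otimes\mu_Y)=\mu_{X\otimes Y}\circ\Fun T\sigma_{X,Y}\circ\sigma_{X,\Fun TY}$ (the lower-left pentagon of the paper's diagram);
\item naturality of $\mu$, to trade $\mu\then\Fun T\ev$ for $\Fun T\Fun T\ev\then\mu$;
\item only then the algebra law $\alpha\circ\mu=\alpha\circ\Fun T\alpha$, which closes the argument.
\end{itemize}

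Your symmetry worry is justified. With this productor, commutativity of $z$ cannot be avoided, and the paper's proof, which never checks symmetry, has the same gap. Take $c=d=\lensunit[I]$. The symmetry of $\Lens{\cat C}^{\Fun T}$ at $\lensunit[I]\ltens\lensunit[I]$ is built from $s_{I,I}=\id_{I\otimes I}$. So $\Theta_\potalg$ sends it to the identity of $z$, modulo the isomorphisms $\ihom{I,z}\otimes I\cong z$, while the productor $\phi$ there is just $m\colon z\otimes z\to z$. The lax-symmetric axiom $\Theta_\potalg(s_{c,d})\circ\phi_{c,d}=\phi_{d,c}\circ s$ therefore reduces to $m=m\circ s_{z,z}$. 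This fails, for example, for $\cat C=\smset$, $\Fun T=\id$, and any noncommutative monoid $z$. Conversely, commutativity of $m$ makes the general check go through. So the statement should assume $z$ commutative, or drop ``symmetric.'' This does not affect the paper's uses, where $z$ is the unit ($z=I$ in \cref{cor.Theta}, $z=\yon$ in \cref{def.smooth_setup}): every monoid structure on the unit is commutative because $s_{I,I}=\id$.
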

\begin{proof}
Define $H_X\coloneqq[\inpt X,z]$ for $X\in\{c,d,e\}$. 
Given a coKleisli morphism $f\colon\lensob{c}\to\lensob{d}$, define $\psi_f\colon H_c\otimes \outp c\otimes\inpt d\to z$ to be the composite
\begin{equation}\label{eqn.psi_f}
H_c\otimes \outp c\otimes\inpt d\To{\inpt f}
H_c\otimes\Fun{T}(\inpt c)\To{\sigma}
\Fun{T}(H_c\otimes\inpt c)\To{\Fun T(\ev_c)}\Fun{T}(z)\To{\alpha}z
\end{equation}
and let $\Theta_\potalg(f)$ be the composite
\begin{equation}\label{eqn.theta_morphism}
H_c\otimes\outp c\To{\delta}H_c\otimes\outp c\otimes\outp c\To{\coev_d\otimes\outp f}[\inpt d,H_c\otimes\outp
  c\otimes\inpt d]\otimes\outp d\To{\psi_f}H_d\otimes\outp d.
\end{equation}

Preservation of identities is a short diagram chase from the triangle identity $\coev_a\then[a,\ev_a]=\id$ and the $\Fun T$-algebra unit law $\alpha\circ\eta_z=\id$; we leave it to the reader.
To prove that $\Theta_\potalg$ preserves composites, expand $\psi_{g\circ f}$ using the coKleisli composite and expand the composite $\Theta_\potalg(g)\circ\Theta_\potalg(f)$. After removing the common opening maps $\delta_{\outp c}$, $\outp f$, and $\coev_e$, it remains to verify that $\psi_{g\circ f}$ is the composite
\[
H_c\otimes \outp c \otimes \outp d\otimes\inpt e\To{\coev_d}[\inpt d,H_c\otimes \outp c\otimes\inpt d]\otimes \outp d\otimes \inpt e\To{\psi_f}H_d\otimes\outp d\otimes\inpt e\To{\psi_g}z.
\] 
To do so, we write $\psi_{g\circ f}$ and this composite respectively as the left-bottom and top-right paths in the following diagram; we suppress~$\otimes$ and write $c$ for~$\inpt{c}$, $d$ for~$\inpt{d}$, and $e$ for~$\inpt{e}$:
\[
\begin{tikzcd}[column sep=1.1em, row sep=1.2em, every label/.append style={font=\scriptsize}, font=\small, ampersand replacement=\&]
H_c\outp c\outp d e
  \ar[r, "{\coev}"]
  \ar[d, "{\inpt g}"']
\&[5pt] {[d,H_c\outp c d]}\outp d e
  \ar[r, "{\inpt f}"]
  \ar[d, "{\inpt g}"']
\&[-5pt] {[d,H_c\Fun T(c)]}\outp d e
  \ar[r, "{\sigma}"]
  \ar[d, "{\inpt g}"']
\& {[d,\Fun T(H_c c)]}\outp d e
  \ar[r, "{\ev}"]
  \ar[d, "{\inpt g}"']
\& {[d,\Fun Tz]}\outp d e
  \ar[r, "{\alpha}"]
  \ar[d, "{\inpt g}"']
\&[-5pt] H_d\outp d e
  \ar[d, "{\inpt g}"]
\\
H_c\outp c\Fun T(d)
  \ar[d, "{\sigma}"']
  \ar[r, "{\coev}"]
  \ar[ddr, dotted, "\sigma", end anchor=170]
\& {[d,H_c\outp c d]}\Fun T(d)
  \ar[r, "{\inpt f}"]
  \ar[d, "{\sigma}"']
\& {[d,H_c\Fun T(c)]}\Fun T(d)
  \ar[r, "{\sigma}"]
  \ar[d, "{\sigma}"']
\& {[d,\Fun T(H_c c)]}\Fun T(d)
  \ar[r, "{\ev}"]
  \ar[d, "{\sigma}"']
\& {[d,\Fun Tz]}\Fun T(d)
  \ar[r, "{\alpha}"]
  \ar[d, "{\sigma}"']
\& H_d\Fun T(d)
  \ar[d, "{\sigma}"]
\\
H_c\Fun T(\outp c d)
  \ar[dd, "{\inpt f}"']
  \ar[dr, bend right=10pt, "{\sigma}"', end anchor=180]
\& \Fun T({[d,H_c\outp c d]}d)
  \ar[r, "{\inpt f}"]
  \ar[d, "{\ev}"]
\& \Fun T({[d,H_c\Fun T(c)]}d)
  \ar[r, "{\sigma}"]
  \ar[dd, "{\ev}"']
\& \Fun T({[d,\Fun T(H_c c)]}d)
  \ar[r, "{\ev}"]
  \ar[dd, "{\ev}"']
\& \Fun T({[d,\Fun Tz]}d)
  \ar[r, "{\alpha}"]
  \ar[dd, "{\ev}"']
\& \Fun T(H_d d)
  \ar[dd, "{\ev}"]
\\
\& \Fun T(H_c\outp c d)
	\ar[u, dotted, bend left=40pt, "\coev"]
  \ar[dr, "{\inpt f}"]
\&\&\&\&
\\
H_c\Fun T\Fun T(c)
  \ar[d, "{\mu}"']
  \ar[rr, "{\sigma}"]
\&\& \Fun T(H_c\Fun T(c))
  \ar[r, "{\sigma}"]
\& \Fun T(\Fun T(H_c c))
  \ar[r, "{\ev}"]
  \ar[d, "{\mu}"']
\& \Fun T(\Fun Tz)
  \ar[r, "{\Fun T(\alpha)}"]
  \ar[d, "{\mu}"']
\& \Fun Tz
  \ar[d, "{\alpha}"]
\\
H_c\Fun T(c)
  \ar[rrr, "{\sigma}"]
\&\&\& \Fun T(H_c c)
  \ar[r, "{\ev}"]
\& \Fun Tz
  \ar[r, "{\alpha}"]
\& z
\end{tikzcd}
\]
The sixteen rectangular tiles commute by functoriality of $\otimes$, naturality of the strength $\sigma$, naturality of evaluation inside $\Fun T$, naturality of $\mu$, and the $\Fun T$-algebra axiom $\alpha\circ\mu=\alpha\circ\Fun T(\alpha)$. The lower-left pentagon is the $\mu$-coherence axiom for the strength. For the remaining region, the dotted arrows show how its commutativity follows from naturality and the triangle identity $(\coev_a\otimes a)\then\ev_a=\id$. Thus $\Theta_\potalg$ is indeed a functor.

The unitor is $e_z\colon I\to z=\Theta_\potalg\lensunit[I]$, and the productor is the canonical map
\[
([\inpt c,z]\otimes\outp c)\otimes([\inpt d,z]\otimes\outp d)
\to
[\inpt c\otimes\inpt d,z\otimes z]\otimes\outp c\otimes\outp d\To{m}
[\inpt c\otimes\inpt d,z]\otimes\outp c\otimes\outp d
\]
Their naturality, unitality, and associativity amount to the monoidality of $\Fun{T}$, the assumption that $\alpha$ is a monoid homomorphism, and that $z$ is a monoid.

Finally, $e\colon I\to z$ is an isomorphism iff $z\cong I$ as a $\otimes$-monoid, iff $\Theta_\potalg\lensunit[I]=[I,z]\otimes I\cong z\cong I$ and the unitor of $\Theta_\potalg$ realizes this isomorphism, proving the last claim.
\end{proof}

The following is an immediate corollary of \cref{thm.Theta_T_alpha} with $\Fun{T}=\id$ and $z\coloneqq I$.

\begin{corollary}[Lens internalization]\label{cor.Theta}
Let $(\cat{C},I,\otimes, [-,-])$ be a symmetric monoidal closed category. There is a normal lax symmetric monoidal functor
\[\Theta\colon\Lens{\cat{C}}\to\cat{C},\qquad
\lensob{c}\mapsto\ihom{\inpt c,I}\otimes\outp c.\]
\end{corollary}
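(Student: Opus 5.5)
The plan is to specialize \cref{thm.Theta_T_alpha} to the identity monad. I will check the hypotheses, identify the two categories involved, and read off normality from the criterion already proved there.

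First, take $\Fun T\coloneqq\id_{\cat C}$. It is a monoidal monad with trivial structure: $\eta=\mu=\id$, unitor $\tau_0=\id_I$, productor $\tau=\id$. Its canonical strength $\sigma$ is then the identity.

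Second, take $\potalg\coloneqq(I,\lambda_I^{-1},\lambda_I,\id_I)$. This is the unit object with its canonical monoid structure, where $e=\id_I$ and $m\colon I\otimes I\cong I$ is the unitor, and with algebra map $\alpha=\id$. The $\Fun T$-algebra axioms and the monoid-homomorphism squares \eqref{eqn.T_monoid} hold trivially, because every map in them is an identity or a unitor. So $\potalg$ is a $\Fun T$-monoid in the sense of \cref{def.T_monoid}.

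Next, identify $\Lens{\cat C}^{\id}$ with $\Lens{\cat C}$. By \cref{prop.backward_comonad}, the comonad $\LensFun{\id}$ is the identity comonad: its backward component is $\Fun T\inpt f\circ\sigma=\inpt f$, and its counit and comultiplication are the lens identities built from $\varepsilon\otimes\id$. The coKleisli category of the identity comonad is canonically isomorphic to the base category. One should check that this isomorphism is compatible with the monoidal structures; it is, since the colax structure comes from $\tau=\id$. I expect this identification, together with the monoidal bookkeeping, to be the only point needing care.

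\Cref{thm.Theta_T_alpha} then yields a lax symmetric monoidal functor $\Lens{\cat C}\to\cat C$ with $\lensob c\mapsto\ihom{\inpt c,I}\otimes\outp c$. Because the unit $e=\id_I$ is an isomorphism, the final clause of the theorem shows the functor is normal. Under these specializations, \eqref{eqn.psi_f} reduces to $\inpt f$ followed by $\ev_c$, which gives an explicit formula for $\Theta$ on morphisms.
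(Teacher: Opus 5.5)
Your proposal is correct and is the paper's argument: the paper states the corollary as the specialization of \cref{thm.Theta_T_alpha} to $\Fun T=\id$ and $z\coloneqq I$, and you fill in the routine checks it leaves implicit (the identity comonad on lenses, the trivial $\Fun T$-monoid structure on $I$, and normality from $e$ being invertible). One small slip: the tuple should read $(I,\id_I,\lambda_I,\id_I)$, as your own prose says, since the unit slot must be a map $I\to I$ rather than $\lambda_I^{-1}\colon I\to I\otimes I$.
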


We call $\Theta$, and more generally $\Theta_\potalg$ the \emph{lens internalization} functor, since it internalizes the interface $\lensob c$ as an object of $\cat{C}$; see \cref{ex.moore}.

\begin{example}\label{ex.moore}
The functor $\Lens{\blank\yon}\colon\Lens{\smset}\to\Lens{\poly}$ from \eqref{eqn.lens_yon} sends lenses $\binom{A}{B}$ in $\smset$ to lenses $\binom{A\yon}{B\yon}$ in $\poly$. Applying $\Theta$ one obtains:%
\footnote{
In fact, the composite functor $(\Lens{\blank\yon}\then\Theta)\colon\Lens{\smset}\to\Lens{\poly}\to\poly$ is fully faithful: a map $\binom{A}{B}\to\binom{A'}{B'}$ in $\Lens{\smset}$ is a pair of maps $B\to B'$ and $B\times A'\to A$, which is exactly the data of a polynomial map $B\yon^A\to B'\yon^{A'}$.
}
\[
\Theta\binom{A\yon}{B\yon}=[A\yon,\yon]\otimes B\yon\cong B\yon^A.
\]
A coalgebra on $B\yon^A$ is a function of the form $S\to B\times S^A$, which is equivalently a pair of functions $S\to B$, $S\times A\to S$, i.e.\ an $A$-input, $B$-output Moore machine.
\end{example}

\begin{corollary}\label{cor.parameterized_representation}
Let $\cat C$, $(\Fun T,\tau)$, $\potalg$, and the resulting $\Theta_\potalg\colon\Lens{\cat{C}}^{\Fun T}\to\cat{C}$ be as in \cref{thm.Theta_T_alpha}. Suppose $\cat A$ is symmetric monoidal and $J\colon\cat A\to\cat C$ is a strong monoidal functor such that each $J(a)$ carries a comonoid structure in $\cat C$ and each $J(u)$ is a comonoid homomorphism. Then with the $\cat A$-actions
\[
a\cdot\lensob c\coloneqq\binom{\inpt c}{J(a)\otimes\outp c}
\qqand
a\cdot c\coloneqq J(a)\otimes c
\]
on $\Lens{\cat C}^{\Fun T}$ and on $\cat C$ respectively, $\Theta_\potalg$ extends to a lax symmetric monoidal functor
\begin{equation}\label{eqn.para_theta_z}
\para{\cat A}{\Theta_\potalg}\colon\para{\cat A}{\Lens{\cat C}^{\Fun T}}\to\para{\cat A}{\cat C},
\end{equation}
normal when $z\cong I$.
\end{corollary}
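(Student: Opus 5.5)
The plan is to apply \cref{prop.para_square} with $F\coloneqq\id_{\cat A}$ (strict monoidal) and $G\coloneqq\Theta_\potalg$, which is lax symmetric monoidal by \cref{thm.Theta_T_alpha}. The source actegory structure on $\Lens{\cat C}^{\Fun T}$ comes from \cref{lem.parameter_lens_action}, and the target structure on $\cat C$ from \cref{lem.smc_action}. Both are monoidal $\cat A$-actegories, the first because $\Fun T$ is monoidal. What remains is to construct an action square
\[
\theta_{a,c}\colon J(a)\otimes\Theta_\potalg\lensob c=J(a)\otimes\ihom{\inpt c,z}\otimes\outp c\too\ihom{\inpt c,z}\otimes J(a)\otimes\outp c=\Theta_\potalg\binom{\inpt c}{J(a)\otimes\outp c},
\]
and to check that it is natural, compatible with unitors and associators, and monoidal. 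I would take $\theta$ to be the symmetry isomorphism, with no use of the comonoid structure. Then $\para{\cat A}{\Theta_\potalg}$ is $\para[\theta]{\id}{\Theta_\potalg}$. It is lax monoidal, and normal exactly when $\Theta_\potalg$ is, which by \cref{thm.Theta_T_alpha} happens when $z\cong I$.

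The coherence checks are routine once naturality holds. Compatibility with the unitor reduces to $J(I)\cong I$ and the symmetry at $I$. Compatibility with the associator reduces to the hexagon identities together with the productor of $J$. Monoidality of $\theta$ should follow from the fact that the productor of $\Theta_\potalg$ in \cref{thm.Theta_T_alpha} is built from canonical maps and $m$. Symmetries commute past it, and the actegory comparison $(a\otimes a')\cdot(c\otimes d)\cong(a\cdot c)\otimes(a'\cdot d)$ is also a symmetry shuffle.

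The main obstacle is naturality of $\theta$ in $c$ with respect to coKleisli lenses. The map $\Theta_\potalg(u\cdot f)$ is computed by \eqref{eqn.theta_morphism} for the lens $u\cdot f$. Its forward part is $J(u)\otimes\outp f$ and its backward part first discards $J(a)$ by $\varepsilon_{J(a)}$. The diagonal $\delta$ in \eqref{eqn.theta_morphism} now duplicates $J(a)\otimes\outp c$. One copy goes forward through $J(u)\otimes\outp f$. The other copy's $J(a)$ factor is immediately erased by $\varepsilon_{J(a)}$ inside $\psi_{u\cdot f}$. To check the square I would use counitality of the comonoid $J(a)\otimes\outp c$, which is the tensor of the two comonoids, and reduce $\delta_{J(a)\otimes\outp c}$ followed by $\varepsilon_{J(a)}$ on one leg to $J(a)\otimes\delta_{\outp c}$. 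After that, $\psi_{u\cdot f}$ becomes $\psi_f$ with $J(a)$ passed through untouched, and $\Theta_\potalg(u\cdot f)$ becomes $J(u)\otimes\Theta_\potalg(f)$ up to symmetry. I expect this to be a short string-diagram argument, in the same style as the functoriality proof of \cref{lem.parameter_lens_action}.

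Naturality in $a$ needs the comonoid-homomorphism hypothesis on $J(u)$, in the form $\varepsilon_{J(a')}\circ J(u)=\varepsilon_{J(a)}$.
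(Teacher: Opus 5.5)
Your proposal is correct and is essentially the paper's proof: both apply \cref{prop.para_square} with $F=\id_{\cat A}$ and $G=\Theta_\potalg$, take the action square to be the symmetry $J(a)\otimes\ihom{\inpt c,z}\otimes\outp c\cong\ihom{\inpt c,z}\otimes J(a)\otimes\outp c$, and read off normality from \cref{thm.Theta_T_alpha}; your counitality computation spells out the naturality check that the paper leaves implicit. One small correction: that computation already handles a general pair $(u,f)$ using only counitality of $J(a)$, so the comonoid-homomorphism hypothesis on $J(u)$ is used in \cref{lem.parameter_lens_action} (to make $u\cdot f$ a lens and the action functorial), not in the naturality of $\theta$ in $a$.
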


\begin{proof}
We obtain the two above-displayed actions of $\cat{A}$, on $\Lens{\cat C}^{\Fun T}$ and on $\cat{C}$, by \cref{lem.parameter_lens_action,lem.smc_action}. The functor $\Theta_\potalg$ is $\cat A$-equivariant:
\[
\begin{tikzcd}[ampersand replacement=\&]
\cat A\times\Lens{\cat C}^{\Fun T}
  \ar[r,"(\cdot)"]
  \ar[d,"\id_{\cat A}\times\Theta_\potalg"']
  \ar[dr,phantom,"\cong"]
\& \Lens{\cat C}^{\Fun T}\ar[d,"\Theta_\potalg"]\\
\cat A\times\cat C
  \ar[r,"(\cdot)"']
\& \cat C,
\end{tikzcd}
\]
where the $(a,\binom{\inpt c}{\outp c})$ component of the natural isomorphism is given by
\[
J(a)\otimes\Theta_\potalg\lensob c
= J(a)\otimes(\ihom{\inpt c,z}\otimes\outp c)
\cong\Theta_\potalg\left(a\cdot\lensob c\right).
\]
We can thus apply \cref{prop.para_square,thm.Theta_T_alpha} to obtain \eqref{eqn.para_theta_z} as desired. Since $\id_{\cat A}$ is strict monoidal, the normal-lax clause of \cref{prop.para_square} transports the normality of $\Theta_\potalg$ (which holds iff $z\cong I$, by \cref{thm.Theta_T_alpha}) to $\para{\cat A}{\Theta_\potalg}$.
\end{proof}

\chapter{Abstract framework for adaptive dynamics}\label{ch.framework}

This section produces a syntax operad $\arr_D$ of \emph{adaptive arrangements} from a short list of ingredients: an \emph{adaptive-arrangement datum} $D$, including a notion of space, a notion of parameter, and a \emph{potentials monad} through which the backward pass is threaded and potentials accumulate. Two further ingredients then yield functorial semantics, a functor $\arr_D\to\pc$: a \emph{polynomial interpretation}, which casts spaces as polynomial interfaces and fixes what potentials do, and an \emph{integrator}, which turns those effects into dynamics. The three play distinct roles: the adaptive-arrangement datum is the material out of which the syntax operad $\arr_D$ is built, the polynomial interpretation interprets that syntax in $\poly$, and the integrator runs it in $\pc$.

The smooth version $\sarr\To{\Phi}\pc$ will be a special case, where: spaces are manifolds, parameters are responsive vector spaces, potentials live in $\rr$, the cotangent bundle gives interfaces, potentials act via their differential, and integration is by Euler steps. This smooth instance is assembled in \cref{ch.smooth_rwd,ch.smooth_dynamics}: its adaptive-arrangement datum $\Sm$ and syntax operad $\sarr$ (the $\sarr$ of the introduction) appear in \cref{def.potlens}, while its polynomial interpretation (\cref{sec.smooth_interpretation}, built on the cotangent bundle of \cref{sec.cot}) and integrator (Euler steps, \cref{sec.integrator_semantics}) assemble into the dynamics functor $\Phi$ in \cref{sec.dynamics_functor}.

\section{Adaptive-arrangement data}\label{sec.rewiring_datum}

\begin{definition}[Adaptive-arrangement datum]\label{def.rewiring_datum}
We define an \emph{adaptive-arrangement datum} $D=(\cat M,\cat Q,J,\Fun R)$ to consist of
\begin{enumerate}
  \item a cartesian monoidal category $\cat M$, objects are called \emph{spaces};
  \item a symmetric monoidal category $\cat Q$, objects are called \emph{parameters};
  \item a strong monoidal functor $J\colon\cat Q\to\cat M$; and
  \item a monoidal monad $\Fun R$ on $\cat M$, the \emph{potentials monad}.
\end{enumerate}
Define the $D$-\emph{adaptive-arrangement operad}, denoted $\defineTerm{rwd}_D$, to be the operad underlying the symmetric monoidal category $\para{\cat Q}{\Lcokl{\cat M}{\Fun R}}$ (\cref{lem.para_monoidal}).
\end{definition}

Unwinding the above, we find that an object of $\arr_D$ is an \emph{interface} in the sense of \cref{def.lens}: a pair $\lensob M$ of spaces; we refer to $\outp M$ as the \emph{output space} and $\inpt M$ as the \emph{input space}. We think of $\lensob M$ as a box
\boxpic{$\inpt{M}$}{$\outp{M}$}
with an input port $\inpt M$ and an output port $\outp M$.

A morphism $(\lensob{M_1},\dots,\lensob{M_K})\to\lensob N$ in $\arr_D$ is a morphism from $\lensob M\coloneqq\lensob{M_1}\ltens\cdots\ltens\lensob{M_K}$ to $\lensob N$ in $\para{\cat Q}{\Lcokl{\cat M}{\Fun R}}$, i.e.\ a tuple $(Q,\outp f,\inpt f)$ where
\begin{itemize}
\item $Q:\cat Q$ parameterizes the various wirings,
\item $\outp f\colon J(Q)\otimes\outp M\to\outp N$ is a comonoid homomorphism shuttling internal outputs to external outputs, and
\item $\inpt f\colon J(Q)\otimes\outp M\otimes\inpt N\to\Fun R(\inpt M)$ is a $\Fun R$-effectful backward pass shuttling internal outputs and external inputs to internal inputs.
\end{itemize}
The backward pass $\inpt f$ always carries the effect of the potentials monad, its extra contribution beyond purely shuttling inputs. In our main instance, the writer monad $\Fun R\coloneqq R\otimes\blank$ of a commutative monoid $(R,0,+)$ in $\cat M$ (\cref{lem.monoid_to_monad}), this effect separates out explicitly, with $\inpt f$ splitting into an ordinary input map $J(Q)\otimes\outp M\otimes\inpt N\to\inpt M$ and a scalar potential $U\colon J(Q)\otimes\outp M\otimes\inpt N\to R$.

\begin{remark}\label{rem.filtration}
To understand $\arr_D$ as a generalized wiring diagram syntax, we should understand what it is a generalization of. In fact, there is a series of generalizations as follows
\[
\Lens{\finset\op}\To{\Lens{X^\blank}}\Lens{\cat M}\To{\LensMor{\cat M}{\eta}}\Lcokl{\cat M}{\Fun R}\To{\Para_I}\arr_D
\]
where $X:\cat{M}$ is any choice of object and $X^\blank$ sends a finite set $A$ to $X^A\in\cat{M}$, $\eta\colon\id_{\cat M}\Rightarrow\Fun R$ is the unit of the potentials monad---equivalently, the unique monad map from the initial monad $\id$---and $\Para_I$ regards a potentialized lens as an adaptive arrangement with unit parameter $I$. This chain gives rise to progressively richer adaptive arrangements, as we now explain.

The simplest---those coming from the leftmost category in the chain---are static arrangements, which come from ``prisms,'' i.e.\ lenses in $\finset\op$, which we saw in \cref{ex.lens_finsetop}. In this case the maps have trivial ($Q\coloneqq I$)-factor, all the maps factor as projections and diagonals, and the potential is zero. These simplest maps give the syntax for something like ``traced symmetric monoidal semicategories,'' as discussed in \cref{sec.plenspot_ops}.

We can realize each such static arrangement as a lens in $\cat M$ via $\Lens{X^\blank}$ (\cref{lem.lens_pow}), but compositions coming from $\Lens{\cat M}$ are more general: there are many more maps in $\cat M$ than just the projections. For example, there are two projections $\rr\times\rr\to\rr$ but uncountably many smooth maps. Moving to $\Lcokl{\cat M}{\Fun R}$ threads the backward pass through the potentials monad but still includes only the trivial parameter ($Q\coloneqq I$); in the writer instance $\Fun R=R\otimes\blank$ this assigns a value in $R$ to the interaction data consisting of the internal-box outputs together with the external-box input, e.g.\ a prediction error.

Finally, the richest adaptive arrangements are the maps of $\arr_D=\para{\cat Q}{\Lcokl{\cat M}{\Fun R}}$ themselves, which add a parameterizing object $Q:\cat Q$. Here, the interaction pattern itself depends on the current $v:Q$, and under the dynamics functor that parameter is updated over time according to the potential as well as by whatever is backpropagated from the external box. \Cref{rem.huge_wiring_diagram} draws the composite of two such arrangements explicitly.
\end{remark}

\begin{remark}\label{rem.huge_wiring_diagram}
The following diagram depicts composition of 1-ary arrangements in $\arr_D$, in the writer instance $\Fun R=R\otimes\blank$ where the backward pass splits off an $R$-valued potential. We include it only for the algorithmically-minded reader's convenience: \emph{it follows from the abstract framework we already have developed.}

Suppose given adaptive arrangements $f=(Q,\outp f,\inpt f,U)\colon\lensob L\to\lensob M$, with parameter $Q$ and potential $U$, and $g=(Q',\outp g,\inpt g,V)\colon\lensob M\to\lensob N$, with parameter $Q'$ and potential $V$. One may imagine $\lensob{L}$ as the inner-most box, $\lensob{M}$ as the middle box, and $\lensob{N}$ as the outer-most box.

The composite $g\circ f$ has parameter $Q'\otimes Q$---the outer parameter first, as in \cref{sec.para_general}, which is also how the two enter the picture below---forward and backward components inherited from $\Lcokl{\cat M}{R}$, and potential added via the monoid multiplication $(+)$.
\[
\begin{tikzpicture}[oriented WD, bb port length=5pt, bb port sep=1, bb min width=.4cm, bbx=.5cm, bby=.7ex, baseline=(fo)]
  \node[bb={2}{1}] (fo) {$\outp f$};
  \node[bb={2}{1}, right=7 of fo] (go) {$\outp g$};
  \node[bb={1}{3}, above left=4 and 3 of fo] (fi) {$\inpt f$};
  \node[bb={1}{3}, above left=4 and 3 of go] (gi) {$\inpt g$};
  \node[bb={1}{3}, above right=1 and 0 of fi] (V) {$U$};
  \node[bb={1}{3}, above right=1 and 0 of gi] (W) {$V$};
  \node[bb={1}{2}, above left=-1 and 2 of V] (plus) {$+$};
  \node[dot, left=1 of fo_in2] (dotL) {};
  \node[dot] at ($(gi.east|-fo.east)+(1,0)$) (dotM) {};
  \coordinate (Lo) at ($(dotL)+(-5,0)$);
  \coordinate (No) at ($(go.east)+(1,0)$);
  \coordinate (Li) at (Lo |- fi_in1);
  \coordinate (Ni) at (No |- gi_out3);
  \node[left=0 of Lo] {$\outp L$};
  \node[right=0 of No] {$\outp N$};
  \node[left=0 of Li] {$\inpt L$};
  \node[right=0 of Ni] {$\inpt N$};
  \draw[ar] (Lo) to (dotL);
  \draw[ar] (dotL) to (fo_in2);
  \draw[ar] (fo_out1) to node[above, font=\scriptsize] {$\outp M$} (dotM);
  \draw[ar] (dotM) to (go_in2);
  \draw[ar] (go_out1) to (No);
  \draw[ar] (fi_in1) to (Li);
  \node[dot] at ($(gi_out3)+(3,0)$) (dotNI) {};
  \draw[ar] (Ni) to (dotNI);
  \draw[ar] (dotNI) to (gi_out3);
  \node[dot] at ($(fi_out3)+(3,0)$) (dotMI) {};
  \draw[ar] (gi_in1) to[out=180, in=0] node[above, font=\scriptsize] {$\inpt M$} (dotMI);
  \draw[ar] (dotMI) to (fi_out3);
  \draw[ar] (dotL) to[out=60, in=0] (fi_out2);
  \draw[ar] (dotM) to[out=60, in=0] (gi_out2);
  \node[above left=1 and 0 of Lo] (P) {$Q$};
  \node[above left=4 and 0 of Lo] (Q) {$Q'$};
  \node[dot] at (dotL|-P) (dotP) {};
  \node[dot, left=5pt] at (dotM|-Q) (dotQ) {};
  \draw[ar] (P) to (dotP);
  \draw[ar] (Q) to (dotQ);
  \draw[ar] (dotP) to (fo_in1);
  \draw[ar] (dotP) to[out=60, in=0] (fi_out1);
  \draw[ar] (dotQ) to (go_in1);
  \draw[ar] (dotQ) to[out=60, in=0] (gi_out1);
  \draw[ar] (dotP) to[out=60, in=0] (V_out1);
  \draw[ar] (dotL) to[out=60, in=0] (V_out2);
  \draw[ar] (dotMI) to[out=120, in=0] (V_out3);
  \draw[ar] (dotQ) to[out=60, in=0] (W_out1);
  \draw[ar] (dotM) to[out=60, in=0] (W_out2);
  \draw[ar] (dotNI) to[out=120, in=0] (W_out3);
  \draw[ar] (V_in1) to[out=180, in=0] (plus_out2);
  \draw[ar] (W_in1) to[out=180, in=0] (plus_out1);
  \coordinate (R) at ($(plus.west)+(-1,0)$);
  \node[left=0 of R] {$R$};
  \draw[ar] (plus_in1) to (R);
\end{tikzpicture}
\qedhere
\]
\end{remark}

We conclude this section with a free supply of monads---the power monads---that can be used in conjunction with any other monad $\Fun R$.

\begin{proposition}[Modes]\label{prop.moding}
Let $(\cat M,I,\otimes)$ be a symmetric monoidal category with finite products, and let $\md:\finset$ be a finite set. The power functor $(\blank)^{\md}$ is a monoidal monad on $\cat M$, and for every monoidal monad $\Fun R$ on $\cat M$ the composite $(\Fun R)^{\md}$ is again a monoidal monad.
\end{proposition}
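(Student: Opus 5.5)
The plan is to recognize $(\blank)^{\md}$ as the reader monad for the finite set $\md$, and then obtain $(\Fun R)^{\md}$ as a composite monad via a distributive law. Throughout, the key simplification is that a map into a power $Y^{\md}=\prod_{m:\md}Y$ is determined by its projections $\pi_m$. So every equation with codomain a power can be checked one component $m:\md$ at a time, which reduces it to an equation already known in $\cat M$.

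First, the structure on $P\coloneqq(\blank)^{\md}$. The unit $\eta^P_X\colon X\to X^{\md}$ is the diagonal, with $\pi_m\circ\eta^P=\id$. The multiplication $\mu^P_X\colon (X^{\md})^{\md}\to X^{\md}$ is restriction along the diagonal $\md\to\md\times\md$, i.e.\ $\pi_m\circ\mu^P=\pi_m\circ\pi_m$. The monad laws then follow componentwise from the corresponding identities for the diagonal of the set $\md$.

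For monoidality, the unitor is the diagonal $I\to I^{\md}$. The productor $X^{\md}\otimes Y^{\md}\to(X\otimes Y)^{\md}$ is the tuple whose $m$-th component is $\pi_m\otimes\pi_m$. Note that $\otimes$ need not be the cartesian product; we only use products to form powers. Naturality, associativity, unitality and symmetry hold componentwise, because each component is built from the coherence isomorphisms of $\otimes$ applied to projections. That $\eta^P$ and $\mu^P$ are monoidal transformations is likewise a componentwise check: after projecting, both sides become $\id\otimes\id$, respectively $\pi_m\pi_m\otimes\pi_m\pi_m$.

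Second, for a monoidal monad $(\Fun R,\tau)$, I form the composite $X\mapsto(\Fun R X)^{\md}=P\Fun R X$ via the canonical comparison
\[
\lambda_X\colon\Fun R(X^{\md})\to(\Fun R X)^{\md},\qquad \pi_m\circ\lambda_X\coloneqq\Fun R(\pi_m).
\]
I then verify that $\lambda$ is a distributive law $\Fun R P\Rightarrow P\Fun R$. Compatibility with $\eta^P$ and $\mu^P$ holds because, componentwise, $\Fun R(\pi_m)\circ\Fun R(\Delta)=\id$ and similarly for $\mu^P$, by functoriality of $\Fun R$. Compatibility with $\eta^{\Fun R}$ and $\mu^{\Fun R}$ is componentwise naturality of these transformations with respect to $\pi_m$. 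Beck's theorem then makes $P\Fun R$ a monad.

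Third, for monoidality I show that $\lambda$ is a monoidal natural transformation. The two composites $\Fun R(X^{\md})\otimes\Fun R(Y^{\md})\to(\Fun R(X\otimes Y))^{\md}$ both have $m$-th component $\tau_{X,Y}\circ(\Fun R\pi_m\otimes\Fun R\pi_m)$, by naturality of $\tau$. The unit comparison is similar, using that $\Fun R$ applied to the diagonal of $I$, followed by $\lambda$, is the diagonal.

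The step I expect to be the main obstacle is the general fact that composing two monoidal monads along a monoidal distributive law yields a monoidal monad, with lax structure
\[
P\Fun R X\otimes P\Fun R Y\to P(\Fun R X\otimes\Fun R Y)\To{P\tau}P\Fun R(X\otimes Y).
\]
Rather than invoke it abstractly, I would verify it directly. Each required coherence diagram lands in a power, so projecting to component $m$ turns it into the corresponding coherence diagram for $\Fun R$ (monoidality of $\eta^{\Fun R}$, $\mu^{\Fun R}$, and associativity and unitality of $\tau$), with the $\pi_m$ absorbed by naturality. This gives the result.
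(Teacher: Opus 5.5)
Your proposal is correct and takes the same route as the paper. Both use the power (reader) monad with componentwise productor $\pi_m\otimes\pi_m$, then the canonical distributive law $\langle\Fun R(\pi_m)\rangle_m\colon\Fun R(X^{\md})\to(\Fun R X)^{\md}$, and then the composite monad $\prod_{\md}\circ\Fun R$. Your componentwise verifications, including that $\lambda$ is a monoidal transformation so that the composite monad is monoidal, correctly fill in the distributive-law and monoidality checks the paper leaves to the reader.
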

\begin{proof}
The power monad $(\blank)^{\md}=\prod_{\md}$ has unit $X\to X^{\md}$ and multiplication $(X^{\md})^{\md}\to X^{\md}$ induced by $\md\to 1$ and $\md\to\md\times\md$, respectively, using only that $\cat M$ has $\md$-fold products. It is lax monoidal for $\otimes$ with unitor $I\to I^{\md}$ and productor $X^{\md}\otimes Y^{\md}\to(X\otimes Y)^{\md}$ whose $i$-th component is $\pi_i\otimes\pi_i$; the unit and multiplication are monoidal for this structure, as one checks componentwise through the projections. Note that $\otimes$ need not be the cartesian product. The canonical $\langle\Fun R(\pi_i)\rangle_i\colon\Fun R(\prod_{\md} X)\to\prod_{\md}\Fun R(X)$ is a distributive law of $\Fun R$ over $\prod_{\md}$, so $(\Fun R)^{\md}\coloneqq\prod_{\md}\circ\Fun R$ is a monoidal monad. The distributive-law and monoidality axioms are routine and left to the reader.
\end{proof}

We call the elements of $\md$ \emph{modes}, e.g.\ ``training mode and inference mode''; \cref{prop.moded_algebra} will interpret them as such.

\section{Polynomial interpretation of adaptive-arrangement data}\label{sec.poly_interpretation}

To interpret the syntax $\arr_D$ in $\poly$ we fix four further ingredients.

\begin{definition}[Polynomial interpretation datum]\label{def.polynomial_interpretation}
A \emph{polynomial interpretation datum} $\defineTerm{interp}=(\Fun c,\Fun R',\theta,\potalg)$ for an adaptive-arrangement datum $D=(\cat M,\cat Q,J,\Fun R)$ consists of
\begin{enumerate}
  \item an \emph{interface functor}: a strong symmetric monoidal functor $\Fun c\colon(\cat M,1,\times)\to(\poly,\yon,\otimes)$;
  \item a \emph{semantic effect}: a monoidal monad $\Fun R'$ on $\poly$;
  \item an \emph{effect comparison}: a monoidal monad morphism $\theta\colon\Fun c\circ\Fun R\Rightarrow\Fun R'\circ\Fun c$, which carries the syntactic effect to the semantic one;
  \item a \emph{potential algebra}: an $\Fun R'$-monoid  $\potalg=(z,e_z,m_z,\alpha)$, which applies the effects (\cref{def.T_monoid}).
  \qedhere
\end{enumerate}
\end{definition}

In the writer instance $\Fun R=R\otimes\blank$ (\cref{def.rewiring_datum}), the canonical choice for the semantic effect is $\Fun R'\coloneqq\Fun c(R)\otimes\blank$ with $\theta$ the comparison $\Fun c(R\otimes\blank)\cong\Fun c(R)\otimes\Fun c(\blank)$ induced by the strong monoidality of $\Fun c$ (\cref{lem.monoid_to_monad}); this is the choice used in \cref{ch.smooth_dynamics}.

With these in hand we build the polynomial interpretation.

\begin{theorem}[Polynomial interpretation]\label{thm.poly_interpretation}
A polynomial interpretation datum $\interp=(\Fun c,\Fun R',\theta,\potalg)$ for $D$ (\cref{def.polynomial_interpretation}) yields a lax symmetric monoidal functor
\[
  \Phip{\interp}\colon\arr_D\longrightarrow\para{\Fun{c}\circ J}{\poly},
\]
where $\cat Q$ acts on $\poly$ by $Q\cdot\blank\coloneqq \Fun{c}(JQ)\otimes\blank$. On
objects, $\Phip{\interp}\lensob{M}=\Fun{c}(\outp{M})\otimes\ihom{\Fun{c}(\inpt{M}),z}$. It is normal if and only if $z\cong\yon$.
\end{theorem}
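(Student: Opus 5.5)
The plan is to factor $\Phip{\interp}$ as a composite of two lax symmetric monoidal functors between Para categories, each supplied by an earlier result:
\[
\para{\cat Q}{\Lcokl{\cat M}{\Fun R}}\To{\;\Para(\LensMor{\Fun c}{\theta})\;}\para{\cat Q}{\Lcokl{\poly}{\Fun R'}}\To{\;\para{\cat Q}{\Theta_\potalg}\;}\para{\cat Q}{\poly}.
\]
Here $\cat Q$ acts on $\poly$ through $\Fun c\circ J$; this is what the notation $\para{\Fun c\circ J}{\poly}$ denotes. Lax symmetric monoidal functors compose, and such functors correspond to operad functors (\cref{sec.prelim}), so the composite gives the claimed functor out of $\arr_D$.

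For the first factor, \cref{lem.lens_T_functoriality} applied to $\Fun c$ and the monoidal monad morphism $\theta$ gives a strong symmetric monoidal functor $\LensMor{\Fun c}{\theta}\colon\Lcokl{\cat M}{\Fun R}\to\Lcokl{\poly}{\Fun R'}$.

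\begin{itemize}
\item \emph{Comonoid hypothesis.} Every object of $\cat M$ is a comonoid because $\cat M$ is cartesian. Since $\Fun c$ is strong monoidal, it carries these to comonoids in $(\poly,\yon,\otimes)$ and comonoid homomorphisms to comonoid homomorphisms. Hence $\Fun c\circ J$ meets the hypotheses of \cref{lem.parameter_lens_action}, and $\cat Q$ acts on both lens categories.
\item \emph{Action square.} Next I check that $\LensMor{\Fun c}{\theta}$ is $\cat Q$-equivariant. The strong-monoidality isomorphism $\Fun c(JQ\times\outp M)\cong\Fun c(JQ)\otimes\Fun c(\outp M)$ supplies the action square. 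It is monoidal because it is built entirely from the coherence data of $\Fun c$.
\item \emph{Conclusion.} \cref{prop.para_square}, with $F=\id_{\cat Q}$ (strict), then yields a strong monoidal functor on Para categories.
\end{itemize}

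For the second factor, apply \cref{cor.parameterized_representation} with $\cat C=\poly$, $\Fun T=\Fun R'$, the potential algebra $\potalg$, and $\cat A=\cat Q$ acting through $\Fun c\circ J$. This produces the lax symmetric monoidal functor $\para{\cat Q}{\Theta_\potalg}$, and it is normal exactly when $z\cong\yon$. On objects the composite sends $\lensob M$ to $\ihom{\Fun c(\inpt M),z}\otimes\Fun c(\outp M)$. Composing with the symmetry of $\otimes$ puts this in the stated form $\Fun c(\outp M)\otimes\ihom{\Fun c(\inpt M),z}$.

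For the normality clause:
\begin{itemize}
\item \emph{If.} The first factor is strong, hence normal, and the second is normal when $z\cong\yon$. So the composite is normal.
\item \emph{Only if.} The unitor of the composite is the unitor of $\Theta_\potalg$ precomposed with an isomorphism. This uses that $\Fun c(1)\cong\yon$ and $\ihom{\yon,z}\cong z$. So if the composite is normal, the unitor $e\colon\yon\to z$ is invertible, and \cref{thm.Theta_T_alpha} gives $z\cong\yon$.
\end{itemize}

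The main obstacle is the equivariance step for $\LensMor{\Fun c}{\theta}$. I must check that the Para image of a coKleisli lens with parameter $JQ$ is the coKleisli lens with parameter $\Fun c(JQ)$ in the action of \cref{lem.parameter_lens_action}. This requires tracking how the productor $F_2$ of $\Fun c$ interacts with the counit $\varepsilon_{J(Q)}$ that the action inserts into backward maps. I would verify it componentwise: forward maps agree by the strength isomorphism, and backward maps agree by naturality of $\theta$ and of $F_2$.
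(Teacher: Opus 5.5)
Your proposal is correct and follows the paper's proof essentially verbatim. The paper likewise factors $\Phip{\interp}$ as $\para{\cat Q}{\LensMor{\Fun c}{\theta}}$ followed by $\para{\cat Q}{\Theta_\potalg}$, using \cref{lem.lens_T_functoriality}, \cref{prop.para_square}, and the action square of \cref{cor.parameterized_representation}; your added checks (the comonoid hypothesis for $\Fun c\circ J$, the symmetry reordering of the object formula, and the ``only if'' direction of normality via the composite unitor reducing to $e\colon\yon\to z$ up to isomorphism) fill in details the paper leaves implicit, and they are sound.
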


\begin{proof}
By \cref{def.rewiring_datum}, $\arr_D$ is the underlying operad of $\para{\cat Q}{\Lcokl{\cat M}{\Fun R}}$. The interface functor $\Fun c$ (strong symmetric monoidal by definition) and the monad morphism $\theta\colon\Fun c\circ\Fun R\Rightarrow\Fun R'\circ\Fun c$ (\cref{def.polynomial_interpretation}) lift, by \cref{lem.lens_T_functoriality}, to a strong monoidal functor $\LensMor{\Fun c}{\theta}\colon\Lcokl{\cat M}{\Fun R}\to\Lcokl{\poly}{\Fun R'}$; and \cref{thm.Theta_T_alpha} (with $\Fun T\coloneqq\Fun R'$) supplies a lax monoidal $\Theta_\potalg\colon\Lcokl{\poly}{\Fun R'}\to\poly$, which is normal iff $z\cong\yon$. By \cref{prop.para_square} the $\cat Q$-action square for $\LensMor{\Fun c}{\theta}$ (from the strong monoidality of $\Fun c$) and the $\cat Q$-action square for $\Theta_\potalg$ recorded in \cref{cor.parameterized_representation} induce the two lax monoidal maps below; restricting to underlying operads gives the desired $\Phip{\interp}$:
\[
\para{\cat Q}{\Lcokl{\cat M}{\Fun R}}\To{\para{\cat Q}{\LensMor{\Fun c}{\theta}}}
\para{\cat Q}{\Lcokl{\poly}{\Fun R'}}\To{\para{\cat Q}{\Theta_\potalg}}
\para{\Fun c\circ J}{\poly}.
\qedhere
\]
\end{proof}

Recall from \cref{prop.moding} that for any finite set $\md$, we can upgrade any adaptive-arrangement datum $D\coloneqq(\cat{M},\cat Q,J,\Fun{R})$ to an $\md$-mode version $D^\md\coloneqq(\cat{M},\cat Q,J,\Fun{R}^\md)$.

\begin{proposition}[Moded interpretations]\label{prop.moded_algebra}
Let $D$ be an adaptive-arrangement datum and $\md$ a finite set. For any polynomial interpretation datum $\interp=(\Fun c,\Fun R',\theta,\potalg)$ for $D$, there is a polynomial interpretation datum $\interp^\md\coloneqq(\Fun c,(\Fun R')^\md,\theta^\md,\potalg^\md)$ for $D^\md$.
\end{proposition}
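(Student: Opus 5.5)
We must produce three ingredients and check their axioms: the semantic effect $(\Fun R')^\md$, the effect comparison $\theta^\md\colon\Fun c\circ\Fun R^\md\Rightarrow(\Fun R')^\md\circ\Fun c$, and the potential algebra $\potalg^\md$, which must be an $(\Fun R')^\md$-monoid. The interface functor $\Fun c$ is unchanged.

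\textbf{Semantic effect.} $\poly$ has finite products and $\otimes$ is symmetric monoidal, so \cref{prop.moding} applied to $(\poly,\yon,\otimes)$ makes $(\Fun R')^\md=\prod_\md\circ\Fun R'$ a monoidal monad.

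\textbf{Effect comparison.} Define $\theta^\md$ as the composite
\[
\Fun c\Bigl(\prod_\md\Fun R X\Bigr)\To{\kappa}\prod_\md\Fun c(\Fun R X)\To{\prod_\md\theta_X}\prod_\md\Fun R'(\Fun c X),
\]
where $\kappa$ is the canonical comparison $\langle\Fun c(\pi_i)\rangle_i$. Any functor induces such a map. We must show that $\theta^\md$ is a morphism of monoidal monads. Compatibility with units and multiplications splits componentwise through the projections $\pi_i$: it reduces to the corresponding axioms for $\theta$, together with naturality of $\kappa$ and its compatibility with the diagonal $\md\to\md\times\md$ and the terminal map $\md\to 1$ that define the power monad. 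Monoidality similarly reduces to monoidality of $\theta$, plus the fact that the productor of $\prod_\md$ is $\pi_i\otimes\pi_i$ in each component. For this we need $\kappa$ to intertwine $\Fun c$'s strong structure with these componentwise productors, which holds by naturality of $\Fun c_2$ applied to the projections.

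\textbf{Potential algebra.} Take carrier $z^\md=\prod_\md z$. It is a $\otimes$-monoid because $\prod_\md$ is lax monoidal and lax monoidal functors preserve monoids. For the structure map, the naive choice $\prod_\md\Fun R'(\prod_\md z)\to z^\md$ would be $\prod_\md(\alpha\circ\Fun R'(\pi_i))$, but this uses all components in a mixed way. The correct choice is
\[
\alpha^\md\colon\prod_\md\Fun R'\bigl(\textstyle\prod_\md z\bigr)\to\prod_\md z,
\qquad
\pi_i\circ\alpha^\md\coloneqq\alpha\circ\Fun R'(\pi_i)\circ\pi_i,
\]
which reads mode $i$'s effect into mode $i$'s potential. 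This is the standard construction of an algebra for a composite monad from a distributive law: $\prod_\md$-algebra structure on $z^\md$ via the diagonal (the free one), and $\Fun R'$-algebra structure componentwise.

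\textbf{Main obstacle.} The substantive step is checking that $\alpha^\md$ is an algebra for the composite monad $\prod_\md\circ\Fun R'$ and a monoid homomorphism as in \eqref{eqn.T_monoid}. My plan is to compose with each projection $\pi_i$ and reduce every diagram to the corresponding one for $\alpha$:
\begin{itemize}
\item The unit law uses $\pi_i\circ\eta^\md=\eta\circ\pi_i$.
\item The multiplication law uses that the composite multiplication, post-composed with $\pi_i$, becomes $\mu$ on the $(i,i)$ diagonal component after the distributive law.
\item The homomorphism squares use that $\tau^\md$ and $m^\md$ are computed componentwise.
\end{itemize}
If the direct chase becomes unwieldy, I would instead invoke \cref{rmk.T_monoid_EM}. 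Since $\prod_\md$ preserves monoids and lifts to $\Cat{Mon}_\otimes(\poly)$, it suffices to show that $\prod_\md$ carries $\Fun R'$-algebras in $\Cat{Mon}_\otimes$ to $(\Fun R')^\md$-algebras there. This is the general fact that, for a distributive law $\Fun R'\prod_\md\Rightarrow\prod_\md\Fun R'$, applying $\prod_\md$ to an $\Fun R'$-algebra and precomposing with the diagonal gives an algebra of the composite monad. That fact needs only the distributive-law axioms already granted in \cref{prop.moding}.
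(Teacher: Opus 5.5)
Your proposal is correct and is essentially the paper's proof: the same semantic effect via \cref{prop.moding}, and the same comparison $\theta^\md$, namely the canonical map $\Fun c\circ\prod_\md\Rightarrow\prod_\md\circ\Fun c$ followed by $\prod_\md\theta$. It also uses the same potential algebra on $\prod_\md z$ with $\pi_i\circ\alpha^\md=\alpha\circ\Fun R'(\pi_i)\circ\pi_i$, where the paper simply declares the $(\Fun R')^\md$-monoid axioms easy to check and your componentwise reductions supply that check, with one notational slip: the unit-law identity you need is $\Fun R'(\pi_i)\circ\pi_i\circ\eta^\md_{z^\md}=\eta_z\circ\pi_i$ (from $\pi_i\circ\eta^\md_{z^\md}=\eta_{z^\md}$ and naturality of $\eta$), not $\pi_i\circ\eta^\md=\eta\circ\pi_i$.
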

\begin{proof}
By \cref{prop.moding}, applied to the symmetric monoidal category $(\poly,\yon,\otimes)$ with its finite products, the composite $(\Fun R')^\md\coloneqq\prod_\md\circ\,\Fun R'$ is a monoidal monad on $(\poly,\otimes)$. The universal property of products gives a map $\Fun c\circ\prod_\md\To{}\prod_\md\circ\,\Fun c$, so we take the required comparison $\theta^\md$ to be the composite
\[
\Fun c\circ\Fun R^\md=\Fun c\circ\textstyle\prod_\md\circ\,\Fun R
\to\textstyle\prod_\md\circ\,\Fun c\circ\Fun R
\To{\prod_\md\theta}\textstyle\prod_\md\circ\,\Fun R'\circ\Fun c=(\Fun R')^\md\circ\Fun c.
\]

For the potential algebra, take $z^{\md}\coloneqq\prod_{\md} z$ to be the $\md$-fold product of the monoid $z$, which is naturally a monoid. Its $(\Fun R')^{\md}$-algebra structure $\alpha^\md$ is given, on the factor at each $i\in\md$, by the composite
\[(\Fun R')^\md(z^{\md})=\textstyle\prod_\md\Fun R'(\textstyle\prod_\md z)\To{\pi_i}\Fun R'(\textstyle\prod_\md z)\To{\Fun R'(\pi_i)}\Fun R'(z)\To{\alpha}z,\]
using the universal property of products. 
It is easy to check that $\potalg^{\md}$ is a $(\Fun R'\blank)^{\md}$-monoid.
\end{proof}

\section{Integrators and the dynamics functor}
  \label{sec.dynamics_functor_abstract}

This section assembles the dynamics functor $\Phi\colon\arr_D\to\pc$ in three steps: we repackage $\pciso$ as a Para construction (\cref{prop.pc_as_para,lem.poly_to_pc}), define \emph{integrators} and the semantics they induce (\cref{def.integrator,prop.integrator_to_pc}), and combine these with the polynomial interpretation to obtain $\Phi_{\interp,\intg}$ (\cref{thm.dynamics_functor}).

Following the convention of \cref{sec.prelim}, we refer to $(2,1)$-categories simply as categories, though we do take care of the 2-dimensional isomorphisms in proofs. Recall from \cref{lem.store_monoidal} that $\Store\colon\smsetiso\to\poly$ sends $S:\smset$ to the store comonad $S\yon^S$.

\begin{definition}[Store action]\label{def.store_action}
The \emph{store action} of $\smsetiso$ on $\poly$ is the action induced by $\Store$ via \cref{lem.smc_action}:
\[
(\cdot)\colon\smsetiso\times\poly\to\poly,
\qquad
S\cdot p\coloneqq \Store(S)\otimes p.
\qedhere\]
\end{definition}

\begin{proposition}\label{prop.pc_as_para}
There is an isomorphism of categories
\[
\para{\smsetiso}{\poly}\To{\cong}\pciso.
\]
\end{proposition}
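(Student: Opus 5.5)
The plan is to compare the two $(2,1)$-categories level by level: objects, hom-groupoids, identities, composition, and 2-cells. On objects both sides are just polynomials $p:\poly$, so the functor is the identity on objects.

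\emph{Hom-groupoids.} A 1-cell $p\to p'$ in $\para{\smsetiso}{\poly}$ is a pair $(S,\phi)$ with $\phi\colon\Store(S)\otimes p\to p'$. By the closed structure \eqref{eqn.dirichlet_adjunction}, this is the same as a map $\Store(S)\to\ihom{p,p'}$. By \cref{prop.coalg_as_poly_map}, that is in turn the same as a coalgebra $S\to\ihom{p,p'}(S)$, i.e.\ a 1-cell of $\pciso$. A 2-cell $(S,\phi)\Rightarrow(S',\phi')$ in Para is a bijection $g\colon S\to S'$ with $\phi'\circ(\Store(g)\otimes p)=\phi$. Transposing along the adjunction, which is natural in the first variable, this becomes $\hat\phi'\circ\Store(g)=\hat\phi$. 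By the naturality-in-$S$ part of \cref{prop.coalg_as_poly_map}, this is exactly the condition that $g$ is a coalgebra isomorphism. This gives an isomorphism of hom-groupoids, not merely an equivalence, since both bijections are bijections of sets that are compatible with the 2-cells.

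\emph{Identities.} The Para identity is $(1,\Store(1)\otimes p\cong\yon\otimes p\cong p)$. It transposes to the one-state coalgebra picking out $\id_p\in\poly(p,p)=\ihom{p,p}(1)$, which is the identity of $\pciso$.

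\emph{Composition (main obstacle).} I expect composition to be the step needing real care, because $\pciso$ composes through change of base. There, the composite of $(S,f)$ and $(T,g)$ is formed in four steps:
\begin{itemize}
\item tensor to get $\Store(S)\otimes\Store(T)\to\ihom{p,p'}\otimes\ihom{p',p''}$;
\item identify the source with $\Store(S\times T)$ via \cref{lem.store_monoidal};
\item postcompose with the internal composition map $\ihom{p,p'}\otimes\ihom{p',p''}\to\ihom{p,p''}$;
\item transport back to a coalgebra via the lax structure of $\Coalg$.
\end{itemize}
The Para composite instead has parameter $T\times S$ and underlying map
\[
\Store(T\times S)\otimes p\cong\Store(T)\otimes\Store(S)\otimes p\To{\Store(T)\otimes\phi}\Store(T)\otimes p'\To{\psi}p''.
\]
The task is to check that the transpose of this map equals the internal composite of the transposes. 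This is the standard fact that the internal composition in a closed category is the transpose of iterated evaluation: $\ihom{p,p'}\otimes\ihom{p',p''}\otimes p\to p''$ is $\ev$ followed by $\ev$. So the chase reduces to the triangle identities and to naturality of $\ev$. One must also match the order convention $T\times S$ versus $S\times T$ using the symmetry of $\Store$. The choice is harmless, but to get an isomorphism of categories rather than a pseudofunctor, I would fix the composition convention in $\pciso$ to agree with Para, or absorb it into the canonical isomorphism.

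\emph{Coherence.} Finally, the associators and unitors on both sides come from the same cartesian coherences in $\smsetiso$, transported by the strong monoidal functor $\Store$. Hence they correspond under the bijection, which completes the isomorphism.
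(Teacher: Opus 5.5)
Your proposal is correct and follows essentially the same route as the paper: identify the hom-groupoids via the Dirichlet hom--tensor adjunction and \cref{prop.coalg_as_poly_map}, then check that identities and composites correspond. You are more careful than the paper's terse proof: it does not treat 2-cells explicitly, it asserts that the composites match without the evaluation chase, and it glosses over the $S\times T$ versus $T\times S$ ordering issue that you rightly flag.
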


\begin{proof}
Both $\para{\smsetiso}{\poly}$ and $\pciso$ are $(2,1)$-categories with object set $\ob\poly$, and the mapping groupoids $p\to p'$ agree
\begin{align*}
	\para{\smsetiso}{\poly}(p,p')&=
  \sum_{S:\smsetiso}\poly(\Store(S)\otimes p, p')\\&\cong
  \sum_{S:\smsetiso}\poly(S\yon^S,[p,p'])\\&\cong
  \sum_{S:\smsetiso}\smset(S,[p,p'](S))=\pciso(p,p')
\end{align*}
by \cref{def.store_action,eqn.dirichlet_adjunction,prop.coalg_as_poly_map}. It remains to show that the identities and composites agree.

In both cases the identity is indexed by the monoidal unit $1:\smsetiso$, and $1\To{\id}\poly(p,p)=[p,p](1)$ and $\yon\To{\id}[p,p]$ are identified as above. Given composable maps $p\To{(S,\varphi)}p'\To{(T,\psi)}p''$, in both cases the composite is of the form $(TS,\psi\circ\varphi)$, and the same identifications match these composites.
\end{proof}

\begin{lemma}\label{lem.poly_to_pc}
Let $\cat A$ be a symmetric monoidal category and let $F\colon\cat A\to\smsetiso$ be strong symmetric monoidal. Let $\cat A$ act on $\poly$ via the strong monoidal composite $\Store\circ F\colon\cat A\to\poly$ as in \cref{lem.smc_action}. There is an identity-on-objects normal lax symmetric monoidal functor
\[
D_F\colon\para{\Store\circ F}{\poly}\to\pciso.
\]
\end{lemma}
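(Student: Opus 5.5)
The plan is to factor $D_F$ through \cref{prop.pc_as_para}. First, I would produce an identity-on-objects functor
\[
\para[\id]{F}{\poly}\colon\para{\Store\circ F}{\poly}\to\para{\smsetiso}{\poly},
\]
where the source carries the $\cat A$-action $a\cdot p=\Store(F(a))\otimes p$ and the target carries the store action of \cref{def.store_action}. Then I would postcompose with the isomorphism $\para{\smsetiso}{\poly}\cong\pciso$.

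For the first factor I apply \cref{prop.para_square} with $F\colon\cat A\to\smsetiso$ and $G\coloneqq\id_\poly$. By the definitions, the action square is literally an equality:
\[
F(a)\cdot p=\Store(F(a))\otimes p=a\cdot p.
\]
More carefully, I would take $\theta$ to be the identity natural transformation $\Store\circ F\Rightarrow\Store\circ F$ and invoke \cref{prop.para_strong_induced}. That result gives a strong monoidal action square, because $\id_\poly$ is strong monoidal and $\theta$ is trivially monoidal. The only thing to check is that the resulting 2-cell is compatible with the action unitors and associators. This holds because the monoidal structure of $\Store\circ F$ is by definition the composite of the coherence isomorphisms of $F$ and of $\Store$ (\cref{lem.store_monoidal}). \Cref{prop.para_square} then yields a pseudofunctor which is strong monoidal, since $G$ is. On a morphism it acts by
\[
(a,f)\mapsto(F(a),f),
\]
and on 2-cells by $u\mapsto F(u)$.

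For the second factor, I need \cref{prop.pc_as_para} to be an isomorphism of symmetric monoidal $(2,1)$-categories, not merely of categories. The monoidal structure on $\para{\smsetiso}{\poly}$ comes from \cref{lem.para_monoidal}: on objects it is $\otimes$, and on a pair of 1-cells it uses
\[
\Store(S\times T)\otimes p\otimes q\cong(\Store S\otimes p)\otimes(\Store T\otimes q).
\]
The monoidal structure on $\pciso$ comes from the change of base of \cref{prop.pc}, via the laxator of $\Coalg$. That laxator sends coalgebras corresponding to $S\yon^S\to[p,p']$ and $T\yon^T\to[q,q']$ to the coalgebra corresponding to
\[
ST\yon^{ST}\cong S\yon^S\otimes T\yon^T\to[p,p']\otimes[q,q']\to[p\otimes q,p'\otimes q'].
\]
Under the hom-tensor adjunction and \cref{prop.coalg_as_poly_map}, these two descriptions match. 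Composing the two factors then gives a strong, hence in particular normal lax, symmetric monoidal identity-on-objects functor, which is the claim.

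The main obstacle is this last verification: that the identification in \cref{prop.pc_as_para} respects tensor products of 1-cells and the symmetry. My approach is to unfold both tensor products into the same element of $\poly(\Store(S\times T)\otimes p\otimes q,\,p'\otimes q')$ and cite naturality of the adjunction \eqref{eqn.dirichlet_adjunction} in its first argument. The unit condition is trivial, since both units are the one-state identity coalgebra on $\yon$. If a strong structure proves awkward to certify, it suffices to exhibit the normal lax structure with identity unitor and these comparison maps; the statement only requires that.
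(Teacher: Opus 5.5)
Your proposal is correct and follows the paper's own proof. The paper applies \cref{prop.para_square} to the strictly commuting action square with $F\times\id_\poly$ (i.e.\ $G=\id_\poly$, $\theta=\id$) to get $\para{\Store\circ F}{\poly}\to\para{\smsetiso}{\poly}$, then composes with the isomorphism of \cref{prop.pc_as_para}. Your extra check, that this isomorphism respects the tensor of 1-cells and the symmetry via naturality of \eqref{eqn.dirichlet_adjunction}, fills in a step the paper leaves implicit, since \cref{prop.pc_as_para} is stated only as an isomorphism of categories.
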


\begin{proof}
The induced action sends $(Q,p)\mapsto \Store(F(Q))\otimes p=F(Q)\yon^{F(Q)}\otimes p$. The following action square, whose lower edge is the store action (\cref{def.store_action}), obviously commutes:
\[
\begin{tikzcd}[ampersand replacement=\&, column sep=60pt]
\cat A\times\poly\ar[r,"\Store(F(\blank))\otimes\blank"]\ar[d,"F\times\id_\poly"', ""{name=L}]\& \poly\ar[d, equal, ""' {name=R}]\\
\smsetiso\times\poly\ar[r,"\Store(\blank)\otimes\blank"']\& \poly.
\end{tikzcd}
\]
\Cref{prop.para_square} gives $\para{\Store\circ F}{\poly}\to\para{\smsetiso}{\poly}$, and composing with the isomorphism $\para{\smsetiso}{\poly}\cong\pciso$ from \cref{prop.pc_as_para} yields the claim. For normality, note that $F$ being strong monoidal gives $F(I_{\cat A})\cong 1$, so $D_F$ sends the unit $(I_{\cat A},\yon)$ to $\Store(F(I_{\cat A}))\otimes\yon\cong\yon$.
\end{proof}

The idea of \cref{lem.poly_to_pc} is very simple: that a polynomial map $F(Q)\yon^{F(Q)}\otimes p\to p'$ can be converted into a $[p,p']$-coalgebra with states $F(Q)$, and that this process is functorial.

\begin{definition}[Integrator]\label{def.integrator}
Let $\cat Q$ be a symmetric monoidal category and $p\colon\cat Q\to\poly$ a strong symmetric monoidal functor, the \emph{parameter interface}. A \emph{$p$-integrator} is a pair $\defineTerm{intg}=(\Fun S,\dyn)$
\[
\begin{tikzcd}[column sep=60pt]
	\cat Q\ar[d, equal]\ar[r, "\Fun S", ""' name=top]&
	\smsetiso\ar[d, "\Store"]\\
	\cat Q\ar[r, "p"', ""{name=bot}]&
	\poly
	\arrow[from=top, to=bot, Rightarrow, shorten=3pt, "\dyn"]
\end{tikzcd}
\]
where $\Fun S\colon\cat Q\to\smsetiso$, the \emph{state space} functor, is strong symmetric monoidal and $\defineTerm{dyn}\colon\Store\circ\Fun S\Rightarrow p$, the \emph{dynamics map}, is a monoidal natural transformation.
\end{definition}

By \cref{prop.coalg_as_poly_map}, any component $\dyn_a\colon\Store(\Fun S_a)\to p_a$ is equivalently a $p_a$-coalgebra structure on the state set $\Fun S_a$, and for any $a,a'$, the coalgebra associated to $a\otimes a'$ is the Dirichlet product of those associated to $a$ and $a'$, i.e.\ the two coalgebras running in parallel.

We call $\intg$ an \emph{integrator} for what its instances turn out to be. Indeed the integrators built below---configuration and phase---are numerical integrators of explicit-Euler type, namely Euler on configuration space and a phase-space analogue. They differ in both the state space $\Fun S$ (configuration versus phase space) and the dynamics $\dyn$.

\begin{proposition}\label{prop.integrator_to_pc}
Let $\cat Q$ be a symmetric monoidal category and $p\colon\cat Q\to\poly$ a strong symmetric monoidal functor. Every $p$-integrator $\intg$ induces a normal lax symmetric monoidal functor
\[\Psisem{\intg}\colon\para{p}{\poly}\to\pciso.\]
\end{proposition}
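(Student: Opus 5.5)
The plan is to factor $\Psisem{\intg}$ as a composite of two functors already available. The first changes the acting object from $p$ to $\Store\circ\Fun S$ via \cref{prop.para_square}. The second is the functor $D_{\Fun S}\colon\para{\Store\circ\Fun S}{\poly}\to\pciso$ of \cref{lem.poly_to_pc}, taken with $F\coloneqq\Fun S$, which is strong symmetric monoidal by \cref{def.integrator}. Since \cref{lem.poly_to_pc} already gives $D_{\Fun S}$ as identity-on-objects and normal lax symmetric monoidal, it suffices to build a normal lax (indeed strong) symmetric monoidal, identity-on-objects functor
\[
\para{p}{\poly}\too\para{\Store\circ\Fun S}{\poly}
\]
and then compose.

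For this first functor I apply \cref{prop.para_strong_induced} to the square with top edge $\Store\circ\Fun S\colon\cat Q\to\poly$, bottom edge $p\colon\cat Q\to\poly$, left edge $F\coloneqq\id_{\cat Q}$ and right edge $G\coloneqq\id_\poly$. The required monoidal natural transformation points from $J'\circ F$ to $G\circ J$, so it must run from the acting functor on the \emph{target} side to the acting functor on the \emph{source} side. We therefore take the source Para to be acted on via $p$ and the target via $\Store\circ\Fun S$, i.e.\ $J=p$ and $J'=\Store\circ\Fun S$. Then $\theta\colon\Store\circ\Fun S\Rightarrow p$ is exactly $\dyn$, a monoidal natural transformation by \cref{def.integrator}.

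\Cref{prop.para_strong_induced} then yields a strong monoidal action square, whose 2-cell at $(Q,q)$ is
\[
\Store(\Fun S_Q)\otimes q\To{\dyn_Q\otimes q}p_Q\otimes q .
\]
Feeding this into \cref{prop.para_square} with $F=\id_{\cat Q}$ strict and $G=\id_\poly$ strong gives a strict, strong symmetric monoidal functor $\para[\dyn]{\id}{\id}\colon\para{p}{\poly}\to\para{\Store\circ\Fun S}{\poly}$. On morphisms it sends $(Q,\varphi\colon p_Q\otimes q\to q')$ to $(Q,(\dyn_Q\otimes q)\then\varphi)$. Composing with $D_{\Fun S}$, which is normal lax, gives a normal lax symmetric monoidal composite; I set $\Psisem{\intg}\coloneqq D_{\Fun S}\circ\para[\dyn]{\id}{\id}$.

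The main point to check is the variance bookkeeping. We need precomposition with $\dyn$ to be covariant on Para-morphisms, and we need the compatibility of $\dyn$ with the action unitors and associators. The latter holds because $\dyn$ is monoidal, so its components at $I$ and at $Q\otimes Q'$ agree with $\Store(\Fun S_{Q\otimes Q'})\cong\Store(\Fun S_Q)\otimes\Store(\Fun S_{Q'})$ followed by $\dyn_Q\otimes\dyn_{Q'}$. I also check that 2-cells are respected, which is naturality of $\dyn$.

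Unwinding the composite, a parameterized map $(Q,\varphi)$ goes to the $[q,q']$-coalgebra on the state set $\Fun S_Q$ corresponding, under \cref{prop.coalg_as_poly_map} and \eqref{eqn.dirichlet_adjunction}, to $\Store(\Fun S_Q)\otimes q\To{\dyn_Q}p_Q\otimes q\To{\varphi}q'$. This matches the intended reading of an integrator.
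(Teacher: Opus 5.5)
Your proposal is correct and follows the paper's proof. You factor $\Psisem{\intg}$ as $D_{\Fun S}$ from \cref{lem.poly_to_pc}, precomposed with the strong monoidal change-of-action functor $\para{p}{\poly}\to\para{\Store\circ\Fun S}{\poly}$ that $\dyn$ induces via \cref{prop.para_strong_induced,prop.para_square}, and you get normality because a strong functor followed by a normal lax one is normal lax; your explicit variance bookkeeping ($J=p$, $J'=\Store\circ\Fun S$, so $\theta=\dyn$) is simply a detail the paper leaves implicit.
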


\begin{proof}
Let $\intg=(\Fun S,\dyn)$. Since the state space functor $\Fun S$ is strong symmetric monoidal, \cref{lem.poly_to_pc} provides a normal lax monoidal functor
\[
D_{\Fun S}\colon\para{\Store\circ\Fun S}{\poly}\to\pciso.
\]
By \cref{prop.para_strong_induced,prop.para_square}, the dynamics $\dyn\colon\Store\circ\Fun S\Rightarrow p$ induces a strong symmetric monoidal functor
\[
\Para_{\cat Q}^{\dyn}(\poly)\colon\para{p}{\poly}\to\para{\Store\circ\Fun S}{\poly}.
\]
Define $\defineTerm{Psisem}_{\intg}\coloneqq D_{\Fun S}\circ\Para_{\cat Q}^{\dyn}(\poly)$; it is normal lax monoidal because $\Para_{\cat Q}^{\dyn}(\poly)$ is strong.
\end{proof}

Let $\arr_D$ be the adaptive-arrangement operad of an adaptive-arrangement datum $D=(\cat{M},\cat Q,J,\Fun R)$.

\begin{theorem}[Dynamics functor]\label{thm.dynamics_functor}
Every polynomial interpretation datum $\interp=(\Fun c,\Fun R',\theta,\potalg)$ and $(\Fun{c}\circ J)$-integrator $\intg$ induce an operad functor
\[
\Phi_{\interp,\intg}\colon\arr_D\to\pc,
\]
which is normal (as a lax symmetric monoidal functor) when $z\cong\yon$.
\end{theorem}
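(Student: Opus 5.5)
The plan is to build $\Phi_{\interp,\intg}$ as a composite of three lax symmetric monoidal functors that the paper has already constructed, and then pass to underlying operads. First, \cref{thm.poly_interpretation} applied to $\interp$ gives a lax symmetric monoidal functor
\[
\Phip{\interp}\colon\para{\cat Q}{\Lcokl{\cat M}{\Fun R}}\to\para{\Fun c\circ J}{\poly}.
\]
This functor is normal exactly when $z\cong\yon$, and its underlying operad functor has domain $\arr_D$ by \cref{def.rewiring_datum}. Second, take the parameter interface $p\coloneqq\Fun c\circ J\colon\cat Q\to\poly$. It is strong symmetric monoidal, since $J$ is strong and $\Fun c$ is strong symmetric monoidal. \Cref{prop.integrator_to_pc} applied to the $(\Fun c\circ J)$-integrator $\intg$ then gives a normal lax symmetric monoidal functor $\Psisem{\intg}\colon\para{\Fun c\circ J}{\poly}\to\pciso$. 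Third, compose with the inclusion $\pciso\inj\pc$ from \cref{sec.pc}. That inclusion is obtained by base change along the monoidal transformation $\Fun{core}\Rightarrow\id_{\smcat}$, so it is strict (in particular normal) symmetric monoidal.

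Next I would check that the intermediate categories match. The codomain of $\Phip{\interp}$ is $\para{\Fun c\circ J}{\poly}$, where $\cat Q$ acts by $Q\cdot q=\Fun c(JQ)\otimes q$. The domain of $\Psisem{\intg}$ is $\para{p}{\poly}$, with the action induced by $p$ via \cref{lem.smc_action}, which is the same action. So the composite
\[
\Phi_{\interp,\intg}\coloneqq \inc\circ\Psisem{\intg}\circ\Phip{\interp}
\]
is a lax symmetric monoidal functor of $(2,1)$-categories. Lax monoidal functors correspond to operad functors on underlying operads (\cref{sec.prelim}), so this yields the required operad functor $\arr_D\to\pc$.

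For normality, a composite of lax monoidal functors has unitor equal to the composite of the unitors. The unitors of $\Psisem{\intg}$ and the inclusion are invertible, and the unitor of $\Phip{\interp}$ is invertible when $z\cong\yon$, so the composite is then normal.

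The step I expect to need the most care is coherence at the bicategorical level. The Para categories are $(2,1)$-categories, and $\Psisem{\intg}$ passes through the pseudo-functor of \cref{prop.para_square} (strict only if $\Fun S$ is strict) together with the isomorphism of \cref{prop.pc_as_para}. So the composites of lax monoidal pseudo-functors have to compose coherently up to the invertible 2-cells. I would handle this by noting that every functor involved is identity-on-objects after the first, or agrees with $\Fun c$ on objects, and that each comparison cell is built from the productors of strong monoidal functors. Standard composition of lax monoidal pseudofunctors then applies, and the underlying operad functor is well defined up to the given coherent isomorphisms.
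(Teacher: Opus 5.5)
Your proposal is correct and is essentially the paper's proof. The paper also defines $\Phi_{\interp,\intg}$ as the composite $\arr_D\to\para{\Fun c\circ J}{\poly}\to\pciso\inj\pc$ of $\Phip{\interp}$, $\Psisem{\intg}$, and the inclusion, and deduces normality when $z\cong\yon$ from the normality of each factor; your extra checks (that the two $\cat Q$-actions on $\poly$ coincide, and the $(2,1)$-categorical coherence remark) are elaborations the paper leaves implicit.
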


\begin{proof}
Define $\Phi_{\interp,\intg}$ to be the composite of the lax monoidal functors below, given by \cref{thm.poly_interpretation,prop.integrator_to_pc}:
\[
\arr_D\To{\Phip{\interp}}
\para{\Fun{c}\circ J}{\poly}\To{\Psisem{\intg}}
\pciso\inj\pc.
\]
When $z\cong\yon$ all three factors are normal---$\Phip{\interp}$ by \cref{thm.poly_interpretation}, $\Psisem{\intg}$ by \cref{prop.integrator_to_pc}, and the inclusion strongly---so the composite is normal.
\end{proof}

Unwinding, we find that the functor $\Phi_{\interp,\intg}$ sends an object $\lensob M$ to the polynomial interface
\[
\Phi_{\interp,\intg}\lensob M=\Fun c(\outp M)\otimes\ihom{\Fun c(\inpt M),z}
\]
and a morphism $f=(Q,\outp f,\inpt f)\colon\lensob M\to\lensob N$ to the $\pc$-morphism $\Phi_{\interp,\intg}\lensob M\to\Phi_{\interp,\intg}\lensob N$ carried by the state set $\Fun S_Q$: the coalgebra (\cref{prop.coalg_as_poly_map}) whose structure map is the composite
\[
\Store(\Fun{S}_Q)=\Fun{S}^{ }_Q\yon^{\Fun S_Q}
\To{\dyn_Q}
\Fun c(JQ)
\To{\widetilde{\Phip{\interp}(f)}}
\ihom*{\Phi_{\interp,\intg}\lensob M,\Phi_{\interp,\intg}\lensob N}.
\]
Here $\dyn_Q$ is the integrator's dynamics (\cref{def.integrator}) and $\widetilde{\Phip{\interp}(f)}$ is the transpose under \eqref{eqn.dirichlet_adjunction} of the $\Fun c(JQ)$-parameterized polynomial map $\Phip{\interp}(f)$ of \cref{thm.poly_interpretation}. The integrator enters only through $\dyn_Q$, which trades the parameter $\Fun c(JQ)$ for the stored state $\Fun S_Q$, while the backward pass $\inpt f$ and the output map $\outp f$ are recorded inside $\Phip{\interp}(f)$.

When $z\not\cong\yon$, the unit comparison cell for the empty interface $I=\lensunit\in\arr_D$ corresponds to a non-invertible $\pc$-morphism $\yon\to z$. It will be a single-state coalgebra on $\ihom{\yon,z}\cong z$, which picks out the unit position $e_z\in z(1)$. 

\begin{remark}[Multi-stage integrators]\label{rmk.multistage}
The integrators we build are \emph{single-stage}: at each tick the system emits a position, receives a direction, and updates once. Many numerical integrators---Runge--Kutta methods, or velocity Verlet (leapfrog)~\cite[\S 2.2]{gauckler2017dynamics}---instead take several emit--receive rounds per tick, producing a coalgebra of the form $S\to p^{\circ\ell}(S)$, where $p^{\circ\ell}=p\circ\cdots\circ p$ is the $\ell$-fold composite. A correspondingly modified integrator (\cref{def.integrator}), with state space $\Fun S$ unchanged but dynamics of the form $\Store\circ\Fun S\Rightarrow(\Fun c\circ J)^{\circ\ell}$, should again yield an operad functor into the analogous $2$-category of $\ell$-round coalgebras. An LLM (Claude) has implemented the $\ell=2$ case (velocity Verlet) in companion code (\url{https://github.com/dspivak/dap}), where it is used to integrate the wave equation; we have not independently audited it, and leave the general treatment to future work. See also \cite{ngotiaoco2017compositionality}.\qedhere
\end{remark}

\begin{example}[A syntactic datum]\label{ex.syntactic_datum}
The spaces $\cat M$ of \cref{def.rewiring_datum} need not be manifolds; they can arise from a genuinely algebraic syntax. For example, one can take the Cartesian category $\Sigma$ presented by a signature of primitive operations---e.g.\ addition, multiplication, and $\tanh\colon\rr\to\rr$, subject to the commutative-$\rr$-algebra laws---but more generally any desired syntax that includes the theory of real vector spaces via some strong monoidal $J\colon\rvect\to\Sigma$ (the responsive vector spaces of \cref{sec.rvect}).

Then $D_\Sigma\coloneqq(\Sigma,\rvect,J,\rr\times\blank)$ is an adaptive-arrangement datum, and if each primitive has a smooth analogue, one has a strong monoidal $\iota\colon\Sigma\to\mfd$ fixing $(\rr,0,+)$. By the functoriality used in the proof of \cref{thm.poly_interpretation}---namely of $\Lcokl{-}{\rr}$ in the base (\cref{lem.lens_T_functoriality}) and of $\para{\rvect}{-}$ (\cref{prop.para_square})---this $\iota$ induces an operad functor $\arr_{D_\Sigma}\to\sarr$ into the smooth adaptive-arrangement operad of \cref{ch.smooth_rwd}, and composing with the smooth dynamics functor $\Phi$ of \cref{ch.smooth_dynamics}, we obtain semantics $\arr_{D_\Sigma}\to\pc$ for the algebraic syntax.

A feedforward network of affine layers and $\tanh$ is then a single arrangement of $\arr_{D_\Sigma}$, i.e.\ a finite syntactic term, not an arbitrary smooth map.
Under the configuration semantics, if $\sharpR_q=\etaLR\sharpEuc{}$ and the incoming covector is the differential of a loss, its image has exactly the gradient-descent update (\cref{sec.dl_warmup}).
\end{example}

%

\chapter{Smooth adaptive arrangements}\label{ch.smooth_rwd}

This section assembles the smooth adaptive-arrangement datum $\Sm$ and its syntax operad $\sarr$. We recall smooth-manifold notation in \cref{sec.manifolds_notation}, introduce the category $\rvect$ of responsive vector spaces in \cref{sec.rvect}, and assemble $\Sm$, whose adaptive-arrangement operad $\sarr$ consists of $\rvect$-parameterized manifold lenses with real-valued potentials, in \cref{sec.potentialized_lenses}. This $\sarr$ is the smooth adaptive-arrangement syntax, the particular $\cat{W}$ promised in \cref{sec.wd_operads}; its interpretation in $\poly$ and the resulting dynamics are the subject of \cref{ch.smooth_dynamics}.

\section{Smooth-manifold notation}\label{sec.manifolds_notation}

In this section we fix the differential-geometric notation used throughout the rest of this paper.

We work in the cartesian category $(\defineTerm{mfd},1,\times)$ of finite-dimensional smooth real manifolds and smooth maps; the monoidal unit $1\coloneqq\rr^0$ is the one-point manifold. For a manifold $M$, we write $TM:\mfd$ and $T^*M:\mfd$ for the tangent and cotangent bundles. Given a point $m\colon 1\to M$, we write $T_mM:\vect$ and $T^*_mM:\vect$ for the tangent space and cotangent space at $m$.

We often elide the inclusion $\inc\colon\defineTerm{vect}\to\mfd$, treating finite-dimensional real vector spaces as manifolds. The direct sum $(0,\oplus)$ structure on $\vect$ is (a coproduct and) a product, and both $\inc$ and $\absval{\blank}$ are product preserving, e.g.\ $\absval{V\oplus W}\cong\absval V\times\absval W$.

The tangent functor $T\colon\mfd\to\mfd$ preserves products, and the cotangent-bundle assignment (which is not functorial) does so on objects: $T(M\times N)\cong TM\times TN$ and $T^*(M\times N)\cong T^*M\times T^*N$; in particular
\begin{equation}\label{eqn.T_products}
T_{(m,n)}(M\times N)\cong T_mM\oplus T_nN
\qqand
T^*_{(m,n)}(M\times N)\cong T^*_mM\oplus T^*_nN.
\end{equation}

For a smooth map $f\colon M\to N$, write
\[
Tf\colon TM\to TN
\]
for the \emph{tangent map} (also called the \emph{global differential}), and
\[
T_m f\colon T_mM\to T_{f(m)}N
\qqand
(T_m f)^\top\colon T^*_{f(m)}N\to T^*_mM
\]
for the \emph{differential at $m$}---the restriction of $Tf$ to the fiber over $m\in M$---and its transpose, each of which is a map in $\vect$.%
\footnote{In the literature, what we denote $T_m f$ is often denoted $Df_m$.}

By standard manifold theory, the canonical global chart on a finite-dimensional real vector space $Q$ provides linear isomorphisms
\begin{equation}\label{eqn.tangent_vec}
T_qQ \;\cong\; Q \qqand T^*_qQ \;\cong\; Q^*
\end{equation}
at every point $q\in Q$; we use these identifications throughout, often without comment. Assembling these fibers over all $q$, the same chart trivializes the tangent and cotangent bundles of the manifold $Q$,
\begin{equation}\label{eqn.TT_def}
TQ\cong Q\oplus Q\qqand T^*Q\cong Q\oplus Q^*,
\end{equation}
so the tangent and cotangent bundles of a vector space are again vector spaces. We also will make frequent use of the canonical identification
\begin{equation}\label{eqn.canonical_dual_sum}
(Q\oplus Q^*)^*\cong Q^*\oplus Q,\qquad
\lambda\leftrightarrow(\xi,q)\quad\text{where}\quad
\lambda(q',\xi')=\xi(q')+\xi'(q).
\end{equation}

When the codomain is $\rr$, the tangent map of any $U\colon M\to\rr$ can be combined with the standard constant covector field $r\mapsto +1$ on $\rr$ to give a covector field on $M$, also called the \emph{differential} of $U$:%
\footnote{Covector fields are also called 1-forms.}
\begin{equation}\label{eqn.differential}
dU\colon M\to T^*M,\qquad dU|_m\coloneqq(T_m U)^\top(+1)\in T^*_mM.
\end{equation}
Equivalently, $dU|_m(\dot m)=(T_mU)(\dot m)$ for $\dot m\in T_mM$. When $M=Q$ is a vector space, we regard $dU|_q$ as an element of $Q^*$ under \eqref{eqn.tangent_vec}.

For a smooth map $f\colon M\times N\to P$ and a point $(m,n)\in M\times N$, we write
\begin{equation}\label{eqn.partial_differential}
\partial_m f\coloneqq T_m\bigl(f(\blank,n)\bigr)\colon T_mM\to T_{f(m,n)}P
\end{equation}
for the differential at $m$ of the partial application $f(\blank,n)\colon M\to P$, and similarly in any factor of a finite product: the subscript names the point at which we differentiate, and all other arguments are held fixed. When $P=\rr$, we regard $\partial_m f$ as a covector in $T^*_mM$ under \eqref{eqn.tangent_vec}, so that $\partial_m f=d\bigl(f(\blank,n)\bigr)\big|_m$ in the sense of \eqref{eqn.differential}.

\begin{warning}\label{warn.prime}
We will not ever use $f'$ notation to mean derivatives in this paper; ``priming'' a symbol (such as $f$) will only be used to indicate that $f'$ is somehow analogous to $f$.
\end{warning}

\begin{remark}[$C^1$ regularity]\label{rmk.C1_regularity}
We use smooth manifolds and smooth maps throughout the paper, but the latter is purely a convenience rather than a necessity. With one exception, we have checked that all definitions, theorem and proposition statements, and proofs in this paper remain valid when the manifolds are smooth but the maps of manifolds (the output and input maps $\outp f,\inpt f$ and the potential $U$ of \eqref{eqn.srw_morphism}, and the sharp map $\sharpR_Q$ of \cref{def.rvect}) \emph{are allowed to be $C^1$} rather than smooth.

The reason it works is that the semantics is first-order and set-theoretic: the cotangent functor $\cot$ (\cref{def.cot}) uses only the pointwise differentials $T_mf$ and their transposes; its functoriality is the chain rule, which is valid for $C^1$ maps. The differential $dU$ of a $C^1$ potential is merely continuous, but even that is unused. It enters only as the backward component of the polynomial map $\potd\circ\cotof{U}\colon\cotof{X}\to\yon$ \eqref{eqn.d_potential}, and maps in $\poly$ carry no regularity requirement: the coalgebra updates in $\pc$ are functions of sets.

The exception is the fullness assertion in the first item below; three further ancillary claims also require qualification:
\begin{enumerate}
\item The operad functor $\cat R\to\sarr$ of \cref{sec.related_cls} remains faithful but is no longer full; the arrow label $\mathrm{ff}$ becomes $\mathrm{faithful}$ in both copies of the comparison diagram \eqref{eqn.cls_diag}, and likewise in the proof of \cref{prop.euler_submersion_lenses}, whose obligations are only weakened.
CLS is a smooth framework, so the morphisms of $\cat R$ keep smooth $w$, $E$, and sharp, while $\sarr$ includes morphisms that aren't smooth.
\item The tangent functor $T\colon\mfd\to\mfd$ above ceases to be an endofunctor: if $f$ is $C^1$, the global tangent map $Tf$ may not be. Its pointwise content---the identifications \eqref{eqn.T_products}---and the pointwise differentials $T_mf$ are unaffected, the former because the manifolds remain smooth and the latter because $f$ is $C^1$; the constructions of this paper use only these.
\item Two statements about \emph{flow}---the final clause of \cref{rmk.symplectic_perpendicular} and the sentence ``In particular the continuous flow conserves $H$, by \cref{rmk.symplectic_perpendicular}'' in \cref{sec.closed_conservation}---require the Hamiltonian vector field to generate a flow. However, for a $C^1$ Hamiltonian that field is merely continuous, so the flow may not exist or be unique.
\item Newton's method (\cref{sec.newton_warmup}) defines its response as an inverse Hessian, but this is undefined for $C^1$ potentials, which is an issue for the construction of the Newton response structure.
\end{enumerate}
An example potential we have seen in practice is $U(q)\coloneqq\absval{q}^{3/2}$, which is $C^1$ but not $C^2$. Yet \cref{cor.functor} still assigns its arrangement a total, deterministic dynamical system, because the discrete semantics uses $dU$ pointwise and never solves an ODE.
\end{remark}

\section{Responsive vector spaces}\label{sec.rvect}

We will eventually define $\sarr$ as the operad underlying $\para{\rvect}{\plmfd}$. Here we define the parameterizing category $\rvect$, whose objects encode the data needed to turn covectors into vectors that serve as state updates. We also give a symplectic example $T^*Q$.

\begin{definition}\label{def.rvect}
A \emph{responsive vector space} is a finite-dimensional real vector space $Q$ together with a smooth map
\[
\sharpR_Q\colon Q\to\vect(Q^*,Q),
\]
the \hypertarget{term.sharp_map}{\emph{sharp}} map, where we write $\sharpR_q\coloneqq\sharpR_Q(q)\colon Q^*\to Q$ for each $q\in Q$.\footnote{The terminology mostly follows~\cite{capucci2024organizing}, where a smooth section of $[T^*X,TX]$ over a manifold $X$ is called a \emph{reaction}; the above is the vector-space case. Equivalently, $\sharpR_Q$ is a smooth bundle map $T^*Q\to TQ$ over $Q$, under the canonical identifications $T_qQ\cong Q$ and $T^*_qQ\cong Q^*$. It will be more convenient to use the above form.
}
We say the response is \emph{constant} if $\sharpR_q=\sharpR_0$ for all $q$. We say the response is \emph{symmetric} if $\xi(\sharpR_q(\xi'))=\xi'(\sharpR_q(\xi))$ for all $q\in Q$ and all $\xi,\xi'\in Q^*$, and we call the responsive vector space \emph{quadratic} if its response is constant and symmetric.%
\footnote{When the response is constant, symmetry says exactly that $B(\xi,\xi')\coloneqq\xi\bigl(\sharpR_0(\xi')\bigr)$ is a symmetric bilinear form on $Q^*$, i.e.\ a quadratic form on $Q^*$ (with positive definiteness not assumed).}
We call a responsive vector space \emph{nondegenerate} if $\sharpR_q$ is an isomorphism for every $q$; in that case we denote the pointwise inverse $\flat_q\coloneqq\sharpR_q\inv\colon Q\to Q^*$ and call it the \emph{flat} map, and we call $\pairing{v,v'}_q\coloneqq\flat_q(v)(v')$ the \emph{pairing at $q$}.

We define the category $\defineTerm{rvect}$ to have responsive vector spaces as objects---we write $\rv Q\coloneqq(Q,\sharpR_Q)$ for the object and $Q$ for its underlying vector space---and as morphisms $\rv Q_1\to\rv Q_2$ the linear isomorphisms $\phi\colon Q_1\To{\cong}Q_2$ that are \emph{sharp-equivariant}: 
for all $q\in Q_1$,
\begin{equation}\label{eqn.pairing_triangle}
\phi\circ\sharpR_q\circ\phi^*=\sharpR_{\phi(q)}
\end{equation}
as maps $Q_2^*\to Q_2$.%
\footnote{
We require every $\rvect$-morphism to be an isomorphism so that we can define the cotangent endofunctor $T^*\colon\rvect\to\rvect$ (\cref{prop.TT_endofunctors}): its action $T^*\phi=\phi\oplus(\phi^*)\inv$ \eqref{eqn.TT_phi} needs $\phi$ invertible.
}
In the nondegenerate case, \eqref{eqn.pairing_triangle} is equivalent to pointwise pairing-preservation: $\pairing{\phi v,\phi v'}_{\phi(q)}=\pairing{v,v'}_q$.

Where the sharp is understood, or is the canonical one (as on $T^*Q$, \cref{prop.canonical_symplectic_pairing}), we may abuse notation and write simply $Q$ for $\rv Q$.
\end{definition}

Our theory does not privilege nondegenerate responses in any way: the only nondegeneracy our results rely on is that of the canonical symplectic pairing on $T^*Q$ (\cref{prop.canonical_symplectic_pairing}), which holds regardless of the response $\sharpR_Q$. Degeneracy is what allows things like controllers to act on their environment without being retroactively back-driven by it. We refer to this condition as unilaterality; see \cref{def.arrangement_terminology}.
I believe this sort of back-driving is an instance of what Del Vecchio--Ninfa--Sontag call \emph{retroactivity}~\cite{delvecchio2008modular}, which they identify as an obstruction to modularity in biomolecular networks.\footnote{I thank Eduardo Sontag for a helpful discussion.}

\begin{example}\label{ex.tank_sharp}
Imagine a weirdly-shaped water tank for which the cross-sectional area is a function of height.
\[
\begin{tikzpicture}[font=\scriptsize,
	dim/.style={<->, >={Straight Barb[length=3pt,width=3.5pt]}, thin}]
	\begin{scope}
		\clip (-1.2,-.05) rectangle (1.2,1.5);
		\fill[blue!12]
			plot[smooth, tension=.7] coordinates {(-.55,0) (-.62,.45) (-.9,1.05) (-.8,1.5) (-.5,1.95) (-.6,2.2)}
			-- plot[smooth, tension=.7] coordinates {(.6,2.2) (.5,1.95) (.8,1.5) (.9,1.05) (.62,.45) (.55,0)}
			-- cycle;
	\end{scope}
	\draw[thick, line join=round]
		plot[smooth, tension=.7] coordinates {(-.6,2.2) (-.5,1.95) (-.8,1.5) (-.9,1.05) (-.62,.45) (-.55,0)}
		-- plot[smooth, tension=.7] coordinates {(.55,0) (.62,.45) (.9,1.05) (.8,1.5) (.5,1.95) (.6,2.2)};
	\draw[dim, blue!50!black] (-.8,1.5) -- node[below=-1pt] {$A(h)$} (.8,1.5);
	\draw[gray!60, dotted] (-.8,1.5) -- (-1.5,1.5);
	\draw[gray!60, dotted] (-.55,0) -- (-1.5,0);
	\draw[dim] (-1.4,0) -- node[left=-1pt] {$h$} (-1.4,1.5);
\end{tikzpicture}
\]
Whether we take its state to be the height or the volume of water, its response can depend on the shape of the tank at its current state, and hence be modeled by a non-constant sharp. We make a concrete choice in \cref{ex.tank_system}.
\end{example}

\begin{proposition}\label{prop.rvect_monoidal}
Direct sum gives $\rvect$ a symmetric monoidal structure $(0,\oplus)$, with the zero space as unit and direct-sum sharp $\sharpR_{Q\oplus W}$ given by
\begin{equation}\label{eqn.directsum_sharp}
\sharpR_{(v,w)}\coloneqq\sharpR_v\oplus\sharpR_w\colon Q^*\oplus W^*\to Q\oplus W.
\end{equation}
\end{proposition}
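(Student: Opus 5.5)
The plan is to transport the symmetric monoidal structure of $(\vect_\cong,0,\oplus)$, the groupoid of finite-dimensional vector spaces and linear isomorphisms, along the forgetful functor $\rvect\to\vect_\cong$. There is only one thing to add: the sharp data must be compatible with all the structure maps. Concretely, I would carry out the following steps in order.

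First, I would check that \eqref{eqn.directsum_sharp} defines a responsive vector space $\rv Q\oplus\rv W$. Its underlying space is $Q\oplus W$, whose dual is identified with $Q^*\oplus W^*$ via \eqref{eqn.canonical_dual_sum}-style canonical identifications. The map $(v,w)\mapsto\sharpR_v\oplus\sharpR_w$ is smooth, because it is the composite of the smooth map $\sharpR_Q\times\sharpR_W$ with the linear block-diagonal inclusion $\vect(Q^*,Q)\times\vect(W^*,W)\to\vect(Q^*\oplus W^*,Q\oplus W)$. The unit is the zero space $0$ with its unique sharp $0\to\vect(0,0)$.

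Second, I would define $\oplus$ on morphisms. For sharp-equivariant isomorphisms $\phi\colon\rv Q_1\to\rv Q_2$ and $\psi\colon\rv W_1\to\rv W_2$, take $\phi\oplus\psi$. Under the identification of duals, $(\phi\oplus\psi)^*=\phi^*\oplus\psi^*$. Hence
\[
(\phi\oplus\psi)\circ(\sharpR_v\oplus\sharpR_w)\circ(\phi\oplus\psi)^*=(\phi\sharpR_v\phi^*)\oplus(\psi\sharpR_w\psi^*)=\sharpR_{\phi v}\oplus\sharpR_{\psi w},
\]
which is exactly \eqref{eqn.pairing_triangle} for $\phi\oplus\psi$ at the point $(v,w)$. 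Functoriality of $\oplus$ is then inherited from $\vect$.

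Third, I would verify that the unitors $0\oplus Q\cong Q$, the associator, and the symmetry $Q\oplus W\cong W\oplus Q$ (all taken from $\vect$) are sharp-equivariant. Each of these is a permutation or inclusion of direct summands, and its dual is the corresponding permutation of the dual summands. Conjugating a block-diagonal sharp by it therefore just permutes the blocks. For the symmetry, for instance, $\sigma\circ(\sharpR_v\oplus\sharpR_w)\circ\sigma^*=\sharpR_w\oplus\sharpR_v=\sharpR_{\sigma(v,w)}$. Once these maps are known to be morphisms of $\rvect$, the coherence axioms (pentagon, triangle, hexagon) hold automatically: the forgetful functor to $\vect$ is faithful, and the axioms already hold there.

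I expect no step to be a genuine obstacle. The only point that needs care is the bookkeeping of the dual identifications $(Q\oplus W)^*\cong Q^*\oplus W^*$. These must be used consistently, so that the dual of a block-diagonal or permutation map is again block-diagonal or the corresponding permutation. I would fix this convention once, at the start, using the canonical projections and inclusions.
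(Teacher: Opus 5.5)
Your proposal is correct and follows essentially the same route as the paper, which checks that the pointwise direct sum of sharps is smooth, notes that sharp-equivariance is preserved by $\oplus$, and inherits coherence from $(\vect,0,\oplus)$. What you add is an explicit check of the step the paper folds into ``coherence is inherited'': that the unitors, associator, and symmetry of $\vect$ are themselves sharp-equivariant, so that faithfulness of $\rvect\to\vect$ then transfers the naturality and coherence axioms.
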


\begin{proof}
A pointwise direct sum of smooth maps is again smooth. Sharp-equivariance \eqref{eqn.pairing_triangle} is preserved by direct sums, and coherence is inherited from $(\vect,0,\oplus)$.\qedhere
\end{proof}

\begin{proposition}\label{prop.rvect_smc_action}
The composite $\rvect\to\vect\inj\mfd$ is strong symmetric monoidal, so $\rvect$ acts on $\mfd$ via $\rv Q\cdot M\coloneqq Q\times M$.
\end{proposition}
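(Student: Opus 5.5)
The plan is to exhibit the composite $\rvect\to\vect\inj\mfd$ as a composite of two strong symmetric monoidal functors, and then apply \cref{lem.smc_action}.

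\textbf{Step 1: the forgetful functor.} The first factor is the forgetful functor $U\colon\rvect\to\vect$, which sends $\rv Q=(Q,\sharpR_Q)$ to $Q$ and a sharp-equivariant isomorphism $\phi$ to itself. By \cref{prop.rvect_monoidal}, the monoidal structure $(0,\oplus)$ on $\rvect$ is defined so that the underlying space of $\rv Q\oplus\rv W$ is literally $Q\oplus W$ and the unit is the zero space. The symmetry and associator are the $\vect$ ones, which are sharp-equivariant by that proposition. Hence $U$ is strict symmetric monoidal into $(\vect,0,\oplus)$.

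\textbf{Step 2: the inclusion.} The second factor is $\inc\colon(\vect,0,\oplus)\to(\mfd,1,\times)$. As recorded in \cref{sec.manifolds_notation}, $\oplus$ is a categorical product in $\vect$ and $\inc$ preserves finite products. Concretely, the underlying manifold of $Q\oplus W$ is $Q\times W$ with its projections, and the zero space is the one-point manifold $1=\rr^0$.

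A product-preserving functor between cartesian monoidal categories is strong symmetric monoidal. Its structure maps are the canonical comparisons $\inc(Q)\times\inc(W)\to\inc(Q\oplus W)$ and $1\to\inc(0)$, which are isomorphisms here. Their coherence with associators, unitors and symmetries follows from the universal property of products: both sides of each coherence square are maps into a product that agree after every projection.

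\textbf{Step 3: conclude.} Composites of strong symmetric monoidal functors are strong symmetric monoidal. Applying \cref{lem.smc_action} with $J\coloneqq\inc\circ U$ and $\cat D\coloneqq(\mfd,1,\times)$ gives the action
\[
\rv Q\cdot M\coloneqq J(\rv Q)\times M=Q\times M,
\]
making $\mfd$ a monoidal $\rvect$-actegory.

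The only step needing any care is Step 2, checking that the smooth structure on $Q\oplus W$ agrees with the product manifold $Q\times W$. This holds because the canonical linear chart on $Q\oplus W\cong Q\times W$ is the product of the charts on $Q$ and $W$, so the comparison map is a diffeomorphism. Everything else is formal.
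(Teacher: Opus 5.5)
Your proposal is correct and follows the paper's argument: the forgetful functor $\rvect\to\vect$ preserves direct sums, which are products in $\vect$, and $\vect\inj\mfd$ preserves products, so the composite is strong symmetric monoidal and \cref{lem.smc_action} yields the action. You additionally spell out the coherence checks and the identification of the smooth structure on $Q\oplus W$ with the product manifold $Q\times W$, both of which the paper leaves implicit.
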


\begin{proof}
The forgetful $\rvect\to\vect$ preserves direct sums (which are products in $\vect$); the inclusion $\vect\inj\mfd$ preserves products.\qedhere
\end{proof}

\begin{proposition}\label{prop.canonical_symplectic_pairing}
For any finite-dimensional vector space $Q:\vect$, the cotangent bundle $T^*Q\cong Q\oplus Q^*$ \eqref{eqn.TT_def} is a responsive vector space under the \emph{canonical symplectic pairing}: at every basepoint $(q_0,\xi_0)\in T^*Q$,
\begin{equation}\label{eqn.canonical_form}
\pairing{(q,\xi),(q',\xi')}_{(q_0,\xi_0)} \;=\; \xi'(q)-\xi(q').
\end{equation}
The right-hand side does not depend on $(q_0,\xi_0)$, so the response is constant.
\end{proposition}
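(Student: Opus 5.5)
The plan is to exhibit an explicit constant sharp map on $T^*Q\cong Q\oplus Q^*$ and check that it is nondegenerate and that its flat map reproduces \eqref{eqn.canonical_form}. In \cref{def.rvect} the pairing is defined as $\pairing{v,v'}_{x}=\flat_x(v)(v')$ with $\flat_x=\sharpR_x\inv$. So it suffices to construct a linear isomorphism
\[
\flat\colon Q\oplus Q^*\to (Q\oplus Q^*)^*
\]
satisfying $\flat(q,\xi)(q',\xi')=\xi'(q)-\xi(q')$. We then set $\sharpR_{(q_0,\xi_0)}\coloneqq\flat\inv$ for every basepoint $(q_0,\xi_0)$.

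First, I will compute $\flat$ under the identification \eqref{eqn.canonical_dual_sum}, $(Q\oplus Q^*)^*\cong Q^*\oplus Q$. That identification sends $(\eta,p)$ to the functional $(q',\xi')\mapsto\eta(q')+\xi'(p)$. Matching this against $\xi'(q)-\xi(q')$ shows that $\flat(q,\xi)$ corresponds to $(-\xi,q)$. Hence $\flat$ is the linear map $(q,\xi)\mapsto(-\xi,q)$. It is visibly invertible, with inverse
\[
\sharpR\colon Q^*\oplus Q\to Q\oplus Q^*,\qquad (\eta,p)\mapsto(p,-\eta).
\]

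Second, I will verify the conditions of \cref{def.rvect}. The map $\sharpR_{T^*Q}\colon T^*Q\to\vect((T^*Q)^*,T^*Q)$ is constant in the basepoint, so it is smooth. Its values are isomorphisms, so the responsive vector space is nondegenerate and its flat is exactly $\flat$. By construction the induced pairing is $\flat(q,\xi)(q',\xi')=\xi'(q)-\xi(q')$, which is \eqref{eqn.canonical_form}. Since the right-hand side never refers to $(q_0,\xi_0)$, the response is constant, as claimed.

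There is no real obstacle here. The only care needed is bookkeeping with signs and with the ordering of summands in \eqref{eqn.canonical_dual_sum}. I would double-check this by plugging $\sharpR(\flat(q,\xi))=\sharpR(-\xi,q)=(q,\xi)$. I might also remark that the pairing is skew, $\pairing{v,v'}=-\pairing{v',v}$, so this response is not symmetric in the sense of \cref{def.rvect}, although it is nondegenerate.
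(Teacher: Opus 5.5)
Your proof is correct, and it takes a slightly different, more constructive route than the paper's.

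The paper argues abstractly. It notes that \eqref{eqn.canonical_form} is bilinear and proves nondegeneracy with a kernel argument: if $(q,\xi)$ pairs to zero with everything, testing against $(0,\xi')$ gives $q=0$ and testing against $(q',0)$ gives $\xi=0$. It leaves implicit that an injective $\flat\colon Q\oplus Q^*\to(Q\oplus Q^*)^*$ is an isomorphism by a dimension count. Only after the proof does it record the resulting sharp \eqref{eqn.canonical_sharp}.

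You instead compute $\flat(q,\xi)=(-\xi,q)$ under \eqref{eqn.canonical_dual_sum} and write down its inverse $(\eta,p)\mapsto(p,-\eta)$. This does several things at once: it proves invertibility without any dimension argument, makes the smoothness of the constant sharp explicit, and derives \eqref{eqn.canonical_sharp} with its signs checked.

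The paper's version has the advantage of not depending on the choice of identification \eqref{eqn.canonical_dual_sum}. Yours gives strictly more information for the same effort.

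Your closing aside is also right: the pairing is skew, so the response is not symmetric. The only caveat is that for $Q=0$ the response is vacuously symmetric.
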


\begin{proof}
The form in \eqref{eqn.canonical_form} is bilinear. To see that it is nondegenerate, suppose $(q,\xi)$ pairs to $0$ with every $(q',\xi')$. Taking $q'=0$ gives $\xi'(q)=0$ for all $\xi'\in Q^*$, hence $q=0$; taking $\xi'=0$ gives $\xi(q')=0$ for all $q'\in Q$, hence $\xi=0$.\qedhere
\end{proof}

For the canonical symplectic pairing, under \eqref{eqn.canonical_dual_sum}, the associated sharp map is the constant map
\begin{equation}\label{eqn.canonical_sharp}
\defineTerm{sharp_symp}_{Q\oplus Q^*}\colon Q^*\oplus Q\to Q\oplus Q^*,\qquad(\xi,q)\mapsto(q,-\xi).
\end{equation}

\begin{remark}[Symplectic sharp produces perpendicular vectors]\label{rmk.symplectic_perpendicular}
For any covector $(\xi,q)\in Q^*\oplus Q$, the resulting vector $\sharpS_{Q\oplus Q^*}(\xi,q)=(q,-\xi)\in Q\oplus Q^*$ \eqref{eqn.canonical_sharp} is annihilated by it: pairing the two via \eqref{eqn.canonical_dual_sum},
\[
(\xi,q)\bigl(\sharpS_{Q\oplus Q^*}(\xi,q)\bigr) = \xi(q) + (-\xi)(q) = 0.
\]
Geometrically, the symplectic sharp ``rotates'' each covector to a vector ``perpendicular'' to it. In particular, when the covector is the differential $dH|_w$ of a smooth function $H\colon Q\oplus Q^*\to\rr$, the vector $\sharpS_{Q\oplus Q^*}(dH|_w)$ is tangent to the level set $\{w'\mid H(w')=H(w)\}$, so the flow it generates conserves $H$.\qedhere
\end{remark}

\begin{example}[Euclidean sharp]\label{ex.euclidean_sharp}
A finite-dimensional real vector space $Q$ equipped with an inner product---a symmetric, positive-definite bilinear form $\pairing{-,-}\colon Q\otimes Q\to\rr$---yields a nondegenerate responsive vector space with constant sharp
\begin{equation*}
\defineTerm{sharpEuc}\colon Q^*\To{\cong}Q,
\end{equation*}
the inverse of $\flat_\euc(v)\coloneqq\pairing{v,-}\colon Q\to Q^*$.\qedhere
\end{example}

\section{The smooth adaptive-arrangement datum \texorpdfstring{$\Sm$}{Sm}}\label{sec.potentialized_lenses}

We build $\Sm$ in two steps: first specialize the backward-monad construction of \cref{sec.backwards} to the writer monad $(\rr\times\blank)$ on $\mfd$, producing $\plmfd$, then parameterize $\plmfd$ over $\rvect$ to form $\sarr$.

Since $(\rr,0,+)$ is a commutative monoid in $\mfd$, \cref{lem.monoid_to_monad} gives the writer monad $(\rr\times\blank)$ on $\mfd$, and \cref{prop.backward_comonad} gives the corresponding colax monoidal comonad on $\Lens{\mfd}$. We let $(\plmfd,\lensunit,\ltens)$ denote its coKleisli category. Let's unpack that.

A map in $\plmfd$ is a tuple $f=(\outp f,\inpt f,U)\colon\lensob M\to\lensob N$ with
\[
\outp f\colon\outp M\to\outp N,\qquad
\inpt f\colon\outp M\times\inpt N\to\inpt M,\qquad
U\colon\outp M\times\inpt N\to\rr.
\]

There is an action of $\rvect$ on $\plmfd$ from \cref{lem.parameter_lens_action} (taking $\cat A\coloneqq\rvect$, $\cat C\coloneqq\mfd$, $J\coloneqq\inc$, $\Fun T\coloneqq\rr\times\blank$), and the resulting Para construction is symmetric monoidal by \cref{lem.para_monoidal}.

\begin{definition}[Smooth adaptive-arrangement datum; $\sarr$]\label{def.potlens}
The \emph{smooth adaptive-arrangement datum} is
\[
\Sm\coloneqq(\mfd,\rvect,\inc,\rr),
\]
an adaptive-arrangement datum (\cref{def.rewiring_datum}): $\mfd$ is cartesian monoidal, $(\rr,0,+)$ is a commutative monoid object in it---giving the writer potentials monad $\rr\otimes\blank$, which we abbreviate by its carrier $\rr$---$\rvect$ is symmetric monoidal (\cref{prop.rvect_monoidal}), and $\inc\colon\rvect\to\mfd$ is strong symmetric monoidal (\cref{prop.rvect_smc_action}). 

Its adaptive-arrangement operad $\defineTerm{srwd}$ is the underlying operad of the symmetric monoidal category $\para{\rvect}{\plmfd}$ (\cref{lem.para_monoidal}).
\end{definition}

An object in $\sarr$ is a pair $\lensob{M}$ of manifolds and a multimorphism
\[f\colon\lensob{M_1},\cdots,\lensob{M_K}\to\lensob{N}\] 
consists of a tuple $f\coloneqq(\rv Q,\outp f,\inpt f,U)$ with $\rv Q\coloneqq(Q,\sharpR_Q):\rvect$ a responsive vector space, and, writing $\lensob M\coloneqq\lensob{M_1}\ltens\cdots\ltens\lensob{M_K}$ for the tensored domain, so that $\outp M=\prod_{i=1}^K\outp M_i$ and $\inpt M=\prod_{i=1}^K\inpt M_i$, the components are smooth maps
\begin{equation}\label{eqn.srw_morphism}
\begin{gathered}
\outp f\colon Q\times\outp M\to\outp N,\qquad
\inpt f\colon Q\times\outp M\times\inpt N\to\inpt M,\\
U\colon Q\times\outp M\times\inpt N\to\rr.
\end{gathered}
\end{equation}
\eqref{eqn.arrangement} depicts this data in the case $K=1$.

The formalism does not prescribe how these data should be chosen for each component, but the dynamics functor then guarantees that these locally-specified behaviors assemble functorially under interconnection.

\begin{definition}[Vocabulary]\label{def.arrangement_terminology}
We say that $f\in\sarr(b_1,\dots,b_K; c)$, a smooth adaptive arrangement given by $f=(Q,\sharpR_Q,\outp f,\inpt f,U)$ as in \eqref{eqn.srw_morphism} is:
\begin{description}[labelindent=2em, leftmargin=!, labelwidth=\widthof{\itshape a closed system}, labelsep=1em, align=left, font=\normalfont\itshape]
  \item[closed] if its codomain is the monoidal unit $I=\lensunit$;
  \item[a system] if its domain is $I=\lensunit$ (i.e.\ the arity is $K=0$); 
  \item[a closed system] if it is a morphism $I\to I$ (sometimes called a \emph{scalar});
  \item[stateless] if its parameter $Q=\rr^0$ is trivial;
  \item[potential-free] if its potential $U=0$ vanishes;
  \item[static] if it is both stateless and potential-free;
  \item[quadratic] if its responsive parameter space $\rv Q$ is quadratic (\cref{def.rvect});
  \item[harmonic] if it is quadratic, $\outp M$ and $\inpt N$ are vector spaces, and its potential $U$ is a quadratic form;%
  \footnote{A \emph{quadratic form} on a vector space $W$ is a function $W\to\rr$ of the form $w\mapsto B(w\otimes w)$ for a symmetric linear map $B\colon W\otimes W\to\rr$. It is common to assume positive definiteness, but we will not need it. Bilinear forms, and hence quadratic forms, are closed under direct sum.}
  \item[linear] if it is harmonic, its other interfaces $\inpt M$ and $\outp N$ are also vector spaces, and its maps $\outp f$ and $\inpt f$ are linear;
  \item[unilateral] if whenever $\outp f(q,\outp m)=\outp n$ and $\xi\in T^*_{\outp n}\outp N$, we have $\sharpR_q\bigl((\partial_q\outp f)^\top\xi\bigr)=0$ \eqref{eqn.partial_differential}, i.e.\ if the following diagram commutes
  \[
  \begin{tikzcd}
  	T^*_{\outp n}\outp N\ar[d, "\bang"']\ar[r, "(T_{(q,\outp m)}\outp f)^\top"]&[25pt]
		Q^*\oplus T^*_{\outp m}\outp M\ar[r, "\pi_{Q^*}"]&
		Q^*\ar[d, "\sharpR_q"]\\
		0\ar[rr, "\bang"']&&
		Q
  \end{tikzcd}
\qedhere
  \]
\end{description}
\end{definition}

We have now settled the choice of composition syntax $\cat{W}$ deferred at the end of \cref{sec.wd_operads}: from this point forward, we work in $\cat{W}\coloneqq\sarr$.

\begin{proposition}\label{prop.suboperads}
The stateless, potential-free, static, quadratic, and unilateral morphisms each span a wide suboperad of $\sarr$, and the linear morphisms also span a suboperad (on objects $\binom{V^-}{V^+}$ for vector spaces $V^-$ and $V^+$).
\end{proposition}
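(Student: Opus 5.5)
To prove \cref{prop.suboperads}, I would check, for each class, the three suboperad conditions in $\sarr=\para{\rvect}{\plmfd}$: the class contains identities, it is closed under the symmetric-group action, and it is closed under operadic composition. The symmetries are static arrangements, so they lie in every class. Operadic composition decomposes into tensoring and then composing in the underlying symmetric monoidal category. So it suffices to show that each class contains identities and symmetries, and is closed under $\ltens$ (\cref{lem.para_monoidal}) and under the Para composite $(b,g)\circ(a,f)=(b\oplus a,\,g\circ(b\cdot f))$.

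For $\ltens$ and the composite I would use the explicit formulas. Parameters combine by $\oplus$, with the direct-sum sharp \eqref{eqn.directsum_sharp}. Potentials combine by $+$, via the writer productor \eqref{eqn.monoid_productor} and the Kleisli multiplication, as pictured in \cref{rem.huge_wiring_diagram}. Forward maps compose, and backward maps are built from diagonals, projections and the given maps.

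The routine classes then go as follows.
\begin{description}[font=\normalfont\itshape]
\item[Stateless.] $\rr^0\oplus\rr^0=\rr^0$.
\item[Potential-free.] $0+0=0$ composed with anything.
\item[Static.] This is the intersection of the two previous classes.
\item[Quadratic.] A direct sum of constant symmetric sharps is constant and symmetric. The identity has the zero space, which is trivially quadratic.
\item[Linear.] Parameters are quadratic as above. The composite potential is $U\circ(\text{projection})+V\circ(\text{linear map})$, and a quadratic form precomposed with a linear map is a quadratic form, so sums of such are quadratic forms. Composites and products of linear maps are linear. All interfaces are vector spaces, which is why the objects are restricted to $\binom{V^-}{V^+}$.
\end{description}

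The main obstacle is the unilateral class, because the condition involves differentials along the composite. For the tensor $f\ltens f'$, the sharp is block-diagonal and $\partial_{(q,q')}(\outp f\times\outp f')^\top$ is block-diagonal. The condition therefore splits componentwise.

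For the composite $g\circ(b\cdot f)$ with parameter $Q'\oplus Q$, the output map is $(q',q,m)\mapsto\outp g(q',\outp f(q,m))$. Given $\xi$, the chain rule gives the $Q'^*$-component $(\partial_{q'}\outp g)^\top\xi$, and $\sharpR_{q'}$ kills it by unilaterality of $g$. It also gives the $Q^*$-component $(\partial_q\outp f)^\top\bigl((\partial_{m'}\outp g)^\top\xi\bigr)$, and $\sharpR_q$ kills it by unilaterality of $f$ applied to the covector $(\partial_{m'}\outp g)^\top\xi$. Since the sharp is $\sharpR_{q'}\oplus\sharpR_q$, the composite is unilateral.

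I also need to check that the parameter order and the identification \eqref{eqn.T_products} are handled consistently, and that the action $b\cdot f$ (the forward map $J(u)\otimes\outp f$ in \cref{lem.parameter_lens_action}) introduces no new $Q$-dependence. Identities and symmetries have the zero parameter, so they are vacuously unilateral.
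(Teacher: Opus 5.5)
Your proposal is correct and follows essentially the same route as the paper. Identities and symmetries are static, so it suffices to check closure under $\ltens$ and under the 1-ary Para composite; parameters, sharps and potentials add; and unilaterality follows from the block-diagonal sharp together with the chain rule. One small imprecision in the linear case: neither summand of the composite potential is precomposed with a mere projection. The inner potential is evaluated at $\inpt g\bigl(r,\outp f(q,m),\inpt o\bigr)$, so it is the linearity of $\inpt g$ that keeps it a quadratic form, which is why harmonic arrangements alone do not form a suboperad. Your general remark that a quadratic form precomposed with a linear map is again a quadratic form already covers this.
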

\begin{proof}
For the linear case, restrict throughout to colors $\binom{V^-}{V^+}$ for vector spaces $V^-$ and $V^+$. The other suboperads are asserted to be wide.

Static is just the intersection of stateless and potential-free. The identity has $Q=0$ and $U=0$, so it is stateless and potential free, and any stateless arrangement is quadratic and unilateral. It is also linear since its output is identity and its input is a projection (see \cref{def.lens}). So it suffices to show that stateless, potential-free,
quadratic, linear, and unilateral maps are each closed under composition.
In fact, we may assume they are 1-ary $\lensob{M}\to\lensob{N}$ at the cost of additionally showing that they are each also closed under tensor product.

Under tensor product, everything adds: $Q_{f\ltens g}=Q\oplus R$, $U_{f\ltens g}=U+V$ by the productor \eqref{eqn.monoid_productor} of the monad $\rr\times\blank$, $\sharpR_{(q,r)}=\sharpR_q\oplus\sharpR_r$ by \eqref{eqn.directsum_sharp}, and $\outp{(f\ltens g)}=\outp f\times\outp g$.
So stateless is preserved because $\rr^0\oplus\rr^0=\rr^0$, potential-free is preserved because $0+0=0$,
quadratic is preserved because $\sharpR_q\oplus\sharpR_r$ is again constant and symmetric, linear is preserved because $\outp{(f\ltens g)}$ and $\inpt{(f\ltens g)}$ are products of linear maps and $U+V$ is again a quadratic form, and unilateral is preserved because
\[
\sharpR_{(q,r)}\circ\bigl(\partial_{(q,r)}\outp{(f\ltens g)}\bigr)^\top
=\Bigl(\sharpR_q\circ(\partial_q\outp f)^\top\Bigr)\oplus\Bigl(\sharpR_r\circ(\partial_r\outp g)^\top\Bigr)
\]
is a direct sum of maps that each vanish by unilaterality of $f$ and $g$.

Under composition $f\colon\lensob M\to\lensob N$ and $g\colon\lensob N\to\lensob O$, again the state spaces and potentials add, $Q_{f\then g}=R\oplus Q$, and potentials are given by
\[
U_{f\then g}(r,q,\outp m,\inpt o)=U\bigl(q,\outp m,\inpt g(r,\outp n,\inpt o)\bigr)+V(r,\outp n,\inpt o),
\qquad\outp n\coloneqq\outp f(q,\outp m),
\]
coming from the multiplication of the writer monad $\rr\times\blank$ in the coKleisli composition of \cref{prop.backward_comonad}. So stateless is preserved because $\rr^0\oplus\rr^0=\rr^0$, and potential-free is preserved because $0+0=0$.
Quadratic is preserved because the sharp $\sharpR_r\oplus\sharpR_q$ of the composite is again constant and symmetric.
Linearity is preserved as well: the maps $\outp{(f\then g)}$ and $\inpt{(f\then g)}$ are each composites of linear maps, the composite is quadratic as above, and each summand of the displayed potential is a quadratic form precomposed with a linear map, hence a quadratic form; thus their sum is too.%
\footnote{
The linearity of $\inpt g$ is what keeps this potential quadratic, which is why linear arrangements form a suboperad but harmonic ones don't.
}
Thus it suffices to prove that unilaterality is preserved under composition.

So suppose $f$ and $g$ are unilateral, let $\outp n\coloneqq\outp f(q,\outp m)$ and $\outp o\coloneqq\outp g(r,\outp n)$, and let $\xi\in T^*_{\outp o}\outp O$. The output map of the composite $R\times Q\times\outp M\To{R\times\outp f}R\times\outp N\To{\outp g}\outp O$ is $\outp{(f\then g)}(r,q,\outp m)=\outp g\bigl(r,\outp f(q,\outp m)\bigr)$, so the chain rule gives
\[
\bigl(\partial_{(r,q)}\outp{(f\then g)}\bigr)^\top(\xi)
=\Bigl((\partial_r\outp g)^\top(\xi),\;(\partial_q\outp f)^\top(\xi')\Bigr)\in R^*\oplus Q^*,
\qquad\xi'\coloneqq(\partial_{\outp n}\outp g)^\top(\xi)\in T^*_{\outp n}\outp N.
\]
Applying the sharp $\sharpR_{(r,q)}=\sharpR_r\oplus\sharpR_q$ of the composite, we obtain
\[
\sharpR_{(r,q)}\left(\bigl(\partial_{(r,q)}\outp{(f\then g)}\bigr)^\top(\xi)\right)
=\Bigl(\sharpR_r\bigl((\partial_r\outp g)^\top(\xi)\bigr),\;\sharpR_q\bigl((\partial_q\outp f)^\top(\xi')\bigr)\Bigr)
=(0,0),
\]
each component vanishing by unilaterality.
\end{proof}

\section{Examples: tanks, electricity, and predator--prey}\label{sec.arrangement_examples}
\begin{example}\label{ex.tank_system}
Recall the weirdly-shaped water tank from \cref{ex.tank_sharp}, with cross-sectional area $A\colon\rr\to\rr_{>0}$ as a function of height. Assume that the cumulative-volume map $h\mapsto\int_0^h A(s)\,ds$ is a diffeomorphism, write $h(v)$ for its inverse, and take the tank's state to be its water volume $v\in\rr$. We consider it as a system
$
\begin{tikzpicture}[oriented WD, bb small, bb port length = 3mm, font=\tiny, baseline=(tank.-20)]
	\node[bb={0}{1}] (tank) {tank};
\end{tikzpicture}
$
with an outgoing pipe, i.e.\ a potential-free arrangement $\fun{tank}\colon\lensunit\to\binom{1}{\rr}$. Let $c>0$ denote water's weight per unit volume, and define
\[
\fun{tank}\coloneqq(\rr,\sharpR_A,\outp{\fun{tank}},\bang,0),\qqwhere
\sharpR_A(v)(\xi)\coloneqq\frac{A(h(v))}{c}\xi,\qquad
\outp{\fun{tank}}(v)\coloneqq ch(v).
\]
We interpret its output as the pressure at the base of the tank. Applying $\frac{d}{dv}$ to both sides of $v=\int_0^{h(v)}A(s)\,ds$ gives
\[1=A(h(v))\frac{dh}{dv}\qqthus\frac{d}{dv}\outp{\fun{tank}}(v)=\frac{c}{A(h(v))}.\]
This is how we chose the sharp: its factor $A(h(v))/c$ cancels the factor $c/A(h(v))$ introduced by differentiating the output map.
\end{example}

\begin{example}\label{ex.tanks_connected}
We can connect two weirdly-shaped tank systems, with cross-sections $A_1,A_2$, of the form
$\fun{tank}_i=(\rr,\sharpR_{A_i},\outp{\fun{tank}_i},\bang,0)$
as in \cref{ex.tank_system} using a pipe
\[
\tankpic
\]
Here the pipe system $\fun{pipe}\colon\lensunit\to\binom{\rr^2}{1}$ is given by
$\fun{pipe}=(\rv 0,\bang,\bang,V_z)$,
i.e.\ stateless with potential
$V_z(e_1,e_2)\coloneqq\tfrac{\kappa}{2}((e_1-e_2)+cz)^2$, where $e_1,e_2$ are the pressures arriving at the pipe's two inputs, $\kappa$ is conductance, $c$ is water's weight per unit volume and $z$ is the elevation of the first tank's base above the second's, which we assume is known to the pipe. The $i$th input of $\fun{pipe}$ receives $e_i\coloneqq\outp{\fun{tank}_i}(v_i)$, and we write $I\colon\rr^2\to\rr$ for the function
\[
I(e_1,e_2)\coloneqq\kappa\bigl((e_1-e_2)+cz\bigr),
\]
so that $dV_z|_{(e_1,e_2)}=\bigl(I(e_1,e_2),\,-I(e_1,e_2)\bigr)$. We will see in \eqref{eqn.covector_triple} that, semantically, covectors flow backwards through wires, and that those covectors are given by the differential of the potential. Thus $\fun{pipe}$ returns $I(e_1,e_2)$ on its first port and $-I(e_1,e_2)$ on its second, as the picture shows.
The whole wiring diagram shown is a closed static arrangement of the form
\begin{equation}\label{eqn.tank_wire}
\fun{wire}\colon\binom{1}{\rr}\ltens\binom{1}{\rr}\ltens\binom{\rr^2}{1}\to\lensunit
\end{equation}
given by $\fun{wire}=(\rv 0,\bang,\id,0)$, i.e.\ the only content being that the lens feeds the outputs of the tanks to the inputs of the pipe by the isomorphism $\rr\times\rr\times 1\times 1\to1\times 1\times \rr^2$, which we're calling $\id$ by abuse of notation.

Composing, we get a map
$\fun{wire}(\fun{tank}_1,\fun{tank}_2,\fun{pipe})\colon\lensunit\to\lensunit$
whose state space is $\rr^2$ with sharp $\sharpR_{A_1}\oplus\sharpR_{A_2}$ (\cref{prop.rvect_monoidal}), and whose potential $\rr^2\to\rr$ is
$(v_1,v_2)\mapsto V_z\bigl(ch_1(v_1),ch_2(v_2)\bigr)$.

If we had instead let $\fun{wire}$ carry as state the relative elevation $\rv Z\coloneqq(\rr,\sharpR_Z)$ and fed that value to a more expressive pipe system $\lensunit\to\binom{\rr^3}{1}$ with potential $V(e_1,e_2,z)\coloneqq\tfrac{\kappa}{2}((e_1-e_2)+cz)^2$, then $\fun{wire}$ would no longer be static and the elevation difference could move to bring the two water surfaces level. A choice of degenerate $\sharpR_Z=0$ is like a weld holding $z$ fixed, as would putting $z$ into the state of a fourth---this time unilateral---component.
\end{example}

\begin{remark}[Electrical reading]\label{rmk.tank_electrical}
The well-known analogy between hydraulics and electrical circuits lets us consider the arrangement \eqref{eqn.tank_wire} as an electrical circuit. Each output port is drawn as the difference between two terminals, like a tank has an opening at the top and bottom. Making one ground $0\in\rr$ is like choosing one tank opening to be open to the air at some known atmospheric pressure:
\[
\tankpic
\qquad\qquad
\begin{tikzpicture}[font=\tiny, thick, baseline=(current bounding box.center)]
	\coordinate (n1) at (0,1.9);
	\coordinate (n2) at (3.2,1.9);
	\draw (n1) -- (.35,1.9);
	\draw[decorate, decoration={zigzag, segment length=4.5pt, amplitude=2.5pt}] (.35,1.9) -- (1.25,1.9);
	\draw (1.25,1.9) -- (1.78,1.9);
	\draw[line width=2pt] (1.78,1.76) -- (1.78,2.04);
	\draw (1.95,1.68) -- (1.95,2.12);
	\draw (1.95,1.9) -- (n2);
	\draw[->, thin] (.15,1.55) -- (2.15,1.55) node[midway, below=-1.5pt] {$I(e_1,e_2)$};
	\draw[decorate, decoration={brace, amplitude=3pt}, thin] (.15,2.44) -- (2.15,2.44) node[midway, above=1pt] {$\fun{pipe}$};
	\node[above=0pt] at (1.87,2.07) {$cz$};
	\node[above=0pt] at (.80,2.07) {$\kappa$};
	\draw (n1) -- (0,.85);
	\draw (-.22,.85) -- (.22,.85);
	\draw (-.22,.67) -- (.22,.67);
	\draw (0,.67) -- (0,.28);
	\node[left=2pt, align=center] at (-.22,.76) {$\fun{tank}_1$\\$v_1$};
	\draw (3.2,1.9) -- (3.2,.85);
	\draw (2.98,.85) -- (3.42,.85);
	\draw (2.98,.67) -- (3.42,.67);
	\draw (3.2,.67) -- (3.2,.28);
	\node[right=2pt, align=center] at (3.42,.76) {$\fun{tank}_2$\\$v_2$};
	\node[right=1pt] at (0,1.02) {$e_1$};
	\node[left=1pt] at (3.2,1.02) {$e_2$};
	\draw (0,.28) -- (3.2,.28);
	\draw (1.6,.28) -- (1.6,.12);
	\draw (1.44,.12) -- (1.76,.12);
	\draw (1.50,.04) -- (1.70,.04);
	\draw (1.56,-.04) -- (1.64,-.04);
	\node[right=2pt] at (1.70,.04) {$0\in\rr$};
\end{tikzpicture}
\]
Every constituent of $\fun{wire}(\fun{tank}_1,\fun{tank}_2,\fun{pipe})$ has an analogue in the circuit, except $c$: water's weight per unit volume is the translator between the two columns:
\[
\begin{array}{ll}
\text{hydraulics} & \text{electricity}\\\hline
\text{the }i\text{th tank} & \text{the }i\text{th capacitor}\\
v_i\text{, the volume of water in the }i\text{th tank} & \text{the charge on the }i\text{th capacitor}\\
e_i\coloneqq\outp{\fun{tank}_i}(v_i)\text{, the pressure at its base} & \text{the voltage at the }i\text{th node}\\
\sharpR_{A_i}\text{, shape as $\Delta$ volume per unit pressure} & \text{capacitance as $\Delta$ charge per unit voltage}\\[4pt]
\text{the pipe and its elevation drop} & \text{a resistor and a voltage source, in series}\\
cz\text{, the pressure from that drop} & \text{the voltage from that source}\\
\kappa\text{, the conductance of the pipe} & \text{the conductance of the resistor}\\
I\text{, the flow through the pipe} & \text{the current through the source and resistor}\\[4pt]
\text{a wire} & \text{a node}\\
0\in\rr & \text{the ground}\\
\text{the outer box: closed} & \text{external terminals: none}
\end{array}
\qedhere
\]
\end{remark}

\begin{example}[Lotka--Volterra]\label{ex.lotka_volterra}
We can use the same system, except with oppositely-signed responses, to model predator-prey systems. Wire two species systems to a predation system as above:
\[
\begin{tikzpicture}[oriented WD, bb small, font=\tiny]
	\node[bb port sep=3mm, bb={0}{0}] (hunt) {hunt};
	\node[bb={0}{1}, above left=2pt and 1.75 of hunt.180] (s1) {prey};
	\node[bb={0}{1}, below left=2pt and 1.75 of hunt.180] (s2) {pred};
	\node[bb={0}{0}, fit=(s1) (s2) (hunt)] (outer) {};
	\draw[ar] (s1_out1) -- (s1_out1-|hunt.west);
	\node[above right=-2pt and -3pt] at (s1_out1) {$x$};
	\draw[ar] (s2_out1) -- (s2_out1-|hunt.west);
	\node[above right=-2pt and -3pt] at (s2_out1) {$y$};
\end{tikzpicture}
\]
Each species system $\fun{prey},\fun{pred}\colon\lensunit\to\binom{1}{\rr}$ carries as state the logarithm of its population---noting that $\rr_{>0}$ is not a vector space but its logarithm is---and reports the population itself, $x=\outp{\fun{prey}}(u)\coloneqq e^u$ and $y=\outp{\fun{pred}}(v)\coloneqq e^v$.

The predation system $\fun{hunt}\colon\lensunit\to\binom{\rr^2}{1}$ is stateless, $\fun{hunt}=(\rv 0,\bang,\bang,V)$ with mass-action potential $V(x,y)\coloneqq\beta xy$ using an encounter rate of $\beta>0$. The systems are combined using $\fun{wire}=(\rv 0,\bang,\id,0)$ as in \eqref{eqn.tank_wire}. The species differ only in their responses, which as in \cref{ex.tank_sharp} are not constant,
\[
\sharpR_{\fun{prey}}(u)\colon\xi\mapsto e^{-u}\xi
\qqand
\sharpR_{\fun{pred}}(v)\colon\xi\mapsto-\tfrac{\delta}{\beta}e^{-v}\xi,
\]
so the prey descends $V$ while the predator ascends it; the factor $\delta/\beta$ in $\sharpR_{\fun{pred}}$ is the number of predators produced per prey consumed. Here $\xi$ is a flux of individuals per unit time,
and each response divides by the population size to obtain the change of log-population. The species' potentials are $U_{\fun{prey}}(u)\coloneqq-\alpha e^u$ and $U_{\fun{pred}}(v)\coloneqq-\tfrac{\gamma\beta}{\delta}e^v$, for a prey birth rate $\alpha>0$ and a predator death rate $\gamma>0$. The composite closed system then has state space $\rr^2$, sharp $\sharpR_{\fun{prey}}\oplus\sharpR_{\fun{pred}}$, and potential given by the sum of those above, with $V$ evaluated at the reported populations $x$ and $y$:
\[
U(u,v)=\beta e^{u+v}-\alpha e^u-\tfrac{\gamma\beta}{\delta}e^v.
\]
Applying the negative sharp to the differential of the potential gives a
state-space
vector field $(\dot u,\dot v)=-(\sharpR_{\fun{prey}}\oplus\sharpR_{\fun{pred}})(dU|_{(u,v)})$.
Since $x=e^u$ and $y=e^v$, we have $\dot x=x\dot u$ and $\dot y=y\dot v$; hence
we obtain
\[
\dot x=x(\alpha-\beta y)
\qqand
\dot y=y(\delta x-\gamma),
\]
the Lotka--Volterra predator--prey equations. In population coordinates the potential is $\beta xy-\alpha x-\tfrac{\gamma\beta}{\delta}y$, and its unique critical point is the coexistence equilibrium $(x,y)=(\gamma/\delta,\alpha/\beta)$.

We can make different ecological modeling choices. For example, making both responses positive makes each species descend the interaction potential $V$. Adding $\tfrac{\rho}{2}e^{2u}$ to $U_{\fun{prey}}$ slows the prey's growth as its population increases, modeling limited resources. Finally, one could add an input port to $\fun{prey}$ and make the potential depend on it \eqref{eqn.para_potential_lens_maps}, in which case an environmental signal could drive $\alpha$.
\end{example}

\begin{example}\label{ex.further_reach}
There are many more examples from electronics, mechanics, control theory, chemistry, biology, and neuroscience that appear to be modellable in this framework. 

Each construction below is implemented in the companion code (\url{https://github.com/dspivak/dap}) by Claude, which used the definitions here to make executable code that was then tested against idealized standard equations. The author understands the content of the claims as stated, and believes them to be reasonable and to have been checked, but \textit{has not verified all of them by hand.}

\begin{description}
	\item[Transistor.] A transistor is a system with interface $\binom{\rr^3}{1}$ whose three inputs receive the gate, drain, and source voltages; whose responsive vector space is $Q\coloneqq(\rr^2,\sharpR)$ with $\sharpR(\xi,\xi')\coloneqq(\xi',0)$; and whose potential $U\colon Q\times \rr^3\to\rr$ is
\[
U\bigl((q,q'),e_G,e_D,e_S\bigr)\coloneqq\tfrac{\kappa}{2}\bigl(e_G-e_S-q-q'\bigr)^2+W\bigl(q,\,e_D-e_S\bigr)
\]
for a constant $\kappa>0$ and a function $W\colon\rr^2\to\rr$ encoding the channel's current--voltage law (e.g.\ it distinguishes NMOS from PMOS). The idea is that in a state $(q,q')$, only the sum $q+q'$ is meaningful: it relaxes toward the gate--source voltage $e_G-e_S$, whereas $\sharpR$ freezes $q'$, ensuring that the summand involving the drain voltage $e_D$ is unable to change the state.
\item[NAND gate.]
A NAND gate is given by applying a static wiring $\binom{\rr^3}{1}^{\ltens 4}\ltens\binom{1}{\rr}^{\ltens 2}\ltens\binom{1}{\rr}\to\binom{\rr^2}{\rr}$ to four transistors---two NMOS and two PMOS \cite{wikipedia2026nand}---two capacitors, and a supply voltage held fixed by the degenerate $\sharpR=0$; when the two input voltages are each held at one of two fixed values, the output voltage converges to the value prescribed by the NAND truth table.
\item[Logic gates.] We obtain logic gates from the NAND gate by static wiring in the usual way. In our implementation, a chain of $K$ inverters takes roughly $K$ times as many ticks of the coalgebra to converge as one does, whereas $K$ side-by-side copies take the same number of ticks as one.
\item[Sequential circuits.]
Cross-coupling two NAND gates yields a latch \cite{wikipedia2026flipflop}---a system with two stable states, each preserved by the dynamics for the parameters we tried, i.e.\ a stored bit of memory---and, when combined with logic gates, sequential circuitry as well.
\item[Amplifier.]
An amplifier is a system with interface $\binom{\rr}{\rr}$: it has the transistor's responsive vector space $(\rr^2,\sharpR)$, it reports its state $(q,q')\mapsto q$, and its potential $U(q,q',e)\coloneqq\tfrac{\kappa}{2}(Ae-q-q')^2$ makes $q$ relax toward the amplification $A\gg 0$ times the input $e$.
Moreover, the amplifier is unilateral: any covector returned at its output port lands in the kernel of $\sharpR$, so whatever is wired to the output cannot move the state.
\item[Feedback amplifier.]
Black's feedback amplifier \cite{black1934stabilized} is the system given by applying a
static arrangement $b\colon\binom{\rr}{\rr}\to\binom{\rr}{\rr}$  to the amplifier above. The output map of $b$ is identity and its input map is $(v,e_{\tn{src}})\mapsto e_{\tn{src}}-\beta v$, where $v$ is emitted from the amp and $e_{\tn{src}}$ is the source voltage input to the system.  Black invented this device for long-distance telephony, where precise amplification is necessary. At equilibrium the system reports $\tfrac{A}{1+A\beta}\approx\tfrac1\beta$ times the source, so the amplification is governed by $\beta$---realized by a comparatively precise passive resistor---rather than by $A$, which was generally quite imprecise.
\item[LC circuit.]
An LC circuit is a system that resonates with incoming signals, e.g.\ radio, formed by a stateless arrangement of an inductor and a capacitor
\[
\lensunit\To{\fun{ind}\ltens\fun{cap}}\binom{1}{\rr}\ltens\binom{1}{\rr}\To{f}\binom{1}{\rr}
\]
The inductor has responsive vector space $(\rr,\sharpR_{\fun{ind}})$, $\sharpR_{\fun{ind}}(\xi)\coloneqq-\xi/L$ for an inductance $L>0$ and potential $-\tfrac{R}{2}i^2$ for a small resistance $R>0$, the sign matching that of $\sharpR_{\fun{ind}}$; its output map reports the state $i\in\rr$. The capacitor is as in \cref{rmk.tank_electrical} reporting a state $v\in\rr$. The arrangement $f$ is stateless with potential $U(i,v)\coloneqq iv$ and output map $(i,v)\mapsto v$: the result is an oscillation as in the predator-prey model (\cref{ex.lotka_volterra})---caused by the opposite-signed sharps---with period roughly $2\pi\sqrt{LC}$ ticks, decaying whenever $1/C<R<2\sqrt{L/C}$. A radio signal enters as a covector---a current drive---returned at the terminal; the oscillation is strongest when the drive frequency matches the circuit's own, tuning to the signal from the matching radio station.%
\footnote{
LC circuits can also be integrated using the phase integrator, if we replace the two systems above with a single one whose state is the charge $q\coloneqq Cv$, responsive vector space $(\rr,\xi\mapsto\xi/L)$, and potential $q^2/2C$. Doing so keeps the energy bounded whenever $4LC>1$.
}
\item[DC motor.]
A DC motor converts electrical current into mechanical rotation. It has the shape of the LC circuit: an electrical component and a mechanical one with states the current $i$ and the angular velocity $\omega$; sharps $\xi/L$ and $-\xi/J$ (inductance $L$, inertia $J$); cell potentials $\tfrac{R}{2}i^2-Vi$ (winding resistance $R$, source voltage $V$) and $-(\tfrac{b}{2}\omega^2+\tau\omega)$ (friction $b$, load torque $\tau$); and coupler potential $Ki\omega$ (motor constant $K$). One tick is exactly the Euler step of the motor's standard equations~\cite[Ex.~2.5]{vanderschaft2014port}.
\item[Molecular dynamics.]
Molecular dynamics simulates a molecule or material as $d$-many point particles with pairwise interactions. Each particle is a potential-free system
$\lensunit\to\binom{1}{\rr^{3}}$ with responsive vector space $(\rr^{3},\sharpR)$,
$\sharpR(\xi)\coloneqq\xi/m$ for a mass $m>0$, reporting its
state, the position $x\in\rr^3$;
each interacting pair is a stateless system
$\lensunit\to\binom{\rr^{6}}{1}$  whose potential is the Lennard--Jones energy $4\epsilon\bigl((\sigma/r)^{12}-(\sigma/r)^6\bigr)$, $r$ being the distance between the two positions it receives and $\sigma>0$ a length scale; pairs are wired to particles along the interaction graph as in \cref{sec.graph_laplacian}.
Under the configuration dynamics the particles descend the emergent total energy---structure relaxation---a pair settling at separation exactly
$2^{1/6}\sigma$. Actual molecular dynamics, however, requires a better integrator: the two-round integrator of \cref{rmk.multistage} applied to this arrangement is exactly velocity Verlet, which is its standard integration loop.
\item[Impedance control.]
Impedance control~\cite{hogan1985impedance} makes a robot arm behave as though it were attached by a spring to its target: push it off the target and it pushes back proportionally. The spring term is a stateless system $\lensunit\to\binom{\rr^2}{1}$ whose two inputs receive a reported position $q$ and a run-time setpoint $s$, and whose potential is $\tfrac{k}{2}(q-s)^2$, where $k$ is stiffness. Wired to a system reporting its state $q$, it pulls $q$ toward $s$; the force returned at the setpoint port is equal and opposite, and in most control systems would be discarded by making the setpoint unilateral.
\item[Kuramoto oscillators.]
A heart's pacemaker cells and the generators on a power grid are modeled as synchronizing by the same mechanism, the Kuramoto equations. Each oscillator is a system $\lensunit\to\binom{1}{\rr}$ reporting its state, the phase $\theta\in\rr$, with responsive vector space $(\rr,\sharpR)$, $\sharpR(\xi)\coloneqq\xi$, and potential $-\omega\theta$ (natural frequency $\omega$); each edge of an interaction graph is a stateless system $\binom{\rr^2}{1}$ with potential $-K\cos(\theta-\theta')$ in the two reported phases (coupling strength $K$). Differentiating, the covector returned to an oscillator is $-\omega+K\sum\sin(\theta-\theta')$, summed over its neighbors, so one tick is $\theta\mapsto\theta+\omega+K\sum\sin(\theta'-\theta)$---the Euler step of the Kuramoto equations.%
\footnote{The phase integrator applied to the same arrangement yields the undamped swing equation of power grids: a generator is a Kuramoto oscillator with inertia.}
\item[Petri nets.]
A chemical reaction network can be modeled by a Petri net
$(S,T,m,n,r,x^0)$ with arc multiplicities $m,n\colon T\times S\to\nn$, rate constants $r\colon T\to\rr_{\geq 0}$, and initial concentrations $x^0\colon S\to\rr_{\geq0}$: a transition $t$ consumes $m_{ts}$ and produces $n_{ts}$ tokens of the species $s$, and fires at the mass-action rate $f_t(x)\coloneqq r(t)\prod_sx_s^{m_{ts}}$ in the concentrations $x$.
Each transition $t\in T$ is a system $\lensunit\to\binom{\rr^{S_t}}{\rr}$ whose inputs receive the concentrations at the set $S_t$ of species it touches; it has the transistor's responsive vector space $(\rr^2,\sharpR)$, $\sharpR(\xi,\xi')\coloneqq(\xi',0)$; it reports the first coordinate of its state $(\epsilon_t,\epsilon'_t)\mapsto\epsilon_t$, the extent of reaction; and
its potential is $U_t\bigl((\epsilon_t,\epsilon'_t),x\bigr)\coloneqq-\epsilon'_t f_t(x)$. The transition-systems are wired together by the static arrangement $\bigltens_{t\in T}\binom{\rr^{S_t}}{\rr}\to\lensunit$ whose input map is affine, as for the feedback amplifier above: it sends the reported extents to the concentration $x_s\coloneqq x^0_s+\sum_t\bigl(n_{ts}-m_{ts}\bigr)\epsilon_t$ at each species $s$.
Differentiating, the sharp discards $\partial U/\partial\epsilon_t$ and delivers $\partial U/\partial\epsilon'_t=-f_t(x)$, so one tick is $\epsilon_t\mapsto\epsilon_t+f_t(x)$, the Euler step of mass action at any molecularity. Note that the chemical reading requires each concentration $x_s$ to remain nonnegative, a condition the update does not enforce.%
\footnote{Nonnegativity can be obtained by construction by re-encoding the same net in log coordinates, at the cost of exact conservation laws, and likewise for the replicator tick below; see the companion code for details.}
Note that $U$ does not represent free energy and, as in the transistor, $\epsilon'_t$ represents nothing---it is frozen and only there to channel covectors to the real state $\epsilon$.
\item[Replicator dynamics.]
Evolutionary game theory tracks the fractions $x_1,\dots,x_n$ of a population playing each of $n$ strategies: a strategy grows when it earns more than the population average. This is a single closed system with state $x\in\rr^n$, state-dependent Shahshahani sharp $\sharpR_x(\xi)\coloneqq\bigl(\operatorname{diag}(x)-xx^\top\bigr)\xi$, and potential $U(x)\coloneqq-\tfrac12x^\top Ax$ for a symmetric payoff matrix $A$ (a potential game). Differentiating, one tick is $x_i\mapsto x_i+x_i\bigl((Ax)_i-x^\top Ax\bigr)$---the Euler step of the replicator equation, each share growing by its payoff excess over the average---and when $\sum_ix_i=1$ the update preserves it; as with the Petri tick above, nonnegativity is a condition the update does not enforce.
\item[Hopfield networks.]
A Hopfield network~\cite{hopfield1984neurons} with suitable weights stores memories as attractors of a neural dynamics: started near a stored pattern, the state falls into it. Each neuron is a system $\lensunit\to\binom{1}{\rr}$ with state $q\in\rr$, state-dependent sharp $\sharpR_q(\xi)\coloneqq\cosh^2(q)\,\xi$, and potential $U(q)\coloneqq q\tanh q-\log\cosh q-I\tanh q$ (bias current $I$), reporting the activation $V\coloneqq\tanh q$; each synapse is a stateless system $\binom{\rr^2}{1}$ with potential $-W\,VV'$ in the two reported activations (weight $W\in\rr$). One tick of configuration dynamics reproduces the Euler step of Hopfield's equations, and the emergent composite potential is exactly Hopfield's energy function.
\end{description}
Each component in each of these examples is specified in its own native variables by a potential and a sharp; any dynamic arrangement of such components is again such a system. All the above components can be mixed and matched---combined without worry---as long as one is willing for them to receive covectors at their output ports in addition to signals at their input ports; indeed, the coupling terms including loading effects are derived from the shared potentials rather than written by hand as in block-diagram tools such as Simulink.

A few additional examples are worked out in complete detail---dynamics included---in \cref{ch.applications}.
\end{example}

\chapter{Smooth adaptive dynamics}\label{ch.smooth_dynamics}

This section turns the syntax $\sarr$ of \cref{ch.smooth_rwd} into dynamics. By the framework of \cref{ch.framework}, this takes two semantic ingredients---a polynomial interpretation and an integrator---which compose as
\[
\sarr\To{\Phip{\interpsm}}\para{\cot}{\poly}\To{\Psisem{\intg}}\pc.
\]
The framework's dynamics functor $\Phi_{\interp,\intg}$ (\cref{thm.dynamics_functor}) records both ingredients; specialized here it is $\Phi_{\interpsm,\intg}$. Since the interpretation is fixed to the smooth one $\interpsm$ throughout this section, we drop it from the notation and write $\Phi_\intg$ for $\Phi_{\interpsm,\intg}$ (and $\Phiconf,\Phiphase$ for the configuration and phase integrators of \cref{sec.integrator_semantics}).

\Cref{sec.cot} builds the cotangent functor $\cot\colon\mfd\to\poly$, which is involved in both. \Cref{sec.smooth_interpretation} assembles the polynomial interpretation $\Phip{\interpsm}$, casting each interface as a polynomial and each adaptive arrangement as a $\cotof{Q}$-parameterized polynomial map. \Cref{sec.integrator_semantics} constructs the two integrators used in this paper---configuration and phase---each inducing a semantics $\Psisem{\intg}\colon\para{\cot}{\poly}\to\pc$. \Cref{sec.dynamics_functor} composes the two to recover the operad functors $\Phiconf,\Phiphase\colon\sarr\to\pc$ (\cref{cor.functor}), and \cref{sec.configuration_dynamics,sec.phase_dynamics} unpack the differential geometry underlying each in full detail.

\section{Cotangent bundles as polynomials}\label{sec.cot}

The category $\mfd$ of smooth manifolds provides the source of polynomials relevant to mechanics and learning. Each manifold has a cotangent bundle, and each smooth map has a transpose going backwards on cotangent fibers. This keeps $T^*$ from being functorial as a map to $\mfd$, but it is exactly what lets it be a (surprisingly well-behaved) functor to $\poly$.

\begin{definition}[Cotangent functor]\label{def.cot}
The \emph{cotangent functor} $\defineTerm{cot}\colon\mfd\to\poly$ sends a smooth manifold $M$ to
\begin{equation*}
\cotof{M}\coloneqq\sum_{m\in M}\yon^{T^*_mM}
\end{equation*}
where $T^*_mM$ is the cotangent space at $m$. A smooth map $f\colon M\to N$ is sent to the $\poly$ map $\cotof{f}\colon\cotof{M}\to\cotof{N}$ whose positions and directions are
\[
  \cotof{f}_1\coloneqq f
  \qqand
  \bk{\cotof{f}}{m}\coloneqq(T_m f)^\top
 \]
 as maps $M\to N$ and $T^*_{f(m)}N\to T^*_mM$, respectively.
\end{definition}

This is indeed functorial: the identity map $\id_M\colon M\to M$ has $T_m(\id_M)=\id_{T_mM}$, whose transpose is $\id_{T^*_mM}$, so $\cotof{\id_M}=\id_{\cotof{M}}$. For composable smooth maps $f\colon M\to N$ and $g\colon N\to P$, the chain rule gives $T_m(g\circ f)=T_{f(m)}g\circ T_m f$, so
\[
(T_m(g\circ f))^\top=(T_m f)^\top\circ(T_{f(m)}g)^\top=\bk{\cotof{f}}{m}\circ\bk{\cotof{g}}{f(m)},
\]
which is the backward part of $\cotof{g}\circ\cotof{f}$.

\begin{proposition}\label{prop.cot_monoidal}
The cotangent functor $\cot\colon(\mfd,1,\times)\to(\poly,\yon,\otimes)$ is strong symmetric monoidal:
\[
\cotof{1}\cong\yon
\qqand
\cotof{M\times N}\cong\cotof{M}\otimes\cotof{N}.
\]
\end{proposition}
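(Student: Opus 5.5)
The plan is to write down the unitor and productor explicitly as isomorphisms of polynomials, and then check naturality and the coherence axioms using the description of $\poly$-maps as forward-on-positions, backward-on-directions pairs from \eqref{eqn.poly_map}.

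For the unitor, the one-point manifold $1=\rr^0$ has a single point with cotangent space $T^*_*1=0$, a one-element set. Hence $\cotof{1}=\yon^{\{0\}}\cong\yon$. For the productor, \eqref{eqn.dirichlet} gives
\[
\cotof{M}\otimes\cotof{N}=\sum_{(m,n)\in M\times N}\yon^{T^*_mM\times T^*_nN}.
\]
By \eqref{eqn.T_products} we have $T^*_{(m,n)}(M\times N)\cong T^*_mM\oplus T^*_nN$, whose underlying set is $T^*_mM\times T^*_nN$. So I take the productor $\cotof{M}\otimes\cotof{N}\to\cotof{M\times N}$ to be the identity on positions. On directions at $(m,n)$ it is the inverse of this canonical splitting, sending a covector $\lambda$ on $M\times N$ to its restrictions $(\lambda\circ\iota_m,\lambda\circ\iota_n)$ along the two slice inclusions. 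Both components are bijections, so the map is an isomorphism in $\poly$.

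Naturality in $f\colon M\to M'$ and $g\colon N\to N'$ is the main check. On positions both sides send $(m,n)$ to $(f(m),g(n))$. On directions we need $(T_{(m,n)}(f\times g))^\top=(T_mf)^\top\oplus(T_ng)^\top$ under the splitting. This follows from $T_{(m,n)}(f\times g)=T_mf\oplus T_ng$, which holds because $T$ preserves products; transposing a block-diagonal map gives the block-diagonal transpose.

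The remaining coherence conditions are the associativity and unit squares, together with compatibility with the symmetry. Since every structure map is the identity on positions, these reduce to statements about direction sets. There they become the corresponding coherences for the splitting of cotangent spaces of products into direct sums, which are the coherences of $(\vect,0,\oplus)$ (associativity, unitality and swapping of summands). The symmetry $M\times N\cong N\times M$ has differential the swap, and its transpose is the swap of covectors, which matches the symmetry of $\otimes$ in $\poly$. I expect nothing deep here; the only point needing care is keeping the direction of the backward maps consistent, because the productor acts contravariantly on directions.
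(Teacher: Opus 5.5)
Your proposal is correct and follows the same route as the paper. It gets the unit from $T^*_*1=0$, takes the productor to be the identity on positions with the splitting $T^*_{(m,n)}(M\times N)\cong T^*_mM\oplus T^*_nN$ on directions, and gets naturality from the block-diagonal form $T_{(m,n)}(f\times g)=T_mf\oplus T_ng$ and its transpose. The only addition is that you sketch the associativity, unit, and symmetry coherences by reducing them to those of direct sums of vector spaces; the paper leaves these implicit.
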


\begin{proof}
The one-point manifold $1=\{0\}$ has $T^*_01=\{0\}$, so $\cotof{1}=\sum_{\{0\}}\yon^{\{0\}}\cong\yon$, i.e.\ the unit is preserved. By \eqref{eqn.T_products}, $\absval{T^*_{(m,n)}(M\times N)}\cong\absval{T^*_mM\oplus T^*_nN}\cong\absval{T^*_mM}\times\absval{T^*_nN}$, so we have
\begin{equation}\label{eqn.cot_times}
\cotof{M\times N}=\sum_{(m,n)\in M\times N}\yon^{T^*_{(m,n)}(M\times N)}\cong\sum_{m\in M}\yon^{T^*_mM}\otimes\sum_{n\in N}\yon^{T^*_nN}=\cotof{M}\otimes\cotof{N}
\end{equation}
using the definition of the Dirichlet product \eqref{eqn.dirichlet}.

This isomorphism $\cotof{M\times N}\cong\cotof{M}\otimes\cotof{N}$ is natural: given a product map $f\times g\colon M\times N\to M'\times N'$, the tangent map is block-diagonal: $T_{(m,n)}(f\times g)=T_m f\oplus T_n g$. Hence $(T_{(m,n)}(f\times g))^\top=(T_m f)^\top\times(T_n g)^\top$.
%
\end{proof}

\begin{remark}
In fact, $\cot$ also preserves coproducts,
\[
\cotof{M}+\cotof{N}\cong\cotof{M+N}.
\]
Hence $\cot\colon(\mfd,0,+,1,\times)\to(\poly,0,+,\yon,\otimes)$ is a distributive monoidal functor. We will not use this fact in this paper.
\end{remark}

\Cref{prop.cot_monoidal} uses only the underlying sets of the cotangent fibers, e.g.\ in \eqref{eqn.cot_times}. The linear structure is not lost, however: \cref{lem.cot_comonoid} shows that the diagonal and terminal maps in $\mfd$ induce fiberwise addition and zero on $\cotof{M}$.

Since $(\mfd,1,\times)$ is cartesian, every manifold $M$ is a comonoid via the diagonal $\Delta_M\colon M\to M\times M$ and $\bang_M\colon M\to 1$. And since $\cot$ is strong symmetric monoidal (\cref{prop.cot_monoidal}), it preserves comonoid structures. 

\begin{lemma}\label{lem.cot_comonoid}
For any smooth manifold $M$, the polynomial $\cotof{M}$ is a comonoid in $(\poly,\yon,\otimes)$. The comultiplication and counit
\[
\cotof{\Delta_M}\colon\cotof{M}\to\cotof{M}\otimes\cotof{M}
\qqand
 \cotof{\bang_M}\colon\cotof{M}\to\yon
\]
send positions by $m\mapsto(m,m)$ and $m\mapsto *$, respectively. On directions, they endow each cotangent fiber $T^*_mM$ with a \emph{commutative monoid} structure
\[
T^*_mM\times T^*_mM\to T^*_mM,\quad (\xi,\xi')\mapsto\xi+\xi'
\qqand \rr^0\to T^*_mM,\quad *\mapsto 0,
\]
i.e.\ fiberwise addition and zero. Thus the $\otimes$-comonoid structure on $\cotof{M}$ recovers the additive structure of each cotangent space.%
\end{lemma}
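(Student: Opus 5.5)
The comonoid claim itself is formal. $(\mfd,1,\times)$ is cartesian, so $(M,\Delta_M,\bang_M)$ is a comonoid in $\mfd$. By \cref{prop.cot_monoidal}, $\cot$ is strong symmetric monoidal, and strong monoidal functors carry comonoids to comonoids. The comonoid structure on $\cotof{M}$ is therefore the pair $\cotof{\Delta_M}$ and $\cotof{\bang_M}$, each postcomposed with the coherence isomorphisms $\cotof{M\times M}\cong\cotof{M}\otimes\cotof{M}$ and $\cotof{1}\cong\yon$. Coassociativity, counitality and cocommutativity transfer from those of $M$ via functoriality and the coherence of the monoidal structure.

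I will then read off the explicit formulas from \cref{def.cot}. On positions, $\cotof{f}_1=f$, so the comultiplication sends $m\mapsto(m,m)$ and the counit sends $m\mapsto *$. On directions, the backward map of $\cotof{\Delta_M}$ at $m$ is $(T_m\Delta_M)^\top\colon T^*_{(m,m)}(M\times M)\to T^*_mM$. I identify the domain with $T^*_mM\oplus T^*_mM$ by \eqref{eqn.T_products}, which is exactly the identification used in the isomorphism \eqref{eqn.cot_times}. Since $T_m\Delta_M$ is the diagonal $v\mapsto(v,v)$ of $T_mM$, its transpose sends $(\xi,\xi')$ to $v\mapsto\xi(v)+\xi'(v)$, that is, to $\xi+\xi'$. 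Similarly, $T_m\bang_M\colon T_mM\to T_*1=0$ is zero, so its transpose sends the unique covector of $T^*_*1=\{0\}$ to $0\in T^*_mM$.

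The monoid structure on directions follows from dualizing. In $\poly$, a comonoid map acts on directions contravariantly, so the coassociativity and counit diagrams for $\cotof{M}$ become, fiberwise, the associativity and unit diagrams for $+$ and $0$ on $T^*_mM$. Commutativity comes from cocommutativity of $\Delta_M$, that is, $\Delta_M$ composed with the swap equals $\Delta_M$. These laws are also immediate directly from vector addition, so no real checking is needed.

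The only point needing care is the identification step: the additive formula for $(T_m\Delta_M)^\top$ depends on matching the splitting \eqref{eqn.T_products} with the product coherence of $\cot$ in \cref{prop.cot_monoidal}. I will state that both use the same canonical isomorphism $T^*_{(m,n)}(M\times N)\cong T^*_mM\oplus T^*_nN$, induced by the projections. With that fixed, the computation is one line.
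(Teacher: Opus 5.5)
Your proposal is correct and follows essentially the same route as the paper: the comonoid structure is transported along the strong symmetric monoidal cotangent functor. The explicit structure is then read off on positions and from the transposes of $T_m\Delta_M$ (addition) and $T_m\bang_M$ (the zero covector). Your additional remarks are details the paper leaves implicit: deriving the monoid laws by dualizing coassociativity, counitality and cocommutativity, and checking that the splitting of $T^*_{(m,m)}(M\times M)$ matches the coherence isomorphism.
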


\begin{proof}
On positions $\cotof{\Delta_M}$ and $\cotof{\bang_M}$ are given $m\mapsto(m,m)$ and $m\mapsto *$. On directions, $T_m\Delta_M$ is the diagonal $T_mM\to T_mM\times T_mM$, $v\mapsto(v,v)$; its transpose is addition $T^*_mM\times T^*_mM\to T^*_mM$. The map $T_m(\bang_M)$ is $T_mM\to\rr^0$, whose transpose is the map $\rr^0\to T^*_mM$ that selects the zero covector.
\end{proof}

\begin{proposition}[Hopf monoid structure on cotangent bundles of Lie groups]\label{prop.cot_hopf}
Let $G$ be a Lie group with unit $\eta\colon 1\to G$, multiplication $\mu\colon G\times G\to G$, and inversion $\iota\colon G\to G$. Then $\cotof{G}$ is a Hopf monoid in $(\poly,\yon,\otimes)$: the monoid structure is
\[
\cotof{\eta}\colon\yon\to\cotof{G},\qquad \cotof{\mu}\colon\cotof{G}\otimes\cotof{G}\to\cotof{G},
\]
the comonoid structure is as in \cref{lem.cot_comonoid}, and the antipode is $\cotof{\iota}\colon\cotof{G}\to\cotof{G}$.
\end{proposition}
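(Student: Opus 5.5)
The plan is to transport everything along the strong symmetric monoidal functor $\cot\colon(\mfd,1,\times)\to(\poly,\yon,\otimes)$ of \cref{prop.cot_monoidal}. The key observation is that a Lie group $G$ is precisely a Hopf monoid in the cartesian monoidal category $(\mfd,1,\times)$. Its comonoid structure is the canonical diagonal $(\Delta_G,\bang_G)$, which every object of a cartesian category carries. Its monoid structure is $(\eta,\mu)$, and its antipode is $\iota$.

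I will first verify the Hopf axioms in $\mfd$. These are the following:
\begin{itemize}
\item associativity and unitality of $\mu$ with $\eta$;
\item the bimonoid compatibilities, namely that $\mu$ and $\eta$ are comonoid homomorphisms for the diagonal structure;
\item the antipode equations $\mu\circ(\iota\times G)\circ\Delta_G=\eta\circ\bang_G=\mu\circ(G\times\iota)\circ\Delta_G$.
\end{itemize}
The bimonoid compatibilities are automatic in a cartesian category, by the footnote on Fox's theorem. The remaining axioms are exactly the group axioms, stated pointwise as equalities of smooth maps.

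Next I would invoke the general fact that a strong symmetric monoidal functor carries Hopf monoids to Hopf monoids. It carries monoids to monoids and comonoids to comonoids via its unitor and productor. The bimonoid compatibility diagram involves the symmetry $\sigma$, which is preserved because $\cot$ is symmetric. The antipode equations are equalities of composites built from $\otimes$, the structure maps, and units, so applying $\cot$ and conjugating by the coherence isomorphisms $\cotof{G\times G}\cong\cotof{G}\otimes\cotof{G}$ and $\cotof{1}\cong\yon$ yields the corresponding equalities in $\poly$. The comonoid structure obtained this way is exactly $(\cotof{\Delta_G},\cotof{\bang_G})$, which is the one described in \cref{lem.cot_comonoid}. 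The monoid and antipode are $\cotof{\eta}$, $\cotof{\mu}$, and $\cotof{\iota}$, transported along the same coherence isomorphisms as in the statement.

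The only real obstacle is bookkeeping: checking that the coherence isomorphisms of $\cot$ enter each axiom consistently, particularly in the bimonoid axiom where the middle-four interchange $(G\times G)\times(G\times G)\to(G\times G)\times(G\times G)$ must be matched with the corresponding symmetry in $(\poly,\otimes)$.

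I would handle this by a direct computation rather than abstract nonsense. The isomorphism in \eqref{eqn.cot_times} is the identity on positions and the canonical identification $T^*_{(m,n)}(M\times N)\cong T^*_mM\oplus T^*_nN$ on directions. Both sides are therefore computed by the same positions and the same transposed differentials, so each axiom reduces via the chain rule to the corresponding axiom in $\mfd$.
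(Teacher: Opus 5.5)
Your proposal is correct and takes the same route as the paper. The paper views $G$ as a Hopf monoid (group object) in the cartesian category $(\mfd,1,\times)$ and transports it along the strong symmetric monoidal functor $\cot$, which preserves monoids, comonoids, bimonoids and antipodes together with their axioms; your attention to the symmetry in the bimonoid axiom and to the coherence isomorphisms spells out what the paper's one-line proof leaves implicit.
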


\begin{proof}
It is well known that strong monoidal functors preserve monoids, comonoids, and bimonoids: they preserve both the defining maps and the axioms relating them. For the same reason they also preserve the antipode structure and axioms, hence the Hopf monoid structure. See also~\cite{spivak2023hopf}.
\end{proof}

In fact, we will only need that if $G$ is a commutative monoid object in $\mfd$ then $\cotof{G}$ is a commutative monoid object in $\poly$.

\section{The smooth polynomial interpretation}\label{sec.smooth_interpretation}

We now interpret the syntax $\sarr$ in $\poly$ along the cotangent functor $\cot$ (\cref{sec.cot}). For the semantic effect and effect comparison we take the canonical choices that come with $\cot$: the writer monad $\cotof{\rr}\otimes\blank$ and the comparison $\theta$ from its strong monoidality (\cref{lem.monoid_to_monad}). The one ingredient we actually deliberate over is the potential algebra, so we pin it down first; \cref{def.smooth_setup} then assembles all four into the interpretation $\Phip{\interpsm}\colon\sarr\to\para{\cot}{\poly}$.

The potential algebra is a $(\cotof{\rr}\otimes\blank)$-monoid $(z,e,m,\alpha)$ in $\poly$, as needed for lens internalization (\cref{thm.Theta_T_alpha}). With a view toward our mechanics and learning applications, we take $z\coloneqq\yon$. It has a unique monoid structure, but $\alpha\colon\cotof{\rr}\to\yon$ is extra data.

\begin{lemma}\label{lem.alpha_constant}
Consider the monad $\Fun{T}\coloneqq(\cotof{\rr}\otimes\blank)$ on $\poly$. The $\Fun{T}$-monoid structures $(z,e,m,\alpha)$ for which $z=\yon$ are in bijection with real numbers $r:\rr$.
\end{lemma}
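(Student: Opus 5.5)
The plan is to unpack all the data directly, using the explicit description of maps in $\poly$ from \eqref{eqn.poly_map}. Since $z=\yon$ has a unique $\otimes$-monoid structure ($e=\id_\yon$, $m\colon\yon\otimes\yon\cong\yon$), only $\alpha\colon\cotof{\rr}\otimes\yon\cong\cotof{\rr}\to\yon$ is free. First I would classify all polynomial maps $\cotof{\rr}\to\yon$. On positions there is no choice, since $\yon(1)=1$. On directions, for each $x\in\rr$ we must give a function from the singleton $\yon[*]$ to $T^*_x\rr\cong\rr$, by \eqref{eqn.tangent_vec}. So such maps are exactly arbitrary functions $a\colon\rr\to\rr$, with $a(x)$ the covector returned at position $x$. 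The rest of the proof shows that the four axioms of \cref{def.T_monoid} cut these down to exactly the constant functions, $a\equiv r$.

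Next I would check the axioms one by one, using \cref{lem.monoid_to_monad} and \cref{lem.cot_comonoid} to make the monad structure on $\Fun T=\cotof{\rr}\otimes\blank$ explicit.
\begin{itemize}
\item The unit is $\cotof{0}\otimes\id$, picking position $0$.
\item The multiplication $\mu$ is $\cotof{+}\otimes\id$. It sends positions $(x,x')\mapsto x+x'$, and its backward map is the transpose of $T(+)$, namely $\xi\mapsto(\xi,\xi)$.
\item The lax structure $\tau_0,\tau$ has the same form.
\end{itemize}
With this in hand, the two \emph{unit} conditions (the algebra unit law and the left square of \eqref{eqn.T_monoid}) are automatic, because they are equations between maps $\yon\to\yon$, and $\poly(\yon,\yon)$ is a singleton.

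The substantive conditions are the two multiplication squares. For associativity, $\alpha\circ\mu$ at position $(x,x')$ returns the covector pair $(a(x+x'),a(x+x'))$. By contrast, $\alpha\circ\Fun T\alpha$, where $\Fun T\alpha=\cotof{\rr}\otimes\alpha$ acts on the inner factor, returns $(a(x),a(x'))$. Equality for all $x,x'$ forces $a(x)=a(x+x')=a(x')$, so $a$ is constant. The monoid-homomorphism square with $\tau_{z,z}$ and $m_z$ unwinds to the same comparison, $(a(x+x'),a(x+x'))$ versus $(a(x),a(x'))$. It therefore holds whenever $a$ is constant. Conversely, every constant $a\equiv r$ satisfies all four axioms. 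This gives the bijection $r\leftrightarrow\alpha_r$.

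I expect the main obstacle to be bookkeeping rather than ideas: correctly orienting the backward maps in $\Fun T\alpha$ and in the productor $\tau$. In particular, one must see that the transpose of addition is the diagonal on covectors, and which tensor factor $\alpha$ acts on. I would handle this by computing everything pointwise at a position $(x,x')$ and tracking the single returned covector pair, as above.
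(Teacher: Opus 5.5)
Your proposal is correct and follows essentially the paper's argument: identify maps $\cotof{\rr}\to\yon$ with arbitrary covector fields $a\colon\rr\to\rr$, note that the unit conditions are vacuous because $\poly(\yon,\yon)$ is a singleton, and observe that the multiplicative condition $(a(x),a(x'))=(a(x+x'),a(x+x'))$ forces $a$ to be constant. You are slightly more thorough than the paper: you check explicitly that the algebra associativity law $\alpha\circ\mu=\alpha\circ\Fun{T}\alpha$ reduces to this same equation. The paper's proof invokes only the monoid-homomorphism square and leaves that law implicit.
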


\begin{proof}
There is a unique $\otimes$-monoid structure on $\yon$. Maps $\alpha\colon\cotof{\rr}\to\yon$ are in bijection with (not-necessarily continuous) covector fields $\omega\colon\rr\to T^*\rr$ on $\rr$, where $\alpha$ is identified with $\omega(q)\coloneqq\bk{\alpha}{q}(*)$. The first condition of \eqref{eqn.T_monoid} is vacuous. Thus the only condition on $\alpha$ necessary to make it a $\Fun{T}$-monoid is the second condition, which requires that the following equation holds for all $q_1,q_2\in\rr$
\[
\left(\omega(q_1),\omega(q_2)\right)=
\left(\omega(q_1+q_2),\omega(q_1+q_2)\right).
\]
Setting $q_2\coloneqq 0$, we find $\omega(0)=\omega(q_1)$ for all $q_1$. In other words, $\omega(\blank)=r$ is constant at some $r:\rr$. And conversely, any constant covector field is a $\Fun{T}$-monoid.
\end{proof}

\begin{remark}[General form]\label{rmk.alpha_general}
\Cref{lem.alpha_constant} is an instance of a general fact: $\cotof{\rr}$ can be replaced by any commutative Hopf monoid $h$ in $\poly$. The $(h\otimes\blank)$-monoid structures on $\yon$ are then in bijection with $h[e_h]$, the directions of $h$ at its unit position $e_h$. For $h=\cotof{\rr}$ this is $T^*_0\rr\cong\rr$, recovering \cref{lem.alpha_constant}; more generally, for $h\coloneqq\cotof{G}$ with $G$ a commutative Lie group, $h[e_h]=T^*_{e_G}G=\mathfrak{g}^*$, the dual of its Lie algebra. The Hopf structure is essential here: recovering the direction at an arbitrary position $i$ from the one at $e_h$ pairs $i$ with its inverse (so that the product is $e_h$), so it is the antipode that makes the value at $e_h$ determine the whole structure.
\end{remark}

Specializing \cref{lem.alpha_constant} to $m\coloneqq\cotof{\rr}$ and $z\coloneqq\yon$, we choose the homomorphism corresponding to $r\coloneqq+1$ and denote the corresponding algebra map as
\begin{equation}\label{eqn.d_potential}
\defineTerm{potd}\colon\cotof{\rr}\to\yon,\qquad\bk{\potd}{q}(*)\coloneqq +1\in T^*_q\rr.
\end{equation}
We call it $\potd$ because this is the choice for which potentials contribute their usual differentials. Indeed, for a smooth map $U\colon X\to\rr$, the composite $\potd\circ\cotof{U}\colon\cotof{X}\to\yon$ is trivial on positions and tracing through directions,
\[
\bk{\potd\circ\cotof{U}}{q}(*)=(T_q U)^\top(+1)=dU|_q
\]
exactly as in \eqref{eqn.differential}. That is, under the identification of maps $\cotof{X}\to\yon$ with covector fields on $X$, the composite $\potd\circ\cotof{U}$ corresponds to the differential $dU$.

\begin{definition}[Smooth polynomial interpretation datum]\label{def.smooth_setup}
We define the \emph{smooth polynomial interpretation datum} for $\Sm$ (\cref{def.potlens}) to be the tuple
\[
\defineTerm{interpsm}\coloneqq(\cot,\cotof{\rr}\otimes\blank,\theta,(\yon,\potd)),
\]
an instance of \cref{def.polynomial_interpretation} whose
\begin{enumerate}
  \item \emph{interface functor} is the cotangent functor $\cot\colon(\mfd,1,\times)\to(\poly,\yon,\otimes)$, strong symmetric monoidal (\cref{prop.cot_monoidal});
  \item \emph{semantic effect} is the writer monad $\cotof{\rr}\otimes\blank$ on $\poly$, monoidal since $\cotof{\rr}$ is a commutative monoid (\cref{prop.cot_hopf});
  \item \emph{effect comparison} is the canonical $\theta\colon\cot(\rr\otimes\blank)\cong\cotof{\rr}\otimes\cot(\blank)$ induced by the strong monoidality of $\cot$ (\cref{lem.monoid_to_monad});
  \item \emph{potential algebra} is the unit $\yon$, equipped with the $(\cotof{\rr}\otimes\blank)$-monoid structure given by $\potd\colon\cotof{\rr}\to\yon$, i.e.\ the choice $r=+1$ of \eqref{eqn.d_potential}.
  \qedhere
\end{enumerate}
\end{definition}

Applying \cref{thm.poly_interpretation} to the smooth polynomial interpretation datum (\cref{def.smooth_setup}) yields a normal lax symmetric monoidal functor $\Phip{\interpsm}\colon\sarr\to\para{\cot}{\poly}$. It acts on objects by
\[\Phip{\interpsm}\lensob{M}\coloneqq\cotof{\outp{M}}\otimes\ihom{\cotof{\inpt{M}},\yon}.\] 
And $\Phip{\interpsm}$ sends a smooth adaptive arrangement $f\coloneqq(Q,\sharpR_Q,\outp f,\inpt f,U)$ as in \eqref{eqn.srw_morphism}
\[\lensob{M_1}\ltens\cdots\ltens\lensob{M_K}\To[\quad]{f}\lensob{N}\]
to a $\cotof{Q}$-parameterized polynomial map of the form
\begin{equation}\label{eqn.phi_prime_morphism}
\cotof{Q}\otimes
\Phip{\interpsm}\lensob{M_1}\otimes\cdots\otimes\Phip{\interpsm}\lensob{M_K}
\To{\Phip{\interpsm}(f)}
\Phip{\interpsm}\lensob{N}.
\end{equation}

This completes the polynomial interpretation $\Phip{\interpsm}\colon\sarr\to\para{\cot}{\poly}$. To run it as dynamics in $\pc$, we compose with an integrator $\Psisem{\intg}\colon\para{\cot}{\poly}\to\pc$, constructed in \cref{sec.integrator_semantics} and combined with $\Phip{\interpsm}$ in \cref{sec.dynamics_functor}.

\section{Integrator semantics}\label{sec.integrator_semantics}

With the polynomial interpretation in hand, the abstract account of \cref{sec.dynamics_functor_abstract} reduces dynamics to a single datum: a $p$-integrator $(\Fun S,\dyn)$ (\cref{def.integrator}), by which \cref{prop.integrator_to_pc} yields a functor $\para{p}{\poly}\to\pc$. This section supplies the two integrators that we'll use. \Cref{sec.integrators} builds the Euler step $\chi$ on which both rest; \cref{sec.config_integrator} reads off the configuration integrator; \cref{sec.cotangent_endofunctor} develops the cotangent endofunctor; \cref{sec.readouts_one_forms} introduces readouts and 1-forms; and \cref{sec.phase_integrators} assembles the phase integrator from them.

\subsection{The Euler step}\label{sec.integrators}

The polynomial interpretation $\Phip{\interpsm}$ of \cref{sec.smooth_interpretation} produces maps of the form $\cotof{Q}\otimes p\to p'$, or equivalently $\cotof{Q}\to[p,p']$. Our goal in this section is to turn them into $[p,p']$-coalgebras in a functorial way. To do so, \cref{prop.coalg_as_poly_map} says we just need a natural map $\Store(S)\to\cotof{Q}$. Such a map can be thought of as an \emph{integrator}, such as Euler integration, which takes a gradient vector and updates the state.

We specialize to $\cat Q=\rvect$ and $p=\cot$, writing $\para{\cot}{\poly}$ for the Para construction of the action $Q\cdot\blank\coloneqq\cotof{Q}\otimes\blank$ induced by $\cot$. By \cref{prop.integrator_to_pc}, every $\cot$-integrator $\intg$ induces a normal lax symmetric monoidal functor
\[\Psisem{\intg}\colon\para{\cot}{\poly}\to\pciso.\]

\begin{notation}\label{not.rvect_polynomials}
Going forward we denote the composite $\rvect\To{\inc}\mfd\To{\cot}\poly$ simply as $\cot\colon\rvect\to\poly$ and use \eqref{eqn.tangent_vec} and \eqref{eqn.canonical_dual_sum} to make the following identifications:
\[
\cotof Q=Q\yon^{Q^*}\qqand\cotof{T^*Q}=(Q\oplus Q^*)\yon^{Q^*\oplus Q}.
\]
\end{notation}

\begin{proposition}\label{prop.rvect_polynomial}
There is a monoidal natural transformation $\chi\colon\Store\circ\absval{\blank}\to\cot\circ\inc$ of functors $\rvect\to\poly$ whose component $\defineTerm{chiQ}_{\rv Q}\colon\Store(\absval{Q})\to\cotof{Q}$ at $\rv Q=(Q,\sharpR_Q)$ is the polynomial map given on positions by the identity and on directions at $q\in Q$ by
\begin{equation}\label{eqn.directions_sharp}
T^*_qQ\To{\cong}Q^*\To{\sharpR_q}Q\To{q+\blank}Q,
\qquad
\xi\longmapsto q+\sharpR_q(\xi).
\end{equation}
\end{proposition}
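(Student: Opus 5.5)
The plan is to check, in order, that each component $\chiQ{\rv Q}$ is a well-defined map in $\poly$, that these components are natural in $\rvect$-morphisms, and that they are monoidal. Well-definedness is immediate from \eqref{eqn.poly_map}. A map $\Store(\absval Q)=Q\yon^{Q}\to\cotof{Q}=Q\yon^{Q^*}$ (\cref{not.rvect_polynomials}) is exactly a function on positions $Q\to Q$, here the identity, together with, for each $q$, a function $Q^*\to Q$ from directions of $\cotof Q$ at $q$ to directions of the store at $q$. We take this function to be $\xi\mapsto q+\sharpR_q(\xi)$. It is only a function of sets, so no smoothness or linearity needs to be verified.

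For naturality, let $\phi\colon\rv Q_1\to\rv Q_2$ be a sharp-equivariant linear isomorphism. By \cref{lem.store_monoidal}, $\Store(\phi)=\phi\yon^{\phi\inv}$. By \cref{def.cot}, $\cotof{\phi}$ is $\phi$ on positions and $(T_q\phi)^\top=\phi^*$ on directions. On positions, both composites send $q\mapsto\phi(q)$. On directions at $q$, for $\xi\in Q_2^*$:
\begin{itemize}
\item the path through $\chiQ{\rv Q_2}$ and $\Store(\phi)$ gives $\phi\inv\bigl(\phi(q)+\sharpR_{\phi(q)}(\xi)\bigr)$;
\item the path through $\cotof{\phi}$ and $\chiQ{\rv Q_1}$ gives $q+\sharpR_q(\phi^*\xi)$.
\end{itemize}
Using linearity of $\phi\inv$, equality of these reduces to $\phi\inv\sharpR_{\phi(q)}=\sharpR_q\phi^*$, which is exactly the equivariance condition \eqref{eqn.pairing_triangle}. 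This is the one place the definition of $\rvect$-morphisms is used.

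For monoidality, the unit $0$ gives $\Store(\absval 0)=\yon=\cotof 0$, and $\chiQ{\rv 0}$ is the identity, so the unit condition holds. For the product condition, I would compare $\chiQ{\rv Q\oplus\rv W}$ with $\chiQ{\rv Q}\otimes\chiQ{\rv W}$ under the structure isomorphisms of \cref{lem.store_monoidal} and \cref{prop.cot_monoidal}. Both are the identity on positions $(q,w)$. On directions, both send $(\xi,\zeta)\mapsto\bigl(q+\sharpR_q\xi,\ w+\sharpR_w\zeta\bigr)$, by \eqref{eqn.directsum_sharp} and the componentwise formula \eqref{eqn.dirichlet}.

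The main obstacle is bookkeeping rather than mathematics. One has to track the identifications $T^*_qQ\cong Q^*$ of \eqref{eqn.tangent_vec} and the direction of the backward maps, so that the transpose $\phi^*$ and the inverse $\phi\inv$ land on the correct sides. I would fix conventions by writing both squares explicitly as in the proof of \cref{prop.coal_as_poly_map}.
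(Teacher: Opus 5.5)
Your proposal is correct and follows essentially the same route as the paper's proof: you take the identity on positions, compare the two backward composites so that naturality reduces exactly to the sharp-equivariance condition $\phi^{-1}\circ\sharpR_{\phi(q)}=\sharpR_q\circ\phi^*$, and get monoidality from the direct-sum sharp. Your explicit check of the unit component is a small addition that the paper leaves implicit.
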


\begin{proof}
On positions, both composite functors $\Store\circ\absval{\blank}$ and $\cot\circ\inc$ send $Q$ to its underlying set, so we can take the on-positions component of $\chiQ{\rv Q}$ to be the identity. At each $q\in Q$, \eqref{eqn.directions_sharp} is the composite of an isomorphism \eqref{eqn.tangent_vec}, the linear map $\sharpR_q$, and translation by $q$---the flat \emph{exponential} step $q\mapsto q+\sharpR_q(\xi)$, which on linear $Q$ coincides with a single Euler step---giving a well-defined polynomial map $\Store(\absval{Q})\to\cotof{Q}$.

For naturality at a $\rvect$-isomorphism $\phi\colon\rv Q\to\rv W$, the on-positions square commutes by definition. On directions at $q:Q$, we must check that the two backward composites $T^*_{\phi(q)}W\to Q$ agree:
\begin{align*}
\xi\;&\Mapsto{\phi^*}\;\phi^*\xi\;\Mapsto{\sharpR_q}\;\sharpR_q(\phi^*\xi)\;\Mapsto{q+\blank}\;q+\sharpR_q(\phi^*\xi), && \text{[$\cotof{\phi}\circ\chiQ{\rv Q}$]}\\
\xi\;&\Mapsto{\sharpR_{\phi(q)}}\;\sharpR_{\phi(q)}(\xi)\;\Mapsto{\phi(q)+\blank}\;\phi(q)+\sharpR_{\phi(q)}(\xi)\;\Mapsto{\phi^{-1}}\;q+\phi^{-1}(\sharpR_{\phi(q)}\xi) && \text{[$\chiQ{\rv W}\circ\Store(\phi)$]}.
\end{align*}
The two are equal iff $\sharpR_q\circ\phi^*=\phi^{-1}\circ\sharpR_{\phi(q)}$, which is the sharp-equivariance condition \eqref{eqn.pairing_triangle} at $q$. Monoidality follows from the direct-sum convention $\sharpR_{Q_1\oplus Q_2}=\sharpR_{Q_1}\oplus\sharpR_{Q_2}$ (\cref{prop.rvect_monoidal}).\qedhere
\end{proof}

\begin{remark}[Generalization to manifolds]\label{rmk.rmfdc_generalization}
The dynamics functor (\cref{thm.dynamics_functor}) depends on $\rvect$ only through the monoidal natural transformation $\chi$ of \cref{prop.rvect_polynomial}. Any symmetric monoidal category of manifolds admitting such a $\chi\colon\Store\circ\absval{\blank}\to\cot\circ\inc$ would serve equally well.
One possible source would be
\emph{responsive manifolds with connection}: triples $(M,\nabla_M,\sharpR_M)$ of a smooth manifold, a geodesically complete linear connection, and a smooth bundle map $\sharpR_M\colon T^*M\to TM$. The geodesic exponential $\exp^{\nabla_M}_m(\sharpR_m(\xi))$
would replace
the Euler step \eqref{eqn.directions_sharp}, and on flat $Q$ recovers it exactly.
We expect examples to include
complete Riemannian manifolds via their Levi-Civita connections, and Lie groups once equipped with a complete left-invariant Riemannian metric. We stay with responsive vector spaces for concreteness.
\end{remark}

\subsection{The configuration integrator}\label{sec.config_integrator}

The \emph{configuration integrator} $\conf\coloneqq(\absval{\blank},(-1)\circ\chi)$ has state space $Q\mapsto\absval{Q}$ and \emph{descends}: it feeds the negative gradient into the Euler step $\chi$ of \cref{prop.rvect_polynomial}. Its dynamics is the composite
\[
\Store\circ\absval{\blank}\To{\chi}\cot\To{-1}\cot,
\]
where $-1\colon\cot\Rightarrow\cot$ negates the incoming covector and is the identity on positions, so on directions at $q\in Q$ it is
\begin{equation}\label{eqn.conf_integrator}
\Store(\absval Q)\to\cotof Q,\qquad \xi\longmapsto\chiQ{\rv Q}(-\xi)=q-\sharpR_q(\xi).
\end{equation}
The minus is the integrator's: an incoming covector $\xi\in Q^*$ steps the state \emph{against} $\sharpR_q(\xi)$, by $q\mapsto q-\sharpR_q(\xi)$, which is what makes the configuration dynamics descend rather than ascend. It rides on the gradient, not on the response---$\sharpR_Q$ is used exactly as the syntax supplies it, so the one responsive vector space $\rv Q$ drives this integrator and the phase integrator below alike. Both legs are monoidal natural transformations---$\chi$ by \cref{prop.rvect_polynomial} and $-1$ trivially---so the composite is one as well, hence a $\cot$-integrator.

\subsection{The cotangent endofunctor}\label{sec.cotangent_endofunctor}

We now turn to a physics-based semantics. We consider integrators of the form
\[
\begin{tikzcd}[column sep=60pt]
	\rvect\ar[d, equal]\ar[r, "\absval{T^*\blank}", ""' name=top]&
	\smsetiso\ar[d, "\Store"]\\
	\rvect\ar[r, "\cot"', ""{name=bot}]&
	\poly
	\arrow[from=top, to=bot, Rightarrow, shorten=3pt, "\dyn"]
\end{tikzcd}
\]
On objects, the cotangent endofunctor sends a responsive vector space $(Q,\sharpR_Q):\rvect$ to its cotangent bundle $T^*Q\cong Q\oplus Q^*$ \eqref{eqn.TT_def}, equipped with the canonical symplectic pairing of \cref{prop.canonical_symplectic_pairing}. Thus for these integrators, the stored state is a point $(q,\xi)$ in the phase space $T^*Q$, with \emph{position} $q$ and \emph{momentum} $\xi$. Our first job is to show (\cref{prop.TT_endofunctors,prop.TT_monoidal}) that this makes sense, i.e.\ that $T^*\colon\rvect\to\rvect$, and hence $\absval{T^*\blank}$, is a strong monoidal functor. We then build the phase integrator $\phase$ from the canonical symplectic structure on $T^*Q$.

The cotangent bundle is naturally a responsive vector space via the canonical symplectic pairing, whose sharp map is \eqref{eqn.canonical_sharp}.
For a $\rvect$-morphism $\phi\colon Q\To{\cong} W$, define
\begin{equation}\label{eqn.TT_phi}
T^*\phi\coloneqq\phi\oplus(\phi^*)\inv.
\end{equation}

\begin{proposition}\label{prop.TT_endofunctors}
Equipping $T^*Q$ with the canonical symplectic pairing of \cref{prop.canonical_symplectic_pairing}, the assignments \eqref{eqn.TT_def}, \eqref{eqn.TT_phi} define an endofunctor $T^*\colon\rvect\to\rvect$.
\end{proposition}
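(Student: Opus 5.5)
We need three things: that $T^*\rv Q$ is an object of $\rvect$, that $T^*\phi$ is a morphism of $\rvect$, and that the assignment preserves identities and composites.

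\textbf{Objects.} On objects, \cref{prop.canonical_symplectic_pairing} already shows that $T^*Q\cong Q\oplus Q^*$ with the constant sharp \eqref{eqn.canonical_sharp} is a responsive vector space. A constant map is smooth, so $T^*\rv Q$ is an object of $\rvect$. Note that the original sharp $\sharpR_Q$ plays no role here.

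\textbf{Morphisms.} For a $\rvect$-morphism $\phi\colon\rv Q\to\rv W$, the map $\phi$ is a linear isomorphism. Hence $(\phi^*)\inv\colon Q^*\to W^*$ exists, and $T^*\phi=\phi\oplus(\phi^*)\inv$ is a linear isomorphism $Q\oplus Q^*\to W\oplus W^*$.

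The main check is sharp-equivariance \eqref{eqn.pairing_triangle} for the constant symplectic sharp. Since this sharp is nondegenerate, it is equivalent to pairing preservation. For $(q,\xi),(q',\xi')\in Q\oplus Q^*$, formula \eqref{eqn.canonical_form} gives
\[
\pairing{T^*\phi(q,\xi),T^*\phi(q',\xi')}=((\phi^*)\inv\xi')(\phi q)-((\phi^*)\inv\xi)(\phi q').
\]
Because $((\phi^*)\inv\xi')(\phi q)=(\phi^*(\phi^*)\inv\xi')(q)=\xi'(q)$, and likewise for the second term, this equals $\xi'(q)-\xi(q')$. So the pairing is preserved at every basepoint.

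Alternatively, one can verify \eqref{eqn.pairing_triangle} directly. Under \eqref{eqn.canonical_dual_sum}, the dual of $\phi\oplus(\phi^*)\inv$ is identified with $\phi^*\oplus\phi\inv$. One then checks that $(\xi,q)\mapsto(q,-\xi)$ intertwines these maps.

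I expect bookkeeping of the dual identification \eqref{eqn.canonical_dual_sum} to be the only real obstacle, so I would rely on the pairing formulation. Sharp-equivariance of $\phi$ with respect to $\sharpR_Q$ is never used; only invertibility of $\phi$ matters, as the footnote in \cref{def.rvect} anticipates.

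\textbf{Functoriality.} $T^*\id=\id\oplus(\id^*)\inv=\id$. For composites, $(\psi\phi)^*=\phi^*\psi^*$ gives $((\psi\phi)^*)\inv=(\psi^*)\inv(\phi^*)\inv$. Therefore
\[
T^*(\psi\phi)=\psi\phi\oplus(\psi^*)\inv(\phi^*)\inv=T^*\psi\circ T^*\phi.
\]
This completes the proof.
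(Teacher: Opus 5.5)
Your proof is correct and takes essentially the same route as the paper: you reduce sharp-equivariance of $T^*\phi=\phi\oplus(\phi^*)^{-1}$ to preservation of the nondegenerate, basepoint-independent canonical pairing, and you verify that via $((\phi^*)^{-1}\xi')(\phi q)=\xi'(q)$. Your explicit checks of the object part and of identities and composites fill in what the paper only calls immediate or inherited from linear algebra.
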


\begin{proof}
To define an endofunctor $T^*\colon\rvect\to\rvect$ we must check that $T^*\phi$ is a morphism of $\rvect$---a linear isomorphism that is moreover sharp-equivariant---and that $T^*$ preserves identities and composites. That $T^*\phi=\phi\oplus(\phi^*)\inv$ is a linear isomorphism is immediate, and preservation of identities and composites is inherited from $\vect$; the substance is sharp-equivariance, which we now verify. 

The defining property of $\phi^*$ states that $(\phi^*\xi')(q) = \xi'(\phi q)$ for all $\xi'\in W^*\text{ and }q\in Q.$ Setting $\xi'\coloneqq\psi(\xi)$, where $\psi\coloneqq(\phi^*)\inv\colon Q^*\to W^*$, and using $\phi^*\circ\psi=\id_{Q^*}$, we note that
\begin{equation}\label{eqn.dualeval}
(\psi\xi)(\phi q) = (\phi^*(\psi\xi))(q) = \xi(q).
\end{equation}
To verify sharp-equivariance \eqref{eqn.pairing_triangle}, we equivalently check pairing-preservation (both pairings are nondegenerate). Fix any basepoint $(q_0,\xi_0)\in T^*Q$ and calculate
\begin{align*}
\pairing{T^*\phi(q_1,\xi_1),T^*\phi(q_2,\xi_2)}_{T^*\phi(q_0,\xi_0)}
&= (\psi\xi_2)(\phi q_1)-(\psi\xi_1)(\phi q_2) && \text{[\eqref{eqn.TT_phi}, \eqref{eqn.canonical_form}]}\\
&= \xi_2(q_1)-\xi_1(q_2) && \text{[\eqref{eqn.dualeval}]}\\
&= \pairing{(q_1,\xi_1),(q_2,\xi_2)}_{(q_0,\xi_0)} && \text{[\eqref{eqn.canonical_form}]};
\end{align*}
both sides are independent of $(q_0,\xi_0)$ by constancy of the canonical symplectic pairing.
\end{proof}

\begin{proposition}\label{prop.TT_monoidal}
The endofunctor $T^*\colon\rvect\to\rvect$ is strong symmetric monoidal with respect to $(0,\oplus)$.
\end{proposition}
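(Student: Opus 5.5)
I will exhibit explicit structure isomorphisms and check that they are $\rvect$-morphisms; the remaining coherence will then be inherited from $(\vect,0,\oplus)$.

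\textbf{Unit comparison.} For the unit I take the identity $0\cong T^*0=0\oplus 0^*$, which is trivially sharp-equivariant.

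\textbf{Productor.} For $\rv Q,\rv W$ I take the shuffle
\[
\nu_{Q,W}\colon T^*Q\oplus T^*W=(Q\oplus Q^*)\oplus(W\oplus W^*)\To{\cong}(Q\oplus W)\oplus(Q\oplus W)^*=T^*(Q\oplus W),
\]
using the canonical identification $(Q\oplus W)^*\cong Q^*\oplus W^*$. It is a linear isomorphism.

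The key step is sharp-equivariance of $\nu_{Q,W}$. Both sides are nondegenerate: the left carries the direct-sum pairing (\cref{prop.rvect_monoidal}) of two canonical symplectic pairings, and the right carries the canonical symplectic pairing. So, as in the proof of \cref{prop.TT_endofunctors}, it suffices to check pairing preservation. For $((q,\xi),(w,\zeta))$ and $((q',\xi'),(w',\zeta'))$ the left pairing is
\[
\xi'(q)-\xi(q')+\zeta'(w)-\zeta(w').
\]
The right pairing is
\[
(\xi',\zeta')(q,w)-(\xi,\zeta)(q',w'),
\]
and these agree by the definition of the dual-sum identification. Both sides are constant in the basepoint, so no basepoint dependence arises. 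Notably, $\sharpR_Q$ and $\sharpR_W$ play no role, since $T^*$ discards them.

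\textbf{Naturality.} For $\rvect$-isomorphisms $\phi\colon Q\to Q'$ and $\omega\colon W\to W'$ I must show
\[
\nu\circ(T^*\phi\oplus T^*\omega)=T^*(\phi\oplus\omega)\circ\nu.
\]
This reduces to the identity $((\phi\oplus\omega)^*)\inv=(\phi^*)\inv\oplus(\omega^*)\inv$ under the dual-sum identification, which holds in $\vect$.

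\textbf{Coherence.} The associativity, unitality and symmetry axioms hold because the forgetful functor $\rvect\to\vect$ is faithful, and in $\vect$ these are the standard coherences for the functor $Q\mapsto Q\oplus Q^*$ with shuffle maps. These follow from the coherence of $\oplus$ together with the strong monoidality of $(\blank)^*\colon\vect\op\to\vect$ with respect to $\oplus$.

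\textbf{Main obstacle.} I expect the only real care to be bookkeeping: keeping the dual-sum convention consistent between $(Q\oplus W)^*\cong Q^*\oplus W^*$ and \eqref{eqn.canonical_dual_sum}, so that the shuffle matches the sign convention in \eqref{eqn.canonical_form}. The symmetry axiom needs the same attention, since the braiding on $T^*Q\oplus T^*W$ must correspond to the swap on both the $Q\oplus W$ and the dual components.
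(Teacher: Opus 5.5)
Your proposal is correct and follows the paper's proof: the unitor is the identity, the productor is the shuffle, and sharp-equivariance of the productor is the one substantive check. The paper conjugates the canonical sharp $(\xi_1,\xi_2,q_1,q_2)\mapsto(q_1,q_2,-\xi_1,-\xi_2)$ through the swap directly, whereas you verify the equivalent pairing-preservation (valid because both sides are nondegenerate); you also spell out naturality and coherence, which the paper leaves implicit.
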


\begin{proof}
The unitor is the identity since $T^*0=0\oplus 0^*=0$. The productor is the swap
\begin{equation*}
(T^*)_2\colon T^*Q_1\oplus T^*Q_2\To{\cong}T^*(Q_1\oplus Q_2),\quad ((q_1,\xi_1),(q_2,\xi_2))\mapsto((q_1,q_2),(\xi_1,\xi_2)).
\end{equation*}
For sharp-equivariance \eqref{eqn.pairing_triangle}, the canonical symplectic sharp on $T^*(Q_1\oplus Q_2)$ acts by the map $(\xi_1,\xi_2,q_1,q_2)\mapsto(q_1,q_2,-\xi_1,-\xi_2)$ by \eqref{eqn.canonical_sharp}. Conjugating by $(T^*)_2$ gives the map $(\xi_1,q_1,\xi_2,q_2)\mapsto(q_1,-\xi_1,q_2,-\xi_2)$, which is the direct-sum sharp $\sharpS_{T^*Q_1}\oplus\sharpS_{T^*Q_2}$ from \eqref{eqn.directsum_sharp}.\qedhere
\end{proof}

\begin{lemma}\label{lem.TT_projection_diagonal_monoidal}
The bundle projections and diagonals
\[
\pi_Q\colon T^*Q\to Q,
\qquad
\Delta_{T^*Q}\colon T^*Q\to T^*Q\times T^*Q,
\]
are components of monoidal natural transformations as follows between functors $\rvect\to\mfd$:
\[
\pi\colon\inc\circ T^*\Rightarrow\inc,\qquad
\Delta_{T^*}\colon\inc\circ T^*\Rightarrow(\inc\circ T^*)\times(\inc\circ T^*).
\]
\end{lemma}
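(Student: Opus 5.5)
The plan is to verify naturality and monoidality separately for each transformation, working entirely with the explicit formulas: $T^*Q\cong Q\oplus Q^*$ from \eqref{eqn.TT_def}, the action $T^*\phi=\phi\oplus(\phi^*)\inv$ from \eqref{eqn.TT_phi}, and the productor swap $(T^*)_2$ of \cref{prop.TT_monoidal}. First I will pin down the monoidal structures on the four functors $\rvect\to\mfd$. For $\inc$ it is the strong structure of \cref{prop.rvect_smc_action}. For $\inc\circ T^*$ it is that structure composed with $(T^*)_2$. For $(\inc\circ T^*)\times(\inc\circ T^*)$ it is the pointwise product structure, which uses the middle-four interchange isomorphism of the cartesian product in $\mfd$.

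For $\pi$, naturality at a $\rvect$-morphism $\phi\colon Q\to W$ amounts to the equation $\pi_W\circ(\phi\oplus(\phi^*)\inv)=\phi\circ\pi_Q$. This holds immediately because the first component of $T^*\phi$ is $\phi$. Monoidality has two parts. The unit condition is trivial, since $T^*0=0$. For the product condition, I must check that $\pi_{Q_1\oplus Q_2}\circ(T^*)_2=\pi_{Q_1}\times\pi_{Q_2}$ as maps $T^*Q_1\times T^*Q_2\to Q_1\times Q_2$. Both send $((q_1,\xi_1),(q_2,\xi_2))$ to $(q_1,q_2)$, so the condition holds.

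For $\Delta_{T^*}$, naturality is the naturality of diagonals in any cartesian category: $\Delta_{T^*W}\circ T^*\phi=(T^*\phi\times T^*\phi)\circ\Delta_{T^*Q}$. Monoidality requires comparing $\Delta_{T^*(Q_1\oplus Q_2)}\circ(T^*)_2$ with the composite of $\Delta_{T^*Q_1}\times\Delta_{T^*Q_2}$, the middle-four interchange, and $(T^*)_2\times(T^*)_2$. Evaluating both on $((q_1,\xi_1),(q_2,\xi_2))$ gives two copies of $((q_1,q_2),(\xi_1,\xi_2))$, so they agree. This is an instance of the general fact that in a cartesian monoidal category diagonals form a monoidal transformation $\id\Rightarrow\id\times\id$, whiskered by the strong monoidal functor $\inc\circ T^*$.

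I do not expect a real obstacle. The only care needed is in the monoidal structure of the target functor for $\Delta_{T^*}$: it involves the interchange isomorphism, and I must check that the swap $(T^*)_2$ is compatible with it. Elementwise evaluation settles this directly.
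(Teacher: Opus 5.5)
Your proposal is correct and follows the paper's proof. The paper likewise checks naturality of $\pi$ via $\pi_W\circ T^*\phi=\phi\circ\pi_Q$, treats naturality of diagonals as automatic, and gets monoidality of both by identifying $\pi_{Q\oplus W}$ with $\pi_Q\times\pi_W$ and $\Delta_{T^*(Q\oplus W)}$ with $\Delta_{T^*Q}\times\Delta_{T^*W}$ under the comparison $T^*(Q\oplus W)\cong T^*Q\times T^*W$. Your version just makes explicit the productor swap and the middle-four interchange that the paper leaves implicit.
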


\begin{proof}
For $\pi$, naturality in $\phi\colon Q\To{\cong} W$ is the identity $\pi_W\circ T^*\phi=\phi\circ\pi_Q$, and monoidality is the identity $\pi_{Q\oplus W}=\pi_Q\times\pi_W$ under the product comparisons $T^*(Q\oplus W)\cong T^*Q\times T^*W$ and $Q\oplus W\cong Q\times W$. Diagonals are natural, and $\Delta_{T^*(Q\oplus W)}=\Delta_{T^*Q}\times\Delta_{T^*W}$ under $T^*(Q\oplus W)\cong T^*Q\times T^*W$.\qedhere
\end{proof}

Whiskering $\pi$ by $T^*$ we obtain a monoidal natural transformation
\begin{equation}\label{eqn.pi_whiskered}
\pi_{T^*}\colon\inc\circ T^*\circ T^*\Rightarrow\inc\circ T^*,\qquad (\pi_{T^*})_{\rv Q}=\pi_{T^*Q}\colon T^*(T^*Q)\to T^*Q.
\end{equation}

\subsection{Readouts and one-forms}\label{sec.readouts_one_forms}

Two kinds of data on the phase space will combine into integrators: \emph{readouts}, which set the position the forward pass reports, and \emph{1-forms}, which feed the backward pass. We discuss them in turn.

\begin{definition}\label{def.monoidal_readout}
A \emph{monoidal readout on $T^*$} is a monoidal natural transformation 
\[\rho\colon\inc\circ T^*\Rightarrow\inc\]
between functors $\rvect\to\mfd$.
\end{definition}

In other words, a monoidal readout is a family of smooth maps $\rho_{\rv Q}\colon T^*Q\to Q$, natural in $\rvect$-isomorphisms and monoidal over direct sums, $\rho_{\rv Q\oplus\rv W}=\rho_{\rv Q}\oplus\rho_{\rv W}$.

\begin{proposition}\label{prop.readouts_vector_space}
Under pointwise addition and scalar multiplication, the monoidal readouts on $T^*$ form a real vector space.
\end{proposition}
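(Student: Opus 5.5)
The plan is to exhibit the set of monoidal readouts as a linear subspace of a vector space we already understand. Every $\rho_{\rv Q}\colon T^*Q\to Q$ is a smooth map into the vector space $Q$. So the family $(\rho_{\rv Q})_{\rv Q}$ lives in the product $\prod_{\rv Q\in\ob\rvect} C^\infty(T^*Q,Q)$, indexed over a set of representatives if we want to avoid size issues. This product is a real vector space under pointwise addition and scalar multiplication in each $Q$. It therefore suffices to show two things: the zero family is a monoidal readout, and the two defining conditions of \cref{def.monoidal_readout} are preserved by $\rho+\rho'$ and by $\lambda\rho$ for $\lambda\in\rr$. The vector-space axioms are then inherited from the ambient space.

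\textbf{Naturality.} For naturality at a $\rvect$-morphism $\phi\colon\rv Q\to\rv W$, the condition reads $\rho_{\rv W}\circ T^*\phi=\phi\circ\rho_{\rv Q}$.
\begin{itemize}
\item Precomposition with the map $T^*\phi$ of \eqref{eqn.TT_phi} is additive and homogeneous on functions, because the operations are pointwise in the codomain.
\item Postcomposition with $\phi$ is additive and homogeneous because $\phi$ is \emph{linear}.
\end{itemize}
So $(\rho_{\rv W}+\rho'_{\rv W})\circ T^*\phi=\phi\circ\rho_{\rv Q}+\phi\circ\rho'_{\rv Q}=\phi\circ(\rho_{\rv Q}+\rho'_{\rv Q})$, and likewise for $\lambda\rho$. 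The zero family is natural since $\phi(0)=0$.

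\textbf{Monoidality.} The unit condition is automatic: the component at $0$ is the unique map $T^*0=0\to 0$. The product condition says that $\rho_{\rv Q\oplus\rv W}$ equals $\rho_{\rv Q}\times\rho_{\rv W}$ under the comparisons $T^*(Q\oplus W)\cong T^*Q\times T^*W$ (the swap of \cref{prop.TT_monoidal}) and $Q\oplus W\cong Q\times W$. Both comparisons are linear isomorphisms. The vector-space structure on $Q\times W\cong Q\oplus W$ is componentwise, so
\[
(\rho_{\rv Q}+\rho'_{\rv Q})\times(\rho_{\rv W}+\rho'_{\rv W})=(\rho_{\rv Q}\times\rho_{\rv W})+(\rho'_{\rv Q}\times\rho'_{\rv W}).
\]
Conjugating by the linear comparisons preserves sums and scalar multiples. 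Hence the sum and scalar multiples of monoidal families are again monoidal, and the zero family is monoidal.

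\textbf{Main obstacle.} The only real obstacle is bookkeeping. One must confirm that every identification involved, namely the swap $(T^*)_2$, the product comparison, and $T^*\phi=\phi\oplus(\phi^*)\inv$, is linear, so that it commutes with the pointwise operations. Smoothness of sums and scalar multiples of smooth maps into a vector space is standard.
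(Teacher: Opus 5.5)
Your proposal is correct and follows essentially the same route as the paper. Both define the operations pointwise in each $Q$ and check that smoothness, naturality (using that $\rvect$-morphisms are linear), and $\oplus$-monoidality are preserved; your framing as a linear subspace of an ambient product of function spaces just makes explicit that the vector-space axioms are inherited, which the paper leaves implicit.
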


\begin{proof}
Given monoidal readouts $\rho,\rho'$ and a scalar $c\in\rr$ we may set $(\rho+\rho')_{\rv Q}(q,\xi)\coloneqq\rho_{\rv Q}(q,\xi)+\rho'_{\rv Q}(q,\xi)$ and $(c\,\rho)_{\rv Q}(q,\xi)\coloneqq c\,\rho_{\rv Q}(q,\xi)$, the operations taken in the real vector space $Q$. These are again smooth, natural in $\rvect$-isomorphisms (which act linearly), and monoidal, e.g.\
\[
(\rho+\rho')_{\rv Q\oplus\rv W}=\rho_{\rv Q\oplus\rv W}+\rho'_{\rv Q\oplus\rv W}=(\rho_{\rv Q}\oplus\rho_{\rv W})+(\rho'_{\rv Q}\oplus\rho'_{\rv W})=(\rho+\rho')_{\rv Q}\oplus(\rho+\rho')_{\rv W}.
\qedhere\]
\end{proof}

\begin{lemma}\label{lem.readout_examples}
For any $\alpha\in\rr$, the \emph{time-$\alpha$ exponential} $\exp^\alpha_{\rv Q}(q,\xi)\coloneqq q+\alpha\,\sharpR_q(\xi)$ is a monoidal readout. The projection $\pi=\exp^0$ from \cref{lem.TT_projection_diagonal_monoidal} and the \emph{exponential} $\exp\coloneqq\exp^1=q+\sharpR_q(\xi)$ are special cases.
\end{lemma}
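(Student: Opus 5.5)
Since \cref{prop.readouts_vector_space} makes monoidal readouts closed under pointwise addition and scalar multiplication, the plan is to decompose $\exp^\alpha=\pi+\alpha\,s$, where $s_{\rv Q}(q,\xi)\coloneqq\sharpR_q(\xi)$. We already know from \cref{lem.TT_projection_diagonal_monoidal} that $\pi$ is a monoidal natural transformation $\inc\circ T^*\Rightarrow\inc$. So the lemma reduces to showing that $s$ is a monoidal readout. The special cases then need no further argument: $\exp^0=\pi$ holds literally, and $\exp=\exp^1$ by definition.

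For $s$, I would check three things.

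\emph{Smoothness.} The map $(q,\xi)\mapsto\sharpR_Q(q)(\xi)$ is the composite of the smooth map $\sharpR_Q\times\id\colon Q\times Q^*\to\vect(Q^*,Q)\times Q^*$ with the bilinear, hence smooth, evaluation map. This uses the identification $T^*Q\cong Q\oplus Q^*$ of \eqref{eqn.TT_def}.

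\emph{Naturality.} Let $\phi\colon\rv Q\to\rv W$ be a $\rvect$-morphism. We need $s_{\rv W}\circ T^*\phi=\phi\circ s_{\rv Q}$. By \eqref{eqn.TT_phi}, the left side sends $(q,\xi)$ to $\sharpR_{\phi(q)}\bigl((\phi^*)\inv\xi\bigr)$. Sharp-equivariance \eqref{eqn.pairing_triangle} gives $\sharpR_{\phi(q)}=\phi\circ\sharpR_q\circ\phi^*$. Substituting, the left side becomes $\phi\bigl(\sharpR_q(\phi^*(\phi^*)\inv\xi)\bigr)=\phi(\sharpR_q\xi)$, which is the right side. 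This is the only step with real content: it is exactly where the choice of $T^*\phi=\phi\oplus(\phi^*)\inv$, rather than something else on the momentum factor, is needed. I expect it to be the main (if small) obstacle, chiefly keeping track of which dual acts where.

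\emph{Monoidality.} Under the productor $(T^*)_2$ of \cref{prop.TT_monoidal} and the direct-sum sharp \eqref{eqn.directsum_sharp}, we have $s_{\rv Q\oplus\rv W}((q,w),(\xi,\zeta))=(\sharpR_q\xi,\sharpR_w\zeta)$, that is, $s_{\rv Q}\oplus s_{\rv W}$. The unit condition is trivial because $T^*0=0$.

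Combining these checks with \cref{prop.readouts_vector_space}, $\exp^\alpha=\pi+\alpha s$ is a monoidal readout for every $\alpha\in\rr$. Alternatively, one can skip the vector-space argument and verify naturality and monoidality directly for $q+\alpha\sharpR_q(\xi)$: linearity of $\phi$ splits the check into the $\pi$ part and the $s$ part.
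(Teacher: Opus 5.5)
Your proof is correct, and its key step is the same computation the paper uses: sharp-equivariance \eqref{eqn.pairing_triangle} together with $T^*\phi=\phi\oplus(\phi^*)\inv$ gives $\sharpR_{\phi(q)}\bigl((\phi^*)\inv\xi\bigr)=\phi\bigl(\sharpR_q(\xi)\bigr)$. The difference is only organizational: you split $\exp^\alpha=\pi+\alpha\,s$ and invoke \cref{prop.readouts_vector_space}, whereas the paper checks smoothness, naturality, and $\oplus$-monoidality for $q+\alpha\,\sharpR_q(\xi)$ directly, using linearity of $\phi$ to separate off the $q$ term---which is exactly the alternative you mention at the end.
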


\begin{proof}
We check the three conditions of \cref{def.monoidal_readout} for $\exp^\alpha_{\rv Q}(q,\xi)=q+\alpha\,\sharpR_q(\xi)$. Smoothness is clear, since $\sharpR$ is a smooth bundle map. For naturality at an $\rvect$-isomorphism $\phi\colon\rv Q\to\rv W$, we compute
\[
\phi\big(q+\alpha\,\sharpR^Q_q(\xi)\big)
=\phi(q)+\alpha\,\phi\big(\sharpR^Q_q(\xi)\big)
=\phi(q)+\alpha\,\sharpR^W_{\phi q}\big((\phi^*)\inv\xi\big)
=\exp^\alpha_{\rv W}\big(T^*\phi(q,\xi)\big),
\]
the first equality by linearity of $\phi$, the second by sharp-equivariance \eqref{eqn.pairing_triangle}, and the third by the action of $T^*\phi=\phi\oplus(\phi^*)\inv$ on $(q,\xi)$. For $\oplus$-monoidality, $\sharpR_{Q\oplus W}=\sharpR_Q\oplus\sharpR_W$ (\cref{prop.rvect_monoidal}) and the affine map $q\mapsto q+\alpha\,\sharpR_q(\xi)$ is applied coordinatewise, so $\exp^\alpha_{\rv Q\oplus\rv W}=\exp^\alpha_{\rv Q}\oplus\exp^\alpha_{\rv W}$.
\end{proof}

\begin{lemma}\label{lem.readout_to_poly}
Every monoidal readout $\rho$ on $T^*$ induces a monoidal natural transformation
\[
\rho\colon \cot\circ T^*\Rightarrow\cot ,
\]
with on-positions $\rho_{\rv Q}$ and on-directions the injection $\xi'\mapsto(\xi',0)$.
\end{lemma}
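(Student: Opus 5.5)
The plan is to first write down the components, then check that they are well-defined polynomial maps, then verify naturality and finally monoidality. All checks reduce to properties of $\rho$ from \cref{def.monoidal_readout}.

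\emph{Components.} By \cref{not.rvect_polynomials}, $\cotof{T^*Q}=(Q\oplus Q^*)\yon^{Q^*\oplus Q}$ and $\cotof{Q}=Q\yon^{Q^*}$. A polynomial map $\cotof{T^*Q}\to\cotof{Q}$ consists of a function on positions $Q\oplus Q^*\to Q$, which we take to be $\rho_{\rv Q}$, together with, for each $(q,\xi)$, a backward map $T^*_{\rho(q,\xi)}Q\cong Q^*\to Q^*\oplus Q$, which we take to be $\xi'\mapsto(\xi',0)$. Because polynomial maps carry no compatibility requirement between the forward and backward parts, this already defines a morphism in $\poly$. Note that it is deliberately \emph{not} $\cotof{\rho_{\rv Q}}$, whose backward part would be the transpose $(T_{(q,\xi)}\rho)^\top$.

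\emph{Naturality.} Fix a $\rvect$-isomorphism $\phi\colon\rv Q\to\rv W$. Recall that $\cot$ sends a linear map to itself on positions and to its transpose on directions, so $\cotof{T^*\phi}$ acts on directions by $(T^*\phi)^\top=\phi^*\oplus\phi\inv$ under \eqref{eqn.canonical_dual_sum}, and $\cotof{\phi}$ acts by $\phi^*$.
\begin{itemize}
\item On positions, the naturality square is exactly $\rho_{\rv W}\circ T^*\phi=\phi\circ\rho_{\rv Q}$, which is the naturality of $\rho$ as a readout.
\item On directions, take $\eta\in W^*$. One path sends $\eta\mapsto\phi^*\eta\mapsto(\phi^*\eta,0)$. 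The other sends $\eta\mapsto(\eta,0)\mapsto(\phi^*\eta,\phi\inv 0)=(\phi^*\eta,0)$.
\end{itemize}
The two directional paths agree because $\phi\inv$ is linear and so fixes $0$. This linearity is the one point that requires care, and it is where the choice of the zero injection is essential.

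\emph{Monoidality.} On positions, monoidality is $\rho_{\rv Q\oplus\rv W}=\rho_{\rv Q}\oplus\rho_{\rv W}$, transported along the productor swap of \cref{prop.TT_monoidal} and the isomorphism of \cref{prop.cot_monoidal}. On directions, the injection $\xi'\mapsto(\xi',0)$ for $Q\oplus W$ is the direct sum of the injections for $Q$ and $W$, after the coordinate swap. The unit component at $\rv 0$ is the identity on $\yon$.

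The main obstacle I expect is bookkeeping: keeping track of the dual identifications \eqref{eqn.canonical_dual_sum} and of the swap $(T^*)_2$ so that the direction injection lands in the correct summand. The conceptual content is simply that the zero vector is preserved by linear isomorphisms and by direct sums.
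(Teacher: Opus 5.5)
Your proposal is correct and follows essentially the same route as the paper: the same components, naturality on positions from the naturality of $\rho$, both directional paths giving $(\phi^*\eta,0)$, and monoidality treated as routine. The main addition is your explicit computation $(T^*\phi)^\top=\phi^*\oplus\phi\inv$ under \eqref{eqn.canonical_dual_sum}, which spells out a step the paper leaves implicit.
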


\begin{proof}
At each $\rv Q$, the on-positions $\rho_{\rv Q}\colon T^*Q\to Q$ and the backward maps $\xi'\mapsto(\xi',0)$, from $Q^*$ to $Q^*\oplus Q\cong T^*_{(q,\xi)}(T^*Q)$, constitute a polynomial map $\cotof{T^*Q}\to\cotof{Q}$.

For naturality at an $\rvect$-isomorphism $\phi\colon Q\to W$, compare $\cotof{\phi}\circ\rho_{\rv Q}$ with $\rho_{\rv W}\circ\cotof{T^*\phi}$. On positions these are $\phi\circ\rho_{\rv Q}$ and $\rho_{\rv W}\circ T^*\phi$, equal because $\rho$ is natural (\cref{def.monoidal_readout}). On directions at $(q,\xi)$, a covector $\xi'\in W^*$ pulls back along both maps to $(\phi^*\xi',0)$. The $\oplus$-monoidality is straightforward.
\end{proof}

Whereas a readout is about the forward pass, the following notion is about the backward pass.

\begin{definition}\label{def.monoidal_one_form}
A \emph{monoidal 1-form on $T^*$} is a monoidal natural transformation
\[\omega\colon\inc\circ T^*\Rightarrow\inc\circ T^*\circ T^*\] 
that is a section of the projection $\pi_{T^*Q}\colon T^*(T^*Q)\to T^*Q$, from \eqref{eqn.pi_whiskered}.
\end{definition}

In other words, a monoidal 1-form on $T^*$ consists of a smooth family of sections $\omega_{\rv Q}\colon T^*Q\to T^*(T^*Q)$, natural in $\rvect$-isomorphisms, that is monoidal over direct sums: $\omega_{\rv Q\oplus\rv W}=\omega_{\rv Q}\oplus\omega_{\rv W}$.
\begin{proposition}\label{prop.one_forms_vector_space}
Under fiberwise addition and scalar multiplication, the monoidal 1-forms on $T^*$ form a real vector space.
\end{proposition}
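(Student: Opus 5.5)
The plan mirrors the proof of \cref{prop.readouts_vector_space}, with the fibers of $T^*(T^*Q)\to T^*Q$ playing the role that $Q$ played there. Under \cref{not.rvect_polynomials} we identify $T^*(T^*Q)\cong(Q\oplus Q^*)\oplus(Q^*\oplus Q)$. In this form a section of $\pi_{T^*Q}$ is a smooth map $w\mapsto(w,\hat\omega_{\rv Q}(w))$ with $\hat\omega_{\rv Q}\colon T^*Q\to Q^*\oplus Q$. So I will define the operations fiberwise:
\[
(\omega+\omega')_{\rv Q}(w)\coloneqq\bigl(w,\hat\omega_{\rv Q}(w)+\hat\omega'_{\rv Q}(w)\bigr),\qquad
(c\,\omega)_{\rv Q}(w)\coloneqq\bigl(w,c\,\hat\omega_{\rv Q}(w)\bigr).
\]
The zero 1-form is the zero section $w\mapsto(w,0)$. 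By construction each of these is smooth and is a section of $\pi_{T^*Q}$. The vector space axioms then hold pointwise, because they hold in the vector space $Q^*\oplus Q$. What is left is to check naturality and monoidality.

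For naturality, take an $\rvect$-isomorphism $\phi\colon\rv Q\to\rv W$. The functor $T^*\circ T^*$ sends it to $T^*(T^*\phi)=T^*\phi\oplus((T^*\phi)^*)\inv$ by \eqref{eqn.TT_phi}. This map covers $T^*\phi$ on the base and acts on fibers by the fixed linear map $((T^*\phi)^*)\inv$. So naturality of $\omega$ says exactly that
\[
\hat\omega_{\rv W}\circ T^*\phi=((T^*\phi)^*)\inv\circ\hat\omega_{\rv Q}.
\]
Both sides of this condition are linear in $\hat\omega$. Hence if $\omega$ and $\omega'$ satisfy it, so do $\omega+\omega'$ and $c\,\omega$.

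Monoidality works the same way. Under the productor of \cref{prop.TT_monoidal}, applied twice, the fiber over a point $(w_1,w_2)$ of $T^*(Q_1\oplus Q_2)$ is identified linearly with the direct sum of the fibers over $w_1$ and $w_2$. The condition $\omega_{\rv Q_1\oplus\rv Q_2}=\omega_{\rv Q_1}\oplus\omega_{\rv Q_2}$ then reads $\hat\omega_{\rv Q_1\oplus\rv Q_2}(w_1,w_2)=(\hat\omega_{\rv Q_1}(w_1),\hat\omega_{\rv Q_2}(w_2))$. Again both sides are linear in $\hat\omega$, so the condition is preserved by sums and scalar multiples.

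I expect the main obstacle to be bookkeeping rather than any real mathematics: checking that the action of $T^*T^*\phi$ and the monoidal comparison on $T^*T^*$ are \emph{fiberwise linear} over $T^*\phi$ and over the productor, respectively. That linearity is what makes both conditions linear in $\omega$. To confirm it, I would write $T^*\phi=\phi\oplus(\phi^*)\inv$ explicitly and note that the cotangent lift of a linear isomorphism is linear on each fiber. This makes the sum and scalar operations commute with the structure maps, just as in the proof of \cref{prop.readouts_vector_space}.
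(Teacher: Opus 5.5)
Your proposal is correct and follows the paper's route. The paper's proof just says that the pointwise argument for readouts (\cref{prop.readouts_vector_space}) applies verbatim, with the operations taken in each fiber $T^*_{(q,\xi)}(T^*Q)\cong Q^*\oplus Q$; your check that naturality and monoidality are linear conditions in $\hat\omega$, because $T^*(T^*\phi)$ and the productor act linearly on fibers, is exactly the detail the paper leaves implicit.
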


\begin{proof}
The pointwise argument of \cref{prop.readouts_vector_space} applies verbatim, with the operations taken in each fiber $T^*_{(q,\xi)}(T^*Q)\cong Q^*\oplus Q$ in place of $Q$.
\end{proof}

\begin{example}[Kinetic 1-form]\label{ex.kinetic_one_form}
The \emph{kinetic 1-form} of $(Q,\sharpR_Q)$ is the section
\begin{equation}\label{eqn.kinetic_one_form}
\beta_{\rv Q}\colon T^*Q\to T^*(T^*Q),\qquad
(q,\xi)\mapsto(0,\sharpR_q(\xi)),
\end{equation}
using the identification $Q^*\oplus Q\cong T^*_{(q,\xi)}(T^*Q)$ from \eqref{eqn.T_products} and \eqref{eqn.tangent_vec}.%
\footnote{We borrow the name from physics: when $\sharpR$ is constant and symmetric, $\beta_{\rv Q}$ is the differential of the kinetic energy $\tfrac12\xi(\sharpR_0(\xi))$.}
It is a monoidal 1-form on $T^*$: naturality follows from sharp-equivariance \eqref{eqn.pairing_triangle}, and monoidality holds because $\sharpR_{Q\oplus W}=\sharpR_Q\oplus\sharpR_W$ (\cref{prop.rvect_monoidal}).
\end{example}

More generally, when a 1-form $\omega_{\rv Q}(q,\xi)=(a,b)$ is fed through the symplectic sharp $\sharpS_{T^*Q}$ and added to the state---as it will be under the phase integrator \eqref{eqn.phase_integrator} below---it contributes $(+b,-a)$ to the update, since $\sharpS_{T^*Q}$ swaps coordinates and negates the second \eqref{eqn.canonical_sharp}. Thus the $Q$-component of $\omega$ drives position and the $Q^*$-component drives momentum with a sign flip: the kinetic 1-form ends up adding velocity $\sharpR_q(\xi)$ to position.

  \begin{example}[Damping 1-form]\label{ex.damping_one_form}
 Although it plays no further role, it is worth recording where damping enters the formalism. Define the \emph{damping 1-form} to be the section
 \begin{equation}\label{eqn.damping}
 \zeta_{\rv Q}\colon T^*Q\to T^*(T^*Q),\qquad
 (q,\xi)\mapsto(\xi,\,0):
 Q^*\oplus Q\cong T^*_{(q,\xi)}(T^*Q),
 \end{equation}
 which is a monoidal 1-form on $T^*$ by linearity and the $\oplus$-monoidality of the momentum coordinate. For a \emph{damping coefficient} $c\in[0,1]$, the scalar multiple $c\,\zeta$ is again a monoidal 1-form (\cref{prop.one_forms_vector_space}); it removes a fraction $c$ of the momentum at each step.
 \end{example}

\begin{lemma}\label{lem.one_form_to_poly}
Every monoidal 1-form $\omega$ on $T^*$ induces a monoidal natural transformation
\[
\omega\colon \cot\circ T^*\Rightarrow\yon .
\]
\end{lemma}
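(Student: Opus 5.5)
Given a monoidal 1-form $\omega$, the plan is to define $\omega\colon\cotof{T^*Q}\to\yon$ directly as a polynomial map. A map of polynomials $\cotof{X}\to\yon$ is the same thing as a set-theoretic section $X\to T^*X$: it is trivial on positions, and at each position $x$ it picks one direction $*\mapsto\omega(x)\in T^*_xX$. So at $\rv Q$ we take the component $\cotof{T^*Q}\to\yon$ that at the position $(q,\xi)\in T^*Q$ sends $*$ to $\omega_{\rv Q}(q,\xi)\in T^*_{(q,\xi)}(T^*Q)$. This is well-defined precisely because $\omega_{\rv Q}$ is a section of $\pi_{T^*Q}$; that is the one place \cref{def.monoidal_one_form} is used pointwise. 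This is the same identification used for $\potd\circ\cotof{U}$ and $dU$ after \eqref{eqn.d_potential}.

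Next, naturality. Let $\phi\colon\rv Q\to\rv W$ be a $\rvect$-isomorphism. The target functor is constant at $\yon$, so we must show $\omega_{\rv W}\circ\cotof{T^*\phi}=\omega_{\rv Q}$ as maps $\cotof{T^*Q}\to\yon$. On positions both sides are trivial. On directions at $(q,\xi)$, the left side sends $*$ to $(T_{(q,\xi)}T^*\phi)^\top\bigl(\omega_{\rv W}(T^*\phi(q,\xi))\bigr)$. Naturality of $\omega$ as a transformation $\inc\circ T^*\Rightarrow\inc\circ T^*\circ T^*$ says $\omega_{\rv W}\circ T^*\phi=T^*(T^*\phi)\circ\omega_{\rv Q}$. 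By \eqref{eqn.TT_phi}, $T^*(T^*\phi)$ acts on the fiber coordinate by $((T^*\phi)^*)\inv$. Since $T^*\phi$ is linear, its differential is itself, and so $(T_{(q,\xi)}T^*\phi)^\top=(T^*\phi)^*$. These two cancel and give $\omega_{\rv Q}(q,\xi)$, as required. The one step needing care is this matching of the abstract $T^*(T^*\phi)$ with cotangent pullback under the identifications \eqref{eqn.tangent_vec} and \eqref{eqn.canonical_dual_sum}. I expect it to be the main obstacle, though it is only bookkeeping: both are the inverse transpose of the linear map $T^*\phi$ on the dual factor.

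Finally, monoidality. The unit condition is immediate: for $\rv Q=0$, both $\cotof{T^*0}$ and $\yon$ are $\yon$, and the component is the identity. For the product, take $\cotof{T^*(Q\oplus W)}\cong\cotof{T^*Q}\otimes\cotof{T^*W}$, using \cref{prop.cot_monoidal} together with the swap productor of \cref{prop.TT_monoidal}, and compare with $\yon\otimes\yon\cong\yon$. On directions, the tensor of the two components picks the pair $(\omega_{\rv Q}(q,\xi),\omega_{\rv W}(w,\zeta))$ in $T^*_{(q,\xi)}(T^*Q)\times T^*_{(w,\zeta)}(T^*W)$. Under \eqref{eqn.T_products}, this pair is exactly $\omega_{\rv Q\oplus\rv W}$ evaluated at the corresponding point, by the monoidality hypothesis $\omega_{\rv Q\oplus\rv W}=\omega_{\rv Q}\oplus\omega_{\rv W}$. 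Hence the component at $\rv Q\oplus\rv W$ equals the tensor of the components, composed with the structure isomorphisms.
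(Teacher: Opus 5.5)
Your proposal is correct and follows the paper's proof. The paper likewise defines each component as the polynomial map $\cotof{T^*Q}\to\yon$ that selects $\omega_{\rv Q}(q,\xi)$ at each position, and then simply asserts that naturality and monoidality are exactly those of the 1-form; your explicit check that the cotangent pullback $(T^*\phi)^*$ cancels the fiber action $((T^*\phi)^*)^{-1}$ of $T^*(T^*\phi)$ is the bookkeeping the paper leaves implicit.
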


\begin{proof}
A section $\omega\colon M\to T^*M$ defines a polynomial map $\cotof{M}\to\yon$ whose position map is unique and whose backward map at $m$ is the function $1\to T^*_mM$ selecting $\omega(m)$. Applying this at $M=T^*Q$ gives the components $\omega_{\rv Q}\colon\cotof{T^*Q}\to\yon$. Naturality and monoidality are exactly the naturality and monoidality over $\oplus$ in \cref{def.monoidal_one_form}.%
\footnote{A monoidal natural transformation $\omega\colon\cot\circ T^*\to\yon$ may not be a monoidal 1-form because the former does not enforce smoothness, whereas the latter does.}
\qedhere
\end{proof}

We use the same symbol to denote both a monoidal 1-form and the polynomial map it induces. By \cref{def.monoidal_one_form} a 1-form $\omega$ is a section $\inc\circ T^*\Rightarrow\inc\circ T^*\circ T^*$, and by \cref{lem.one_form_to_poly} it also induces a map $\omega\colon\cot\circ T^*\Rightarrow\yon$; both are determined by the same datum---the cotangent vector $\omega_{\rv Q}(q,\xi)\in T^*_{(q,\xi)}(T^*Q)\cong Q^*\oplus Q$ selected at each phase point---so no confusion should arise. For the kinetic 1-form this datum is $\beta_{\rv Q}(q,\xi)=(0,\sharpR_q(\xi))$.

The following lemma combines monoidal readouts and 1-forms into a package we will use shortly.

\begin{lemma}\label{lem.combined_update}
A monoidal readout $\rho$ and a monoidal 1-form $\omega$ combine into a monoidal natural transformation $\nu_{\rho,\omega}\colon\cot\circ T^*\Rightarrow\cot$, given by the composite
\[
\begin{tikzcd}[column sep=2.5em]
\cot\circ T^*
  \ar[rrr, bend left=12, "\nu_{\rho,\omega}"]
  \ar[r, "\cotof{\Delta_{T^*\blank}}"']
& (\cot\circ T^*)\otimes(\cot\circ T^*)
  \ar[r, "\rho\otimes\omega"']
& \cot\otimes\yon
  \ar[r, "\cong"']
& \cot.
\end{tikzcd}
\]
On positions $\nu_{\rho,\omega}$ is $\rho_{\rv Q}$; on directions at $(q,\xi)$ it sends $\xi'\mapsto(\xi',0)+\omega_{\rv Q}(q,\xi)$.
\end{lemma}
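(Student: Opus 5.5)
The lemma follows once each arrow in the displayed composite is identified as a monoidal natural transformation between functors $\rvect\to\poly$. Monoidal natural transformations are closed under vertical composition, whiskering, and the pointwise Dirichlet product of two transformations with strong monoidal (co)domains. So after identifying the arrows, the only remaining task is to compute the components.

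The first arrow is $\cot$ whiskered with the monoidal natural transformation $\Delta_{T^*}\colon\inc\circ T^*\Rightarrow(\inc\circ T^*)\times(\inc\circ T^*)$ of \cref{lem.TT_projection_diagonal_monoidal}. It is then composed with the productor isomorphism $\cotof{M\times N}\cong\cotof M\otimes\cotof N$ of \cref{prop.cot_monoidal}. That productor is a monoidal natural isomorphism, since $\cot$ is strong symmetric monoidal. Hence $\cotof{\Delta_{T^*\blank}}\colon\cot\circ T^*\Rightarrow(\cot\circ T^*)\otimes(\cot\circ T^*)$ is monoidal natural.

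The middle arrow is the Dirichlet product of two monoidal natural transformations: $\rho\colon\cot\circ T^*\Rightarrow\cot$ from \cref{lem.readout_to_poly}, and $\omega\colon\cot\circ T^*\Rightarrow\yon$ from \cref{lem.one_form_to_poly}. Because $\otimes$ is symmetric monoidal, the pointwise tensor $\rho\otimes\omega$ is again natural. It is monoidal once the middle-four interchange isomorphisms of $(\poly,\otimes)$ are inserted; these are coherent by the symmetry of $\otimes$.

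The last arrow is the right unitor $\cot\otimes\yon\cong\cot$. It is natural, and it is monoidal with respect to the pointwise monoidal structure on $\cot\otimes\yon$.

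To compute the components I would take $\rv Q$ and a position $(q,\xi)\in T^*Q$.

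\emph{Positions.} The diagonal sends $(q,\xi)$ to $((q,\xi),(q,\xi))$. Next, $\rho\otimes\omega$ sends this to $(\rho_{\rv Q}(q,\xi),*)$, and the unitor returns $\rho_{\rv Q}(q,\xi)$.

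\emph{Directions.} A covector $\xi'\in Q^*$ at $\rho_{\rv Q}(q,\xi)$ is first passed back through the unitor to $(\xi',*)$. Then $\rho$ sends $\xi'$ to $(\xi',0)$, by \cref{lem.readout_to_poly}, and $\omega$ sends $*$ to $\omega_{\rv Q}(q,\xi)$. This yields the pair $\bigl((\xi',0),\omega_{\rv Q}(q,\xi)\bigr)$ in $T^*_{(q,\xi)}(T^*Q)\times T^*_{(q,\xi)}(T^*Q)$. Finally the diagonal pulls this pair back along $(T\Delta)^\top$, which is fiberwise addition by \cref{lem.cot_comonoid}. The result is $(\xi',0)+\omega_{\rv Q}(q,\xi)$, as claimed.

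I expect the main obstacle to be bookkeeping: making precise that the composite of the diagonal with $\rho\otimes\omega$ is monoidal. This requires tracking how the duplicating map $\Delta_{T^*(Q\oplus W)}$ interacts with the interchange $(a\otimes b)\otimes(c\otimes d)\cong(a\otimes c)\otimes(b\otimes d)$. My approach would be to check it directly on components, where everything is explicit. For $\rv Q\oplus\rv W$, positions split coordinatewise. On directions, both sides send a pair of covectors $(\xi'_Q,\xi'_W)$ to $(\xi'_Q,0)+\omega_{\rv Q}$ in the $Q$-block and to $(\xi'_W,0)+\omega_{\rv W}$ in the $W$-block. This uses the identities $\rho_{\rv Q\oplus\rv W}=\rho_{\rv Q}\oplus\rho_{\rv W}$ and $\omega_{\rv Q\oplus\rv W}=\omega_{\rv Q}\oplus\omega_{\rv W}$, together with the fact that fiberwise addition is blockwise. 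Naturality in $\rvect$-isomorphisms can be checked the same way, or read off from the abstract argument.
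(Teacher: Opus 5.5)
Your proposal is correct and matches the paper's proof: you show each arrow is monoidal natural using \cref{lem.TT_projection_diagonal_monoidal,prop.cot_monoidal,lem.readout_to_poly,lem.one_form_to_poly} together with the unitor, and then read off positions and directions, where the diagonal's backward map adds the two covectors. Your component-level check that $\rho\otimes\omega$ is monoidal, via the interchange isomorphism, is more explicit than the paper, which only asserts this, but it is not a different route.
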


\begin{proof}
Each arrow is a monoidal natural transformation: $\cotof{\Delta_{T^*\blank}}$ by \cref{lem.TT_projection_diagonal_monoidal,prop.cot_monoidal}, the readout $\rho$ by \cref{lem.readout_to_poly}, the 1-form $\omega$ by \cref{lem.one_form_to_poly}, and the unitor $\cot\otimes\yon\cong\cot$ because $\yon$ is the monoidal unit. On positions, $\cotof{\Delta_{T^*\blank}}$ is the diagonal and $\rho\otimes\omega$ applies $\rho_{\rv Q}$ in the first factor while $\omega$ lands in $\yon$, so the composite reads out $\rho_{\rv Q}$. On directions at $(q,\xi)$, a covector $\xi'\in Q^*$ pulls back through $\rho\otimes\omega$ to $(\xi',0)$ and $\omega_{\rv Q}(q,\xi)$ in the two factors, which the diagonal adds.
\end{proof}

\subsection{The phase integrator}\label{sec.phase_integrators}

The \emph{phase integrator} combines two of the transformations built above: the Euler step $\chiQ{T^*\blank}$ on the cotangent bundle (\cref{prop.rvect_polynomial}) and $\defineTerm{nu}\coloneqq\nu_{\exp,\beta}\colon\cot\circ T^*\Rightarrow\cot$. It is the instance of \cref{lem.combined_update} assembled from the exponential readout $\exp$ (\cref{lem.readout_examples}) and the kinetic 1-form $\beta$ (\cref{ex.kinetic_one_form}). On positions $\nuro$ reports the \emph{presented position}
\begin{equation}\label{eqn.presented_position}
\tilde q\coloneqq q+\sharpR_q(\xi).
\end{equation}
On directions it pairs the incoming gradient $\xi'$ with the stored velocity $\sharpR_q\xi$,
\begin{equation}\label{eqn.nu}
\bk{\nu}{(q,\xi)}\colon\xi'\mapsto(\xi',\sharpR_q\xi)\in Q^*\oplus Q\cong T^*_{(q,\xi)}(T^*Q).
\end{equation}

As promised, define $\dynphase$ to be the composite of this readout with the Euler step,
\[
\defineTerm{dynphase}\colon\quad\Store\circ\absval{T^*\blank}\To{\chiQ{T^*\blank}}\cot\circ T^*\To{\nuro}\cot.
\]
Since $\absval{T^*\blank}$ is a composite of strong monoidal functors (\cref{prop.TT_monoidal}), and since $\chiQ{T^*\blank}$ and $\nuro$ are monoidal-natural (\cref{prop.rvect_polynomial,lem.combined_update}) we define the phase integrator to be the pair
\begin{equation}\label{eqn.phase_integrator}
\phase\coloneqq(\absval{T^*\blank},\dynphase).
\end{equation}

The two factors play distinct roles. The map $\nuro$ assembles the covector $(\xi',\sharpR_q\xi)$ \eqref{eqn.nu}, and this is the only place the response $\sharpR_Q$ enters. The Euler step $\chiQ{T^*\blank}$ \eqref{eqn.directions_sharp} then applies the canonical symplectic sharp on $T^*Q$ \eqref{eqn.canonical_sharp}, the same for every responsive vector space: on directions it adds to the current state $(q,\xi)$ the sharp of that covector,
\[
\sharpS_{T^*Q}\bigl(\xi',\sharpR_q(\xi)\bigr)=\bigl(\sharpR_q(\xi),\,-\xi'\bigr),
\]
so the update is
\begin{equation}\label{eqn.phase_update}
\absval{T^*Q}\times Q^*\to\absval{T^*Q},\qquad ((q,\xi),\xi')\mapsto(q+\sharpR_q(\xi),\,\xi-\xi').
\end{equation}
This has one step of Hamilton's equations as a special case%
: the position moves by the velocity $\sharpR_q\xi$, landing on $\tilde q$, and the momentum moves by $-\xi'$, the incoming gradient at $\tilde q$. See \cref{sec.closed_conservation}.

\section{The dynamics functors}\label{sec.dynamics_functor}

The smooth adaptive arrangements of \cref{sec.potentialized_lenses} become smooth adaptive dynamics by composing the polynomial interpretation $\Phip{\interpsm}$ (\cref{sec.smooth_interpretation}) with an integrator $\intg$ (\cref{sec.integrator_semantics}); we unpack this composite for the smooth instance. The interpretation $\Phip{\interpsm}$ already carries most of the construction: the resulting functor $\Phi_\intg$ agrees with it on objects,
\[\Phi_\intg\lensob{M}=\Phip{\interpsm}\lensob{M},\]
and \cref{thm.dynamics_functor} supplies only what the integrator contributes. It first rearranges \eqref{eqn.phi_prime_morphism} to
\[
\cotof{Q}\to
\left[
  \Phi_\intg\lensob{M_1}\otimes\cdots\otimes\Phi_\intg\lensob{M_K},
  \Phi_\intg\lensob{N}
\right],
\]
then use the integrator to replace the $\cotof{Q}$ with something of the form $S\yon^S$ for $S=F(Q)$, and finally, to unfold that to a coalgebra via \cref{prop.coalg_as_poly_map}
\[
S\to\left[
  \Phi_\intg\lensob{M_1}\otimes\cdots\otimes\Phi_\intg\lensob{M_K},
  \Phi_\intg\lensob{N}
\right](S).
\]
The integrator is what fixes the relationship between $S$ and $Q$: in our two main cases of interest $S$ is the configuration space $\absval{Q}$ or the phase space $\absval{T^*Q}$; see \eqref{eqn.our_phis}.

\begin{corollary}\label{cor.functor}
Every $\cot$-integrator $\intg$ induces an operad functor
\[
\Phi_\intg\colon\sarr\to\pc
\]
with $\Phi_\intg\lensunit\cong\yon$; hence it carries closed systems to $\yon$-coalgebras.
\end{corollary}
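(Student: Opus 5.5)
The plan is to obtain $\Phi_\intg$ as a direct instance of \cref{thm.dynamics_functor}, applied to the smooth adaptive-arrangement datum $\Sm$ (\cref{def.potlens}) and the smooth polynomial interpretation datum $\interpsm$ (\cref{def.smooth_setup}). That theorem needs a $(\Fun c\circ J)$-integrator. Here $\Fun c=\cot$ and $J=\inc\colon\rvect\to\mfd$, so $\Fun c\circ J$ is exactly the composite written $\cot\colon\rvect\to\poly$ in \cref{not.rvect_polynomials}. Hence any $\cot$-integrator $\intg$ is precisely the required input, and \cref{thm.dynamics_functor} yields an operad functor $\Phi_{\interpsm,\intg}=\Phi_\intg\colon\sarr\to\pc$, defined as the composite $\Psisem{\intg}\circ\Phip{\interpsm}$ followed by $\pciso\inj\pc$.

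For the unit claim, I will use the normality clause of \cref{thm.dynamics_functor}, which holds when the potential algebra has carrier $z\cong\yon$. In $\interpsm$ the carrier is $z=\yon$ by definition, so $\Phi_\intg$ is normal lax monoidal. Concretely, $\Phi_\intg\lensunit=\cotof{1}\otimes\ihom{\cotof{1},\yon}$. By \cref{prop.cot_monoidal} we have $\cotof{1}\cong\yon$, and $\ihom{\yon,\yon}\cong\yon$, so $\Phi_\intg\lensunit\cong\yon\otimes\yon\cong\yon$.

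The last clause follows by unwinding definitions. A closed system is a morphism $\lensunit\to\lensunit$ in $\sarr$ (\cref{def.arrangement_terminology}), so $\Phi_\intg$ sends it to a $\pc$-morphism $\yon\to\yon$. This is an $\ihom{\yon,\yon}$-coalgebra, and since $\ihom{\yon,p}\cong p$ (\cref{sec.pc}) it is a $\yon$-coalgebra, i.e.\ a self-map of the state set as in \cref{ex.y_coalgebra_trajectory}.

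The only step requiring care is confirming that $\interpsm$ really satisfies the hypotheses of \cref{def.polynomial_interpretation}. In particular, $\potd$ must be a $(\cotof{\rr}\otimes\blank)$-monoid structure on $\yon$, which is \cref{lem.alpha_constant} with $r=+1$. Also, $\theta$ must be a monoidal monad morphism; this is the functoriality clause of \cref{lem.monoid_to_monad} applied to the strong monoidal functor $\cot$, which carries the commutative monoid $(\rr,0,+)$ to $\cotof{\rr}$. Both facts are already recorded in \cref{def.smooth_setup}, so I expect no real obstacle beyond this bookkeeping.
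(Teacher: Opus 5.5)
Your proposal is correct and follows the paper's proof, which is simply \cref{thm.dynamics_functor} instantiated at the datum $\Sm$ and the interpretation $\interpsm$. There, normality (and hence $\Phi_{\intg}\lensunit\cong\yon$) comes from the potential algebra having carrier $z=\yon$; your explicit computation $\cotof{1}\otimes\ihom{\cotof{1},\yon}\cong\yon$ and your unwinding of closed systems via $\ihom{\yon,p}\cong p$ just spell out what the paper leaves implicit.
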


\begin{proof}
This is \cref{thm.dynamics_functor} for the smooth adaptive-arrangement datum $\Sm$ (\cref{def.potlens}) and the smooth polynomial interpretation datum $\interpsm$ (\cref{def.smooth_setup}), whose potential algebra has carrier $z=\yon$.
\qedhere
\end{proof}

Specializing $\Phi_\intg$ to the configuration integrator $\conf$ \eqref{eqn.conf_integrator} and the phase integrator $\phase$ \eqref{eqn.phase_integrator} gives our two main functors of interest, the \emph{configuration} and \emph{phase dynamics functors}
\begin{equation}\label{eqn.our_phis}
\defineTerm{Phiconf},\ \defineTerm{Phiphase}\colon\sarr\to\pc.
\end{equation}

\begin{remark}[Varying the parameterizing category]\label{rmk.adam}

Given an adaptive-arrangement datum $D=(\cat M,\cat Q,J,\Fun R)$, composing $J$ with any strong monoidal functor $\iota\colon\cat{Q}'\to\cat{Q}$ induces a new adaptive-arrangement datum $D'$. This operation doesn't affect a polynomial interpretation datum, and any $(\Fun c\circ J)$-integrator restricts to a $(\Fun c\circ J\circ\iota)$-integrator, so \cref{thm.dynamics_functor} goes through to give a functor $\arr_{D'}\to\pc$. However, there may be more integrators for $J\circ\iota$ than there were for $J$. In particular, removing morphisms from $\cat{Q}$ allows for more integrators because the naturality condition is weakened.

In $\rvect$, the sharp-equivariant isomorphisms between Euclidean parameter spaces (\cref{ex.euclidean_sharp}) are exactly the orthogonal maps. But several optimizers used in practice do not require naturality with respect to orthogonal maps. Any method that acts coordinatewise---e.g.\ an elementwise square or a per-coordinate step
size---singles out a basis and admits no natural transformation over $\rvect$. Such a method is not outside the framework; it just lives over a smaller $\cat Q$. For example, we could restrict our attention to vector spaces equipped with a Euclidean sharp and take as morphisms any of the following nested classes of isometries:%
\footnote{With the Euclidean response structure fixed, this is as far as the restriction can go while keeping $\cat Q$ symmetric monoidal: the braidings are themselves coordinate permutations, so dropping them too would leave $\cat Q$ without its symmetry.}
\[
  \text{orthogonal}\;\supset\;\text{signed coordinate permutations}\;\supset\;\text{coordinate permutations}.
\]
We conjecture that \textsc{Adam}~\cite{kingma2014adam} is an integrator when $\cat{Q}$ is restricted to the subcategory whose objects are Euclidean vector spaces $\rr^n$ equipped with Euclidean sharp and whose morphisms are signed permutations. That is, that \textsc{Adam}'s behavior does not depend on the order of the coordinate axes, nor on the direction of each axis. 

The monoidal requirement of integrators cannot be negotiated like the naturality can. The monoidality of $\intg$ corresponds to the compositionality of the semantics, that each box can compute its own update. This suggests seeking other optimizers as integrators over appropriately chosen parameter categories.
\end{remark}

\section{Configuration dynamics \texorpdfstring{$\termraw{Phiconf}$}{Phi conf}}\label{sec.configuration_dynamics}

The dynamics functor $\Phiconf\colon\sarr\to\pc$ is the simpler of the two. A state is just an element $q\in Q$ of the parameterizing manifold---there is no additional ``momentum'' component---and each step updates it by a descent step on $Q$. This is the functor we apply to recover Newton's method (\cref{sec.newton_warmup}) and gradient descent (\cref{sec.dl_warmup}).

Recall from \cref{thm.dynamics_functor} that $\Phiconf=\Psisem{\conf}\circ\Phip{\interpsm}$. The functor $\Psisem{\conf}$ replaces a $\cotof{Q}$-parameter with $\Store(S)$, where $S\coloneqq\absval{Q}$:
\[
\Store(S)\To{(-1)\circ\chiQ{\rv Q}}\cotof{Q}.
\]
A state consists of a position $q\in Q$, and an incoming covector $\xi'\in T^*_qQ$ updates that state to $q-\sharpR_q(\xi')$ by \eqref{eqn.conf_integrator}.

However, the majority of what takes place under $\Phiconf$ is still compressed within the $\Phip{\interpsm}$ factor from \cref{def.smooth_setup,thm.poly_interpretation}. So we now unpack $\Phiconf$ in total, tracing each term and sign in the resulting formulas back to the definitions that produced it.

Let $\inpt M,\outp M:\mfd$ be manifolds, and write
\[\Omega(M)\coloneqq\poly(\cotof M,\yon)\]
for the set of \emph{covector fields} on $M$: a polynomial map $\omega\colon\cotof M\to\yon$ is exactly a choice of covector $\omega(m)\coloneqq\bk{\omega}{m}(*)\in T^*_mM$ at each position $m\in M$, i.e.\ a (not-necessarily continuous) section of the cotangent projection $\pi\colon T^*M\to M$.
A smooth map $f\colon M\to N$ induces a \emph{pullback} $f^*\coloneqq\poly(\cotof f,\yon)\colon\Omega(N)\to\Omega(M)$, precomposition with $\cotof f$, which sends $\omega$ to $(f^*\omega)(m)=(T_m f)^\top\omega(f(m))$.

\paragraph{The action of $\termraw{Phiconf}$ on objects.}
The functor $\Phiconf$ acts on objects as follows:
\begin{equation}\label{eqn.Phi_on_obs}
\Phiconf\lensob M=
\cotof{\outp M}\otimes\ihom{\cotof{\inpt M},\yon}\cong
\sum_{(\outp m,\omega):\outp M\times\Omega(\inpt M)}\yon^{\left(T^*_{\outp{m}}\outp M\right)\times\inpt M}.
\end{equation}
A position in $\Phiconf\lensob M$ is a pair $(\outp m,\omega)$ where $\outp m\in\outp{M}$ is a point and $\omega\in\Omega(\inpt M)$ is a covector field. A direction at $(\outp m,\omega)$ is a pair $(\xi,\inpt m)$ where $\xi:T^*_{\outp{m}}\outp M$ is a covector and $\inpt m\in\inpt M$ is a point.

So at every time step, a box of the form
\boxpic{$\inpt{M}$}{$\outp{M}$}
outputs both a point $\outp m$ of $\outp M$ and a covector field $\omega$ on $\inpt M$.
It then waits for input consisting of a covector at $\outp m$ and a point of $\inpt M$.

\paragraph{The action of $\termraw{Phiconf}$ on morphisms.}
Describing the action of $\Phiconf$ on morphisms will extend to the end of this section, \cref{sec.configuration_dynamics}. Let $(Q,\sharpR_Q):\rvect$ and let
$f\colon Q\cdot\lensob M\to\lensob N$
be an arrangement in $\sarr$, i.e.\ a tuple $(\outp f,\inpt f,U)$; specializing \eqref{eqn.srw_morphism} to $K=1$:
\begin{equation}\label{eqn.para_potential_lens_maps}
\outp f\colon Q\times\outp M\to\outp N,\qquad
\inpt f\colon Q\times\outp M\times\inpt N\to\inpt M,\qquad
U\colon Q\times\outp M\times\inpt N\to\rr.
\end{equation}
It is sent by $\Phiconf$ to a $[\Phiconf\lensob M,\Phiconf\lensob N]$-coalgebra whose carrier is the underlying set of $Q$,
\begin{equation}\label{eqn.state_space_conf}
S\coloneqq\absval{Q};
\end{equation}
we denote its points by $q\in S=|Q|$. The state update below uses the (possibly basepoint-varying) sharp $\sharpR_q$ on $Q$ from \cref{def.rvect}.

We now unpack the coalgebra function itself,
\[
S\To{\Phiconf(\rv Q,f)}
\left[\Phiconf\lensob M,\Phiconf\lensob N\right](S).
\]
Given a state $q\in S$ and a position $a\coloneqq(\outp m,\omega_M):\outp M\times\Omega(\inpt M)$, the readout position $b=(\outp n,\omega_N)$ is the on-positions component, at parameter $q$, of the polynomial map $\Phip{\interpsm}(f)$ \eqref{eqn.phi_prime_morphism}. Unwinding \cref{thm.poly_interpretation}, this map applies $\Theta_\potalg$ \eqref{eqn.theta_morphism} to the lens $\binom{\inpt f}{\outp f}$, so its $\outp N$-coordinate is carried by the forward map $\outp f$,
\begin{equation}\label{eqn.outpn}
\outp n\coloneqq\outp f(q,\outp m),
\end{equation}
and its covector-field coordinate $\omega_N\colon\inpt N\to T^*\inpt N$ is carried by the map $\psi_f$ \eqref{eqn.psi_f}, whose backward map $\inpt f$ contributes a pullback of $\omega_M$ and whose potential algebra $\potd$ contributes the differential of $U$:
\begin{equation}\label{eqn.omega_n}
\omega_N(\inpt n)\coloneqq(\inpt f(q,\outp m,\blank))^*\omega_M\big|_{\inpt n}\;+\;d(U(q,\outp m,\blank))\big|_{\inpt n}.
\end{equation}
The first term is the pullback of $\omega_M$ along the partial map $\inpt f(q,\outp m,\blank)\colon\inpt N\to\inpt M$; the second is the differential of the potential $U(q,\outp m,\blank)\colon\inpt N\to\rr$ at $\inpt n$, in the sense of \eqref{eqn.differential}.%
\footnote{The $+1$ that turns the second summand into $dU$ comes from \eqref{eqn.d_potential}; the $+$ between summands is the cotangent splitting $T^*(\rr\times\inpt M)\cong T^*\rr\oplus T^*\inpt M$ (\cref{sec.manifolds_notation}).} Thus the readout position is the pair,
\[
b\coloneqq\bigl(\outp n,\,\omega_N\bigr)\;\in\;\outp N\times\Omega(\inpt N).
\]

Now given a direction $(\xi_N,\inpt n):T^*_{\outp n}\outp N\times\inpt N=\Phiconf\lensob N[b]$, the coalgebra function $\Phiconf(\rv Q,f)$ at $(q,a)$ produces a direction in $T^*_{\outp m}\outp M\times\inpt M$ by the on-directions component of the same map $\Phip{\interpsm}(f)$, i.e.\ of $\Theta_\potalg(f)$ \eqref{eqn.theta_morphism}. The input point is pulled back along the backward map $\inpt f$,
\begin{equation}\label{eqn.inptm}
\inpt m\coloneqq\inpt f(q,\outp m,\inpt n),
\end{equation}
and $\xi_N$, $\omega_M$, and the potential are pulled back along the maps of \eqref{eqn.para_potential_lens_maps} to give
\begin{equation}\label{eqn.covector_triple}
(\xi_Q,\,\xi_M,\,\xi_{\inpt N})\coloneqq (T_{(q,\outp m)}\outp f)^\top\xi_N \;+\; (T_{(q,\outp m,\inpt n)}\inpt f)^\top\omega_M(\inpt m) \;+\; dU\big|_{(q,\outp m,\inpt n)}\;
\end{equation}
where $(\xi_Q,\,\xi_M,\,\xi_{\inpt N}):Q^*\oplus T^*_{\outp m}\outp M\oplus T^*_{\inpt n}\inpt N$.%
\footnote{Each summand is a cotangent pullback: $\xi_N$ along $\outp f$, $\omega_M(\inpt m)$ along $\inpt f$, and $+1$ along $U$ (via $\potd$, \eqref{eqn.d_potential}); the $+$'s are the cotangent splitting of $Q\times\outp M\times\inpt N$ (\cref{sec.manifolds_notation}), morally the domain of the maps in \eqref{eqn.para_potential_lens_maps}.}
Note that the first summand $(T_{(q,\outp m)}\outp f)^\top\xi_N$ naturally lands in $Q^*\oplus T^*_{\outp m}\outp M$ and has been extended by $0$ in the remaining $\inpt N$-factor, so by \eqref{eqn.omega_n} we have $\omega_N(\inpt n)=\xi_{\inpt N}$.

All of the work has now been done: the coalgebra at $(q,a)$ on $(\xi_N,\inpt n)$ returns the direction
\[
(\xi_M,\,\inpt m):T^*_{\outp m}\outp M\times\inpt M=\Phiconf\lensob M[a]
\]
and updates the state (from its original $q$) to
\begin{equation}\label{eqn.state_update_gradient}
q-\sharpR_q(\xi_Q),
\end{equation}
which is the on-directions component of the descent update $(-1)\circ\chiQ{\rv Q}$ \eqref{eqn.conf_integrator}---the Euler step $\chi$ fed the negative gradient---applied to the covector $\xi_Q\in Q^*$ of \eqref{eqn.covector_triple}. In other words, the state is updated by summing three covectors pulled back along the maps in \eqref{eqn.para_potential_lens_maps}---$\xi_N$ along $\outp f$, $\omega_M(\inpt m)$ along $\inpt f$, and $+1$ along $U$ (yielding $dU$)---projecting to the $Q^*$-component, sharpening to a vector via $\sharpR_q$, and subtracting it from the original state.

All in all, one could tersely denote the coalgebra map as follows:
\[
\Phiconf(\rv Q,f)\colon
q\mapsto
(\outp m,\omega_M)\mapsto
\Big(
	(\outp n,\omega_N),
	(\xi_N,\inpt n)\mapsto\\
	\big(
		(\xi_M,\inpt m),
		q-\sharpR_q(\xi_Q)
	\big)
\Big),
\]
where $\outp n$, $\omega_N$, $\xi_M$, $\inpt m$, and $\xi_Q$ are as in \eqref{eqn.outpn}, \eqref{eqn.omega_n}, \eqref{eqn.inptm}, and \eqref{eqn.covector_triple}.

We can now fulfill the promise from \cref{sec.related_cls}.

\begin{proposition}[Eulerized submersion lenses]\label{prop.euler_submersion_lenses}
For $\cat R$ as in \cref{sec.related_cls}, there is a category $\cat S$, whose objects are submersions, and a diagram
\begin{equation}\label{eqn.cls_diag}
\begin{tikzcd}[column sep=large, row sep=large]
\mathbf{OpenErg}\ar[d, "{\mathrm{collapse}\circ T^*}"'] &
\cat{R}\ar[l, "{\mathrm{wide}}"']\ar[r, "{\mathrm{ff}}"]\ar[d] &
\sarr\ar[d, "\Phiconf"]\\
\mathbf{COrg} &
\cat{S}\ar[l, "{\mathrm{wide}}"]\ar[r] &
\pc
\end{tikzcd}
\end{equation}
where the middle vertical functor sends $M$ to the submersion $T^*M\surj M$, and the lower right functor sends a submersion $p\colon E\surj B$ to the polynomial $\sum_{b:B}\yon^{E_b}$.
\end{proposition}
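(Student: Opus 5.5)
The plan is to construct $\cat S$ as a category of \emph{responsive-parameterized submersion lenses}: data that can be read either as a continuous-time open ODE or as an Euler-stepped coalgebra. The right square should then commute by the formulas already unpacked in \cref{sec.configuration_dynamics}, and the left square by unwinding CLS's definitions.

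\textbf{Defining $\cat S$.} Objects are submersions $p\colon E\surj B$. A morphism $p\to p'$ is a tuple $(\rv Q,w,\lambda)$ consisting of:
\begin{itemize}
\item $\rv Q=(Q,\sharpR_Q)\in\rvect$;
\item a smooth map $w\colon Q\times B\to B'$;
\item a smooth map $\lambda\colon (Q\times B)\times_{B'}E'\to T^*Q\times E$ over $Q\times B$, i.e.\ for each $(q,b)$ a fiber map $E'_{w(q,b)}\to T^*_qQ\times E_b$.
\end{itemize}
Composition uses the Para parameter $Q'\oplus Q$. The forward parts compose as in $\Para$. The backward parts compose as in lenses, with the two $T^*Q'$ and $T^*Q$ outputs placed side by side. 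I expect checking this to be mechanical, and it should mirror \cref{lem.para_monoidal} together with the lens composition of \cref{def.lens}.

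\textbf{The functor $\cat S\to\pc$.} It sends $p$ to $\sum_{b:B}\yon^{E_b}$. A morphism $(\rv Q,w,\lambda)$ goes to the coalgebra with state set $\absval Q$. Its readout at state $q$ on position $b$ is $w(q,b)$. Its update on a direction $e'\in E'_{w(q,b)}$ writes $\lambda(q,b,e')=(\xi_Q,e)$, returns $e$, and moves the state to $q-\sharpR_q(\xi_Q)$, the Euler step \eqref{eqn.conf_integrator}.

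\textbf{The functor $\cat S\to\mathbf{COrg}$.} It sends the same data to the continuous-time system whose vector field is $\dot q=-\sharpR_q(\xi_Q)$, with identical readout and backward map. This should be a wide inclusion once one recalls that CLS's $\mathbf{COrg}$-morphisms between bundle interfaces are precisely such parameterized lenses carrying a vector field on the parameter.

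\textbf{The middle functor $\cat R\to\cat S$.} It sends $M$ to $T^*M\surj M$. A morphism $(\rv Q,w,E)$ goes to $(\rv Q,w,\lambda)$, where
\[
\lambda(q,m,\xi_N)\coloneqq (T_{(q,m)}w)^\top\xi_N+dE|_{(q,m)}
\]
is split into its $T^*_qQ$ and $T^*_mM$ components. Functoriality is the chain rule (as for $\cot$ in \cref{def.cot}) together with additivity of potentials under Para composition (\cref{prop.suboperads}).

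\textbf{Right square.} Under the fully faithful $\cat R\to\sarr$, the object $M$ goes to $\binom1M$. Then
\[
\Phiconf\tbinom1M=\cotof M\otimes\ihom{\cotof 1,\yon}\cong\cotof M=\sum_m\yon^{T^*_mM},
\]
which matches the image of $T^*M\surj M$. On morphisms, the input data are trivial: $\inpt f=\bang$, $\inpt M=\inpt N=1$, and $\Omega(1)$ is a point. So \eqref{eqn.omega_n} is vacuous, and \eqref{eqn.covector_triple} reduces to
\[
(\xi_Q,\xi_M)=(T w)^\top\xi_N+dE.
\]
The update \eqref{eqn.state_update_gradient} is $q-\sharpR_q(\xi_Q)$, which is exactly the image in $\pc$ of the image in $\cat S$. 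Since $\Phiconf$ is only lax, I will also check that the comparison cells agree; they are identities here because $z=\yon$ (\cref{cor.functor}).

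\textbf{Left square.} I will unwind CLS's $T^*$ and collapse on $(\rv Q,w,E)$. Their $T^*$ produces an open ODE whose backward pass is cotangent pullback along $w$ plus $dE$, with vector field given by the reaction applied to the $Q^*$-component. Collapse then forgets to $\mathbf{COrg}$.

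\textbf{Main obstacle.} I expect the hard step to be this left square: matching CLS's conventions exactly. The sign of the gradient (they descend via $-\sharp$), the ordering of parameters under composition, and whether collapse is strict or only pseudo all need to agree with the Para conventions here, so the square may commute only up to a canonical isomorphism from reordering $Q'\oplus Q$. I will first verify it on generators: pure wirings with $Q=0$, and single systems with $M=1$. I will then use that all functors involved are monoidal to extend to composites. Wideness of $\cat S\to\mathbf{COrg}$ follows because every submersion with vector space parameter is an object on both sides.
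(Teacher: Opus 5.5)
Your argument is correct and has the same skeleton as the paper's proof. Both take $\cat S$ to be a Para construction on submersion lenses, build the middle functor from $dE+(Tw)^\top\xi_N$, check the right square against \eqref{eqn.covector_triple} and \eqref{eqn.state_update_gradient}, and defer the left square to CLS. The genuine difference is where the response lives.

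The paper takes $\cat S\coloneqq\para{\vectiso}{\SLens}$ for the \emph{tangent} action $Q\mapsto(TQ\surj Q)$, so an $\cat S$-morphism already carries a velocity $u$. The sharp and the minus sign are applied once, in $\cat R\to\cat S$, via $u\coloneqq-\sharpR_q(\xi_Q)$. After that both lower functors are sharp-free: $\cat S\to\pc$ is the bare Euler step $(\dot q,e)\mapsto(q+\dot q,e)$, and $\cat S\to\mathbf{COrg}$ reads such a lens directly as a $\mathbf{COrg}$-morphism with state $Q$.

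Your $\cat S$ is in effect $\para{\rvect}{\SLens}$ for the \emph{cotangent} action $\rv Q\mapsto(T^*Q\surj Q)$, keeping the covector $\xi_Q$ and the response as data. This buys you two things. Your middle functor is sharp-free. And your $\cat S\to\pc$ is literally $\Psisem{\conf}$ precomposed with the functor induced by $P$, since $P(T^*Q\surj Q)=\cotof Q$; its functoriality is then the naturality in \cref{prop.rvect_polynomial}, and the right square is nearly tautological.

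The price is paid on the $\mathbf{COrg}$ side. Applying $-\sharpR_q$ there discards data: a degenerate sharp, or rescaling $\sharpR_Q$ against the $Q^*$-part of $\lambda$, yields the same vector field. So your $\cat S\to\mathbf{COrg}$ is identity-on-objects, which is all ``wide'' requires, but it is not an inclusion as you assert.

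Two smaller corrections. First, the comparison cells of $\Phiconf$ are invertible on the image of $\cat R$ because the input sides are $1$, so $[\cotof 1,\yon]\cong\yon$; $z=\yon$ alone would give only normality. Second, on the sign you flag: the paper records that CLS fold it into the reaction, so their velocity is the reaction applied to $\xi_Q$ with no extra minus. In either version, then, the left square commutes only if $\cat R\to\mathbf{OpenErg}$ sends $\sharpR_Q$ to the reaction $-\sharpR_Q$. That requirement is identical for you and for the paper; only the placement of the minus within the bottom row differs.
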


\begin{proof}
Let $\defineTerm{SLens}$ denote the symmetric monoidal category whose objects are submersions $p\colon E\surj B$ and whose morphisms $p\to p'$, for $p'\colon E'\surj B'$, are smooth maps
\[
\begin{tikzcd}
E\ar[d, ->>, "p"']&
B\times_{B'}E'\ar[l, "\alpha"']\ar[r]\ar[d, ->>]&
E'\ar[d, ->>, "p'"]\\
B\ar[r, equal]&
B\ar[r, "r"']&
B'\ar[ul, phantom, very near end, "\lrcorner"]
\end{tikzcd}
\]
The monoidal product is defined by $(E\surj B)\otimes(E'\surj B')\coloneqq(E\times E'\surj B\times B')$, and the unit is $1\surj 1$.
There is a strong symmetric monoidal functor $P\colon\SLens\to\poly$ given on objects by
\[
P(E\surj B)\coloneqq\sum_{b:B}\yon^{E_b}.
\]
It sends a submersion lens $(r,\alpha)\colon p\to p'$ to the polynomial map with forward component $r$ and backward component $\alpha$, and is strong monoidal by construction.

The strong monoidal functor
\[
\defineTerm{vectiso}\to\SLens,
\qquad
Q\longmapsto (TQ\surj Q)
\]
induces an action of $\vectiso$ on $\SLens$; let $\cat{S}\coloneqq\para{\vectiso}{\SLens}$. The objects of $\cat S$ are submersions, and a morphism $p\to p'$ in $\cat S$ consists of a tuple $(Q,r,u,\alpha)$, where $Q$ is a finite-dimensional vector space, $r\colon Q\times B\to B'$ is the forward map on bases, and $(u,\alpha)\colon(Q\times B)\times_{B'}E'\to Q\times E$ is the backward map, assembled into the submersion lens
\[
\begin{tikzcd}
Q\times E\ar[d, ->>, "Q\times p"']&
(Q\times B)\times_{B'}E'\ar[l, "{(u,\alpha)}"']\ar[r]\ar[d, ->>]&
E'\ar[d, ->>, "p'"]\\
Q\times B\ar[r, equal]&
Q\times B\ar[r, "r"']&
B'\ar[ul, phantom, very near end, "\lrcorner"]
\end{tikzcd}
\]

Let $\absval{\blank}\colon\vectiso\to\smsetiso$ be the underlying-set functor. For $Q:\vectiso$ and $p\colon E\surj B$, define a polynomial map\footnote{The Euler step $\chi'_{Q,p}$ is a sharp-free analogue of $\chi$ (\cref{prop.rvect_polynomial}); we do not spell out the relation here.}
\[
\chi'_{Q,p}\colon \Store(\absval{Q})\otimes P(p)\to P\bigl((TQ\surj Q)\otimes p\bigr).
\]
Both sides have positions $(q,b)\in Q\times B$, so take $\chi'_{Q,p}$ on positions to be the identity. Using the canonical identification $TQ\cong Q\oplus Q$ \eqref{eqn.TT_def}, the map $\chi'_{Q,p}\colon T_qQ\times E_b\to Q\times E_b$ on directions at $(q,b)$ is given by
\[
(\dot q,e)\mapsto (q+\dot q,e).
\]
These maps are natural in vector-space isomorphisms and submersion lenses, and monoidal by componentwise addition in direct sums. Thus \cref{prop.para_square} induces a normal lax symmetric monoidal functor
\[
\cat{S}\coloneqq\para{\vectiso}{\SLens}\to\para{\smsetiso}{\poly}.
\]
Composing with the identification $\para{\smsetiso}{\poly}\cong\pciso$ of \cref{prop.pc_as_para} and the inclusion $\pciso\inj\pc$ gives a normal lax symmetric monoidal functor $\cat S\to\pc$. Unwinding this composite sends the above morphism $(Q, r, u,\alpha)$ to the $\ihom{P(p),P(p')}$-coalgebra with state set $Q$ whose action is
\begin{equation}\label{eqn.S_coalgebra}
q\longmapsto b\longmapsto
\left(
r(q,b),\;
a'\longmapsto\bigl(\alpha(q,b,a'),\,q+u(q,b,a')\bigr)
\right).
\end{equation}

The functor $\cat S\to\mathbf{COrg}$ is more straightforward: an $\cat S$-morphism is a vector space $Q$ together with a lens $(TQ\surj Q)\otimes p\to p'$, hence a $\mathbf{COrg}$ morphism with state manifold $Q$. It is identity-on-objects, hence wide.

For $\cat{R}$ we take the category $\cat R$ from \cref{sec.related_cls}. We define $\cat R\to\cat S$ on objects by $M\mapsto T^*M\surj M$. On morphisms, send $(\rv Q,w,E)\colon M\to N$ to the $\cat S$-morphism with parameter space $Q$, readout $r\coloneqq w$, and for the rest of the data write
\[
(\xi_Q,\xi_M)\coloneqq dE|_{(q,m)}+(T_{(q,m)}w)^\top(\xi_N)\in Q^*\oplus T_m^*M
\]
and set $\alpha(q,m,\xi_N)\coloneqq\xi_M$ and $u(q,m,\xi_N)\coloneqq-\sharpR_q(\xi_Q)$, the descent velocity. Functoriality follows from the chain rule.

The right square in \eqref{eqn.cls_diag} commutes as follows. Under the fully faithful functor $\cat R\to\sarr$, the morphism $(\rv Q,w,E)$ is sent to the smooth adaptive arrangement with output map $w$, unique input map, and potential $E$. Since the input interface is terminal, the only nonzero terms in \eqref{eqn.covector_triple} are
\[
dE|_{(q,m)}+(T_{(q,m)}w)^\top(\xi_N)=(\xi_Q,\xi_M).
\]
Thus \eqref{eqn.state_update_gradient} sends the state $q$ to $q-\sharpR_q(\xi_Q)$ and returns the internal covector $\xi_M$. The $\cat S$-image of $(\rv Q,w,E)$ has readout $r=w$, $\alpha(q,m,\xi_N)=\xi_M$, and $u(q,m,\xi_N)=-\sharpR_q(\xi_Q)$, so \eqref{eqn.S_coalgebra} reads
\[
q\mapsto m\mapsto
\Big(
w(q,m),\;
\xi_N\mapsto\bigl(\xi_M,\,q-\sharpR_q(\xi_Q)\bigr)
\Big),
\qquad
(\xi_Q,\xi_M)=dE|_{(q,m)}+(T_{(q,m)}w)^\top(\xi_N),
\]
which is exactly the $\Phiconf$-coalgebra \eqref{eqn.state_update_gradient} of the same diagram. Hence the right square commutes. The descent vs. ascent sign here sits in the configuration integrator \eqref{eqn.conf_integrator}---seen as the minus in the velocity $u=-\sharpR_q(\xi_Q)$---while the response $\sharpR_Q$ stays sign-free; CLS make the opposite choice, folding the sign into the response so that the velocity is $\sharpR_Q(dE)$ directly~\cite[\S\S 2.1--2.2]{capucci2024organizing}. The remaining compatibility with $\mathbf{OpenErg}$ is obtained by restricting to $\cat R$ the CLS functor $\mathbf{OpenErg}\to\mathbf{OpenODE}$ and then applying their ``collapse'' construction~\cite[Thm.~22 and Prop.~21]{capucci2024organizing}.\qedhere
\end{proof}

Applications of $\Phiconf$ appear in \cref{sec.newton_warmup} (Newton's method) and \cref{sec.dl_warmup} (gradient descent and backpropagation).

\section{Phase-space dynamics \texorpdfstring{$\termraw{Phiphase}$}{Phi phase}}\label{sec.phase_dynamics}

We remind the reader of \cref{warn.prime}: primes (e.g.\ $\xi', q'$) never denote derivatives in this paper.

Recall from \cref{thm.dynamics_functor} that $\Phiphase=\Psisem{\phase}\circ\Phip{\interpsm}$. The functor $\Psisem{\phase}$ replaces a $\cotof{Q}$-parameter with $\Store(S)$, where $S\coloneqq\absval{T^*Q}$:
\[
\Store(S)\To{\dynphase}\cotof{Q}.
\]
A state consists of a position and a momentum $(q,\xi)\in T^*Q$, and an incoming covector $\xi'$ updates that state to $(q+\sharpR_q(\xi),\,\xi-\xi')$
 by \eqref{eqn.phase_update}. The readout and returned direction of the coalgebra $\Phiphase(\rv Q,f)$ have the same formulas as the configuration case---they are the on-positions and on-directions components of the shared factor $\Phip{\interpsm}$, formulas \eqref{eqn.outpn}, \eqref{eqn.omega_n}, \eqref{eqn.inptm}, \eqref{eqn.covector_triple}, which depend only on the parameter position---but the phase integrator evaluates them at the presented position $\tilde q=q+\sharpR_q(\xi)$ \eqref{eqn.presented_position}, where $\Phiconf$ evaluates them at the stored $q$. We unpack all this below.

\subsection{The phase coalgebra}\label{sec.phase_coalgebra}

\paragraph{The action of $\termraw{Phiphase}$ on objects.}
The functor $\Phiphase$ acts on objects exactly as $\Phiconf$ in \eqref{eqn.Phi_on_obs}: a position in $\Phiphase\lensob M$ is a pair $(\outp m,\omega)$ and a direction at $(\outp m,\omega)$ is a pair $(\xi,\inpt m)$, with the same types as in the configuration case (see \eqref{eqn.Phi_on_obs}).

\paragraph{The action of $\termraw{Phiphase}$ on morphisms.}
Describing the action of $\Phiphase$ on morphisms will extend to the end of this section, \cref{sec.phase_dynamics}. Let $(Q,\sharpR_Q):\rvect$ and let
$f\colon Q\cdot\lensob M\to\lensob N$
be an arrangement in $\sarr$, i.e.\ a tuple $(\outp f,\inpt f,U)$ as in \eqref{eqn.para_potential_lens_maps}.
It is sent by $\Phiphase$ to a $[\Phiphase\lensob M,\Phiphase\lensob N]$-coalgebra whose carrier is the underlying set of $T^*Q$,
\begin{equation}\label{eqn.state_space}
S\coloneqq\absval{T^*Q}\cong Q\oplus Q^*;
\end{equation}
we write $s=(q,\xi)\in S$ for position $q\in Q$ and momentum $\xi\in Q^*$. Although the coalgebra remembers only the set $S$, the state update below uses the canonical symplectic pairing on $T^*Q$. Its sharp map is
\begin{equation}\label{eqn.sharp_S}
\sharpS_S\colon S^*\to S,\qquad (\xi',q')\mapsto(q',-\xi'),
\end{equation}
specializing \eqref{eqn.canonical_sharp}.

We now unpack the coalgebra function itself,
\[
S\To{\Phiphase(\rv Q,f)}
\left[\Phiphase\lensob M,\Phiphase\lensob N\right](S).
\]
Given a state $s\coloneqq(q,\xi)\in S$ and a position $a\coloneqq(\outp m,\omega_M):\outp M\times\Omega(\inpt M)$, the readout position $b=(\outp n,\omega_N)\in\outp N\times\Omega(\inpt N)$ is the on-positions component of $\Phip{\interpsm}(f)$ at the parameter position $\tilde q$, with $\outp n$ and $\omega_N$ as in \eqref{eqn.outpn} and \eqref{eqn.omega_n} with $\tilde q$ in place of $q$. 

Given a direction $(\xi_N,\inpt n):T^*_{\outp n}\outp N\times\inpt N=\Phiphase\lensob N[b]$, the on-directions component of $\Phip{\interpsm}(f)$ returns the direction $(\xi_M,\inpt m):T^*_{\outp m}\outp M\times\inpt M=\Phiphase\lensob M[a]$, with $\inpt m$ as in \eqref{eqn.inptm} and the triple $(\xi_Q,\,\xi_M,\,\xi_{\inpt N})$ as in \eqref{eqn.covector_triple}, again at $\tilde q$.
These have the same form as the configuration case (\cref{sec.configuration_dynamics}): $\Phiphase$ and $\Phiconf$ share the factor $\Phip{\interpsm}$, whose on-positions and on-directions components depend only on the parameter position; the phase integrator presents it as $\tilde q$ \eqref{eqn.presented_position} where the configuration integrator presents the stored $q$. The remaining difference is in the state update: starting from $s=(q,\xi)$, the state is updated to
\begin{equation}\label{eqn.state_update}
s+\sharpS_S(\xi_Q,\sharpR_q(\xi))=\bigl(q+\sharpR_q(\xi),\,\xi-\xi_Q\bigr).
\end{equation}

We will now break down \cref{eqn.state_update}, which is the phase integrator formula \eqref{eqn.phase_update} with $\xi'\coloneqq\xi_Q$; note that the updated position is exactly the presented one, $q+\sharpR_q(\xi)=\tilde q$. Its left-hand side records where the three ingredients of this update come from. The sum $s+\sharpS_S(\blank)$ is the Euler step $\chi$ on the phase space $T^*Q$ \eqref{eqn.directions_sharp}; the use of $\sharpS_S$ comes from the canonical symplectic pairing on $T^*Q$, \eqref{eqn.canonical_sharp}, which is constant in $s$ regardless of variation in $\sharpR_Q$; and the pair $(\xi_Q,\sharpR_q(\xi))\in T^*_sS$ comes from $\nuro$ \eqref{eqn.nu} at $(q,\xi)$, with $\xi_Q$ computed at $\tilde q$.

The right-hand side of \cref{eqn.state_update} is just a computation: \eqref{eqn.canonical_sharp} gives $\sharpS_S(\xi_Q,\sharpR_q(\xi))=(\sharpR_q(\xi),-\xi_Q)$, then componentwise addition with $s=(q,\xi)$ produces $(q+\sharpR_q(\xi),\,\xi-\xi_Q)$.

All in all, one could tersely denote the coalgebra map as follows:
\begin{multline*}
\Phiphase(\rv Q,f)\colon
(q,\xi)\mapsto
(\outp m,\omega_M)\mapsto
\Big(
	(\outp n,\omega_N),
	(\xi_N,\inpt n)\mapsto\\
	\big(
		(\xi_M,\inpt m),
		(q+\sharpR_q(\xi),\,\xi-\xi_Q)
	\big)
\Big),
\end{multline*}
where $\outp n$ is defined in \eqref{eqn.outpn}, $\omega_N$ in \eqref{eqn.omega_n}, $\inpt m$ in \eqref{eqn.inptm}, and $\xi_M$ and $\xi_Q$ in \eqref{eqn.covector_triple}, all evaluated at the presented position $\tilde q$.

\subsection{Closed systems and symplecticity}\label{sec.closed_conservation}

When $f$ is a closed system (\cref{def.arrangement_terminology}), the potential reduces to a map $U\colon Q\to\rr$ and the two pullback terms of \eqref{eqn.covector_triple} vanish, leaving $\xi_Q=dU|_{\tilde q}$, the differential at the presented position $\tilde q=q+\sharpR_q(\xi)$ \eqref{eqn.presented_position}. The image $\Phiphase(f)$ is then a $\yon$-coalgebra, i.e.\ a function $\absval{T^*Q}\to\absval{T^*Q}$, namely
\begin{equation}\label{eqn.closed_update}
(q,\xi)\;\longmapsto\;\bigl(q+\sharpR_q(\xi),\;\xi-dU|_{\tilde q}\bigr).
\end{equation}
This step is the phase dynamics $\dynphase\colon\Store\circ\absval{T^*\blank}\Rightarrow\cot$ specialized to a closed system: its readout reports the presented position $\tilde q$, and its state update is \eqref{eqn.state_update}.

Equation \eqref{eqn.closed_update} is the closed phase step for an arbitrary response and potential. We now ask when it is \emph{symplectic}, i.e.\ when the step preserves the canonical symplectic pairing on $T^*Q$ \eqref{eqn.canonical_form}. We reach that regime by three hypotheses---imposed cumulatively, each marked in \textbf{bold} below---after which \cref{prop.closed_conservation} proves it symplectic.

If \textbf{the sharp is constant}, $\sharpR_q=\sharpR_0$, then the presented position \eqref{eqn.presented_position} is $\tilde q=q+\sharpR_0(\xi)$ and \eqref{eqn.closed_update} becomes
\[
(q,\xi)\longmapsto\bigl(q+\sharpR_0(\xi),\,\xi-dU|_{\tilde q}\bigr)=\bigl(\tilde q,\,\xi-dU|_{\tilde q}\bigr).
\]
This is one Euler step of the vector field
\[
\dot q\coloneqq \sharpR_0(\xi),
\qquad
\dot\xi\coloneqq-dU|_q,
\]
advancing the position along the velocity and then evaluating the force at the new position $\tilde q$. For instance, take $\sharpR_0(\xi)=\xi/m$ for a scalar mass $m>0$; then $\dot q=\xi/m$ and $\dot\xi=-dU|_q$, the familiar (velocity, force) form of Newton's second law, with $\xi=m\dot q$ the usual momentum.

When moreover \textbf{the sharp is symmetric}, $\xi'(\sharpR_0(\xi))=\xi(\sharpR_0(\xi'))$ for all $\xi,\xi'\in Q^*$, this vector field is \emph{Hamiltonian}, meaning it is the symplectic sharp applied to the differential of some function $H$, namely
\[
H(q,\xi)\coloneqq \tfrac12\xi(\sharpR_0(\xi))+U(q).
\]
When $\sharpR_0$ is positive-definite---as for $\sharpR_0(\xi)=\xi/m$ above---this is the \emph{energy function} of a classical mechanical system~\cite[\S 18.2]{cannasdasilva2008lectures}---kinetic energy $\tfrac12\xi(\sharpR_0(\xi))$ plus potential energy $U$. Indeed, the symmetry condition and linearity of $\sharpR$ give $dH|_{(q,\xi)}=(dU|_q,\sharpR_0(\xi))=\bk{\nuro}{(q,\xi)}(dU|_q)\in Q^*\oplus Q$ by \eqref{eqn.nu}, and \eqref{eqn.sharp_S} recovers the vector field---as advertised---as the symplectic sharp applied to the differential of $H$:
\[
\sharpS_S(dH|_{(q,\xi)})=(\sharpR_0(\xi),-dU|_q)=(\dot{q},\dot{\xi}).
\]
In particular the continuous flow conserves $H$, by \cref{rmk.symplectic_perpendicular}.
The step above is then the \emph{symplectic} Euler step of this Hamiltonian system~\cite[\S 2.1]{gauckler2017dynamics}.

The discrete step \eqref{eqn.closed_update} need not conserve $H$ exactly.
However, when the arrangement is \emph{harmonic} (\cref{def.arrangement_terminology})---quadratic response, \textbf{quadratic potential}---it does preserve the canonical symplectic pairing.
With $\rv Q$ quadratic as above, let $\kappa\colon Q\to Q^*$ be \emph{symmetric}, $(\kappa q)(q')=(\kappa q')(q)$ for all $q,q'\in Q$, and take the potential
\[U(q)\coloneqq\tfrac12(\kappa q)(q).\]
In this case, $dU|_q=\kappa q$ is linear,
\footnote{This is computed  using symmetry of $\kappa$: $dU|_q(q')=\tfrac12\bigl((\kappa q')(q)+(\kappa q)(q')\bigr)=(\kappa q)(q')$.} 
and we will see in \cref{prop.closed_conservation} that the step \eqref{eqn.closed_update} factors as $T^*Q\To{\fun{drift}}T^*Q\To{\fun{kick}}T^*Q$, where
\begin{equation}\label{eqn.drift_kick}
\fun{drift}(q,\xi)\coloneqq\bigl(q+\sharpR_0(\xi),\;\xi\bigr)
\qqand
\fun{kick}(q,\xi)\coloneqq\bigl(q,\;\xi-\kappa q\bigr)
\end{equation}
are the \emph{drift} and \emph{kick} steps of the symplectic Euler method~\cite[\S 2.1]{gauckler2017dynamics}, the drift advancing the position and the kick applying the force.

\begin{proposition}[Symplecticity of the closed phase step]\label{prop.closed_conservation}
Let $f=(Q,\sharpR_0,\bang,\bang,U)\colon I\to I$ be a harmonic closed system (\cref{def.arrangement_terminology}) whose potential is $U(q)=\tfrac12(\kappa q)(q)$ for a symmetric linear map $\kappa\colon Q\to Q^*$. Then, for the drift and kick of \eqref{eqn.drift_kick},
\[\Phiphase(f)=\fun{kick}\circ\fun{drift}.\]
In particular, the update \eqref{eqn.closed_update} is an endomorphism of the responsive vector space $T^*Q$ in $\rvect$, i.e.\ a linear isomorphism preserving the canonical symplectic pairing \eqref{eqn.canonical_form}.
\end{proposition}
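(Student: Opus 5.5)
The plan is to compute the closed phase step explicitly and compare it with $\fun{kick}\circ\fun{drift}$. Then I would check that drift and kick are each $\rvect$-morphisms of $T^*Q$; since $\rvect$ is a category, their composite is one too. By \eqref{eqn.closed_update}, specialized to the constant sharp $\sharpR_0$, the $\yon$-coalgebra $\Phiphase(f)$ is the function $(q,\xi)\mapsto(\tilde q,\,\xi-dU|_{\tilde q})$ with $\tilde q=q+\sharpR_0(\xi)$. Symmetry of $\kappa$ gives $dU|_{\tilde q}=\kappa\tilde q$, as in the footnote preceding \eqref{eqn.drift_kick}. On the other side, $\fun{drift}(q,\xi)=(\tilde q,\xi)$, and then $\fun{kick}(\tilde q,\xi)=(\tilde q,\xi-\kappa\tilde q)$. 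These agree, which proves the displayed equality. The only subtlety is that the kick evaluates the force at the drifted position, and this is exactly what the presented-position convention of \cref{sec.closed_conservation} supplies.

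For the second claim, both maps are linear, because $\sharpR_0$ and $\kappa$ are linear. Each is invertible, with inverses $(q,\xi)\mapsto(q-\sharpR_0\xi,\xi)$ and $(q,\xi)\mapsto(q,\xi+\kappa q)$. Since the canonical symplectic pairing is nondegenerate and constant (\cref{prop.canonical_symplectic_pairing}), sharp-equivariance \eqref{eqn.pairing_triangle} is equivalent to pairing preservation, as noted after \cref{def.rvect}. It therefore suffices to check
\[
\pairing{g(q_1,\xi_1),g(q_2,\xi_2)}=\xi_2(q_1)-\xi_1(q_2)
\]
for $g$ equal to each of $\fun{drift}$ and $\fun{kick}$, using \eqref{eqn.canonical_form}.

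For the drift, the left side is $\xi_2(q_1+\sharpR_0\xi_1)-\xi_1(q_2+\sharpR_0\xi_2)$. The extra terms $\xi_2(\sharpR_0\xi_1)-\xi_1(\sharpR_0\xi_2)$ cancel by symmetry of the quadratic response (\cref{def.rvect}). For the kick, the left side is $(\xi_2-\kappa q_2)(q_1)-(\xi_1-\kappa q_1)(q_2)$. The extra terms $-(\kappa q_2)(q_1)+(\kappa q_1)(q_2)$ cancel by symmetry of $\kappa$.

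I expect no real obstacle. The step needing the most care is keeping the sign and ordering conventions of \eqref{eqn.canonical_form} consistent, and being explicit that the relevant symmetry hypotheses are precisely those in the definition of harmonic. Constancy of the sharp is used only so that the drift is linear.
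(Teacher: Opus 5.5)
Your proposal is correct and follows the paper's proof essentially step for step. You identify the closed step with $\fun{kick}\circ\fun{drift}$ via $dU|_{\tilde q}=\kappa\tilde q$, reduce sharp-equivariance to pairing preservation, and check drift and kick separately, where the cross terms cancel by symmetry of $\sharpR_0$ and of $\kappa$ respectively. One small correction to your closing remark: constancy of the sharp is also what lets you replace $\sharpR_q$ by $\sharpR_0$ when matching \eqref{eqn.closed_update} with the drift, not only what makes the drift linear.
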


\begin{proof}
Since $dU|_q=\kappa q$ and $\tilde q=q+\sharpR_0(\xi)$, the step \eqref{eqn.closed_update} sends $(q,\xi)$ to $\bigl(q+\sharpR_0(\xi),\,\xi-\kappa(q+\sharpR_0(\xi))\bigr)$, which is $\fun{kick}\bigl(\fun{drift}(q,\xi)\bigr)$. Each of $\fun{drift}$ and $\fun{kick}$ is a linear isomorphism, inverted by negating the second summand of its formula. 

For the second claim, $\rvect$-morphisms compose (\cref{def.rvect}), so since the step is $\fun{kick}\circ\fun{drift}$ it suffices to show that $\fun{drift}$ and $\fun{kick}$ are each $\rvect$-morphisms $T^*Q\to T^*Q$. Because the canonical symplectic pairing is nondegenerate and the same at every basepoint, being such a morphism---i.e.\ satisfying sharp-equivariance \eqref{eqn.pairing_triangle}---is equivalent to preserving that pairing (\cref{def.rvect}), which we check from \eqref{eqn.canonical_form}. For the drift, for all $(q,\xi),(q',\xi')\in Q\oplus Q^*$:
\begin{align*}
\pairing{\fun{drift}(q,\xi),\,\fun{drift}(q',\xi')}
&=\xi'\bigl(q+\sharpR_0(\xi)\bigr)-\xi\bigl(q'+\sharpR_0(\xi')\bigr)\\
&=\pairing{(q,\xi),\,(q',\xi')}+\xi'\bigl(\sharpR_0(\xi)\bigr)-\xi\bigl(\sharpR_0(\xi')\bigr),
\end{align*}
and the last two terms cancel by symmetry of the sharp. For the kick:
\[
\pairing{\fun{kick}(q,\xi),\,\fun{kick}(q',\xi')}
=(\xi'-\kappa q')(q)-(\xi-\kappa q)(q')
=\pairing{(q,\xi),\,(q',\xi')}-(\kappa q')(q)+(\kappa q)(q'),
\]
and the last two terms cancel by symmetry of $\kappa$. Each of $\fun{drift}$ and $\fun{kick}$ therefore preserves the pairing, hence lies in $\rvect(T^*Q,T^*Q)$, and so does their composite.
\end{proof}

\chapter{Applications}\label{ch.applications}

In this concluding section we illustrate the two dynamics functors on four concrete examples.\footnote{These constructions are precise enough for a language model to turn into runnable code: point one at this paper with the prompt at \url{https://github.com/dspivak/dap}, and you should obtain a program you can run and modify. The implementation there---written entirely by Claude from the definitions in \cref{ch.smooth_rwd,ch.smooth_dynamics}---has tests reproducing all four examples below, though we have not independently audited it.} \Cref{sec.newton_warmup} applies $\Phiconf$ to a closed system in $\sarr$ to recover Newton's method. \Cref{sec.dl_warmup} applies $\Phiconf$ to a feedforward arrangement to recover gradient descent, with backpropagation appearing as the lens backward pass. \Cref{sec.wave_equation} applies $\Phiphase$ to a chain of harmonic-oscillator particles to recover the discrete wave equation, computed two ways to illustrate functoriality. \Cref{sec.graph_laplacian} generalizes this---again using $\Phiphase$---to any finite directed graph of harmonic particles, recovering its graph Laplacian as a sanity check on the formalism.

\section{Newton's method}\label{sec.newton_warmup}

Newton's method is a \emph{closed system} in the sense of \cref{def.arrangement_terminology}: an arrangement $f\colon \lensunit\to \lensunit$ in $\sarr$. Both $\outp f$ and $\inpt f$ in \eqref{eqn.para_potential_lens_maps} are forced to be the unique map $\bang$, and the only data is a responsive parameter space $(Q,\sharpR_Q)$ and a potential $U\colon Q\to\rr$. Since $\Phiconf\lensunit\cong\yon$ (\cref{cor.functor}), under $\Phiconf$ it determines a closed system in $\pc$, i.e.\ a discrete dynamical system $Q\to[\yon,\yon](Q)=Q$.

Newton's method for \emph{minimization}---given an objective $U$, find a critical point, i.e.\ a point where the gradient $dU$ vanishes---is such a closed system~\cite[Ch.~9]{boyd2004convex}. In the presentation below, the standing hypothesis is that the Hessian of $U$ is invertible at every point of $Q$; 
this holds in particular when the Hessian is everywhere positive definite, in which case the critical point found is the global minimum. %

Let $U\colon Q\to\rr$ be a smooth map on a finite-dimensional real vector space $Q$. Its differential $dU\colon Q\to Q^*$ is a smooth map sending $q\in Q$ to the gradient covector $dU|_q\in Q^*$. Wanting it to vanish, we approximate $dU$ to first order about $q$; the linear part of that approximation is the differential of $dU$ at $q$, itself a linear map
\begin{equation*}
T_q(dU)\colon Q\to Q^*,
\end{equation*}
called the \emph{Hessian} of $U$ at $q$. In a basis $e_1,\ldots,e_n$ of $Q$, the linear map $T_q(dU)$ is given by the matrix of second partial derivatives
\[
\bigl(T_q(dU)\bigr)_{ij}=\partial_i\partial_j U(q),
\]
which is symmetric by equality of mixed partials. By the standing hypothesis it is invertible at every $q$, hence an isomorphism $T_q(dU)\colon Q\To{\cong}Q^*$, and we obtain a nondegenerate responsive vector space (\cref{def.rvect}) by putting
\[
\sharpR^{U}_q\coloneqq\bigl(T_q(dU)\bigr)^{-1}\colon Q^*\To{\cong}Q,
\]
so $\rv Q\coloneqq (Q,\sharpR^{U})\in\rvect$. We thus have a closed system in $\sarr$ with potential $U$,
\[
f=\bigl(\rv Q,\ \outp f=\bang,\ \inpt f=\bang,\ U\bigr)\colon I\to I,
\]
and \emph{Newton's method for $U$} is the discrete dynamical system it determines,
\begin{equation}\label{eqn.newton}
\Phiconf(f)=\Phiconf\bigl((Q,\sharpR^{U}),\bang,\bang,U\bigr)\colon Q\to Q.
\end{equation}

This coalgebra is carried by the state set $\absval Q$; in \eqref{eqn.covector_triple} the $\xi_N$- and $\omega_M$-summands vanish, leaving only $dU|_q$, and the state update \eqref{eqn.state_update_gradient} reads
\[
q\longmapsto q-\sharpR^{U}_q(dU|_q)=q-\bigl(T_q(dU)\bigr)^{-1}(dU|_q),
\]
one step of Newton's method for finding a critical point of $U$. For example if $Q=\rr$ and $U(q)=(q-a)^2+b$, the Hessian is $2$, so $\sharpR^{U}_q=\frac12$, and the update rule takes $q\mapsto q-\frac12(2q-2a)=a$, which is the critical point.

For a non-quadratic example, take $Q=\rr$ and $U(x)=e^x-x$. Its Hessian $T_x(dU)=e^x$ is positive at every $x$, so the standing hypothesis holds; the gradient is $dU|_x=e^x-1$, and the sharp is the inverse Hessian $\sharpR^{U}_x=(e^x)^{-1}=e^{-x}$. The update rule then reads
\[
x\longmapsto x-\sharpR^{U}_x(dU|_x)=x-e^{-x}(e^x-1)=x-1+e^{-x}.
\]
Its unique fixed point is the minimum at $x=0$.

In general, the response structure $\sharpR^{U}_q=\bigl(T_q(dU)\bigr)^{-1}$ locally approximates $U$ by a quadratic, and the configuration integrator's descent step jumps to that quadratic's critical point. When $U$ is exactly quadratic, the Hessian $T_q(dU)$ is constant, so this quadratic model lands on the true critical point in one step. In dimension $>1$, this could be a maximum, minimum, or saddle. 

This is genuinely different from gradient descent: there the response structure $\sharpR=\etaLR\,\id$ is a fixed metric on the parameter space, chosen independently of the potential, whereas here $\sharpR^{U}_q=\bigl(T_q(dU)\bigr)^{-1}$ is derived from the potential itself. It is this dependence on the second derivative of $U$ that makes the method second-order, and that lets the same descent step find any critical point rather than only moving downhill toward minima.

\section{Gradient descent and backpropagation}\label{sec.dl_warmup}

By \emph{deep learning} we mean the training of an artificial neural network by gradient descent with backpropagation~\cite{rumelhart1986learning}. Applying \cref{prop.lens_inclusion} with $\cat C=\mfd$ and $\Fun T=\rr\times\blank$ gives a strong symmetric monoidal functor $\inc\colon\mfd\to\plmfd$, $M\mapsto\binom{1}{M}$. Whiskering with $\para\rvect{\blank}$ and composing with $\Phiconf$ yields a normal lax symmetric monoidal functor
\[\para{\rvect}{\mfd}\To{\para{\rvect}{\inc}}\sarr\To{\Phiconf}\pc,\]
whose composite we denote by
\begin{equation}\label{eqn.lrn}
\lrn\colon\para{\rvect}{\mfd}\too\pc.
\end{equation} 
Training a neural network is a case of our configuration dynamics construction, in the case that input manifolds and potentials are trivial: $\inpt M=1$ and $U=0$.

{\emergencystretch=2em
We now unfold this construction in coordinates, taking the source and target manifolds---the input and output layers of the neural network---to be $\rr^m$ and $\rr^n$.
\par}

\paragraph{A parameterized map as a smooth adaptive arrangement.}
Suppose we are given a smooth parameterized map
\begin{equation}\label{eqn.model_map}
F\colon Q\times\rr^m\to\rr^n,
\end{equation}
  thought of as a feedforward neural network with parameter space $(Q,\sharpR_Q):\rvect$---i.e., $Q$ comes equipped with a smooth map $\sharpR_Q\colon Q\to\vect(Q^*,Q)$ (\cref{def.rvect}), whose value $\sharpR_q\colon Q^*\to Q$ at $q\in Q$ will act as a (possibly position-dependent) learning rate. This $F$ is regarded by $\para{\rvect}{\inc}$ as a smooth adaptive arrangement
\begin{equation}\label{eqn.learning_lens}
f=\bigl(\rv Q,\ \outp f=F,\ \inpt f=\bang,\ U=0\bigr)\colon\binom{1}{\rr^m}\to\binom{1}{\rr^n},
\end{equation}
with output map $\outp f=F\colon Q\times\rr^m\to\rr^n$, trivial input map $\inpt f=\bang$, and zero potential.

\paragraph{The interfaces.}
On objects, $\lrn(\rr^m)$ is given by
\[
\lrn(\rr^m)=\Phiconf\binom{1}{\rr^m}
=\cotof{\rr^m}\otimes\ihom{\cotof{1},\yon}
\cong\cotof{\rr^m}=\sum_{x:\rr^m}\yon^{T^*_x\rr^m},
\]
because $\cotof{1}\cong\yon$ and $\ihom{\yon,\yon}\cong\yon$. Hence $\Phiconf(f)$ is a coalgebra of the form $Q\to [\cotof{\rr^m},\cotof{\rr^n}](Q)$.

\paragraph{The coalgebra.}
Because we are using the configuration integrator, the state set is $\absval{Q}$ by \eqref{eqn.state_space_conf}; an element is a vector of the ``weights and biases'' of the artificial neural network. We unpack the coalgebra step at fixed $q\in Q$, given an input position $x\in\rr^m$ from upstream and a covector $\xi\in T^*_{F(q,x)}\rr^n$ from downstream; where these come from during the training process is taken up in the loss construction below and \cref{ex.training_data}.

At state $q\in Q$ and input data $x\in\rr^m$, the readout point is
\begin{equation*}
\outp n\coloneqq\outp f(q,x)=F(q,x),
\end{equation*}
which is \eqref{eqn.outpn} in this special case.

Now we feed the external interface a direction $\xi\in T^*_{F(q,x)}\rr^n$. Since $\inpt f$ is the unique map to $1$ and $U=0$, the second and third summands in \eqref{eqn.covector_triple} vanish. Thus the only nonzero contribution is the cotangent pullback along $F$,
\begin{equation}\label{eqn.dl_pullback}
(\xi_Q,\xi_M)=(T_{(q,x)}F)^\top\xi\in Q^*\oplus T^*_x\rr^m,
\end{equation}
which breaks up into a parameter covector $\xi_Q$ and a covector $\xi_M$ that's returned to the $\binom{1}{\rr^m}$ interface. In terms of partial derivatives,
\[
\xi_Q=\bigl(T_q(F(\blank,x))\bigr)^\top\xi
\qqand
\xi_M=\bigl(T_x(F(q,\blank))\bigr)^\top\xi.
\]
By \eqref{eqn.state_update_gradient}, the parameter update is therefore
\begin{equation}\label{eqn.dl_gradient_update}
q\longmapsto q-\sharpR_q(\xi_Q).
\end{equation}
Altogether, suppressing the unique $1$-components, the coalgebra is
\[
\Phiconf(F)(q)(x)
=\Bigl(F(q,x),\;\xi\mapsto\bigl(\xi_M,\,q-\sharpR_q(\xi_Q)\bigr)\Bigr),
\]
where $(\xi_Q,\xi_M)$ is defined by \eqref{eqn.dl_pullback}.

\paragraph{Losses and backpropagation.}
Recall the network $f\colon\binom{1}{\rr^m}\to\binom{1}{\rr^n}$: responsive parameter $(Q,\sharpR_Q)\colon\rvect$ modeling  the weights, with $\sharpR_Q$
the (possibly position-dependent) learning rate,
forward map $\outp f=F\colon Q\times\rr^m\to\rr^n$, trivial input map, and zero potential (though see \cref{def.reg_potential}). With the potential zero, the learning signal comes from whatever the network is wired into. We supply it with a \emph{loss}: a stateless $2$-ary closed arrangement (\cref{sec.potentialized_lenses})
\begin{equation}\label{eqn.loss}
\fun{loss}\colon\binom{1}{\rr^n}\ltens\binom{1}{\Lambda}\to I,
\end{equation}
with trivial maps and potential $U\colon\rr^n\times\Lambda\to\rr$ comparing a prediction $y\in\rr^n$ against a label $\lambda\in\Lambda$ (for regression, $\Lambda=\rr^n$ and $U(y,\lambda)=\tfrac12\lVert y-\lambda\rVert^2$). Define $g\colon\binom{1}{\rr^m}\ltens\binom{1}{\Lambda}\to I$ to be the composite
\begin{equation}\label{eqn.dl_composite}
\binom{1}{\rr^m}\ltens\binom{1}{\Lambda}\To{f\,\ltens\,\binom{1}{\Lambda}}\binom{1}{\rr^n}\ltens\binom{1}{\Lambda}\To{\fun{loss}}I,
\end{equation}
a closed arrangement with potential $U(F(q,x),\lambda)$ and open inputs a datum $x\in\rr^m$ and its label $\lambda\in\Lambda$. Applying $\Phiconf$, functoriality splits the result along the composition,
\[
\Phiconf(g)=\Phiconf(\fun{loss})\circ\Phiconf\bigl(f\ltens\tbinom{1}{\Lambda}\bigr),
\]
whose second factor runs $\lrn(F)$ on the input while carrying the label through. The loss factor $\Phiconf(\fun{loss})$ supplies the output covector $\xi=dU(\blank,\lambda)|_{F(q,x)}$---the differential of $U(\blank,\lambda)\colon\rr^n\to\rr$ at the prediction---which $\lrn(F)$ pulls back as in \eqref{eqn.dl_pullback}. 
The parameter covector $\xi_Q$ drives the descent update \eqref{eqn.dl_gradient_update}. On a Euclidean parameter space (\cref{ex.euclidean_sharp}), with the constant sharp scaled by a \emph{learning rate} $\etaLR>0$,
\begin{equation}\label{eqn.learning_sharp}
\sharpR_q=\etaLR\sharpEuc{},
\end{equation}
this becomes ordinary gradient descent at rate $\etaLR$:
\begin{equation}\label{eqn.gradient_descent}
q\longmapsto q-\sharpR_q(\xi_Q)=q-\etaLR\,\bigl(T_qF(\blank,x)\bigr)^\top\xi.
\end{equation}
The returned $\xi_M$ is propagated backward to the previous interface, if any; for a composite network these repeated $\xi_M$'s are the usual backpropagated gradient signal. Backpropagation is thus the chain rule, packaged as the functoriality of $\Phiconf$.
See \cite{shapiro2022dynamic}.

\begin{example}[Training data as an environment]\label{ex.training_data}
The composite $g$ \eqref{eqn.dl_composite} leaves two interfaces open---the input $\rr^m$ and the label $\Lambda$---which we close with an \emph{environment}: a $\pc$-map
\[
e\colon\yon\to\Phiconf\bigl(\tbinom{1}{\rr^m}\ltens\tbinom{1}{\Lambda}\bigr)
\]
whose job is to present the training stream. At any state, it emits a datum $(x,\lambda)\in\rr^m\times\Lambda$---an input together with its label---and, after the backward pass, receives a backward covector on each leg, on which its next state may depend. The composite
\[
\yon\To{e}\Phiconf\bigl(\tbinom{1}{\rr^m}\ltens\tbinom{1}{\Lambda}\bigr)\To{\Phiconf(g)}\yon
\]
is a closed discrete dynamical system on $\absval{Q}\times Q_e$, with $Q_e$ the carrier of $e$: at each step the training environment $e$ emits $(x,\lambda)$, the network computes $y=F(q,x)$, the loss seeds the output covector $\xi=dU(\blank,\lambda)|_{y}\in T^*_y\rr^n$, which pulls back to the parameter gradient $\xi_Q$; the parameter descends by \eqref{eqn.dl_gradient_update}, and the two backward covectors return to $e$---the input-gradient $\xi_M$ and the label-gradient $\xi_\Lambda=\partial_\lambda U(y,\lambda)$---after which $e$ advances.

When $e$ ignores these covectors it is an externally specified supervised stream, i.e.\ a fixed dataset of pairs $(x,\lambda)$. When its transitions are based on them, the stream adapts to the model's backpropagated covectors. For instance, when $\Lambda=\rr^n$, for the squared-error loss $U(y,\lambda)=\tfrac12\lVert y-\lambda\rVert^2$, we have $\xi_\Lambda=\lambda-y$, so $e$---which knows $\lambda$---can read off the prediction $y$ and adapt to it.
\end{example}

\paragraph{Regularization.}
The architecture above \eqref{eqn.learning_lens} carried the zero potential, and the entire learning signal arrived from the downstream loss. But using a potential can be useful, it costs no new machinery---\eqref{eqn.covector_triple} already accounts for it---and it is precisely where regularization comes in.

\begin{definition}[Regularizing potential]\label{def.reg_potential}
A \emph{regularizing potential} on a map $f\colon\binom{1}{\rr^m}\to\binom{1}{\rr^n}$ in $\sarr$ is a choice of potential $U\colon Q\times\rr^m\to\rr$ in \eqref{eqn.para_potential_lens_maps}. It is \emph{weight-only} if $U(q,x)=R(q)$ is independent of the input $x$.
\end{definition}

The simplest weight-only potential recovers a standard ingredient of training. 

\begin{proposition}[Weight decay]\label{prop.weight_decay}
Switching the potential of the arrangement \eqref{eqn.learning_lens} from $0$ to the weight-only $R(q)\coloneqq\tfrac\gamma2\lVert q\rVert^2$, on a Euclidean parameter space with $\sharpR_q=\etaLR\,\sharpEuc{}$ (\cref{ex.euclidean_sharp}, \eqref{eqn.learning_sharp}), changes the gradient-descent update \eqref{eqn.gradient_descent} to
\[
q\longmapsto (1-\etaLR\gamma)\,q-\etaLR\,\bigl(T_qF(\blank,x)\bigr)^\top\xi, 
\]
i.e.\ it gives weight decay, multiplying the weights by a per-step factor of $1-\etaLR\gamma$.
\end{proposition}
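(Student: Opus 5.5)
The plan is to read the new update off the general configuration-dynamics formula \eqref{eqn.covector_triple} and \eqref{eqn.state_update_gradient}, exactly as was done for \eqref{eqn.gradient_descent}, with one additional summand now present. Consider the arrangement $f=(\rv Q,F,\bang,R)$, where $R(q,x)=\tfrac\gamma2\lVert q\rVert^2$. Here $\inpt N=1$ and $\inpt M=1$, so \eqref{eqn.covector_triple} has only two possibly nonzero summands. The first is the cotangent pullback $(T_{(q,x)}F)^\top\xi$ of the downstream covector, which is the same as in \eqref{eqn.dl_pullback}. The second is $dR|_{(q,x)}$. The middle summand, the pullback of $\omega_M$ along $\inpt f=\bang$, vanishes because $T^*1=0$.

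The next step is to compute $dR|_{(q,x)}\in Q^*\oplus T^*_x\rr^m$. Since $R$ does not depend on $x$, its $T^*_x\rr^m$-component is zero. Its $Q^*$-component is the covector $q'\mapsto\gamma\pairing{q,q'}$, that is, $\gamma\,\flat_\euc(q)$ in the notation of \cref{ex.euclidean_sharp}. This is the one point to check carefully: the derivative of $\tfrac12\lVert q\rVert^2$ in direction $q'$ is $\pairing{q,q'}$ by symmetry and bilinearity of the inner product. Hence
\[
\xi_Q=\bigl(T_qF(\blank,x)\bigr)^\top\xi+\gamma\,\flat_\euc(q).
\]
The returned covector $\xi_M$ is unchanged, because $R$ contributes nothing in the $x$-direction.

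Finally, apply \eqref{eqn.state_update_gradient} with $\sharpR_q=\etaLR\,\sharpEuc{}$. Because $\sharpEuc{}$ is linear and is the inverse of $\flat_\euc$, we get $\sharpR_q(\gamma\flat_\euc q)=\etaLR\gamma q$. The update is therefore
\[
q\mapsto q-\etaLR\gamma q-\etaLR\,\sharpEuc{}\bigl((T_qF(\blank,x))^\top\xi\bigr).
\]
This matches the claimed formula once we adopt the same Euclidean identification $Q^*\cong Q$ that is implicit in \eqref{eqn.gradient_descent}.

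The main obstacle is mostly bookkeeping. One must confirm that the $\potd$ convention of \eqref{eqn.d_potential} gives $+dR$, not $-dR$. One must also confirm that the descent sign comes only from the integrator \eqref{eqn.conf_integrator}, so that the potential's contribution enters $\xi_Q$ with the same sign as the loss gradient and is subtracted along with it. Given the footnoted derivation of \eqref{eqn.covector_triple}, both points are immediate.
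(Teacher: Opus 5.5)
Your proposal is correct and follows essentially the same route as the paper's proof: you note that the input summand of \eqref{eqn.covector_triple} vanishes, that the extra $dR$ term adds $\gamma q$ to $\xi_Q$ and leaves $\xi_M$ unchanged, and then you substitute into \eqref{eqn.state_update_gradient} with $\sharpR_q=\etaLR\sharpEuc{}$. The only difference is presentational: you write the contribution explicitly as $\gamma\,\flat_\euc(q)$, whereas the paper simply treats $\sharpEuc{}\colon Q^*\cong Q$ as the identity.
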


\begin{proof}
Take $U=R$. Since $\inpt f=\bang$, the input summand of \eqref{eqn.covector_triple} vanishes, and since $R$ is weight-only, $\partial_qR=\gamma q$ and $\partial_xR=0$. Hence the $Q^*$-component of \eqref{eqn.covector_triple} is the sum
\[
\xi_Q=(T_qF(\blank,x))^\top\xi+\gamma q,
\]
the parameter covector of \eqref{eqn.dl_pullback} plus $\gamma q$, and its $\outp M$-component $\xi_M$ is exactly that of \eqref{eqn.dl_pullback}, since $\partial_xR=0$. Substituting $\xi_Q$ into \eqref{eqn.state_update_gradient}, with $\sharpR_q=\etaLR\sharpEuc{}$ \eqref{eqn.learning_sharp} and $\sharpEuc{}\colon Q^*\cong Q$ the identity, we obtain the desired equation:
\[
q\mapsto q-\sharpR_q(\xi_Q)=q-\etaLR\bigl((T_qF(\blank,x))^\top\xi+\gamma q\bigr)=(1-\etaLR\gamma)\,q-\etaLR\,(T_qF(\blank,x))^\top\xi.\qedhere
\]
\end{proof}

\section{The wave equation}\label{sec.wave_equation}

We first illustrate $\Phiphase\colon\sarr\to\pc$ by deriving the wave equation. We describe a single particle (with harmonic potential) as a $0$-ary smooth adaptive arrangement $\Part\colon\lensunit\to\boxob$, and we arrange such particles into a chain via a $K$-ary smooth (static) arrangement $\fun{wire}_K\colon\boxob\ltpow{K}\to\boxob$. In \cref{sec.spring_first_pass} we compose the particles and the wiring in $\sarr$, apply $\Phiphase$, and recover the discrete wave equation. In \cref{sec.spring_second_pass} we apply $\Phiphase$ to the particle and wiring separately, compose the resulting coalgebras in $\pc$, and verify term-by-term agreement as a check on signs, indices, and routing.

We begin with a single particle whose state is a transverse displacement and momentum, which we put into a ``box'' that delivers the displacement to its neighbors and updates its state based on some fixed parameters $\kappa,m>0$ and the displacements of its neighboring particles. We wire these boxes together in series
\begin{equation}\label{eqn.wave_wd}
\begin{tikzpicture}[oriented WD, bb port length=4pt, bb port sep=1.5, bb min width=.35cm, bbx=.5cm, bby=.8ex, baseline=(box1), particle/.style={path picture={\fill (path picture bounding box.center) circle (2.4pt);}}]
  \def\N{13}
  \node[bb={1}{1}, particle] (box1) {};
  \foreach \i [evaluate=\i as \prev using int(\i-1)] in {2,...,\N} {
    \node[bb={1}{1}, particle, right=.7 of box\prev] (box\i) {};
    \draw (box\prev_out1) -- (box\i_in1);
  }
  \node[bb={0}{0}, fit={(box1) (box\N)}] (Y) {};
  \node[coordinate] at (Y.west|-box1_in1) (Y_in1) {};
  \node[coordinate] at (Y.east|-box\N_out1) (Y_out1) {};
  \draw (Y_in1) -- (box1_in1);
  \draw (box\N_out1) -- (Y_out1);
  \begin{scope}[on background layer]
    \def\P{1.5}
    \xdef\wave{}
    \foreach \i [evaluate=\i as \ang using (\i-1)/(\N-1)*\P*360] in {1,...,\N} {
      \draw[gray!50, thick, densely dotted] ([yshift=22pt]box\i.center) -- ([yshift=-22pt]box\i.center);
      \coordinate (p\i) at ([yshift={15pt*sin(\ang)}]box\i.center);
      \xdef\wave{\wave (p\i)}
    }
    \draw[gray!55] plot[smooth] coordinates {\wave};
    \foreach \i in {1,...,\N} {\fill[gray!50] (p\i) circle (2.4pt);}
  \end{scope}
\end{tikzpicture}
\end{equation}
and aim to show that the resulting state dynamics evolves according to the discrete wave equation:%
\footnote{Taking a continuum limit with the inter-particle spacing and time step going to zero---with $m$ and $\kappa$ scaled appropriately---should recover the usual wave equation. See \cite[App.~B]{demanet2015waves}.}
\begin{equation}\label{eqn.discrete_wave}
m\ddot q_i=\kappa(q_{i-1}-2q_i+q_{i+1}),
\end{equation}
where for $t\geq1$ we use $\ddot q_i(t)$ to denote
\begin{equation}\label{eqn.doubledot}
\ddot q_i(t)\coloneqq q_i(t+1)-2q_i(t)+q_i(t-1).
\end{equation}

We will work in the full suboperad of $\sarr$ spanned by a single object $\boxob\coloneqq\binom{\rr}{\rr}$, corresponding to a box whose input port and output port are both $\rr$. That is, we'll consider arrangements of the form
\[
f\in\sarr(\boxob\ltpow{K},\boxob)
\]
for varying $K$. Note that $\boxob\ltpow{0}=\lensunit$ is the monoidal unit.

We derive the wave equation, then audit the derivation via functoriality. First, we describe the particle and the wiring in $\sarr$, compose them there, and apply $\Phiphase$ to the resulting $0$-ary smooth adaptive arrangement $\boxob\ltpow{0}\to\boxob$ to read off the dynamics. Then, as a bookkeeping check, we apply $\Phiphase$ to the particle and to the wiring separately, compose in $\pc$, and verify that the same coalgebra is obtained.

\subsection{First pass: composing in \texorpdfstring{$\termraw{srwd}$}{SRWD}}\label{sec.spring_first_pass}

First we give a $0$-ary smooth adaptive arrangement $\Part\colon\lensunit\to\binom{\rr}{\rr}$, for the dynamical system that goes in each internal box, representing the particle of mass $m>0$. Then we give a $K$-ary smooth static arrangement for each $K>0$ that describes the (static) arrangement shown in \eqref{eqn.wave_wd}. Then we calculate what happens when we compose.

\paragraph{The particle.}

To define a particle $\Part\colon\lensunit\to\binom{\rr}{\rr}$
\[
\begin{tikzpicture}[oriented WD, 
	bb port length=4pt, bb min width=.35cm, bbx=.4cm, bby=.8ex, 
	particle/.style=
		{path picture={\fill (path picture bounding box.center) circle (2.4pt);}}]
	\node[bb={1}{1}, particle] {};
\end{tikzpicture}
\]
as a $0$-ary smooth adaptive arrangement in $\sarr$ \eqref{eqn.srw_morphism}, we need a responsive vector space $(Q,\sharpR_Q)$, a smooth output map $\outp f\colon Q\to\rr$, and a smooth potential $U\colon Q\times\rr\to\rr$; the input map is trivial. We take the parameter space to be $\rr\in\rvect$ with constant sharp $\sharpR_q(\xi)\coloneqq\xi/m$ (independent of $q\in\rr$); we read the value $q\in\rr$ as the particle's transverse displacement from equilibrium. We take the output map to be the identity, $\outp f(q)\coloneqq q$, and the potential to be
\[U(q,q')\coloneqq\tfrac{\kappa}{2}(q-q')^2,\]
the \emph{harmonic} potential, assigning energy to the relative displacement of the particle and its neighbor.

\begin{remark}[Other particles]\label{rmk.other_particles}
In $\sarr$-maps of the form $\lensunit\to\binom{\rr}{\rr}$ with parameter $\rr$, the potential alone selects the kind of particle.
\begin{enumerate}
	\item $U(q,q')\coloneqq\tfrac\kappa2(q-q'-\ell)^2$ models a bond whose preferred displacement difference is $\ell$.
	\item $U(q,q')\coloneqq\tfrac\kappa2(q-q')^2+\tfrac\mu2 q^2$ models a particle also pinned toward $q=0$ by an on-site spring.
\end{enumerate}
We will continue to work with the particle above.
\end{remark}

\begin{remark}\label{rmk.symmetric_bonds}
In the above representation, the particle's position is being sent out the output port $\outp f=\id_\rr$, whereas it seems that nothing is being communicated through the particle's input port $\rr$, which appears problematic for the symmetry of heat or wave propagation. In fact, we will see that the potential's differential is communicated through the input port, so in the end everything will symmetrize. However, we could achieve that earlier by refactoring the particle.

Indeed, the map $\Part\colon\lensunit\to\binom{\rr}{\rr}$ is actually the composite of two systems---a pure mass with no potential and a stateless bond---nested inside a static wiring diagram $g\colon\binom{1}{\rr}\ltens\binom{\rr^2}{1}\to\binom{\rr}{\rr}$, given by the following wiring diagram (see \cref{sec.plenspot_ops})
\[
g\coloneqq\begin{tikzpicture}[oriented WD,
	bb port length=4pt, bb min width=.35cm, bbx=.4cm, bby=.8ex]
	\node[bb port sep=1, bb={2}{0}] (bond) {$\Bond$};
	\node[dot, above left=1.5 and 1 of bond_in1] (dot) {};
	\node[bb={0}{1}, bb port sep=1, left=1.25 of dot] (realpart) {$\RealPart$};
	\node[bb={0}{0}, fit=(dot) (bond) (realpart)] (outer) {};
	\coordinate (outer_in1) at (outer.west|-bond_in2);
	\coordinate (outer_out1) at (outer.east|-dot);
	\draw (outer_in1) -- (bond_in2);
	\draw (dot) -- (outer_out1);
	\draw (realpart_out1) -- (dot);
	\draw (dot) to (bond_in1);
\end{tikzpicture}
\]
So $\Part=g(\RealPart,\Bond)$ where $\RealPart\colon\lensunit\to\binom{1}{\rr}$ is potential-free with state $Q=\rr$ and $\outp\RealPart=\id$, and $\Bond\colon\lensunit\to\binom{\rr^2}{1}$ is stateless with potential $U(q_1,q_2)=\frac\kappa2(q_1-q_2)^2$. We will not use these pictures $\RealPart$, $\Bond$ again.

So what we have called the particle $\Part$ already carries its own bond: it is a pure mass (vertex) with a stateless bond attached on one side. If one wishes, the asymmetry in the presentation to follow can be avoided by working this way: replace edges with stateless bonds and vertices with potential-free (pure) particles.%
\footnote{I thank Ryan Tavenner for this observation.}

A stateless bond is, formally, the same construct as the training loss \eqref{eqn.loss}: a stateless $2$-ary closed arrangement whose only content is a potential $U$ of its two arguments. Indeed the squared-error loss $\tfrac12\lVert y-\lambda\rVert^2$ is exactly a harmonic bond that ties the prediction $y$ to its label $\lambda$. So each bond is like a ``training loss'' provided to each mass by its neighbor.
\end{remark}

\paragraph{The wiring diagram.}
For the wiring diagram, we start in $\Lens{\finset\op}$; see \cref{ex.lens_finsetop}. The $K$-ary series composition shown in \eqref{eqn.wave_wd} corresponds to a lens $\varphi_K\colon\binom{K}{K}=\binom{1}{1}\ltens\cdots\ltens\binom{1}{1}\to\binom{1}{1}$. Writing $\{1,\ldots,K\}$ for the inner finite set $K$, and the outer box $\binom{1}{1}$ as $\binom{\{K\}}{\{0\}}$---its output port $\{K\}$ sitting to the right of the inner boxes and its input port $\{0\}$ to the left---the lens is a pair of functions of type
\[
\{1,\ldots,K\}\From{\outp f}\{K\}
\qqand
\{1,\ldots,K\}+\{0\}\From{\inpt f} \{1,\ldots,K\},
\]
where the codomain of $\inpt f$ is the disjoint union of the internal-box outputs $\{1,\ldots,K\}$ and the external-box input $\{0\}$. Here $\outp f$ is the inclusion $\{K\}\hookrightarrow\{1,\ldots,K\}$, reading the external output off the last internal output, and $\inpt f(n)\coloneqq n-1$ feeds each internal input from the preceding wire: the first internal input from the external input $0$, and input $n>1$ from output $n-1$.

Applying $\rr^\blank$ from \cref{lem.lens_pow}, we obtain the lens $\binom{\rr^K}{\rr^K}\to\binom{\rr}{\rr}$ given on coordinates by the functions
\[
\outp f(q_1,\ldots,q_K)\coloneqq q_K
\qqand
\inpt f\bigl((q_1,\ldots,q_K),\,q_0\bigr)\coloneqq(q_0,\,q_1,\,\ldots,\,q_{K-1}).
\]
{\emergencystretch=2em
From this lens we obtain a static arrangement (taking $Q'\coloneqq\rr^0$ and the $0$-potential $U'\coloneqq0\colon\rr^K\times\rr\to\rr$): the wiring
\par}
\[
\fun{wire}_K\colon\boxob\ltpow{K}\to\boxob,\qquad (\outp f,\inpt f,0)
\]

Composing, we obtain a $0$-ary smooth adaptive arrangement, $\fun{wire}_K(\Part,\ldots,\Part)\colon\boxob\ltpow{0}\to\boxob$, with parameter $Q\coloneqq Q'\oplus\rr^{\oplus K}=\rr^K$ (one $\rr$ per particle) and constant sharp
\begin{equation}\label{eqn.sharp_chain}
\sharpR_{\vec q}\colon Q^*\to Q,\qquad \xi\mapsto\xi/m,
\end{equation}
acting per-particle on each $\rr$-summand (the inverse-mass pairing), independent of $\vec q\in Q$. The defining maps are $\inpt f\coloneqq\bang$, vacuous, and 
\[
\outp f(q_1,\ldots,q_K)\coloneqq q_K
\qqand
U\bigl((q_1,\ldots,q_K),\,q_0\bigr)\coloneqq\sum_{i=1}^{K}\tfrac{\kappa}{2}(q_i-q_{i-1})^2.
\]
Here, the sum in the above formula for $U$ comes from the productor \eqref{eqn.monoid_productor} of the monoidal monad $\rr\times\blank$ on $\mfd$. Composition with $\fun{wire}_K$ then substitutes $q'_i\mapsto q_{i-1}$ via $\inpt f$ (with $q_0$ the external input).

\paragraph{The discrete dynamics.}

The discrete dynamics appear when we apply the functor $\Phiphase\colon\sarr\to\pc$. The object $\boxob$ is sent to
\[
\Phiphase(\boxob)=\sum_{(q,\omega):\rr\times\Omega(\rr)}\yon^{T^*_q\rr\times \rr}.
\]
The map $\fun{wire}_K(\Part,\ldots,\Part)$ is sent to a coalgebra
\[
\Phiphase(\fun{wire}_K(\Part,\ldots,\Part))\colon S\To{}\Phiphase(\boxob)(S)
\]
with state set $S\coloneqq T^*\rr^K$, with elements written $s=((q_1,\ldots,q_K),(\xi_1,\ldots,\xi_K))$ for positions and momenta, and with the function given by
\begin{equation*}
(\vec q,\vec\xi)\mapsto\Bigl(\bigl(\tilde q_K,\,\omega_1\bigr),\;(\xi_{\fun{ext}},q_0)\mapsto\bigl(\vec{\tilde q},\,\vec\xi-\xi_Q\bigr)\Bigr),
\end{equation*}
where $\vec{\tilde q}\coloneqq\vec q+\tfrac{1}{m}\vec\xi$ is the presented position \eqref{eqn.presented_position}, computed with the sharp \eqref{eqn.sharp_chain}, and each ingredient is read off from the unpacking in \cref{sec.phase_dynamics}:
\begin{itemize}
\item The output position $\tilde q_K$ comes from $\outp f$ via \eqref{eqn.outpn}, evaluated at $\vec{\tilde q}$.
\item The \emph{harmonic spring} covector field on the left-hand side,
\[\omega_1(q_0)=d\bigl[\tfrac{\kappa}{2}(\tilde q_1-q_0)^2\bigr]\big|_{q_0}=-\kappa(\tilde q_1-q_0),\]
comes from \eqref{eqn.omega_n} (with $\inpt f$ and the incoming covector field vacuous, and $dq_0=+1$ via \eqref{eqn.d_potential}).
\item The pullback covector $\xi_Q\in T^*_{\vec{\tilde q}}Q$ comes from \eqref{eqn.covector_triple}, evaluated at $\vec{\tilde q}$. It is given component-wise (via \eqref{eqn.tangent_vec} identifying $T^*_{\vec{\tilde q}}Q\cong Q^*=(\rr^K)^*$) by
\begin{align*}
(\xi_Q)_i&=
\bigl((T_{\vec{\tilde q}}\outp f)^\top\xi_{\fun{ext}}\bigr)_i+\partial_{q_i}U(\vec{\tilde q},q_0)\\&=
\begin{cases}\partial_{q_i}U(\vec{\tilde q},q_0)+0&1\leq i<K\\ \partial_{q_K}U(\vec{\tilde q},q_0)+\xi_{\fun{ext}}&i=K\end{cases}\\&=
\begin{cases}-\kappa(\tilde q_{i-1}-2\tilde q_i+\tilde q_{i+1})&1\leq i<K\\ \kappa(\tilde q_K-\tilde q_{K-1})+\xi_{\fun{ext}}&i=K\end{cases}
\end{align*}
(for $i=1$, read $\tilde q_0\coloneqq q_0$, the external input).
\item The state update applied to $s=(\vec q,\vec\xi)$ comes from \eqref{eqn.state_update}: using our (constant) sharp $\sharpR_{\vec q}(\vec\xi)=\vec\xi/m$ from \eqref{eqn.sharp_chain} and the symplectic sharp $\sharpS_S\colon(\xi,q)\mapsto(q,-\xi)$ from \eqref{eqn.sharp_S}, we get
\[
s'\coloneqq s+\sharpS_S(\xi_Q,\sharpR_{\vec q}(\vec\xi))=(\vec q+\vec\xi/m,\,\vec\xi-\xi_Q)=(\vec{\tilde q},\,\vec\xi-\xi_Q).
\]
In other words, writing $s'\coloneqq(q_1',\ldots,q_K',\xi_1',\ldots,\xi_K')$, we have
\begin{equation}\label{eqn.potlens_composite_final}
q_i'\coloneqq q_i+\xi_i/m=\tilde q_i
\qqand
\xi_i'\coloneqq\begin{cases}\xi_i+\kappa(\tilde q_{i-1}-2\tilde q_i+\tilde q_{i+1})&1\leq i<K\\ \xi_i+\kappa(\tilde q_{K-1}-\tilde q_K)-\xi_{\fun{ext}}&i=K.\end{cases}
\end{equation}
\end{itemize}

\paragraph{Recovering the wave equation.}
To recover the discrete wave equation everywhere, we pin both ends of the chain by fixing the phantom neighbor displacements outside the chain to zero:
\begin{equation}\label{eqn.wave_pin}
  \tilde q_0\coloneqq 0
  \qqand
  \tilde q_{K+1}\coloneqq 0.
\end{equation}
Supplying these via the external input $(\xi_{\fun{ext}},q_0)$ means choosing the following data:
\begin{equation}\label{eqn.wave_bc}
q_0\coloneqq 0\qqand\xi_{\fun{ext}}\coloneqq \kappa\,\tilde q_K,
\end{equation}
where $\xi_{\fun{ext}}$ is the environment's response to the output $\tilde q_K$ it receives.%
\footnote{Equivalently, we can present the data of a pinned chain as a closed system  by composing with a map $\boxob\to I$ whose lens is $\bang$ forward and $q\mapsto 0$ backward, and whose potential is $U'(q_K)\coloneqq\tfrac\kappa2 q_K^2$.
}
With these choices, the boundary case at $i=K$ becomes $\xi_K'=\xi_K+\kappa(\tilde q_{K-1}+\tilde q_{K+1}-2\tilde q_K)$, same as the others, so the above state update formula reduces to the following for all $1\leq i\leq K$:
\begin{equation}\label{eqn.state_update_explicit}
q_i'\coloneqq q_i+\xi_i/m
\qqand
\xi_i'\coloneqq\xi_i+\kappa(\tilde q_{i-1}-2\tilde q_i+\tilde q_{i+1}).
\end{equation}

Following \cref{ex.y_coalgebra_trajectory}, fix any initial $s(0)\coloneqq(\vec{q}(0),\vec{\xi}(0))$ and write $s(t)$ for the resulting trajectory under \eqref{eqn.state_update_explicit}; in the notation of \eqref{eqn.state_update_explicit}, $s(t+1)=s(t)'$. The first equation says $q_i(t+1)=q_i(t)+\xi_i(t)/m$, whose right-hand side is the presented position $\tilde q_i(t)$ \eqref{eqn.presented_position}; so $\tilde q_i(t)=q_i(t+1)$, and solving for the momentum,
\[\xi_i(t)=m\bigl(q_i(t+1)-q_i(t)\bigr).\]
For $t\geq 1$, we can replace $t$ by $t-1$ in the second equation of \eqref{eqn.state_update_explicit} and it reads $\xi_i(t)-\xi_i(t-1)=\kappa\bigl(q_{i-1}(t)-2q_i(t)+q_{i+1}(t)\bigr)$, using $\tilde q_j(t-1)=q_j(t)$. Substituting the momentum gives:
\begin{equation}\label{eqn.recurrence}
m\bigl(q_i(t+1)-q_i(t)\bigr)-m\bigl(q_i(t)-q_i(t-1)\bigr)=\kappa\bigl(q_{i-1}(t)-2q_i(t)+q_{i+1}(t)\bigr)
\end{equation}
Collecting the position terms on the left into the centered second difference \eqref{eqn.doubledot} yields the discrete wave equation \eqref{eqn.discrete_wave}, $m\,\ddot q_i(t)=\kappa\bigl(q_{i-1}(t)-2q_i(t)+q_{i+1}(t)\bigr)$, at all $t\geq1$.

\begin{proposition}\label{prop.wave_equation}
\leavevmode\par\nobreak
With $\Part$, $\fun{wire}_K$, and the boundary choices \eqref{eqn.wave_pin} and \eqref{eqn.wave_bc} as above, the $\Phiphase(\boxob)$-coalgebra
\[
\Phiphase\bigl(\fun{wire}_K(\Part,\ldots,\Part)\bigr)
\]
recovers the discrete wave equation \eqref{eqn.discrete_wave}.
\end{proposition}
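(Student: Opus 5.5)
The plan is to read the coalgebra off the phase unpacking of \cref{sec.phase_coalgebra} and then eliminate momenta, essentially formalizing the derivation just given. The first step computes the composite arrangement $\fun{wire}_K(\Part,\ldots,\Part)$ in $\sarr$. By \cref{lem.para_monoidal} and the coKleisli composition of \cref{prop.backward_comonad}, the parameter is $\rr^0\oplus\rr^{\oplus K}=\rr^K$ with direct-sum sharp $\vec\xi\mapsto\vec\xi/m$ (\cref{prop.rvect_monoidal}). Its output map is $\vec q\mapsto q_K$, and its potential is $\sum_i\tfrac\kappa2(q_i-q_{i-1})^2$. This potential arises as the sum, via the productor \eqref{eqn.monoid_productor}, of the particle potentials precomposed with $\inpt{(\fun{wire}_K)}$, which substitutes $q_{i-1}$ into the $i$th particle's input port, with $q_0$ the external input.

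The second step applies the closed-form description of $\Phiphase$ on morphisms. This requires no further functoriality argument, since \cref{cor.functor} together with the unpacking \eqref{eqn.outpn}, \eqref{eqn.omega_n}, \eqref{eqn.covector_triple} and \eqref{eqn.state_update} gives the coalgebra on the state set $T^*\rr^K$ directly. At the presented position $\vec{\tilde q}=\vec q+\vec\xi/m$, we compute $\xi_Q$ as the gradient $\partial_{\vec q}U(\vec{\tilde q},q_0)$ plus $(T\outp f)^\top\xi_{\fun{ext}}$. The latter term contributes only to coordinate $K$. The result is the piecewise formula \eqref{eqn.potlens_composite_final}.

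The third step imposes the boundary data \eqref{eqn.wave_bc}: $q_0=0$ and $\xi_{\fun{ext}}=\kappa\tilde q_K$. With these choices the $i=K$ case matches the interior formula once we set $\tilde q_{K+1}=0$, which yields the uniform update \eqref{eqn.state_update_explicit}. Making this substitution precise is the one place requiring care. The external direction is part of the data fed to the $\Phiphase(\boxob)$-coalgebra, so "recovers" should be read as saying that the trajectory obtained by always supplying this input satisfies the equation. Alternatively, one can close the chain as in the footnote, by composing with the boundary arrangement $\boxob\to I$ with potential $\tfrac\kappa2 q_K^2$, and invoke \cref{ex.y_coalgebra_trajectory} to obtain a genuine trajectory. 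I would state whichever of these readings the authors intend and check that the two agree. The agreement should follow because the differential of $\tfrac\kappa2 q_K^2$ at $\tilde q_K$ is exactly $\kappa\tilde q_K$.

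The final step is the momentum elimination, which I expect to be the main obstacle. It is not difficult, but the time indices and the presented-versus-stored position are easy to confuse. From $q_i(t+1)=q_i(t)+\xi_i(t)/m=\tilde q_i(t)$ we get $\xi_i(t)=m(q_i(t+1)-q_i(t))$ and $\tilde q_j(t-1)=q_j(t)$. Substituting both into the momentum update at time $t-1$, for $t\ge1$, gives \eqref{eqn.recurrence}, which rearranges via \eqref{eqn.doubledot} to \eqref{eqn.discrete_wave} for every $1\le i\le K$ with pinned ends $q_0=q_{K+1}=0$.
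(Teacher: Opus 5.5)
Your proposal is correct and follows the paper's own derivation step for step. You compose in the syntax operad, read the phase coalgebra off at the presented position to get the piecewise update, impose $q_0=0$ and $\xi_{\fun{ext}}=\kappa\tilde q_K$ so that the $i=K$ case matches the interior, and eliminate the momenta via $\tilde q_j(t-1)=q_j(t)$.

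Your extra check that closing the chain with the footnote's boundary arrangement (potential $\tfrac\kappa2 q_K^2$) gives the same trajectory is also correct. Its static image under the phase functor returns exactly the direction $(\kappa\tilde q_K,0)$, so by functoriality the two readings agree.
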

\begin{proof}
This is the calculation of \eqref{eqn.state_update_explicit}: writing the state as a function of time and eliminating the momenta gives \eqref{eqn.discrete_wave}.
\end{proof}

\begin{remark}[Adaptive spring material]\label{rmk.adaptive_spring}
The above chain \eqref{eqn.wave_wd} uses a single constant coupling $\kappa$, but the $\sarr$ formalism supports adaptive ones, as mentioned in \cref{sec.intro_smooth_rewiring}. One could give each bond its own spring constant $\kappa_j$ and treat these constants as responsive parameters in their own right, evolving alongside the displacements. How bonds change according to the state---softening under strain, say---is a modeling choice, governed by whatever potential one picks to encode the intended material behavior; the formalism turns that choice into coupled dynamics.\qedhere
\end{remark}

\subsection{Functoriality audit: composing in \texorpdfstring{$\pc$}{Coalg*Poly}}\label{sec.spring_second_pass}

The wave equation was derived in \cref{sec.spring_first_pass}; operad-functoriality of $\Phiphase$ forces equality of the two sides below:
\[
\Phiphase\bigl(\fun{wire}_K(\Part,\ldots,\Part)\bigr)=\Phiphase(\fun{wire}_K)\bigl(\Phiphase(\Part),\ldots,\Phiphase(\Part)\bigr),
\]
We now work out the right-hand side term by term---applying $\Phiphase$ to the particle and to the wiring separately, then composing in $\pc$---as an explicit check on signs, indices, and the routing of inputs through the formulas. 

\paragraph{The particle dynamics.}

We first apply $\Phiphase$ to $\Part\colon\lensunit\to\boxob$ to obtain a coalgebra on
\[
  \left[\Phiphase\lensunit,\Phiphase(\boxob)\right]=
  \left[\yon,\Phiphase(\boxob)\right]\cong\Phiphase(\boxob)=
  \sum_{(q,\omega):\rr\times\Omega(\rr)}\yon^{T^*_q\rr\times\rr}
\]
We took the parameter space to be $\rr$, so by \eqref{eqn.state_space} the state set of $\Phiphase(\Part)$ is $S\coloneqq T^*\rr$, with elements written $s=(q,\xi)$ for position and momentum. The coalgebra function
\[
S\To{\Phiphase(\Part)}
\sum_{(q,\omega):\rr\times\Omega(\rr)}S^{T^*_q\rr\times\rr}
\]
is given by
\[
(q,\xi)\mapsto\Bigl(\bigl(\tilde q,\,q_{\fun{prev}}\mapsto\kappa(q_{\fun{prev}}-\tilde q)\bigr),\;(\xi_{\fun{next}},q_{\fun{prev}})\mapsto\bigl(\tilde q,\,\xi-\xi_{\fun{next}}+\kappa(q_{\fun{prev}}-\tilde q)\bigr)\Bigr),
\]
where $\tilde q\coloneqq q+\xi/m$ is the presented position \eqref{eqn.presented_position}. In other words, starting at position $q$ and momentum $\xi$, it outputs $\tilde q$---the position it is moving to---to the right, along with the covector field $q_{\fun{prev}}\mapsto\kappa(q_{\fun{prev}}-\tilde q)$, which ``pulls'' $q_{\fun{prev}}$ toward $\tilde q$. Then, receiving a covector $\xi_{\fun{next}}$ from the right and a point $q_{\fun{prev}}$ from the left, it moves to $\tilde q$ and updates its momentum by adding the spring force $\kappa(q_{\fun{prev}}-\tilde q)$, which ``pulls'' $\tilde q$ toward $q_{\fun{prev}}$, and subtracting the received covector $\xi_{\fun{next}}$.
 
\paragraph{The static wiring.}

Apply $\Phiphase$ to $\fun{wire}_K\colon\boxob\ltpow{K}\to\boxob$. With trivial parameter $\rr^0$, the state space $T^*\rr^0$ is a single point, so $\Phiphase(\fun{wire}_K)$ is \emph{static}: a function $1\to[\Phiphase(\boxob)^{\otimes K},\allowbreak\Phiphase(\boxob)](1)$ is equivalently (see \eqref{eqn.dirichlet_hom}) a polynomial map $\Phiphase(\boxob)^{\otimes K}\to\allowbreak\Phiphase(\boxob)$, i.e.\
\[
\sum_{(\vec q,\vec\omega):\rr^K\times\Omega(\rr)^K}\yon^{T^*_{\vec q}\rr^K\times\rr^K}
\To{\Phiphase(\fun{wire}_K)}
\sum_{(q,\omega):\rr\times\Omega(\rr)}\yon^{T^*_q\rr\times\rr}.
\]
To each $(\vec q,\vec \omega)$, the vector of outputs of the internal boxes, we assign the external output $(q_K,\omega_1)$ (the rightmost particle's position with the leftmost box's covector field); and to each external input $(\xi_{\fun{ext}},q_0)$ at the external box, we assign the internal inputs $(\omega_2(q_1),q_0)$, $(\omega_3(q_2),q_1)$, $\ldots$, $(\omega_K(q_{K-1}),q_{K-2})$, $(\xi_{\fun{ext}},q_{K-1})$---each box $i<K$ receives its right neighbor's covector field evaluated at $q_i$ in its $\xi$-component, box $K$ gets the external $\xi_{\fun{ext}}$, and each box receives its left neighbor's position. In our concise format,
\begin{equation}\label{eqn.pc_wiring_concise}
(\vec q,\vec \omega)\mapsto\Big((q_K,\omega_1),\;(\xi_{\fun{ext}},q_0)\mapsto\bigl((\omega_2(q_1),\ldots,\omega_K(q_{K-1}),\xi_{\fun{ext}}),\,(q_0,q_1,\ldots,q_{K-1})\bigr)\Big).
\end{equation}
The routing $\omega_{i+1}(q_i)$ into box $i$'s $\xi$-component is how the wiring transmits the spring force from box $i+1$ to box $i$; the only external forcing is via $q_0$ (the leftmost neighbor position, fed to box $1$) and $\xi_{\fun{ext}}$ (the covector at $q_K$, fed to box $K$) in \eqref{eqn.wave_wd}.

\paragraph{Composition.}

The $\pc$-composite $\Phiphase(\fun{wire}_K)\bigl(\Phiphase(\Part),\ldots,\Phiphase(\Part)\bigr)$ is a $0$-ary morphism $\yon\to\Phiphase(\boxob)$ in $\pc$, i.e.\ a $\Phiphase(\boxob)$-coalgebra. By \cref{prop.pc_as_para}, composition in $\pc$ multiplies state spaces and composes the underlying polynomial maps; here $\Phiphase(\fun{wire}_K)$ has trivial state space and each $\Phiphase(\Part)$ has state space $T^*\rr$, so the composite has state space $1\times(T^*\rr)^K=T^*\rr^K$. Concretely, the $K$-many single-particle coalgebra maps $\Phiphase(\Part)\colon T^*\rr\to\Phiphase(\boxob)(T^*\rr)$ combine via the lax monoidal structure of $\blank\coalg$ (\cref{prop.pc}) into a single $\Phiphase(\boxob)^{\otimes K}$-coalgebra with states $T^*\rr^K$, and the static polynomial map $\Phiphase(\fun{wire}_K)\colon\Phiphase(\boxob)^{\otimes K}\to\Phiphase(\boxob)$ then post-composes:
\[
\Store(T^*\rr^K)\To{\Phiphase(\Part)^{\otimes K}}\Phiphase(\boxob)^{\otimes K}\To{\Phiphase(\fun{wire}_K)}\Phiphase(\boxob).
\]
Read back as a coalgebra $T^*\rr^K\to\Phiphase(\boxob)(T^*\rr^K)$, the composite acts on a state $(\vec q,\vec\xi)$ as follows. The parallel particles $\Phiphase(\Part)^{\otimes K}$ output
\begin{equation*}
\Bigl(\bigl(\tilde q_i,\,q_i'\mapsto\kappa(q_i'-\tilde q_i)\bigr)_{i=1}^K,\;(\vec\xi_{\fun{next}},\vec q_{\fun{prev}})\mapsto\bigl(\vec{\tilde q},\,\vec\xi-\vec\xi_{\fun{next}}+\kappa(\vec q_{\fun{prev}}-\vec{\tilde q})\bigr)\Bigr).
\end{equation*}
and the wiring \eqref{eqn.pc_wiring_concise} then threads them as follows.

Beginning with $(\vec q,\vec\xi):T^*\rr^K$, the coalgebras output $(\vec{\tilde q},\vec\omega)\coloneqq\bigl(\tilde q_i,\,q_i'\mapsto\kappa(q_i'-\tilde q_i)\bigr)_{i=1}^K$. The wiring takes that, outputs the last point and the first covector field, $(\tilde q_K,\omega_1)$, and asks for $(\xi_{\fun{ext}},q_0)$. Upon receiving it, it sends $\bigl((\omega_2(\tilde q_1),\ldots,\omega_K(\tilde q_{K-1}),\xi_{\fun{ext}}),\,(q_0,\tilde q_1,\ldots,\tilde q_{K-1})\bigr)$ back to the coalgebras. In other words, it sends $(\vec\xi_{\fun{next}},\vec q_{\fun{prev}})$ where $(\xi_{\fun{next}})_i\coloneqq\omega_{i+1}(\tilde q_i)=\kappa(\tilde q_i-\tilde q_{i+1})$ for $1\leq i<K$ and $(\xi_{\fun{next}})_K\coloneqq\xi_{\fun{ext}}$, and $(q_{\fun{prev}})_i=\tilde q_{i-1}$ (with $\tilde q_0\coloneqq q_0$). This in turn is sent to $(\vec{\tilde q},\,\vec\xi-\vec\xi_{\fun{next}}+\kappa(\vec q_{\fun{prev}}-\vec{\tilde q}))$. This is exactly \eqref{eqn.potlens_composite_final}: the two routes through the functoriality square agree term by term, confirming the signs, indices, and covector-field evaluations in the formulas above.

\section{Graph Laplacian on a directed graph}\label{sec.graph_laplacian}

The chain of \cref{sec.spring_first_pass} is one instance of a more general construction: again using the harmonic particle, any finite directed graph $G=(E\tto V)$ yields a closed system $\lensunit\to\lensunit$ whose image under $\Phiphase$ is the discrete dynamics of the \emph{graph Laplacian} of $G$. The orientation of $G$ sets up the wiring, but we will see that it drops out of the dynamics: because the harmonic potential is symmetric in each edge's endpoints, what we recover is the \emph{symmetrized} Laplacian, so $G$ behaves like its underlying undirected multigraph. This serves as a sanity check on the formalism of this paper.

\paragraph{From directed graph to static arrangement.}

Let $G\coloneqq(V,E,\src,\tgt)$ be a finite directed graph: i.e.\ $V$ is a finite vertex set, $E$ is a finite edge set, and $\src,\tgt\colon E\to V$ are functions assigning a source and target vertex to each edge. For each $v\in V$ write
\[
\outp{v}\coloneqq\src^{-1}(v)\subseteq E\qqand \inpt{v}\coloneqq\tgt^{-1}(v)\subseteq E
\]
for the sets of out-edges and in-edges at $v$:
\[
\begin{tikzpicture}[oriented WD, bb port sep=.6, bb min width=.7cm, bbx=.55cm, bby=1.7ex, baseline=(v.center)]
  \def\nin{4}\def\nout{3}
  \node[circle, fill, inner sep=1.6pt] (vtx) {};
  \node[anchor=north] at ($(vtx)+(0,-3pt)$) {$v$};
  \foreach \i in {1,...,\nin}{
    \coordinate (a) at ($(vtx)+(-1cm,{(\i-(\nin+1)/2)*3mm})$);
    \draw[-{Stealth[length=4pt]}] (a) -- ($(a)!.5!(vtx)$);  \draw ($(a)!.5!(vtx)$) -- (vtx);}
  \foreach \i in {1,...,\nout}{
    \coordinate (b) at ($(vtx)+(1cm,{(\i-(\nout+1)/2)*3mm})$);
    \draw[-{Stealth[length=4pt]}] (vtx) -- ($(vtx)!.5!(b)$);  \draw ($(vtx)!.5!(b)$) -- (b);}
  \node[bb={\nin}{\nout}, right=3.8cm of vtx] (v) {$v$};
  \coordinate (itop) at ($(v_in1)!.5!(v.north west)$);
  \coordinate (ibot) at ($(v_in\nin)!.5!(v.south west)$);
  \coordinate (otop) at ($(v_out1)!.5!(v.north east)$);
  \coordinate (obot) at ($(v_out\nout)!.5!(v.south east)$);
  \draw[decorate, line width=.8pt, decoration={brace, amplitude=5pt, mirror}]
    ([xshift=-3pt]v_in1|-itop) -- ([xshift=-3pt]v_in\nin|-ibot)
    node[midway, left=7pt] (inlab) {$\inpt{v}$};
  \draw[decorate, line width=.8pt, decoration={brace, amplitude=5pt}]
    ([xshift=3pt]v_out1|-otop) -- ([xshift=3pt]v_out\nout|-obot)
    node[midway, right=7pt] {$\outp{v}$};
  \draw[{Stealth[length=5pt]}-{Stealth[length=5pt]}, decorate,
    decoration={snake, amplitude=.7mm, segment length=2.2mm, pre length=3pt, post length=3pt}]
    ($(vtx)+(1.3cm,0)$) -- ($(inlab.west|-vtx)+(-3mm,0)$);
\end{tikzpicture}
\]
 Note that the maps $\tgt$ and $\src$ supply canonical bijections $\sum_{v\in V}\inpt{v}\cong E\cong\sum_{v\in V}\outp{v}$. Denote the composite by
\begin{equation}\label{eqn.prism_f}
\inpt{f}\colon\sum_{v\in V}\inpt{v}\too\sum_{v\in V}\outp{v}
\end{equation}

Working in $\Lens{\finset\op}$ (\cref{ex.lens_finsetop}), define the $|V|$-ary \emph{prism wiring} of $G$ to be the lens
\[
\varphi_G\colon\bigltens_{v\in V}\binom{\inpt{v}}{\outp{v}}\to\binom{0}{0}
\]
with $\outp{f}\coloneqq\bang\colon 0\to\sum_{v}\outp{v}$ unique and $\inpt{f}\colon\sum_{v}\inpt{v}\to\sum_{v}\outp{v}$ as in \eqref{eqn.prism_f}.%
\footnote{
In fact finite directed graphs with vertex set $V$ are in bijection with $|V|$-ary prisms in $\Lens{\finset\op}$ whose codomain is $\binom{0}{0}$ (hence $\outp f=\bang$) and whose $\inpt f$ is a bijection.
}
Concretely, each $v$ becomes a box with as many output and input ports as the out-degree and in-degree of $v$; the edges connect these ports one-to-one.

Applying $\rr^{\blank}$ (\cref{lem.lens_pow}) gives a lens
\[
\rr^{\varphi_G}\colon\bigltens_{v\in V}\binom{\rr^{\inpt{v}}}{\rr^{\outp{v}}}\to\lensunit
\]
whose input map $\prod_{v}\rr^{\outp{v}}\to\prod_{v}\rr^{\inpt{v}}$ is again the identity on $\rr^E$, shuttled through the source and target maps. As in \cref{sec.spring_first_pass}, we treat $\rr^{\varphi_G}$ as a static smooth arrangement in $\sarr$ by taking $Q'\coloneqq\rr^0$ and $U'\coloneqq 0$.

\paragraph{The particle at a vertex.}

For each $v\in V$, fix a mass $m_v>0$, and fix a spring constant $\kappa_e>0$ for each $e\in\inpt{v}$. The \emph{harmonic particle at $v$} is the $0$-ary smooth adaptive arrangement
\[
\Part_v\colon\lensunit\to\binom{\rr^{\inpt{v}}}{\rr^{\outp{v}}}
\]
defined by:
\begin{itemize}
\item parameter space $Q_v\coloneqq\rr\in\rvect$ with constant sharp $\sharpR_q(\xi)\coloneqq\xi/m_v$ (independent of $q\in\rr$);
\item output map $\outp{f_v}\colon Q_v\to\rr^{\outp{v}}$ the diagonal $q\mapsto(q)_{e\in\outp{v}}$, broadcasting the particle's position to each of its $|\outp{v}|$-many out-edges;
\item input map $\inpt{f_v}\coloneqq\bang\colon Q_v\times\rr^{\inpt{v}}\to 1$, uniquely determined; and
\item potential $U_v\colon Q_v\times\rr^{\inpt{v}}\to\rr$ given by
\[
U_v\bigl(q,(q'_e)_{e\in\inpt{v}}\bigr)\coloneqq\sum_{e\in\inpt{v}}\tfrac{\kappa_e}{2}\,(q-q'_e)^2.
\]
\end{itemize}
The particle $\Part$ of \cref{sec.spring_first_pass} is the case where $|\outp{v}|=|\inpt{v}|\coloneqq 1$ with $\kappa_e\coloneqq\kappa$ and $m_v\coloneqq m$ for a single $\kappa,m>0$.

\paragraph{The composite.}

Besides the graph $G$, this construction depends on the chosen masses $(m_v)_{v\in V}$ and spring constants $(\kappa_e)_{e\in E}$; we bundle all three into a \emph{weighted directed graph}
\[
\mathbf G\coloneqq\bigl(G,\,(m_v)_{v\in V},\,(\kappa_e)_{e\in E}\bigr),
\]
its vertices weighted by masses and its edges by spring constants. Composing $\rr^{\varphi_G}$ with the family $(\Part_v)_{v\in V}$ in $\sarr$ produces a $0$-ary smooth adaptive arrangement
\begin{equation}\label{eqn.graph_wire}
\fun{wire}_{\mathbf G}\coloneqq\rr^{\varphi_G}\bigl((\Part_v)_{v\in V}\bigr)\colon\lensunit\to\lensunit
\end{equation}
with composite parameter space $Q_G\coloneqq\bigoplus_{v\in V}Q_v=\rr^V$, the composite constant sharp $\sharpR_{\vec q}(\vec\xi)\coloneqq(\xi_v/m_v)_{v\in V}$, vacuous output and input maps, and composite potential $\rr^V\to\rr$ calculated as
\begin{equation}\label{eqn.graph_potential}
U(\vec q)\;=\;\sum_{v\in V}U_v\bigl(q_v,(q_{\src(e)})_{e\in\inpt{v}}\bigr)\;=\;\sum_{e\in E}\tfrac{\kappa_e}{2}\,(q_{\tgt(e)}-q_{\src(e)})^2.
\end{equation}
The first equality comes from $\outp f_v$ and $\inpt f_v$ for each $v\in V$; the second equality uses the partition $E=\bigsqcup_{v\in V}\inpt{v}$.

\paragraph{The discrete dynamics.}

Next we will apply $\Phiphase$ to $\fun{wire}_{\mathbf G}\colon\lensunit\to\lensunit$. Its parameter space is $Q_G=\rr^V$, so by \eqref{eqn.state_space} the carrier of $\Phiphase(\fun{wire}_{\mathbf G})$ is the phase space $T^*Q_G=T^*\rr^V$. Its interface is trivial---$\Phiphase\lensunit=\yon$ (\cref{sec.phase_dynamics})---so $\Phiphase(\fun{wire}_{\mathbf G})$ is a $[\yon,\yon]=\yon$-coalgebra, i.e.\ a function $T^*\rr^V\to T^*\rr^V$. For $(\vec q,\vec\xi)\in T^*\rr^V$, it is given by
\begin{equation}\label{eqn.graph_state_update}
\Phiphase(\fun{wire}_{\mathbf G})(\vec q,\vec\xi)\coloneqq\bigl(\vec q+(\xi_v/m_v)_{v\in V},\,\vec\xi-\xi_Q\bigr),
\end{equation}
where $\xi_Q\in(\rr^V)^*$ is as in \eqref{eqn.covector_triple}, evaluated at the presented position $\vec{\tilde q}\coloneqq\vec q+(\xi_v/m_v)_{v\in V}$ \eqref{eqn.presented_position}. With $\outp f$ and $\inpt f$ both vacuous, the two pullback terms in \eqref{eqn.covector_triple} vanish, leaving $\xi_Q=dU(\vec{\tilde q})$, i.e.\ the differential of \eqref{eqn.graph_potential} at $\vec{\tilde q}$. Component-wise, at a generic argument $\vec q$ (via \eqref{eqn.tangent_vec}):
\begin{align}\label{eqn.graph_gradient}
(\xi_Q)_v
&=\partial_{q_v}\sum_{e\in E}\tfrac{\kappa_e}{2}(q_{\tgt(e)}-q_{\src(e)})^2\\\nonumber
&=\partial_{q_v}\sum_{e\in E}\tfrac{\kappa_e}{2}\bigl(q_{\tgt(e)}^2-2\,q_{\tgt(e)}q_{\src(e)}+q_{\src(e)}^2\bigr)\\\nonumber
&=\sum_{e\in\inpt{v}}\kappa_e(q_v-q_{\src(e)})\;+\;\sum_{e\in\outp{v}}\kappa_e(q_v-q_{\tgt(e)}).
\end{align}
Define a $(V\times V)$-matrix $L_G$ to be the linear function that sends $\vec{q}$ to the vector with the following components:
\begin{equation}\label{eqn.graph_laplacian}
(L_G\,\vec q)_v\;\coloneqq\;\sum_{w\,\To{e}\,v}\kappa_e(q_v-q_w)\;+\;\sum_{v\,\To{e}\,w}\kappa_e(q_v-q_w).
\end{equation}
Comparing the last line of \eqref{eqn.graph_gradient} with this definition, $dU(\vec q)=L_G\,\vec q$ at every $\vec q$, so the backward covector of \eqref{eqn.covector_triple} is
\begin{equation}\label{eqn.xiQ_is_laplacian}
\xi_Q\;=\;dU(\vec{\tilde q})\;=\;L_G\,\vec{\tilde q}.
\end{equation}
We may assume that $\src(e)\neq \tgt(e)$ for all $e$, because self-loops contribute nothing to the sum \eqref{eqn.graph_gradient}. One can check that $L_G$ has diagonal and off-diagonal entries given by
\[
(L_G)_{vv}=\sum_{e\in\inpt{v}\cup\outp{v}}\kappa_e
\qqand
(L_G)_{vw}=-\sum_{v\,\To{e}\,w}\kappa_e-\sum_{w\,\To{e}\,v}\kappa_e
\]
This is the weighted (and symmetrized; see \cref{rmk.symmetric_bonds}) \emph{graph Laplacian} of $G$~\cite{spielman2015spectral}. The symmetrization occurs because the harmonic potential \eqref{eqn.graph_potential} sees each edge only through $(q_{\tgt(e)}-q_{\src(e)})^2$, which is invariant under swapping source and target. Getting the symmetrized Laplacian is exactly what one expects of a conservative network of springs.

Fixing an initial state $(\vec q(0),\vec\xi(0))\in T^*\rr^V$ and iterating $\Phiphase(\fun{wire}_{\mathbf G})$ yields, by \cref{ex.y_coalgebra_trajectory}, a trajectory $\bigl(\vec q(t),\vec\xi(t)\bigr)_{t\geq0}$. As in \eqref{eqn.doubledot}, write $\ddot{q}_v(t)\coloneqq q_v(t+1)-2q_v(t)+q_v(t-1)$ for $t\geq1$.

\begin{proposition}\label{prop.graph_laplacian}
For $\fun{wire}_{\mathbf G}$ as in \eqref{eqn.graph_wire}, every trajectory of $\Phiphase(\fun{wire}_{\mathbf G})\colon T^*\rr^V\to T^*\rr^V$ satisfies
\begin{equation}\label{eqn.graph_laplacian_dynamics}
m_v\,\ddot{q}_v(t)\;=\;-\bigl(L_G\,\vec q(t)\bigr)_v
\qquad(v\in V,\ t\geq1),
\end{equation}
where $L_G$ is the graph Laplacian \eqref{eqn.graph_laplacian} of $G$.
\end{proposition}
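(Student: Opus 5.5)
The plan is to mirror the chain computation of \cref{sec.spring_first_pass}, using the explicit update \eqref{eqn.graph_state_update} together with the identification \eqref{eqn.xiQ_is_laplacian} of the backward covector with the Laplacian. Writing the trajectory as $(\vec q(t),\vec\xi(t))$ with $(\vec q(t+1),\vec\xi(t+1))=\Phiphase(\fun{wire}_{\mathbf G})(\vec q(t),\vec\xi(t))$ (\cref{ex.y_coalgebra_trajectory}), the update \eqref{eqn.graph_state_update} splits componentwise into two recurrences:
\[
q_v(t+1)=q_v(t)+\xi_v(t)/m_v
\qqand
\xi_v(t+1)=\xi_v(t)-\bigl(L_G\,\vec{\tilde q}(t)\bigr)_v,
\]
where $\vec{\tilde q}(t)=\vec q(t)+(\xi_v(t)/m_v)_v$ is the presented position \eqref{eqn.presented_position}.

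The key observation, exactly as in the chain case, is that the first recurrence says $\vec{\tilde q}(t)=\vec q(t+1)$. The updated position is the presented one, as noted after \eqref{eqn.state_update}. The second recurrence therefore reads $\xi_v(t+1)-\xi_v(t)=-(L_G\,\vec q(t+1))_v$. Solving the first recurrence for the momentum gives $\xi_v(t)=m_v\bigl(q_v(t+1)-q_v(t)\bigr)$ for every $t\geq0$.

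Now take $t\geq1$ and apply the second recurrence at time $t-1$:
\[
\xi_v(t)-\xi_v(t-1)=-\bigl(L_G\,\vec q(t)\bigr)_v.
\]
Substituting the momentum formula at times $t$ and $t-1$ turns the left side into $m_v\bigl(q_v(t+1)-2q_v(t)+q_v(t-1)\bigr)=m_v\ddot q_v(t)$, which is \eqref{eqn.graph_laplacian_dynamics}.

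The only step needing care is the justification of \eqref{eqn.xiQ_is_laplacian}, namely $\xi_Q=dU|_{\vec{\tilde q}}=L_G\vec{\tilde q}$, with the correct sign and evaluation point. I would argue it as follows.
\begin{itemize}
\item The system is closed, so \eqref{eqn.covector_triple} reduces to $dU$, as explained in \cref{sec.closed_conservation}.
\item The phase integrator evaluates that differential at $\vec{\tilde q}$ rather than at $\vec q$ (\cref{sec.phase_coalgebra}).
\item The computation \eqref{eqn.graph_gradient} shows $dU=L_G$ at every point.
\end{itemize}
The sign convention $\xi\mapsto\xi-\xi_Q$ in \eqref{eqn.state_update} then produces the minus sign in \eqref{eqn.graph_laplacian_dynamics}. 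Beyond this bookkeeping, which the chain example already audited, I expect no real obstacle.
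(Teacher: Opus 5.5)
Your proposal is correct and follows the paper's own proof essentially line for line. You read $\vec{\tilde q}(t)=\vec q(t+1)$ off the position update, solve for $\xi_v(t)$, apply the momentum update at time $t-1$ using \eqref{eqn.xiQ_is_laplacian}, and substitute. Your extra justification of $\xi_Q=L_G\vec{\tilde q}$ (closedness removing the pullback terms, evaluation at the presented position, and \eqref{eqn.graph_gradient}) matches how the paper derives \eqref{eqn.xiQ_is_laplacian} just before the proposition.
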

\begin{proof}
From the state update equation \eqref{eqn.graph_state_update}, the position update is $q_v(t+1)\coloneqq q_v(t)+\xi_v(t)/m_v$, i.e.\ $\vec{\tilde q}(t)=\vec q(t+1)$; solving for momentum, $\xi_v(t)=m_v\bigl(q_v(t+1)-q_v(t)\bigr)$. The momentum update \eqref{eqn.graph_state_update}, with $t$ replaced by $t-1$, gives $\xi_v(t)-\xi_v(t-1)=-(\xi_Q)_v=-(L_G\vec q(t))_v$, using \eqref{eqn.xiQ_is_laplacian} and $\vec{\tilde q}(t-1)=\vec q(t)$. Substituting the momentum gives
\[
m_v\bigl(q_v(t+1)-q_v(t)\bigr)-m_v\bigl(q_v(t)-q_v(t-1)\bigr)\;=\;-(L_G\vec q(t))_v,
\]
and collecting the position terms on the left, in the notation of \eqref{eqn.doubledot}, yields \eqref{eqn.graph_laplacian_dynamics}.
\end{proof}

The response $\sharpR_{\vec q}(\vec\xi)=(\xi_v/m_v)_{v\in V}$ is constant and symmetric and the potential \eqref{eqn.graph_potential} is quadratic with $dU=L_G$ symmetric---so the arrangement is harmonic (\cref{def.arrangement_terminology}) and \cref{prop.closed_conservation} applies (with $\kappa=L_G$): $\Phiphase(\fun{wire}_{\mathbf G})\colon T^*\rr^V\to T^*\rr^V$ is a map in $\rvect$, hence preserves the canonical symplectic pairing.

\begin{remark}[Heat equation from the configuration integrator]\label{rmk.graph_heat}
Rather than $\Phiphase$, applying $\Phiconf$ to the \emph{very same} smooth adaptive arrangement $\fun{wire}_{\mathbf G}$ of \eqref{eqn.graph_wire}---same graph wiring, same potential, and the same positive inverse-mass response $\sharpR_{\vec q}(\vec\xi)=(\xi_v/m_v)_v$---recovers the discrete \emph{heat} equation on $G$. The state set is now $Q_G=\rr^V$ (no momentum), and the same computation \eqref{eqn.graph_gradient} applies---except that the configuration integrator presents the stored position itself, so here $\xi_Q=dU(\vec q)=L_G\vec q$. Its descent update $\vec q(t+1)=\vec q(t)-\sharpR_{\vec q(t)}(\xi_Q)$ then reads component-wise as
\[
m_v\bigl(q_v(t+1)-q_v(t)\bigr)\;=\;-\bigl(L_G\,\vec q(t)\bigr)_v
\qquad(v\in V,\ t\geq0).
\]
Recall from \eqref{eqn.graph_laplacian} that the $\kappa_e$'s are built into $L_G$. Here, we read $q_v$ as temperature and $m_v$ as heat capacity at vertex $v$, read $\kappa_e$ as the thermal conductance along an edge, and the right-hand side is the net heat flow into $v$.

Writing $\dot q_v(t)\coloneqq q_v(t+1)-q_v(t)$ and $M\coloneqq\operatorname{diag}(m_v)_{v\in V}$ for the diagonal mass matrix, the update is the discrete heat equation
\[
M\,\dot{\vec q}\;=\;-L_G\,\vec q,
\]
diffusion on $G$ with the masses $m_v$ serving as heat capacities.

Thus a single smooth adaptive arrangement gives wave dynamics under the phase integrator and heat dynamics under the configuration integrator: the second-order, symplectic regime and the first-order, descent regime are two semantics of one syntactic object.
\end{remark}

\begin{remark}[Predicting a closed system]\label{rmk.graph_environment}
The environment of \cref{ex.training_data} need not be a dataset; it can be a closed system being observed. Given a closed system $c\colon S\to S$ (a $\yon$-coalgebra, \cref{ex.y_coalgebra_trajectory}) and an observation $o\colon S\to X$, the carrier $S$ with readout and update
\[
\langle o,\ o\circ c\rangle\colon S\to X\times X
\qqand
c\colon S\to S
\]
is a $\cotof X\otimes\cotof X$-coalgebra---an environment $\yon\to\Phiconf(\tbinom1X\ltens\tbinom1X)$ in $\pc$---presenting the observation now, $o(s)$, as input and one step ahead, $o(c(s))$, as label, then advancing by $c$. Closing the learner \eqref{eqn.dl_composite} (with $\rr^m=\Lambda=X$) with it descends the weights \eqref{eqn.dl_gradient_update} toward predicting $o\circ c$ from $o$. If $o\circ c=\bar c\circ o$ for some $\bar c\colon X\to X$, the target is the system's one-step map $\bar c$; in general no such $\bar c$ exists, and the learner is trained only on the observed transition pairs $\bigl(o(s),\,o(c(s))\bigr)$.

For the graph, take the closed system to be $c\coloneqq\Phiphase(\fun{wire}_{\mathbf G})\colon T^*\rr^V\to T^*\rr^V$ \eqref{eqn.graph_state_update}.
With the bundle projection $\pi\colon T^*\rr^V\to\rr^V$ (\cref{lem.TT_projection_diagonal_monoidal}), no $\bar c$ as above exists: only positions are observed, but states with the same position and different momenta have different next positions. 
However, if we instead take $X\coloneqq\rr^V\times\rr^V$ and observe two consecutive configurations, $o\coloneqq\langle\pi,\,\pi\circ c\rangle\colon T^*\rr^V\to X$, sending $(q,\xi)$ to $\bigl(q,\ q+\sharpR_q\xi\bigr)$, then $o\circ c=\bar c\circ o$ for some $\bar c\colon X\to X$.
In this case, the network is fit to the graph's propagator $\bar c$, which is linear in $L_G$.
Any other closed system $c$ in its place---Newton's method, gradient descent---trains a predictor of that one instead.
\end{remark}

\section*{Conclusion}

The operad $\sarr$ of smooth adaptive arrangements provides a single compositional syntax in which gradient-based learning and Hamiltonian-style mechanics coexist as functorial semantics $\Phiconf,\Phiphase\colon\sarr\to\pc$. The key mediating construction is lens internalization, which converts lens interfaces into polynomial interfaces, and integrators, which determine how stored state relates to the parameter space.

The same split appears in the neural-network example of \cref{sec.dl_warmup}. Applying $\Phiconf$ to the parameterized map $F\colon Q\times\rr^m\to\rr^n$ recovers gradient descent; we expect that applying $\Phiphase$ to the same arrangement may yield momentum-based weight dynamics. This would place standard and momentum-based training as two integrators of one smooth adaptive arrangement, exactly as the wave and heat equations are two integrators of one graph wiring (\cref{rmk.graph_heat}).

Several directions remain open. Replacing responsive vector spaces with responsive manifolds equipped with connections (\cref{rmk.rmfdc_generalization}) would extend the framework beyond linear parameter spaces. The monoidality of specific multi-stage integrators such as leapfrog remains to be checked (\cref{rmk.multistage}). A probabilistic extension---replacing the dynamics map $\dyn_Q\colon\Store(S_Q)\to\cotof Q$ of \cref{def.integrator} by a map $\Store(S_Q)\to\cotof Q\circ\rand$, using the polynomial monad $\rand=\sum_{N:\nn}\sum_{P:\fun{Dist}(N)}\yon^N$ introduced in \cref{sec.coalgebras}---might provide finitely-supported stochastic semantics, e.g.\ for stochastic parameter updates in neural networks.

On the suboperad of quadratic arrangements (\cref{prop.suboperads}), we expect the two example semantics $\Phiconf$ and $\Phiphase$ to be related: for every quadratic arrangement $f$ with parameter space $Q$, there should be an arrangement $g$ with parameter space $T^*Q$ such that $\Phiphase(f)=\Phiconf(g)$, functorially in $f$.

The linear arrangements span a suboperad $\sarr^{\tn{lin}}\ss\sarr$ (\cref{prop.suboperads}) on which other semantics may be extracted. The superposition principle of heat and wave systems should extend functorially across the whole linear stratum. We expect the inclusion $\sarr^{\tn{lin}}\ss\sarr$ to factor through a suboperad of ``critically pointed arrangements,'' with second-order Taylor expansion about the critical point providing a retraction onto $\sarr^{\tn{lin}}$.

It would also be interesting to compare this framework with Coya's compositional framework for bond graphs~\cite{coya2017bondgraphs}, particularly by relating his two Lagrangian-relation semantics to the deterministic coalgebra semantics here.

We plan to pursue this symplectic picture and the critical-point linearization in a sequel.

\printbibliography

\end{document}